\theoremstyle{plain}
\newtheorem{theorem}{Theorem}[section]
\newtheorem{corollary}[theorem]{Corollary}
\newtheorem{lemma}[theorem]{Lemma}
\newtheorem{proposition}[theorem]{Proposition}
\theoremstyle{definition}
\newtheorem{definition}[theorem]{Definition}
\newtheorem{problem}[theorem]{Problem}
\theoremstyle{remark}
\newtheorem{remark}[theorem]{Remark}
\numberwithin{equation}{section}
\DeclareMathOperator{\ad}{ad}
\DeclareMathOperator{\Imag}{Im}
\begin{document}

\title[Global Existence for the DNLS Equation]{Global Existence  for the Derivative Nonlinear Schr\"{o}dinger Equation  by the Method of Inverse Scattering}
\author{Jiaqi Liu}
\author{Peter A. Perry}
\author{Catherine Sulem}
\address[Liu]{Department of Mathematics, University of Kentucky, Lexington, Kentucky 40506--0027}
\address[Perry]{ Department of Mathematics, University of Kentucky, Lexington, Kentucky 40506--0027}
\address[Sulem]{Department of Mathematics, University of Toronto, Toronto, Ontario M5S 2E4, Canada }
\thanks{Peter Perry supported in part by NSF Grant DMS-1208778}
\thanks {C. Sulem supported in part by NSERC Grant 46179-13}
%\thanks{File \texttt{Papers/dnls/dnls-revised.tex}}
\date{\today}

\begin{abstract}
We develop inverse scattering for the derivative nonlinear Schr\"{o}\-din\-ger equation (DNLS) on the line 
using its  gauge equivalence with a related nonlinear dispersive equation. We prove Lipschitz continuity of the direct and inverse scattering maps from the weighted Sobolev spaces $H^{2,2}(\mathbb{R})$ to itself. These results immediately imply global existence of solutions to the DNLS for initial data in a spectrally determined (open) subset of $H^{2,2}(\mathbb{R})$ containing a neighborhood of $0$. 
Our work draws ideas  from the pioneering work of Lee and from more recent work of Deift and Zhou on the nonlinear Schr\"{o}dinger equation.
\end{abstract}
\maketitle
\tableofcontents

%%%%%%%%%%%%%%%%%%%%%%%%%%%%
%
%		dnls-macros.tex
%
%		macros for inverse scattering method papers
%		customized for dnls.tex
%
%		Peter Perry
%		2015.08.04
%		
%		Reviewed and updated 2015.12.31
%
%%%%%%%%%%%%%%%%%%%%%%%%%%%%

% accents

\newcommand{\ba}{\breve{a}}
\newcommand{\bb}{\breve{b}}

\newcommand{\bM}{\breve{M}}

\newcommand{\tdelta}{\widetilde{\delta}}

% blackboard bold

\newcommand{\bbC}{\mathbb{C}}
\newcommand{\bbR}{\mathbb{R}}

% calligraphic - capitals

\newcommand{\calA}{\mathcal{A}}
\newcommand{\calB}{\mathcal{B}}
\newcommand{\calC}{\mathcal{C}}
\newcommand{\calD}{\mathcal{D}}
\newcommand{\calE}{\mathcal{E}}
\newcommand{\calF}{\mathcal{F}}
\newcommand{\calG}{\mathcal{G}}
\newcommand{\calH}{\mathcal{H}}
\newcommand{\calI}{\mathcal{I}}
\newcommand{\calJ}{\mathcal{J}}
\newcommand{\calK}{\mathcal{K}}
\newcommand{\calL}{\mathcal{L}}
\newcommand{\calM}{\mathcal{M}}
\newcommand{\calN}{\mathcal{N}}
\newcommand{\calO}{\mathcal{O}}
\newcommand{\calP}{\mathcal{P}}
\newcommand{\calQ}{\mathcal{Q}}
\newcommand{\calR}{\mathcal{R}}
\newcommand{\calS}{\mathcal{S}}
\newcommand{\calT}{\mathcal{T}}
\newcommand{\calU}{\mathcal{U}}
\newcommand{\calV}{\mathcal{V}}
\newcommand{\calW}{\mathcal{W}}
\newcommand{\calX}{\mathcal{X}}
\newcommand{\calY}{\mathcal{Y}}
\newcommand{\calZ}{\mathcal{Z}}

% boldface - lower case

\newcommand{\bfe}{\mathbf{e}}
\newcommand{\bfm}{\mathbf{m}}
\newcommand{\bfn}{\mathbf{n}}
\newcommand{\bft}{\mathbf{t}}

% boldface - upper case

\newcommand{\bfA}{\mathbf{A}}
\newcommand{\bfB}{\mathbf{B}}
\newcommand{\bfC}{\mathbf{C}}
\newcommand{\bfD}{\mathbf{D}}
\newcommand{\bfE}{\mathbf{E}}
\newcommand{\bfF}{\mathbf{F}}
\newcommand{\bfG}{\mathbf{G}}
\newcommand{\bfH}{\mathbf{H}}
\newcommand{\bfI}{\mathbf{I}}
\newcommand{\bfJ}{\mathbf{J}}
\newcommand{\bfK}{\mathbf{K}}
\newcommand{\bfL}{\mathbf{L}}
\newcommand{\bfM}{\mathbf{M}}
\newcommand{\bfN}{\mathbf{N}}
\newcommand{\bfO}{\mathbf{O}}
\newcommand{\bfP}{\mathbf{P}}
\newcommand{\bfQ}{\mathbf{Q}}
\newcommand{\bfR}{\mathbf{R}}
\newcommand{\bfS}{\mathbf{S}}
\newcommand{\bfT}{\mathbf{T}}
\newcommand{\bfU}{\mathbf{U}}
\newcommand{\bfV}{\mathbf{V}}
\newcommand{\bfW}{\mathbf{W}}
\newcommand{\bfX}{\mathbf{X}}
\newcommand{\bfY}{\mathbf{Y}}
\newcommand{\bfZ}{\mathbf{Z}}

% mixed

\newcommand{\bbfN}{\breve{\mathbf{N}}}

% bars - roman

\newcommand{\abar}{\overline{a}}
\newcommand{\bbar}{\overline{b}}
\newcommand{\fbar}{{\overline{f}}}
\newcommand{\gbar}{{\overline{g}}}
\newcommand{\kbar}{{\overline{k}}}
\newcommand{\rbar}{{\overline{r}}}
\newcommand{\ubar}{{\overline{u}}}
\newcommand{\qbar}{{\overline{q}}}
\newcommand{\xbar}{\overline{x}}
\newcommand{\wbar}{\overline{w}}
\newcommand{\zbar}{{\overline{z}}}

\newcommand{\Pbar}{\overline{P}}

% bars - Greek

\newcommand{\mubar}{{\overline{\mu}}}
\newcommand{\xibar}{\overline{\xi}}
\newcommand{\nubar}{\overline{nu}}
\newcommand{\psibar}{\overline{\psi}}
\newcommand{\rhobar}{\overline{\rho}}
\newcommand{\zetabar}{\overline{\zeta}}

% abbreviations - general

\newcommand{\dee}{\partial}
\newcommand{\dbar}{\overline{\partial}}

\newcommand{\dint}{\displaystyle{\int}}
\newcommand{\dsum}{\displaystyle{\sum}}

\newcommand{\darr}{\downarrow}
\newcommand{\uarr}{\uparrow}
\newcommand{\rarr}{\rightarrow}

\newcommand{\eps}{\varepsilon}

\newcommand{\diag}{\mathrm{diag}}
\newcommand{\offd}{\mathrm{off}}

% abbreviations - arguments of functions

\newcommand{\dotarg}{\, \cdot \, }
\newcommand{\diamondarg}{\, \diamond \, }

% macros - general

\newcommand{\bigO}[1]{{\mathcal{O}}\left( {#1} \right)}
\newcommand{\littleo}[1]{{\mathcal{o}}\left( {#1} \right)}

\newcommand{\norm}[2]{\left\Vert {#1} \right\Vert_{#2}}
\newcommand{\absval}[1]{\left\vert {#1} \right\vert}

% macros - vectors

\newcommand{\twovec}[2]
{
\left( 
	\begin{array}{c} {#1} \\ {#2} \end{array}
\right)
}

\newcommand{\Twovec}[2]
{
\left( 
	\begin{array}{c} {#1}\\  \\ {#2} \end{array}
\right)
}

% macros - matrices

%%%%%
%
%		Format various 2x2 matrices
%
%%%%

\newcommand{\twomat}[4]
{
	\left(
			\begin{array}{cc}
			#1 & #2 \\
			#3 & #4
			\end{array}
	\right)
}

\newcommand{\Twomat}[4]
{
	\left(
			\begin{array}{cc}
			#1 & #2 \\
			\\
			#3 & #4
			\end{array}
	\right)
}

\newcommand{\uppermat}[1]
{
	\left(
			\begin{array}{cc}
			0	&	{#1} \\
			0	&	0		
			\end{array}
	\right)
}

\newcommand{\lowermat}[1]
{
	\left(
			\begin{array}{cc}
			0		&		0 \\
			{#1}	&		0		
			\end{array}
	\right)
}

\newcommand{\unituppermat}[1]
{
	\left(
			\begin{array}{cc}
			1	&	{#1} \\
			0	&	1		
			\end{array}
	\right)
}

\newcommand{\unitlowermat}[1]
{
	\left(
			\begin{array}{cc}
			1		&		0 \\
			{#1}	&		1		
			\end{array}
	\right)
}

\newcommand{\diagmat}[2]
{
	\left(
			\begin{array}{cc}
				{#1}	&		0\\
				0		&		{#2}
			\end{array}
	\right)
}

\newcommand{\offdiagmat}[2]
{
	\left(
			\begin{array}{cc}
			0		&	{#1} \\
			{#2}	&	0		
			\end{array}
	\right)
}

\newcommand{\Diagmat}[2]
{
	\left(
			\begin{array}{cc}
				{#1}	&		0\\
				\\
				0		&		{#2}
			\end{array}
	\right)
}

\newcommand{\Offdiagmat}[2]
{
	\left(
			\begin{array}{cc}
			0		&	{#1} \\
			\\
			{#2}	&	0		
			\end{array}
	\right)
}

\newcommand{\twodet}[4]
{
\left|
	\begin{array}{cc}
		#1		&		#2		\\[5pt]
		#3		&		#4
	\end{array}
\right|
}

%%%%%
%
%		Display weights
%
%		Usage:
%
%		\weights{label}{w}
%				{w^- (\lamdba \in D_+)}{w^+ (\lamdba \in D_+)}
%				{w^- (\lambda \in D_-)}{w^+ (\lambda \in D^-)}
%
%		For the matrices, use \uppermat, \lowermat, or \offdiagmat
%
%%%%%%%%

\newcommand{\weights}[6]
{
	\begin{equation}
	\label{#1}
	({#2}^-,{#2}^+) =
				\begin{cases}
						\left(	{#3}, {#4} \right) &	\lambda \in D_+\\
						\\
						\left(  {#5}, {#6} \right) & 	\lambda \in D_-
				\end{cases}
	\end{equation}
}

%%%%%
%
%		Display weights on multlipline lines using \multline environment
%
%		Usage:
%
%		\longweights{label}{w}
%				{w^- (\lamdba \in D_+)}{w^+ (\lamdba \in D_+)}
%				{w^- (\lambda \in D_-)}{w^+ (\lambda \in D^-)}
%
%		For the matrices, use \uppermat, \lowermat, or \offdiagmat
%
%%%%%%%%

\newcommand{\longweights}[6]
{
	\begin{multline}
	\label{#1}
	({#2}^-,{#2}^+) =\\[0.2cm]
				\begin{cases}
						\left(	{#3}, {#4} \right) &	\lambda \in D_+\\
						\\
						\left(  {#5}, {#6} \right) & 	\lambda \in D_-
				\end{cases}
	\end{multline}
}

% specific matrices

\newcommand{\sigmathree}{\twomat{1}{0}{0}{-1}}

% adds for direct scattering - note fine-tuning on subscript

\newcommand{\intpm}[1][x]{{\displaystyle{\int}}_{\hspace{-3pt} #1}^{\pm \infty}}

%%%%%%%%%%%%
%
%
%
%%%%%%%%%

\newcommand{\One}{\mathbf{1}}

\newcommand{\leftint}[1][x]{\int_{-\infty}^{#1}}
\newcommand{\rightint}[1][x]{\int_{#1}^\infty}

%%%%%%%%%%%%
%
%		Updates October 03
%
%%%%%%%%%%%%

\newcommand{\lambdabar}{\overline{\lambda}}
\newcommand{\lam}{\lambda}
\newcommand{\qsharp}{q^\sharp}

\newcommand{\tbfn}{\widetilde{\mathbf{n}}}

%%%%%%%%%%%%%%
%
%		From Jiaqi's files
%
%%%%%%%%%%%%%%

\newcommand{\intR}{\int_{\mathbb{R}}}
\newcommand{\fourier}{\calF(\overline{\rho})(\xi')\calF'(\rho)(\xi-\xi')d\xi'd\xi}
\newcommand{\nusharp}{\nu^\sharp}   %% replaces \niu in section 5
\newcommand{\mixed}{L^2_xL^2_\lambda}		%% we should probably change this

\newcommand{\lambdanorm}{L^2_{|\lambda|>M}}
\newcommand{\inftynorm}{C_x^0(\mathbb{R}^\pm,L^2_{|\lambda|>M})}
\newcommand{\twonorm}{L^2_{x}(\mathbb{R}^\pm)\otimes L^2_{|\lambda|>M}}

\newcommand{\intx}{\int_{x}^{\pm\infty}}
\newcommand{\inty}{\int_{y}^{\pm\infty}}
\newcommand{\intnot}{\int_{0}^{\pm\infty}}
\newcommand{\firstdiag}{\frac{\partial\nu^\pm_{diag}(x,\lambda)}{\partial\lambda}}
\newcommand{\seconddiag}{\frac{\partial^2\nu^\pm_{diag}(x,\lambda)}{\partial\lambda^2}}
\newcommand{\firstoff}{\frac{\partial\nu^\pm_{off}(x,\lambda)}{\partial\lambda}}
\newcommand{\secondoff}{\frac{\partial^2\nu^\pm_{off}(x,\lambda)}{\partial\lambda^2}}
\newcommand{\secondh}{\frac{\partial^2 h^{\pm}(x,\lambda)}{\partial\lambda^2}}
\newcommand{\dt}{\frac{\partial T^\pm}{\partial\lambda}}
\newcommand{\ddt}{\frac{\partial^2 T^\pm}{\partial\lambda^2}}
\newcommand{\bilinear}{B(\rho,\overline{\rho})}
\newcommand{\niu}{\nu_{11}^{\sharp}}

\newcommand{\sbar}{\overline{s}}

\newcommand{\balpha}{\breve{\alpha}}
\newcommand{\bbeta}{\breve{\beta}}

%%%%%%%%%%%%%
%
%		More Macros for section 5.2
%
%%%%%%%%%%%%%

\newcommand{\bN}{\breve{N}}
\newcommand{\bp}{\breve{p}}
\newcommand{\bq}{\breve{q}}
\newcommand{\bqbar}{\overline{\bq}}

\newcommand{\brho}{\breve{\rho}}
\newcommand{\bnu}{\breve{\nu}}

\newcommand{\bS}{\breve{S}}

%%%%%%%%%%%%%%%%%%
%
%		A few more add-ons
%
%%%%%%%%%%%%%%%%%%

\newcommand{\laminv}{\langle \lam \rangle^{-1}}

\newcommand{\bJ}{\breve{J}}
\newcommand{\bmu}{\breve{\mu}}
\newcommand{\bm}{\breve{m}}
\newcommand{\bw}{\breve{w}}

\newcommand{\hatrho}{\widehat{\rho}}
\newcommand{\hatsigma}{\widehat{\sigma}}

\newcommand{\HS}{\mathrm{HS}}

\newcommand{\Norm}[2]
{
\bigl\|
	{#1}
\bigr\|
_{#2}
}

\section{Introduction}
\label{sec:intro}

The Derivative Nonlinear Schr\"odinger equation (DNLS) 
%\begin{equation}
$$
i u_t + u_{xx} + i (|u|^2 u)_x =0
$$
%\end{equation}
is a  canonical  dispersive equation derived from the Magneto-Hydrodynamic equations in the presence 
of the Hall effect.
The equation models the  dynamics  of Alfv\'en waves propagating along an ambient magnetic field
in a long-wave, weakly nonlinear scaling regime
\cite{CLPS99,M76}. 
Under gauge transformations, DNLS takes different equivalent forms.
For example, defining 
$$
\psi(x,t) = u(x,t)  \exp\left(\frac{i}{2} \int_{-\infty}^x |u(y,t)|^2 dy\right),
$$
DNLS becomes 
%\begin{equation}\label{DNLSA}
$$
i\psi_t +\psi_{xx} + i|\psi|^2 \psi_x =0
$$
%\end{equation}
which appears in optical models of ultra-short pulses and is sometimes referred to as the Chen-Liu-Lee equation \cite{CLL79}. On the other hand, under the gauge
transformation, 
$$
v(x,t) 
	=  u(x,t)  \exp\left( \frac{3i}{4} \int_{-\infty}^x |u(y,t)|^2 dy\right),
$$
the new function $v$ satisfies
%\begin{equation} \label{DNLSB}
$$iv_t +v_{xx} -\frac{i}{2} |v|^2 v_x +\frac{i}{2} v^2 \bar v_x +
\frac{3}{16} |v|^4 v=0
$$
where the overbar means complex conjugate.
%\end{equation} 
The latter transformation has been particularly useful in the mathematical
analysis of solutions and questions of global well-posedness.
In terms of scaling properties, DNLS is invariant under the transformation 
%\begin{equation}
$$
u \rightarrow u_\lambda = \lambda^{-1/2} u\left(\frac{x}{\lambda}, \frac{t}{\lambda^2}\right).
$$
%\end{equation}
In particular, it is $L^2$-critical in the sense that 
$$\|u_\lambda\|_{L^2} =\|u\|_{L^2}.$$
DNLS has the same scaling properties as
the Nonlinear Schr\"{o}dinger (NLS)  equation with $L^2$-critical focusing nonlinearity
which has solutions that blowup in a finite time. Yet, these two equations have very different structural properties. 

It was proved by Hayashi and Ozawa \cite{HO92}  that 
solutions  exist locally in time in the Sobolev space  $H^1(\mathbb{R})$ and
they can be extended for all time if the $L^2$-norm of the initial condition is   small enough, 
namely if $\|u_0\|_{L^2} < \sqrt{2\pi}$. Recently, this upper bound has been improved to $\sqrt{4\pi}$ by
Wu \cite{Wu14}.  The novelty of Wu's argument  comes from using 
the conservation of the momentum
$$
 P(v)\equiv \int  \Big( \Im(\bar{v}v_x) +\frac{1}{4} 
  |v|^4\Big) dx
$$
in addition to the conservation of mass and energy  
$$
M(v) = \|v(t)\|_{L^2}^2, \;\;
E(v)  = \|v_x(t)\|_{L^2}^2 -\frac{1}{16} \|v(t)\|^6_{L^6}.
$$

The question of global well-posedness of solutions of DNLS with large data
remains an important  open problem. 

 A central  property of DNLS, discovered by  Kaup and Newell \cite{KN78}, is that it is
solvable through the inverse scattering method. In this pioneering work, the authors 
establish the main elements of the inverse scattering analysis.  In particular, they find the Lax pair, analyze the linear spectral flow and derive the soliton solutions.
A key observation is that the 
associated spectral problem  is of second order in terms of 
the spectral parameter.  This is in contrast with 
the  {ZS-AKNS} system  \cite{AKNS74,ZS72} (the linear spectral problem  associated to NLS) 
which is of first order in the spectral parameter.  Its study was the object of Lee's thesis 
\cite{Lee83} and  his subsequent work \cite{Lee89a}. The spectral analysis also provides an  infinite number of conserved quantities \cite{KN78}.

The present paper is devoted to a rigorous analysis
of the direct and inverse  scattering map,
with the goal of providing insight and building blocks 
for the questions of global well-posedness,
long-time behavior and asymptotic stability of solitons. 
In the rest of the introduction, we present the framework for the inverse scattering approach, building on seminal works
by Beals, Coifman, Deift and Zhou \cite{BC84,DZ93,DZ03,Zhou89,Zhou98}.  We first review the Lax representation for DNLS, the spectral problem that defines the direct scattering map, and Riemann-Hilbert problem (RHP) that defines the inverse scattering map (Section  \ref{intro:DNLS}). We then use symmetry reduction to give a more precise and analytically tractable definition of the direct and inverse scattering maps (Section \ref{intro:sr}; see Definitions \ref{def:R} and \ref{def:I}).  
The introduction ends with a summary of our results (Section \ref{sec:intro.summary}). 
These include a Lipschitz continuity property of the direct and   inverse scattering maps in 
weighted Sobolev spaces (Theorems \ref{thm:R}, \ref{thm:I}).  Due to the simple time-evolution of the 
scattering data (see \eqref{ab.ev}),  this analysis provides a construction
of a global  smooth solution of the DNLS equation under some restrictions on the initial condition 
(Theorem \ref{thm:DNLS2}).  Finally, we mention that the analysis of the direct and inverse scattering 
maps and well-posedness of DNLS is also the subject of  recent work by Pelinovsky and Shimabukoro 
\cite{PS15}. They work in slightly different weighted spaces and
use a mapping \cite{KN78}  that transforms the original spectral problem to a spectral problem of  Zakharov-Shabat type. 

The class of initial data considered here (see Theorems \ref{thm:R} and \ref{thm:I}) excludes initial data with soliton solutions, and contains ``small data.'' 
%We discuss this further in Remark \ref{rem:U} of what follows.

\subsection{DNLS as an Integrable System} 
\label{intro:DNLS}
It is common to rewrite DNLS
in the form 
\begin{equation}
\label{DNLS1}
i u_t + u_{xx} = i \varepsilon (|u|^2 u)_x 
\end{equation}
where  $\varepsilon = \pm 1$. A
gauge transformation, different from those 
% presented 
discussed
above, plays an important role in the analysis. It has the form
\begin{equation}
\label{q.gauge}
 q(x,t) = u(x,t) \exp\left({-i\eps \int_{-\infty}^x |u(y,t)|^2 dy}\right)
\end{equation}
and maps solutions of \eqref{DNLS1} into solutions of 
\begin{equation}
\label{DNLS2}
i q_t +q_{xx} +i \eps q^2 \bar q_x +\frac{1}{2} |q|^4 q=0.
\end{equation}
The latter equation is sometimes referred to as the Gerjikov-Ivanov equation \cite{F00}.
We will actually solve \eqref{DNLS2} by inverse scattering and use the inverse of the gauge transformation \eqref{q.gauge} to obtain the solution of \eqref{DNLS1}. 
As the solution spaces of \eqref{DNLS1} with $\eps=1$ and $\eps=-1$ are connected by the simple mapping $u(x,t) \mapsto u(-x,t)$, we  will now fix $\eps=1$ and consider only this case for the rest of the paper.

The advantage of this formulation manifests itself when analyzing the inverse scattering map through  the  Riemann-Hilbert problem (RHP),  allowing us to write appropriately normalized, piecewise analytic solutions.
A discussion of the use of gauge transformations in soliton theory  can be found in \cite{WS83}.

The integrable equations \eqref{DNLS1} and \eqref{DNLS2} each admit a Lax representation 
$$ L_t - A_x +[L,A]=0 $$
for suitable operators $L$ and $A$. Equivalently, \eqref{DNLS1} and \eqref{DNLS2} are the compatibility conditions for the system of equations
\begin{equation}\label{pair}
 \psi_x = L\psi, \quad \psi_t = A\psi. 
\end{equation}
The operators $A$ and $L$ for \eqref{DNLS1} and \eqref{DNLS2} and their equivalence through the gauge transformation
are given in Appendix \ref{app:gauge}.
The flow defined by \eqref{DNLS2} with $\eps=1$  is linearized by the scattering transform associated with the linear problem
\begin{equation}
\label{LS}
\frac{d}{dx} \Psi = -i\zeta^2 \sigma \Psi + \zeta Q(x) \Psi + P(x) \Psi,
\end{equation}
where $\Psi$ is  a $2 \times 2$ matrix-valued function of $x$  and
$$ 
\sigma = 	\diagmat{1}{-1},
\quad
Q(x) = 		\offdiagmat{q(x)}{\overline{q(x)}}, 
\quad
P(x) =		\diagmat{p_1(x)}{p_2(x)}
$$
with
$$
p_1(x) = -(i/2) |q(x)|^2, \quad p_2(x) = (i/2)|q(x)|^2. 
$$
 As usual in the inverse scattering theory, time is seen as fixed,  hence we omit the  time dependence of the functions under consideration
for brevity, both for the direct and inverse problems.

To describe the scattering transform, we recall the Jost solutions and their associated scattering data. First, observe  that if $q=0$,  \eqref{LS} admits the solutions 
$\Psi_0(x,\zeta)=\exp(-ix\zeta^2 \sigma)$. These solutions are 
bounded for $\zeta \in \Sigma$ where
\begin{equation} \label{contour-Sigma}
 \Sigma = \left\{ \zeta \in \bbC: \Imag(\zeta^2)=0 \right\} 
 \end{equation}
 as shown in Figure \ref{fig:contours}.

%%%%%%%%%%%%%%%%%%%%%%%%%
%
%		Picture of the two contours
%
%%%%%%%%%%%%%%%%%%%%%%%%%

\begin{figure}[h!]
\caption{The Contours $\Sigma$ and $\bbR$}
\vskip 0.5cm
\begin{center}
\begin{tabular}{ccc}

%%%%%%%%%%% Contour $\Sigma$ %%%%

\setlength{\unitlength}{4.5cm}
\begin{picture}(1,1)

%% Contour $\Sigma_1$
\put(0.5,0.5){\vector(1,0){0.25}}
\put(0.75,0.5){\line(1,0){0.25}}

%% Contour $\Sigma_2$
\put(0.5,1){\vector(0,-1){0.25}}
\put(0.5,0.5){\line(0,1){0.25}}

%% Contour $\Sigma_3$
\put(0.5,0.5){\vector(-1,0){0.25}}
\put(0.25,0.5){\line(-1,0){0.25}}

%% Contour $\Sigma_4$
\put(0.5,0.25){\line(0,1){0.25}}
\put(0.5,0){\vector(0,1){0.25}}

%% labels for the contours
\put(0.85,0.55){$\Sigma_1$}
\put(0.55,0.9){$\Sigma_2$}
\put(0.05,0.55){$\Sigma_3$}
\put(0.55,0.05){$\Sigma_4$}

%% labels for the regions

\put(0.85,0.2){$\Omega^-$}
\put(0.85,0.8){$\Omega^+$}
\put(0.15,0.2){$\Omega^+$}
\put(0.15,0.8){$\Omega^-$}

%% + and - signs for orientation

%% Quadrant 1
\put(0.7,0.55){$+$}
\put(0.55,0.75){$+$}

%% Quadrant 2
\put(0.25,0.55){$-$}
\put(0.4,0.75){$-$}

%% Quadrant 3
\put(0.25,0.4){$+$}
\put(0.4,0.2){$+$}

%% Quadrant 4
\put(0.7,0.4){$-$}
\put(0.55,0.2){$-$}

%% Origin
\put(0.5,0.5){\circle*{0.025}}
\end{picture}
%%%%%%%%%%%%%%%%%%%%%%%%

&\hspace{2cm}&

%%%%%%%%%%% Contour $\bbR$  %%%%

\setlength{\unitlength}{4.5cm}
\begin{picture}(1,1)

%% contour

\put(0.5,0.5){\vector(1,0){0.25}}
\put(0.75,0.5){\line(1,0){0.25}}
\put(0.0,0.5){\vector(1,0){0.25}}
\put(0.25,0.5){\line(1,0){0.25}}
\put(0.5,0.5){\circle*{0.025}}

%% + and - signs for orientation

\put(0.47,0.6){$+$}
\put(0.47,0.35){$-$}

%% labels for the regions
\put(0.3,0.8){$\bbC^+$}
\put(0.3,0.2){$\bbC^-$}
\end{picture}

%%%%%%%%%%%%%%%%%%%%%%%%
 \\[0.2cm]
The Contour $\Sigma =\Sigma_1\cup \Sigma_2\cup \Sigma_3\cup\Sigma_4$ && The Contour $\bbR$ 
\end{tabular}
\end{center}
%%%%%%
%	Note place figure label here to get correct number in the text
%%%%%
\label{fig:contours}
\end{figure}
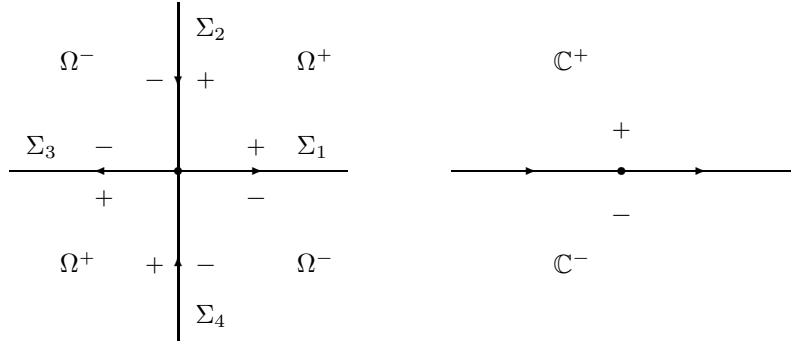

From this observation, we are led to consider 
bounded solutions $\Psi^\pm(x,\zeta)$ of \eqref{LS} with $\zeta \in \Sigma$, asymptotic as $x \rarr \pm \infty$ to $\Psi_0(x,\zeta)$. 
For such $\zeta$, we denote by $\Psi^\pm(x,\zeta)$ the unique solutions of \eqref{LS} with 
$$ \lim_{x \rarr \pm \infty} \Psi^\pm(x,\zeta) e^{ix\zeta^2 \sigma} = \One. $$
Here and in what follows, $\One$ denotes the $2\times 2$ identity matrix. The $\Psi^\pm$ are called \emph{Jost 
solutions}. Analytically, it is more convenient to work with the \emph{normalized Jost solutions} \begin{equation}
\label{LS.normal}
m^\pm(x,\zeta)=
\Psi^\pm(x,\zeta) e^{ix\zeta^2 \sigma}
\end{equation} 
These functions solve the equation
\begin{equation}
\label{LS.red}
\frac{d}{dx} m = -i\zeta^2 \ad \sigma (m) + \zeta Q(x) m + P(x) m, 
\quad \ad \sigma (A) \equiv \sigma A - A \sigma,
\end{equation}
with asymptotic condition
$$ \lim_{x \rarr \pm \infty} m^\pm(x,\zeta) = \One.$$
It follows from \eqref{LS} that $\det\Psi(x)$ is constant for any solution $\Psi$, and that  any two solutions $\Psi_1$ and $\Psi_2$ of \eqref{LS} with nonvanishing determinant are related by $\Psi_1 = \Psi_2 A$ for a constant nonsingular matrix $A$. Hence, $\det m(x) $ is constant for any solution of \eqref{LS.red}, and any two solutions $m_1$ and $m_2$ of \eqref{LS.red} with nonvanishing determinant are related by $m_1 = m_2 e^{-ix\zeta^2 \ad \sigma } A
 = m_2 e^{-ix\zeta^2\sigma} A e^{ix\zeta^2 \sigma}
$ for a nonsingular constant matrix $A$.

In particular, the Jost solutions $\Psi^\pm$ obey the relation
\begin{equation}
\label{T.rel}
\Psi^+(x,\zeta) = \Psi^-(x,\zeta) T(\zeta), \quad T(\zeta) = \twomat{a(\zeta)}{\bb(\zeta)}{b(\zeta)}{\ba(\zeta)}
\end{equation}
The matrix $T(\zeta)$
is called the \emph{transition matrix}, and the functions $a(\zeta)$, $b(\zeta)$, $\ba(\zeta)$, $\bb(\zeta)$ are called the \emph{scattering data}. We have
\begin{equation}
\label{scatt.det}
\det T(\zeta) = a(\zeta)\ba(\zeta)-b(\zeta)\bb(\zeta)=1 
\end{equation}
and $(a,\ba,b,\bb)$ obey the symmetry relations \eqref{ab.sym} described below.
Roughly and informally, the map $q \mapsto (a,\ba,b,\bb)$ is the direct scattering map.

The crucial result of inverse scattering theory for \eqref{DNLS2} is the following: suppose that $a(\lam,t)$, $\ba(\lam,t)$, $b(\lam,t)$, $\bb(\lam,t)$ are the scattering data of a solution $q(x,t)$ of \eqref{DNLS2}. It follows from the asymptotics of $\Psi^\pm$ and the equations \eqref{pair}
that
\begin{equation}
\label{ab.ev}
\dot{a}(\zeta,t) = \dot{\ba}(\zeta,t) = 0, \quad \dot{b}(\zeta,t) = 4i\zeta^4 b(\zeta,t), \quad \dot{\bb}(\zeta,t) = -4i\zeta^4 \bb(\zeta,t)
\end{equation}
(see \cite[eq. (34)]{KN78}).
Hence, to solve the Cauchy problem \eqref{DNLS2}, it suffices to compute the scattering data, solve the linear evolution equations \eqref{ab.ev}, and invert the time-evolved scattering data to recover $q(x,t)$. 

\begin{remark}
\label{rem:evr}
Let
$\Omega^\pm = \{ z \in \bbC: \pm \Imag (z^2) > 0\}$ (see Figure \ref{fig:contours} below).
The function $a$ extends analytically to $\Omega^-$, while $\ba$ extends analytically
to $\Omega^+$. It follows from Theorem A in \cite{Lee83} that any zeros of $a$ are \
contained in a bounded set.
The zero set respects the symmetries $\zeta \rarr \zetabar$ and $\zeta \rarr -\zeta$. In addition, zeros of $a(\zeta)$ and $\ba(\zeta)$ cannot occur on the real axis. This is a consequence of the symmetry
conditions \eqref{ab.sym} and the fact that the determinant of the transition matrix $T(z)$ is equal to $1$ (see  Figure  \ref{fig:spectra} below).

Zeros of $a$ in $\Omega^-$  correspond to $L^2$ eigenfunctions and give rise to bright (exponentially decaying) solitons. Zeros of $a$ on $\Sigma$ are called resonances and give rise to algebraic solitons.  Although the Kaup-Newell algebraic solitons \cite{KN78} decay too slowly to belong to $H^{2,2}(\bbR)$, we have not yet been able to  prove that zeros of 
$a$ on $\Sigma$ cannot occur for  $H^{2,2}(\bbR)$ initial data.  In the following, we exclude
initial conditions $q$ for which $a(\zeta)$ has zeros on $\Sigma$ or in $\Omega^-$. Due to \eqref{ab.ev}, this property persists for all time.
\end{remark}

\begin{figure}[h!]
\caption{Spectral Singularities in the $\zeta$- and $\lambda$-planes}
\vskip 0.4cm
\begin{center}
\begin{tabular}{ccc}

%%%%%%%%%%% Contour $\Sigma$ %%%%

\setlength{\unitlength}{5.0cm}
\begin{picture}(1,1)

%% Contour $\Sigma_1$
\put(0.5,0.5){\vector(1,0){0.25}}
\put(0.75,0.5){\line(1,0){0.25}}

%% Contour $\Sigma_2$
\put(0.5,1){\vector(0,-1){0.25}}
\put(0.5,0.5){\line(0,1){0.25}}

%% Contour $\Sigma_3$
\put(0.5,0.5){\vector(-1,0){0.25}}
\put(0.25,0.5){\line(-1,0){0.25}}

%% Contour $\Sigma_4$
\put(0.5,0.25){\line(0,1){0.25}}
\put(0.5,0){\vector(0,1){0.25}}

%% Origin
\put(0.5,0.5){\circle{0.025}}

%% Labels

\put(0.9,0.55){$\bbR$}
\put(0.55,0.9){$i\bbR$}

%% Some resonances

%\put(0.5,0.65){{\color{red}\circle*{0.025}}}
\put(0.472,0.65){\color{red}$\times$}
%\put(0.5,0.35){{\color{red}\circle*{0.025}}}
\put(0.472,0.35){\color{red}$\times$}
%\put(0.5,0.4){\color{red}\circle*{0.025}}
\put(0.472,0.4){\color{red}$\times$}
%\put(0.5,0.6){\color{red}\circle*{0.025}}
\put(0.472,0.6){\color{red}$\times$}

%% Some eigenvalues

\put(0.7,0.8){{\color{blue}\circle*{0.025}}}
\put(0.7,0.2){{\color{blue}\circle*{0.025}}}
\put(0.3,0.8){{\color{blue}\circle*{0.025}}}
\put(0.3,0.2){{\color{blue}\circle*{0.025}}}

\put(0.6,0.7){{\color{blue}\circle*{0.025}}}
\put(0.6,0.3){{\color{blue}\circle*{0.025}}}
\put(0.4,0.7){{\color{blue}\circle*{0.025}}}
\put(0.4,0.3){{\color{blue}\circle*{0.025}}}

\end{picture}

%%%%%%%%%%%%%%%%%%%%%%%%

&\hspace{2cm}&

%%%%%%%%%%% Contour $\bbR$  %%%%

\setlength{\unitlength}{5.0cm}
\begin{picture}(1,1)

%% contour

\put(0.5,0.5){\vector(1,0){0.25}}
\put(0.75,0.5){\line(1,0){0.25}}
\put(0.0,0.5){\vector(1,0){0.25}}
\put(0.25,0.5){\line(1,0){0.25}}
\put(0.5,0.5){\circle{0.025}}

%% labels for the regions
\put(0.3,0.8){$\bbC^+$}
\put(0.3,0.2){$\bbC^-$}

%% resonances

\put(0.35,0.485){\color{red}$\times$}
\put(0.4,0.485){\color{red}$\times$}

%% eigenvalues

\put(0.4,0.55){\color{blue}\circle*{0.025}}
\put(0.4,0.45){\color{blue}\circle*{0.025}}
\put(0.2,0.60){\color{blue}\circle*{0.025}}
\put(0.2,0.4){\color{blue}\circle*{0.025}}

\end{picture}

%%%%%%%%%%%%%%%%%%%%%%%%
 \\[0.2cm]
Spectrum in $\zeta$ && Spectrum in $\lambda = \zeta^2$ 
\end{tabular}

\vskip 0.2cm

\begin{tabular}{ccc}
Origin ({\color{black} $\circ$}) &
Resonance ({\color{red} $\times$})	&	
Eigenvalue ({\color{blue} $\bullet$}) 
\end{tabular}
\end{center}
%%%%%%
%	Note place figure label here to get correct number in the text
%%%%%
\label{fig:spectra}
\end{figure}
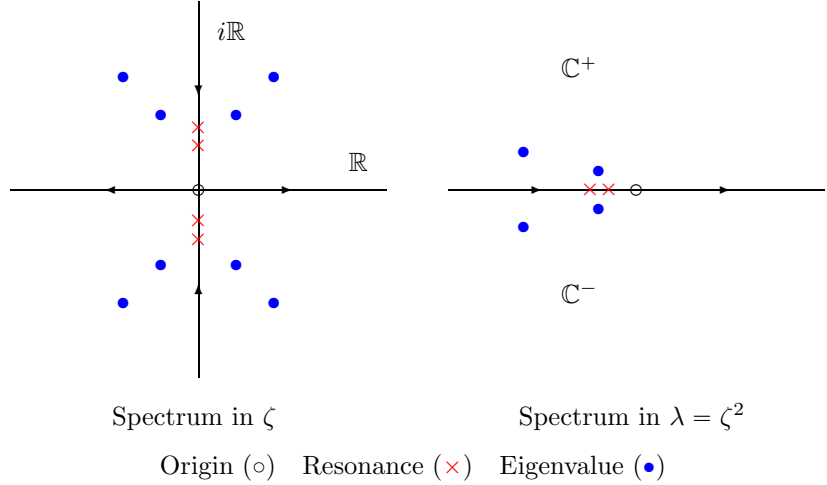

\bigskip

To formulate the inverse scattering map, we recall that equation \eqref{LS.red} admits
\emph{Beals-Coifman solutions} $M(x,z)$, piecewise analytic 
for $z \in \bbC \setminus \Sigma$,
with the following spatial normalizations:
\begin{itemize}
\item[(i)] The ``right-hand'' Beals-Coifman solutions are normalized so that 
		$$\lim_{x \rarr \infty} M(x,z) = \One$$ 
		for $z \in \bbC \setminus \Sigma$, and 
		$M(x,z)$ is bounded as $x \rarr -\infty$, for each such $z$, while
		
		\medskip
		
\item[(ii)] The ``left-hand'' Beals-Coifman solutions are normalized so that 
		$$\lim_{x \rarr -\infty}	M(x,z) = \One$$ 
		for $z \in \bbC \setminus \Sigma$, and $M(x,z)$ is bounded as $x \rarr +\infty$ 
		for each such $z$.
\end{itemize}
 We recall the construction of Beals-Coifman solutions in Section \ref{sec:BC} for sake of completeness.

Denoting by $M(x,z)$ either the left or right Beals-Coifman solution for $x \in \bbR$ and 
$z \in \bbC \setminus \Sigma$, there exist boundary values 
$M_\pm(x,\zeta)$ as $\pm \Imag(z^2) \darr 0$ which obey a jump relation of the form
\begin{equation}
\label{BC.jump}
M_+(x,\zeta) = M_-(x,\zeta) e^{-ix\zeta^2 \ad \sigma } v(\zeta), \quad \zeta \in \Sigma.
\end{equation}
The \emph{jump matrix} $v(\zeta)$ is determined by the scattering data $a$, $b$, $\ba$, and $\bb$. For the right-hand Beals-Coifman solution,
\begin{equation}
\label{BC.v.right}
 v_r(\zeta) =\Twomat{1-b(\zeta)\bb(\zeta)/a(\zeta)\ba(\zeta)}{\bb(\zeta)/a(\zeta)}{-b(\zeta)/\ba(\zeta)}{1}, 
\end{equation}
while for the left-hand Beals-Coifman solution,
\begin{equation}
\label{BC.v.left}
v_\ell(\zeta) 
	=\Twomat{1}
					{\bb(\zeta)/\ba(\zeta)}
					{-b(\zeta)/a(\zeta)}
					{1-b(\zeta)\bb(\zeta)/a(\zeta)\ba(\zeta)}. 
\end{equation}
This jump condition, together with the large-$z$ asymptotics
\begin{equation}
\label{BC.asy}
M(x,z) \sim \One + \frac{M_{-1}(x)}{z} + o\left(z^{-1}\right) 
\end{equation}
suffices to determine the Beals-Coifman solutions uniquely. Using this asymptotic expansion in 
\eqref{LS.red}, it is easy to see that the potential $Q(x)$ may be recovered from the formula
$$ Q(x) = i \ad \sigma \left[ M_{-1}(x) \right] $$
which implies that
$$ q(x) = 2i \lim_{z \rarr \infty} z M_{12}(x,z).$$
We may take the Riemann-Hilbert problem (RHP) \eqref{BC.jump}, \eqref{BC.asy} with given scattering data $(a,\ba,b,\bb)$ as a starting point to recover $q$ from the scattering data. In practice, the RHP with jump matrix \eqref{BC.v.right} gives a stable reconstruction of $q(x)$ for $x$ in half-lines $[c,\infty)$, while the RHP with jump matrix \eqref{BC.v.left} gives a stable reconstruction of the potential $q(x)$ for $x$ in half-lines  $(-\infty,c]$. Roughly and informally, the map $(a,\ba,b,\bb) \rarr q$ defined by these RHPs is the inverse scattering map.

%%%%%%%%%%%%%%%%%%%%%%%%%
%
%		End of picture of the two contours
%
%%%%%%%%%%%%%%%%%%%%%%%%%

\subsection{Symmetry Reduction}
\label{intro:sr}

To give a precise formulation of the direct and inverse maps, we reduce by symmetry from scattering data on the oriented contour $\Sigma$ to scattering data on the oriented contour 
$\bbR$. Both contours with orientation are shown in Figure \ref{fig:contours}. 
The contour $\Sigma$ can be viewed as the boundary of the regions
%\begin{equation}
%\label{Omega.def}
$$ \Omega^\pm = \left\{ \zeta \in \bbC: \pm \Imag \zeta^2 > 0 \right\}.$$
%\end{equation}
The map $\zeta \mapsto \zeta^2$ maps $\Sigma$ onto $\bbR$, $\Omega^+$ onto $\bbC^+$, $\Omega^-$ onto $\bbC^-$,  and induces the natural orientation on the contour $\bbR$.

Even functions on $\Sigma$ determine and are determined by functions on $\bbR$. This observation allows us to  reduce the scattering map defined by \eqref{LS} to a map involving functions on the real line.

We can reduce the spectral problem \eqref{LS.red} from $\Sigma$ to $\bbR$ by noting that the maps
\begin{equation}
\label{m.map1}
m(x,\zeta) \mapsto \Twomat{m_{11}(x,-\zeta)}{-m_{12}(x,-\zeta)}{- {m_{21}(x,-\zeta)}}{m_{22}(x,-\zeta)}
\end{equation}
and
\begin{equation}
\label{m.map2}
m(x,\zeta) \mapsto \twomat{0}{1}{1}{0} \overline{m(x,\zetabar)} \twomat{0}{1}{1}{0}
\end{equation}
preserves the solution space of \eqref{LS.red}. 
It follows from the unicity of normalized Jost solutions $m^\pm$ and stability of the solution space under \eqref{m.map1} that 
\begin{equation}
\label{mpm.sym1}
\begin{aligned}
m^+_{11}(x,-\zeta)&=m^+_{11}(x,\zeta), &
m^+_{12}(x,-\zeta)&=-m^+_{12}(x,-\zeta)\\
m^+_{21}(x,-\zeta)&=-m^+_{21}(x,\zeta), &
m^+_{22}(x,-\zeta)&=m^+_{22}(x,\zeta)
\end{aligned}
\end{equation}
and similarly for   $m^-(x,\zeta)$. 
In an analogous way, it follows from unicity of $m^\pm$ and stability of the solution space under \eqref{m.map2}, that
\begin{equation}
\label{mpm.sym2}
m^+_{22}(x,\zeta) = \overline{m^+_{11}(x,\zetabar)}, 
\quad 
m^+_{12}(x,\zeta)=\overline{m^+_{21}(x,\zetabar)}
\end{equation}
and similarly for $+$ replaced by $-$.

Equations \eqref{T.rel}, \eqref{mpm.sym1}, and \eqref{mpm.sym2}  imply the symmetry relations
\begin{equation}
\label{ab.sym}
\ba(\zeta) = \overline{a(\zetabar)}, \quad \bb(\zeta)=\overline{b(\zetabar)}, 
\quad a(-\zeta) = a(\zeta), \quad b(-\zeta)=-b(\zeta)
\end{equation}
for the scattering data.

Using these relations, we can now reduce the scattering problem \eqref{LS.red} with $\zeta \in \Sigma$ to scattering problem for $\lam = \zeta^2 \in \bbR$, and identify a single scattering datum $\rho(\lam)$ to define the direct scattering map. Let 
$$ m^\sharp(x,\zeta) = \Twomat{m_{11}(x,\zeta)}{ \zeta^{-1} m_{12}(x,\zeta)}{\zeta m_{21}(x,\zeta) }{m_{22}(x,\zeta)}.$$
By \eqref{mpm.sym1}, $m^\sharp$ is an even function of $\zeta$.  Then define
$$ \lambda = \zeta^2, \quad 
n(x,\lambda) = m^\sharp(x,\zeta)
$$
The map
$$ \twomat{a}{b}{c}{d} \mapsto \twomat{a}{\zeta^{-1}b }{\zeta c }{d} $$
is an automorphism of $2 \times 2$ matrices and commutes with differentiation in $x$. 
It follows that the functions $n^\pm$ obtained from $M_\pm$ by this map obey
\begin{subequations}
\label{n}
\begin{align}
\label{n.de}
\frac{dn^\pm}{dx}	&=	-i\lambda \ad \sigma  (n^\pm) +\offdiagmat{q}{\lam \qbar} n^\pm + P n^\pm\\
\label{n.ac}
\lim_{x \rarr \pm \infty} n^\pm(x,\lam)	&=	\One
\end{align}
\end{subequations}
and are related by
\begin{equation}
\label{n.T}
n^+(x,\lam) =	n^-(x,\lam)
	e^{-i\lambda x \ad \sigma } 
	\Twomat{\alpha(\lam)}
				{\beta(\lam)}
				{\lam \overline{\beta(\lam)}}
				{\overline{\alpha(\lam)}}
\end{equation}
where
$$ \alpha(\lam) = a(\zeta), \quad \beta(\lam) = \zeta^{-1} \bb(\zeta) $$
and the relation 
$$  |\alpha(\lam)|^2 - \lam |\beta(\lam)|^2 = 1 $$
holds. We used the symmetry relations \eqref{ab.sym} to compute the form of the transition matrix in \eqref{n.T}.  Denoting by $n$ one of $n^+$ or $n^-$, we also have 
\begin{equation}
\label{n.sym}
n_{22}(x,\lam) = \overline{n_{11}(x,\lam)}, 
\quad 
n_{12}(x,\lam) = \lam^{-1} \overline{n_{21}(x,\lam)}
\end{equation}
so that one column or row of $n(x,\lam)$ determines $n(x,\lam)$ completely.  
Finally we define
\begin{equation}
\label{rho}
\rho(\lam) = \beta(\lam)/\alpha(\lam).
\end{equation}
In Lemma \ref{lemma:rho.to.ab} we show that $a$, $\ba$, $b$, $\bb$ can be reconstructed from $\rho$.

\begin{definition}
\label{def:R}
The map $q \mapsto \rho$ defined by \eqref{n}, \eqref{n.T}, and \eqref{rho} is called the \emph{direct scattering map} and denoted $\calR$.
\end{definition}

Similarly,  one can reduce the RHP  \eqref{BC.jump} with contour $\Sigma$ and jump matrix \eqref{BC.v.right} to a RHP with contour $\bbR$ using symmetry
\cite{XuFan12}. It follows from the parity properties of the scattering data
(see \eqref{ab.sym}) and the jump relation \eqref{BC.jump} that the mapping
$$
M(x,\zeta) \mapsto \Twomat{M_{11}(x,-\zeta)}
									{-M_{12}(x,-\zeta)}
									{-M_{21}(x,-\zeta)}
									{M_{22}(x,-\zeta)}
$$
preserves the solution space of the RHP. This fact,
together with unicity of the solution (Lemma \ref{lemma:unique.RH.M}),
implies that the diagonal entries of $M_\pm$ are even 
under the reflection $\zeta \mapsto -\zeta$, while the off-diagonal entries are odd under this reflection (see Lemma \ref{lemma:mu.even-odd}). 

Let
$$ 
M^\sharp(x,\zeta) = \Twomat{M_{11}(x,\zeta)}
											{\zeta^{-1} M_{12}(x,\zeta)}
											{\zeta M_{21}(x,\zeta)}
											{M_{22}(x,\zeta)}
$$ 
and define
$$
N(x,\zeta^2) = M^\sharp(x,\zeta).
$$
One  arrives at the RHP
\begin{align} 
\label{N.jump}
N_+(x,\lambda)	
	&=	N_-(x,\lambda) e^{-i\lambda x \ad \sigma } J(\lambda)\\
\nonumber
J(\lam)				
	&=	\Twomat{1-\lambda |\rho(\lambda)|^2}
						{\rho(\lambda)}
						{-\lambda \overline{\rho(\lambda)}}
						{1}
\end{align}
corresponding to the RHP with jump matrix \eqref{BC.v.right}, where $\rho(\lam) = \zeta^{-1} \bb(\zeta)/a(\zeta)$. 
A similar computation for the RHP  with jump matrix \eqref{BC.v.left}
leads to a RHP in the $\lam$ variable with $\rho(\lam)$ replaced by
$ \brho(\lam) = \zeta^{-1} \bb(\zeta)/\ba(\zeta)$. 

However, this RHP is not properly normalized: a careful computation shows that the piecewise analytic function $N(x,z)$ on $\bbC \setminus \bbR$ has the asymptotics
$$N(x,z) \sim \Twomat{1}{0}{(i/2)\overline{q(x)}}{1}+\bigO{z^{-1}}$$
as $z \rarr \infty$.  It is more effective to consider the row-wise RHP
\begin{equation}
\label{N}
\begin{split}
%\label{N.jump}
\bfN_+(x,\lambda)	&=	\bfN_-(x,\lam) e^{-i\lam x \ad(\sigma)} J(\lam)\\
%\label{N.asy}
\lim_{\lambda \rarr \infty} \bfN_\pm(x,\lam) &= \bfe_1
\end{split}
\end{equation}
where
$$ \bfN(x,\lam) = \left( N_{11}(x,\lam), N_{12}(x,\lam) \right),  \; {\rm and} \;\;  \bfe_1 =(1,0).$$
A similar problem occurs in the study   of  KdV in a small dispersion limit \cite{DVZ97}.  

One recovers $q$ from the relation
\begin{equation}
\label{q}
 q(x) = 2i \lim_{z \rarr \infty} z N_{12}(x,z)
 \end{equation}
where $z \rarr \infty$ in $\bbC \setminus \bbR$. 
As we show in \S \ref{sec:RH.recon} (see  Proposition \ref{prop:recon} and \eqref{q.recon.nu}), one can also compute the limit in \eqref{q} from the integral formula
\begin{equation}
\label{q.recon}
q(x) = -\frac{1}{\pi} \int e^{-2i\lam x} \rho(\lam) \nu_{11}(x,\lam) \, d\lam.
\end{equation}
where $\nu=(\nu_{11},\nu_{12})$ solves the Beals-Coifman integral equation \eqref{RH.n.BC} associated with the RHP \eqref{N}.

\begin{definition}
\label{def:I}
The mapping $\rho \mapsto q$ determined by the RHP \eqref{N} and the asymptotic formula 
\eqref{q} is called the \emph{inverse scattering map} and denoted $\calI$. 
\end{definition}

From the evolution equations \eqref{ab.ev}, we see that\footnote{{This equation differs by a minus sign from Kaup-Newell \cite[eq.\ (34)]{KN78} since Kaup and Newell take $\rho(\lam)=\zeta^{-1}b(\zeta)/a(\zeta)$ whereas we take $\rho(\lam)= \zeta^{-1} \bb(\zeta)/a(\zeta)$.}}
$$ \dot\rho(\lam,t)= -4i \lam^2 \rho(\lam,t).$$ In Lee's thesis, it is shown that for Schwartz class initial condition $q_0$ satisfying certain spectral conditions, the formula
\begin{equation}
\label{DNLS2.sol}
q(x,t) = \calI \left[ e^{{-4it(\diamond)^2}} \left( \calR q_0 \right)(\diamond) \right](x)
\end{equation}
gives a classical solution to \eqref{DNLS2}. Lee's class includes $\calS(\bbR) \cap U$ where
$U$ is the open set in Theorem \ref{thm:DNLS2} below. We give a self-contained proof 
of the solution formula \eqref{DNLS2.sol} in section \ref{sec:RHP-t} and  Appendix \ref{app:time}.

\subsection{Summary of Results}
\label{sec:intro.summary}

Given the solution formula \eqref{DNLS2.sol} for Schwartz class data, the key to obtaining a globally defined solution map with good continuity properties is to prove precise continuity properties of the maps $\calR$ and $\calI$ in a natural function space in which the map 
$\rho \mapsto e^{{ -4it(\diamond)^2}} \rho(\diamond)$ is continuous.

Let
$ H^{2,2}(\bbR)$ be the completion of $\calS(\bbR)$ 
in the norm
$$ \norm{q} {H^{2,2}(\bbR)} = \norm{[1+ (\dotarg)^2] q(\dotarg)}{2} + \norm{q''}{2}. $$
The main result of our paper is expressed in the following theorem:

\begin{theorem}
\label{thm:DNLS2}
There is a spectrally determined open subset $U$ of $H^{2,2}(\bbR)$ containing a neighborhood of $0$ so that the solution map \eqref{DNLS2.sol} for \eqref{DNLS2}
\begin{align*}
H^{2,2}(\bbR) \times \bbR 	&	\longrightarrow	H^{2,2}(\bbR) \\
(q_0, t)								&\mapsto				q(\dotarg,t)
\end{align*}
is continuous, and Lipschitz continuous in $q_0$ for each $t$.
\end{theorem}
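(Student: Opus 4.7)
The plan is to write the solution map as the composition of three maps: the direct scattering map $\calR$, a linear ``time evolution'' multiplier on the scattering data, and the inverse scattering map $\calI$. By Theorems \ref{thm:R} and \ref{thm:I} (assumed), $\calR$ and $\calI$ are Lipschitz continuous between open subsets of $H^{2,2}(\bbR)$ (in $x$) and $H^{2,2}(\bbR)$ (in $\lam$). Thus the whole theorem reduces to (i) choosing $U$ correctly so that the composition is well defined, (ii) checking that the time-evolution multiplier acts boundedly on the spectral space $H^{2,2}(\bbR)$ with continuous dependence on $t$, and (iii) verifying the spectral condition is preserved by the flow.

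I would define $U = \calR^{-1}(V)$, where $V \subset H^{2,2}(\bbR)$ is the (open) subset of reflection coefficients $\rho$ for which the reconstructed $\alpha(\lam)$, obtained from the formula in Lemma \ref{lemma:rho.to.ab}, has no zeros on $\bbR$ (i.e.\ no resonances) and no zeros in $\bbC^-$ (i.e.\ no discrete eigenvalues). Continuity of $\calR$ makes $U$ open in $H^{2,2}(\bbR)$. That $U$ contains a neighborhood of $0$ follows because at $q_0=0$ one has $\rho \equiv 0$, hence $\alpha \equiv 1$, and the spectral condition is stable under small perturbations in $H^{2,2}$ (e.g.\ by the argument principle applied to $\alpha$, combined with the decay $\alpha(\lam)\to 1$ as $|\lam|\to\infty$ that comes from membership of $\rho$ in $H^{2,2}$). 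The crucial point, which is encoded in the evolution laws \eqref{ab.ev}, is that the spectral condition is preserved by the flow: since $\alpha(\lam,t)=\alpha(\lam,0)$, the zeros of $\alpha$ do not move, so $\rho(\cdot,t)=e^{-4it(\cdot)^2}\rho(\cdot,0)$ remains in $V$ for all $t$ whenever $\rho(\cdot,0)\in V$.

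Next I would analyze the linear multiplier $T_t : \rho\mapsto e^{-4it\lam^2}\rho$ on $H^{2,2}_\lam$. Boundedness in $L^2$ and in the weighted norm $\|(1+\lam^2)\,\cdot\,\|_2$ is immediate since $|e^{-4it\lam^2}|=1$. For the second-derivative norm, a direct computation yields
\begin{equation*}
\partial_\lam^2\bigl(e^{-4it\lam^2}\rho\bigr)
= e^{-4it\lam^2}\bigl(\rho'' - 16it\lam\,\rho' - 8it\rho - 64t^2\lam^2 \rho\bigr),
\end{equation*}
and all four terms are controlled by $(1+t^2)\|\rho\|_{H^{2,2}}$, the $\lam\rho'$ term being handled by interpolation between $\lam^2\rho$ and $\rho''$. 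Hence $T_t$ is a bounded linear map $H^{2,2}\to H^{2,2}$ with operator norm $O(1+t^2)$, and $t\mapsto T_t\rho$ is continuous in $H^{2,2}$ by dominated convergence.

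Putting the three pieces together, for $q_0\in U$ one has $\rho_0=\calR q_0\in V$, then $T_t\rho_0\in V$ for all $t$, so $q(\cdot,t)=\calI(T_t\rho_0)$ is well-defined in $H^{2,2}(\bbR)$. Joint continuity of $(q_0,t)\mapsto q(\cdot,t)$ follows by chaining the continuity of $\calR$, the joint continuity of $(\rho,t)\mapsto T_t\rho$, and the continuity of $\calI$; Lipschitz continuity in $q_0$ for fixed $t$ follows from the Lipschitz hypotheses on $\calR$ and $\calI$ combined with the boundedness of $T_t$. The main obstacle I anticipate is the verification that the spectral set $V$ is open in $H^{2,2}$ and contains $\{0\}$ in its interior, which requires tracking the location of zeros of the analytic function $\alpha(\lam)$ in the closed lower half-plane under $H^{2,2}$ perturbations of $\rho$; the routine but nontrivial input is the inequality $|\alpha(\lam)|^2 = 1 + \lam|\beta(\lam)|^2$ valid on $\bbR^+$ (and its negative-$\lam$ analogue) together with the explicit reconstruction in Lemma \ref{lemma:rho.to.ab}, which makes $\alpha$ a continuous function of $\rho$ in a sufficiently strong topology.
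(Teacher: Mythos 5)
Your proposal is correct and follows essentially the same route as the paper: the solution map is factored as $\calI \circ T_t \circ \calR$, with Theorems \ref{thm:R} and \ref{thm:I} supplying the Lipschitz continuity of the outer maps, the unimodular multiplier $e^{-4it\lam^2}$ acting boundedly on $H^{2,2}(\bbR)$ with norm $O(1+t^2)$, and the spectral condition persisting under the flow (the paper packages this persistence into the invariance clause of Theorem \ref{thm:R}, noting also that since $|e^{-4it\lam^2}\rho|=|\rho|$ the defining condition $1-\lam|\rho(\lam)|^2\geq c>0$ of $V$ is manifestly preserved, which is slightly more direct than tracking the zeros of $\alpha$). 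The only ingredient the paper adds that you omit is the remark that the formula actually produces a classical solution of \eqref{DNLS2} (via Lee's thesis or Theorem \ref{thm:RHP.t.q}), but that is not needed for the continuity assertions of the statement itself.
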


\begin{remark}\label{rem:U}
The set $U$ consists, by definition, of those $q \in H^{2,2}(\bbR)$ for which the scattering 
data $\alpha(\lam)$ is everywhere nonzero and has a zero-free analytic 
extension $\alpha(z)$ to $\bbC^-$. For $q=0$, the transition matrix 
$T(\zeta)$ is the identity matrix (so $\alpha(\lam)\equiv 1$).
It follows from the continuity of $q \mapsto 
\alpha$ as a map from $H^{2,2}(\bbR)$ to itself that, given any $c>0$,  there is a neighborhood $U'$ 
of $q=0$ in $H^{2,2}(\bbR)$ so that, for all $q \in U'$,  $|\alpha(q)(\lam)| 
\geq c >0$ for all $\lam \in \bbR$. 
To see that $\alpha(q)(\dotarg)$ has a zero-free analytic continuation 
to $\Omega^-$ for $q$ in a neighborhood of zero, one uses the following 
facts. First, zeros of $a$ in $\Omega^-$ correspond to $L^2$ vector-valued 
solutions of the problem \eqref{LS} with exponential decay as 
$x \rarr \pm\infty$. Second, one can show that such $L^2$ solutions satisfy a 
homogeneous Fredholm-type integral equation. Third, if $\norm{q}{H^{2,2}}$ 
is sufficiently small, the Fredholm operator has trivial kernel, so any such solutions 
must be identically zero. We omit details.
\end{remark}

%{\color{magenta}
%\begin{remark}
%The set $U$ contains initial data of arbitrarily large $L^2$ norm so that the result of Theorem \ref{thm:DNLS2} is \emph{not} simply a ``small data'' result. To see this, we use the nonlinear  Plancherel formula \eqref{plancherel} 
%$$
%\int_{-\infty}^\infty |q(x)|^2 \, dx = -\frac{1}{\pi} \int_{-\infty}^\infty \frac{\log\left(1-s |\rho(s)|^2 \right)}{s} \, ds
%$$
%which computes the $L^2$ norm of $q$ in terms of the scattering transform $\rho$. From this formula, it is clear that, by choosing $\rho$ with support contained in $(-\infty,0)$, we can construct a potential $q$ of large $L^2$ norm which lies in the set $U$.
%\end{remark}
%}

Since the gauge transformation \eqref{q.gauge} defines a Lipschitz continuous self-mapping $\calG$ of $H^{2,2}(\bbR)$ onto itself with $\calG(0)=0$, we immediately obtain:

\begin{corollary}
\label{cor:DNLS}
There is an open subset of $H^{2,2}(\bbR)$ containing a neighborhood of $0$ so that the solution map for 
\eqref{DNLS1}
\begin{align*}
H^{2,2}(\bbR) \times \bbR 	&	\longrightarrow	H^{2,2}(\bbR) \\
(u_0, t)								&\mapsto				u(\dotarg,t)
\end{align*}
is continuous, and Lipschitz continuous in $u_0$ for each $t$.
\end{corollary}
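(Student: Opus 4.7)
The plan is a straightforward composition argument exploiting the gauge intertwining of \eqref{DNLS1} with \eqref{DNLS2}. Because the exponential factor in \eqref{q.gauge} is unimodular, $|u|^{2}=|q|^{2}$ whenever $q=\calG(u)$, so the inverse gauge is given by the explicit formula
\[
\calG^{-1}(q)(x) = q(x)\exp\!\left(i\int_{-\infty}^{x}|q(y)|^{2}\,dy\right),
\]
which is algebraically the same kind of map as $\calG$. I would first record that both $\calG$ and $\calG^{-1}$ are Lipschitz self-maps of $H^{2,2}(\bbR)$ fixing $0$. The verification reduces to bounding, on balls in $H^{2,2}(\bbR)$, derivatives of the form $|u|^{4}u$, $|u|^{2}u'$, $(\partial_{x}|u|^{2})u$, and $u''$ in $L^{2}$, plus the weighted quantity $(1+x^{2})\calG(u)$, using $H^{2,2}\hookrightarrow H^{2}\hookrightarrow L^{\infty}\cap W^{1,\infty}$, the unimodularity of the exponential factor, and the mean value estimate $|e^{i\theta_{1}}-e^{i\theta_{2}}|\le|\theta_{1}-\theta_{2}|$ to handle differences.

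Let $U\subset H^{2,2}(\bbR)$ be the open set from Theorem \ref{thm:DNLS2}, and define $\widetilde U = \calG^{-1}(U)$. By continuity of $\calG$, the set $\widetilde U$ is open in $H^{2,2}(\bbR)$, and since $\calG(0)=0\in U$, it contains a neighborhood of $0$. For $u_{0}\in\widetilde U$, I define the candidate solution by
\[
u(\dotarg,t) \;=\; \calG^{-1}\!\left(\, \calI\!\left[\, e^{-4it(\diamondarg)^{2}}\bigl(\calR\,\calG(u_{0})\bigr)(\diamondarg)\,\right]\,\right).
\]
For $u_{0}\in\calS(\bbR)\cap\widetilde U$ this is a classical solution of \eqref{DNLS1}, by \eqref{DNLS2.sol} applied to $q_{0}=\calG(u_{0})$ together with the bijection between smooth solutions of \eqref{DNLS1} and \eqref{DNLS2} recorded in Appendix \ref{app:gauge}. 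The formula then extends to all $u_{0}\in\widetilde U$ by the density of $\calS(\bbR)\cap\widetilde U$ in $\widetilde U$ and the Lipschitz bounds proved next.

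The continuity and Lipschitz claims then drop out by composition. Fix $t\in\bbR$. Theorem \ref{thm:DNLS2} asserts that $q_{0}\mapsto q(\dotarg,t)$ is Lipschitz continuous on $U$ and jointly continuous in $(q_{0},t)\in U\times\bbR$. Pre-composing with the Lipschitz map $\calG\colon\widetilde U\to U$ and post-composing with the Lipschitz map $\calG^{-1}\colon H^{2,2}(\bbR)\to H^{2,2}(\bbR)$ preserves both the Lipschitz continuity in $u_{0}$ (on bounded subsets of $\widetilde U$) and the joint continuity in $(u_{0},t)$.

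The only place where there is any real work, rather than a formal composition, is the Lipschitz verification for $\calG$ and $\calG^{-1}$ on $H^{2,2}(\bbR)$. Once that is in hand, no new spectral or Riemann--Hilbert input is required beyond what Theorem \ref{thm:DNLS2} already supplies, and the corollary follows immediately.
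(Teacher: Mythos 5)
Your proposal is correct and follows essentially the same route as the paper, which deduces the corollary in one line from the fact that the gauge transformation \eqref{q.gauge} is a (locally) Lipschitz self-map of $H^{2,2}(\bbR)$ fixing $0$, composed with Theorem \ref{thm:DNLS2}. Your additional detail — the explicit inverse gauge formula using $|q|=|u|$, the definition of the open set as $\calG^{-1}(U)$, and the sketch of the Lipschitz verification via Sobolev embedding and unimodularity — is exactly the content the paper leaves implicit.
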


The technical core of our paper consists of the following two continuity results for the direct and inverse scattering maps.

\begin{theorem}
\label{thm:R}
There is an open subset $U$ of $H^{2,2}(\bbR)$ containing a neighborhood of $0$ so that the direct scattering map $\calR$, initially defined on $\calS(\bbR) \cap U$, extends to a  Lipschitz continuous map from $U$ into $H^{2,2}(\bbR)$. Moreover, $\calR(U)$ is invariant under the map 
$\rho\mapsto {e^{-4it(\diamond)^2}} \rho(\diamond)$, 
and also contains an open neighborhood of $0$ in $H^{2,2}(\bbR)$. 
\end{theorem}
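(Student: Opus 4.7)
The plan is to establish three things: (i) $\calR(q) = \beta/\alpha$ lies in $H^{2,2}(\bbR)$ for $q \in U$ with Lipschitz dependence; (ii) $\calR(U)$ is invariant under $\rho \mapsto e^{-4it\lambda^2}\rho$; and (iii) $\calR(U)$ contains a neighborhood of $0$ in $H^{2,2}(\bbR)$.

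First I would set up and solve Volterra integral equations for the normalized Jost solutions $n^\pm(x,\lambda)$ defined by \eqref{n.de}--\eqref{n.ac}. The $\lambda$-dependent off-diagonal entry $\lambda\bar q$ in the potential is what makes the large-$\lambda$ analysis nontrivial, and I would address it by passing to the $\zeta$ variables via $n(x,\lambda)=m^\sharp(x,\zeta)$, where the extra factor $\zeta$ in the off-diagonal potential is offset by the $\zeta^{-1}$ rescaling of the off-diagonal entries, restoring uniformly bounded Volterra kernels. Standard fixed-point arguments then produce unique $n^\pm(\cdot,\lambda)$ continuous in $\lambda\in\bbR$ and Lipschitz in $q\in L^1\cap L^\infty$. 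Extracting $\alpha,\beta$ from \eqref{n.T} by letting $x\to-\infty$ yields the oscillatory representations
\begin{equation*}
\alpha(\lambda) = 1 - \int_\bbR \bigl[p_1 n^+_{11} + q\, n^+_{21}\bigr](y,\lambda)\, dy,
\end{equation*}
\begin{equation*}
\beta(\lambda) = -\int_\bbR e^{2i\lambda y}\bigl[p_1 n^+_{12} + q\, n^+_{22}\bigr](y,\lambda)\, dy.
\end{equation*}
The spectral assumption defining $U$ gives $|\alpha(\lambda)|\geq c>0$ uniformly for $\lambda\in\bbR$ and $q$ in an open neighborhood of any point of $U$, so $\rho=\beta/\alpha$ is well-defined and shares the $H^{2,2}$ regularity of $\beta$.

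Next I need two complementary bounds on $\beta$. The weighted estimate on $\|\lambda^2\rho\|_{L^2}$ exploits the $H^2$ smoothness of $q$: two integrations by parts in $y$ against the oscillation $e^{2i\lambda y}$ in the formula for $\beta$ trade each factor of $\lambda^{-1}$ for a derivative hitting $q$, $p_1=-\tfrac{i}{2}|q|^2$, or $n^+_{j2}$ (with $\partial_y n^+$ reduced back to ``coefficient times $n^+$'' via \eqref{n.de}), producing expressions controllable by Plancherel. The estimate on $\|\partial_\lambda^2\rho\|_{L^2}$ uses the decay $\langle x\rangle^2 q\in L^2$: differentiating the Volterra equations for $n^\pm$ in $\lambda$ once and twice yields inhomogeneous Volterra equations with source terms weighted by $y$ and $y^2$, so $\partial_\lambda^k n^\pm$ for $k=1,2$ satisfy pointwise bounds in terms of these weights; substituting into the $\lambda$-differentiated representation of $\beta$ (which also produces an explicit $2iy$ factor from the exponential) yields $L^2$ control via the weighted pieces of the $H^{2,2}$ norm of $q$. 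Each estimate is bilinear in $(q,n^\pm)$ with Lipschitz dependence on $q$, whence Lipschitz continuity of $\calR$ on a neighborhood of each $q_0\in U$.

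Finally, invariance of $\calR(U)$ under $\rho\mapsto e^{-4it\lambda^2}\rho$ is immediate from the definition of $H^{2,2}$: unimodularity preserves the weighted $L^2$ norm of $\rho$, and each $\partial_\lambda$ on the phase contributes at most $8t\lambda$, so the resulting $H^{2,2}$ norm is controlled by $\|\lambda^2\rho\|_{L^2}$, $\|\lambda\,\partial_\lambda\rho\|_{L^2}$, and $\|\partial_\lambda^2\rho\|_{L^2}$, each bounded by $\|\rho\|_{H^{2,2}}$ (the intermediate term by interpolation). Openness of $\calR(U)$ at $0$ follows from Theorem \ref{thm:I}: for $\rho$ of sufficiently small $H^{2,2}$ norm, $q=\calI(\rho)$ has small $H^{2,2}$ norm and hence lies in $U$ by Remark \ref{rem:U}, and uniqueness of the scattering correspondence gives $\calR(q)=\rho$. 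The main technical obstacle will be the control of $\partial_\lambda^2\rho$: because of the $\lambda\bar q$ entry in the potential, differentiating the Volterra equations in $\lambda$ introduces an unweighted source $\bar q\cdot n^\pm$ in addition to the expected $y$-weighted sources, so the $\langle x\rangle^2 q$ decay must carry the burden of controlling these extra contributions while keeping only the $H^{2,2}$ norm of $q$ on the right. Getting this bookkeeping right is what dictates the choice of weighted Sobolev space in the theorem.
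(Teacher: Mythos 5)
Your overall strategy coincides with the paper's: Volterra analysis of the reduced Jost solutions, integration by parts against $e^{2i\lam y}$ to convert the $H^2$ regularity of $q$ into the $\lam^2$ weight on $\rho$, $\lam$-differentiation of the Volterra equations to convert the spatial weights on $q$ into control of $\partial_\lam^2\rho$, and the spectral condition $\inf_\lam|\alpha(\lam)|>0$ to divide by $\alpha$. (Your full-line integral representations of $\alpha$ and $\beta$ are a legitimate variant of the Wronskian-at-$x=0$ formulas \eqref{alpha}--\eqref{beta} used in Section \ref{sec:direct}.) The genuine gap is the step ``$\partial_\lam^k n^\pm$ satisfy pointwise bounds in terms of these weights; substituting into the $\lam$-differentiated representation of $\beta$ yields $L^2$ control.'' A bound on $\partial_\lam^k n^\pm(y,\lam)$ that is uniform (pointwise) in $\lam$ yields, after substitution into $\partial_\lam^2\beta(\lam)=-\int e^{2i\lam y}\bigl[\cdots+q\,\partial_\lam^2 n^+_{22}\bigr]\,dy$, only an $L^\infty$ bound in $\lam$ for that contribution; since $\partial_\lam^2\rho$ must lie in $L^2$ of the unbounded set $\bbR$, the argument does not close. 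What is actually required --- and what constitutes the technical core of the paper's proof --- are estimates on $\partial_\lam^k n^\pm$ as $L^2_\lambda$-valued functions of $y$, i.e.\ bounds in the mixed norms $C^0(\bbR^\pm,L^2_\lambda)$ and $L^2(\bbR^\pm\times\bbR_\lambda)$, obtained by Plancherel/duality on the inhomogeneous terms (Lemma \ref{lemma:n0}) and by boundedness of the Volterra resolvent on these mixed spaces (Lemmas \ref{lemma:res.T0}, \ref{lemma:R0}); Minkowski's inequality then converts $\int|q(y)|\,\bigl\|\partial_\lam^2 n(y,\cdot)\bigr\|_{L^2_\lambda}\,dy$ into the needed $L^2_\lambda$ bound.

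Two further points of bookkeeping. For large $|\lam|$ the Jost iteration only produces decay after the integration by parts that replaces $\overline{q}$ by $\qsharp=\overline{q'}-\tfrac{i}{2}|q|^2\overline{q}$; the second $\lam$-derivative then meets sources of the form $(y-x)^2e^{2i\lam y}\,\overline{q'(y)}$, and $y^2q'$ is \emph{not} controlled by $\norm{q}{H^{2,2}}$. The paper's resolution is a further integration by parts that costs a factor of $\lam$, so that only $\lam^{-1}\eta^\pm_{\lam\lam}$ is controlled on $I_\infty$ (Proposition \ref{prop:n.large}), and the assembly of $\rho''$ must be arranged (via the quotient-rule identity and the separate bounded-$\lam$ analysis of Proposition \ref{prop:n.small}) so that only this quantity is ever needed; your sketch does not anticipate this and would stall at exactly this term. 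Finally, your invariance argument shows only that $e^{-4it\lam^2}\rho$ remains in $H^{2,2}(\bbR)$, not that it remains in $\calR(U)$; for the latter one must use that the conditions characterizing the range depend only on $|\rho(\lam)|$ (equivalently, that $\alpha$ is time-invariant by \eqref{ab.ev}) together with the inversion statement of Theorem \ref{thm:I}.
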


\begin{theorem}
\label{thm:I}
There is an open subset $V$ of $H^{2,2}(\bbR)$ containing $\calR(U)$
so that the inverse
scattering  map $\calI$, 
initially defined on $\calS(\bbR) \cap V$, 
extends to a Lipschitz continuous map 
from 
$V$  to $H^{2,2}(\bbR)$ 
with the property that
$\calR \circ \calI$ is the identity map on the open set $V$ and
$\calI \circ \calR$ is the identity map on the open set 
$U$ of Theorem \ref{thm:R}.
\end{theorem}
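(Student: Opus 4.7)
The plan is to construct $\calI$ by solving the row-normalized Riemann--Hilbert problem \eqref{N} for $\bfN(x,\lambda)$ and then recovering $q$ via the reconstruction formula \eqref{q.recon}. The open set $V$ will be the set of $\rho \in H^{2,2}(\bbR)$ for which the Beals--Coifman operator $I - C_w$ associated with a factorization of the jump matrix $J(\lambda)$ is boundedly invertible on $L^2_\lambda$ uniformly in $x$; smallness in $H^{2,2}$ gives this via a Neumann series, while on $\calR(U)$ it follows from a Fredholm/vanishing-lemma argument combined with the spectral hypothesis defining $U$ (no zeros of $\alpha$ on $\bbR$ or in $\Omega^-$).

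First I would establish Lipschitz bounds on the solution $\nu = (\nu_{11},\nu_{12})$ of \eqref{RH.n.BC} in the mixed space $L^\infty_x L^2_\lambda$, using $L^2$-boundedness of the Cauchy projectors $C_\pm$ together with the control $\norm{\rho}{L^\infty}, \norm{\lambda\rho}{L^\infty} \lesssim \norm{\rho}{H^{2,2}}$. Plancherel applied to \eqref{q.recon} then yields $q \in L^2(\bbR)$ with Lipschitz dependence on $\rho$. To upgrade to $q \in H^{2,2}(\bbR)$, I would handle the two weights in the $H^{2,2}$-norm separately. For $q'' \in L^2$, integrate by parts twice in \eqref{q.recon}, transferring the $x$-derivatives (through the oscillation $e^{-2i\lambda x}$) onto $\rho\,\nu_{11}$; derivatives that land on $\nu_{11}$ are re-expressed using \eqref{n.de}, which trades $\partial_x$ for multiplication by $\lambda$ plus potential terms, and the estimate closes against $\norm{\rho''}{2}$ plus lower-order terms. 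For $\langle x\rangle^2 q \in L^2$, differentiate \eqref{q.recon} twice in $\lambda$: the oscillatory factor produces $x^2$, so by Plancherel one needs $\partial_\lambda^2(\rho\,\nu_{11}) \in L^2_\lambda$ uniformly in $x$. This is obtained by differentiating the Beals--Coifman equation in $\lambda$ to derive inhomogeneous equations for $\partial_\lambda\nu$ and $\partial_\lambda^2\nu$, whose solvability in $L^\infty_x L^2_\lambda$ is guaranteed by the $H^{2,2}$ regularity of $\rho$ (which absorbs the weights generated when $\partial_\lambda$ strikes the Cauchy kernel). All estimates are bilinear/trilinear in $\rho$, so the resulting map $\calI$ is Lipschitz on $V$.

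For the inversion identities I would invoke uniqueness of \eqref{N}. Given $q \in U$, the normalized Jost functions $n^\pm$ from \eqref{n} assemble into a piecewise analytic matrix on $\bbC \setminus \bbR$ whose boundary values satisfy \eqref{N.jump} with the $\rho$ defined by \eqref{rho}. A vanishing lemma for \eqref{N}, based on the Schwarz-reflection symmetry encoded in \eqref{n.sym} together with the positivity identity $|\alpha(\lam)|^2 - \lam|\beta(\lam)|^2 = 1$, produces uniqueness of solutions, so the top row of $n^+$ agrees with $\bfN$; reading the coefficient of $\lam^{-1}$ in the large-$\lam$ expansion recovers $q$. Hence $\calI \circ \calR = \mathrm{id}$ on $\calS(\bbR) \cap U$, extending to $U$ by density and Lipschitz continuity. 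The identity $\calR \circ \calI = \mathrm{id}$ follows immediately on $\calR(U)$ by inverting, and extends to all of $V$ by continuity of both maps and openness of $V$.

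The principal obstacle is the uniform-in-$x$ control of $\partial_\lambda^2 \nu$ in $L^2_\lambda$: the factor of $\lambda$ multiplying $\overline{\rho(\lambda)}$ in $J(\lambda)$, a reflection of the quadratic-in-$\zeta$ spectral problem characteristic of DNLS, means that each $\lambda$-derivative of the Beals--Coifman equation generates terms whose integrability depends delicately on the weighted regularity of $\rho$, and the requisite commutator estimates for the Cauchy operator on weighted $L^2$ spaces must be established with care. A secondary obstacle is the vanishing lemma for the row-normalized RHP \eqref{N}: the usual $M \mapsto M M^{*}$ trick does not apply directly, and instead one combines the two row solutions forced by the symmetry \eqref{n.sym} into a full matrix-valued RHP for which the standard Schwarz-reflection argument can be run.
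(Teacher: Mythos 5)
Your overall architecture matches the paper's: reduce the Riemann--Hilbert problem to the Beals--Coifman integral equation, obtain $\nu$ and its $x$- and $\lambda$-derivatives in mixed $L^2$ spaces with bilinear (hence Lipschitz) dependence on $\rho$, read off $q''$ from the $\lambda^2\rho$ terms and $\langle x\rangle^2 q$ from two integrations by parts in $\lambda$, and prove the inversion identities from uniqueness of the Beals--Coifman solutions. The identification of $\partial_\lambda^2\nu$ (with the extra factor of $\lambda$ in the jump matrix) as the delicate point is also correct, and the paper indeed only controls $\langle\lambda\rangle^{-1}\nu^\sharp_{\lambda\lambda}$, which suffices after the division by $\alpha$ in $\rho=\beta/\alpha$ — or here, after pairing with $\rho\in H^{2,2}$.

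There is, however, one genuine gap: you reconstruct $q$ on all of $\bbR$ from the single right-normalized RHP \eqref{N}, but the estimates that make this work are only one-sided. The inhomogeneous term $\nu_0^\sharp=S[1]$ satisfies $\norm{\nu_0^\sharp(x,\dotarg)}{L^2_\lambda}\lesssim(1+|x|)^{-1}$ and $\norm{S}{\HS}\to 0$ only as $x\to+\infty$; in Fourier variables the kernel is supported on $\xi\le 0$, $\xi'\ge x$, and the weight $(1+|x|)$ can be extracted from $|\xi-\xi'|$ only when $x$ is bounded below. As $x\to-\infty$ neither $\nu^\sharp$ nor its derivatives decay, so Cauchy--Schwarz applied to \eqref{q.recon} gives $q\in H^{2,2}(-a,\infty)$ for each $a$ but says nothing about integrability or the weight $\langle x\rangle^2$ on $(-\infty,-a)$. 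Your claim that ``Plancherel applied to \eqref{q.recon} then yields $q\in L^2(\bbR)$'' is therefore not justified. The paper repairs this by introducing a second, left-normalized RHP obtained by conjugating with the scalar function $\delta$ of Lemma \ref{lemma:delta.left} (with reflection coefficient $\brho=\rho/\Delta$), proving the mirror-image estimates on $(-\infty,a)$, and then showing via the large-$z$ asymptotics of $N_{12}$ and $\bN_{12}$ that the two reconstructions agree (Lemma \ref{lemma:q.all}). Without this second RHP and the matching argument, your proof establishes membership of $q$ only in $H^{2,2}$ of half-lines, not of $\bbR$. A secondary, smaller issue: your plan to trade $\partial_x\nu_{11}$ for $\lambda$-multiplication via the ODE \eqref{n.de} reintroduces $q$ and $\overline{q'}$ into the estimates for $q''$, creating a circularity that must be closed by a bootstrap; the paper avoids this by estimating $\nu^\sharp_x$ and $\nu^\sharp_{xx}$ directly from the $x$-differentiated integral equation, which is cleaner.
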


We emphasize that results from Lee's thesis \cite{Lee83} already imply that the direct scattering  map is continuous from $\calS(\bbR) \cap U$ into $\calS(\bbR)$, and that the inverse map is continuous from $\calS(\bbR) \cap V$ to $\calS(\bbR)$. Our contribution is to prove sharp continuity estimates between weighted Sobolev spaces.

The space $H^{2,2}(\bbR)$ is invariant under the Fourier transform, and, for $\rho \in H^{2,2}(\bbR)$, the map 
$t \mapsto e^{-4it\lam^2 t} \rho(\lam)$ describes a 
continuous curve in $H^{2,2}(\bbR)$. 
Since the nonlinear maps $\calR$ 
and $\calI$ linearize respectively to the direct and inverse Fourier transform, the space $H^{2,2}(\bbR)$ is well-suited to study the map \eqref{DNLS2.sol}.

\medskip

Given Theorems \ref{thm:R} and \ref{thm:I}, the proof of Theorem \ref{thm:DNLS2} is straightforward.
The solution map $S$ defined by
$$
(q_0,t) \mapsto \calI \left[ e^{{-4it(\diamond)^2}} \left( \calR q_0 \right)(\diamond) \right](\dotarg)
$$ 
has the claimed continuity properties by Theorems \ref{thm:R} and \ref{thm:I}. Thus, from Theorem \ref{thm:I} that $S(q_0,0)=q_0$. Moreover, the solution map gives a classical solution of \eqref{DNLS2} by Lee's thesis or Theorem \ref{thm:RHP.t.q}. The result for $q \in H^{2,2}(\bbR)$ now follows from Lipschitz continuity of $\calR$ and $\calI$.

In a subsequent paper \cite{LPS16}, we establish  the large-time asymptotics of the solution $q(x,t)$ for \eqref{DNLS2} 
using the steepest descent method of Deift-Zhou \cite{DZ93,DZ03} as  
recast by Dieng-McLaughlin \cite{DM08}.
% \cite{Do11} of the Deift-Zhou nonlinear steepest descent method \cite{DZ93,DZ03}.
 Finally, we mention that Kitaev and Vartanian developed the inverse scattering method  and found the long time behavior of solutions
for  Schwarz class initial data with and without solitons. 
\cite{KV97,KV99}.

The paper is organized as follows.
In Section \ref{sec:prelim},  we present some useful  tools of functional and complex analysis.
They include interpolation  and Sobolev embedding lemmas for weighted spaces that will be used in the analysis on the direct 
map, as well as basic properties of Cauchy projectors onto the lower and upper half complex  planes that come into play in the analysis of the
RHP. 
We also recall the Beals-Coifman formulation of the RHP which 
shows that the RHP is equivalent to the Beals-Coifman integral equation.
Section  \ref{sec:direct} is devoted to  Lipschitz  continuity properties of the direct scattering map, initially defined on $\calS(\bbR) \cap U$ and extended by continuity to
$H^{2,2}(\bbR) \cap U$.  We discuss the construction of Beals-Coifman solutions 
in Section \ref{sec:BC}.
In Section \ref{sec:RH.Schwartz}, we study the RHP and the reconstruction of $q$ for Schwartz class scattering data, and in 
Section \ref{sec:inverse}, we prove Lipschitz continuity of the inverse scattering map, initially defined on $\calS(\bbR) \cap V$ but extended by continuity  to $H^{2,2}(\bbR)\cap V$. 
%{\color{magenta} In Section \ref{sec:Plancherel}, we prove a nonlinear Plancherel formula 
%for the scattering transform}.
In Section \ref{sec:RHP-t}, we give a self-contained proof that the formula \eqref{DNLS2.sol} gives a classical solution of \eqref{DNLS2} if the initial data belong to $\calS(\bbR)\cap U$.
For sake of completeness,  several technical calculations and proofs are presented in Appendices. In Appendix \ref{app:gauge}, we formulate  the Lax pairs for \eqref{DNLS1} and \eqref{DNLS2} 
 and show their equivalence through the gauge transformation \eqref{q.gauge}.
Finally, in Appendix \ref{app:time}, we supply some technical computations needed in Section \ref{sec:RHP-t}.

We close this introduction with a guide to notation used for various solutions of the linear systems defining the direct scattering map and the Riemann-Hilbert problem defining the inverse scattering map. Lower case letters $m$, $m^\sharp$, $n$ denote solutions to the linear systems \eqref{LS.red}, \eqref{n}, and $\pm$ superscripts designate Jost solutions obeying a boundary condition at $\pm \infty$. The boldface letter $\bfn$ denotes the renormalized first column of $n$ (see \eqref{bold.n}). Upper case letters $M$, $N$ denote respectively solutions of the Riemann-Hilbert problems \eqref{BC.jump}, \eqref{N.jump}, and subscripts $\pm$ designate boundary values on $\Sigma$ (for $M$) or $\bbR$ (for $N$). The boldface letter $\bfN$ denotes a row-vector of $N$.  The RHP's for $M$ and $\bfN$ are formulated precisely as Problems \ref{prob:RH.M} and \ref{prob:RH.n}. The letters $\mu$ and $\nu$ denote, respectively, the $2\times 2$ matrix-valued solution for the Beals-Coifman integral equation corresponding to Problem \ref{prob:RH.M} and the row vector-valued solution to the Beals-Coifman integral equation for Problem \ref{prob:RH.n}. The Beals-Coifman equation for $\nu=(\nu_{11},\nu_{12})$ can be reduced for a scalar integral equation for $\widetilde{\nu}=\nu_{11}$ and in turn to an equation for $\nu^\sharp=\widetilde{\nu}-1$ which is studied in depth in Section \ref{sec:RH.Mapping}.

\section{Preliminaries}
\label{sec:prelim}

%%%%%%%%%%%%%%%%%%%%%%%%%%%%%%%%%%%%%
%
%		Section 2 as revised by CS			12.16.2015
%
%%%%%%%%%%%%%%%%%%%%%%%%%%%%%%%%%%%%%

We start this section %with notations that will be used throughout the paper.  
with some  functional analysis results, namely
%Sobolev and embedding theorems (Section \ref{sec:not-sobolev}) and 
estimates for Volterra-type integral equations (Section \ref{sec:volterra}) 
useful  in the analysis of the direct scattering map. 
Since the spectral problem and the RHP are formulated for matrix-valued functions, we present in Section \ref{sec:matrix} some classical operations on matrices.
We then turn to complex analysis tools that are central for the study of the inverse scattering map  and recall some properties  of Cauchy integrals and Cauchy operators in Section \ref{sec:cauchy}. 
We present a useful change-of-variables formula for the Cauchy projectors in Section \ref{sec:lee.cv}.
Finally,   we discuss the key ideas leading to  the reduction of the  RHP to the so-called  Beals-Coifman  integral equation in Section  \ref{sec:BC-integral-eq}. 

\subsection{Some Tools from Functional Analysis}

\subsubsection{Notations}
\label{sec:notations}

If $X$ and $Y$ are Banach spaces, we   denote  by $\calB(X,Y)$ the Banach space of bounded linear operators
from $X$ to $Y$. 
We write $\calB(X)$ for $\calB(X,X)$. If $A$ is a Hilbert-Schmidt operator on a Hilbert space $\calH$, we denote by $\norm{A}{\HS}$ the Hilbert-Schmidt norm of $A$.
If $I$ is an interval on the real line, $C^0(I,X)$ denotes the space of continuous functions on $I$ taking 
values in $X$.  It is 
equipped with the norm
$$ \norm{f}{C^0(I,X)} = \sup_{x \in I } \norm{f(x)}{X}. $$ 
We  write $C^0(X)$ if there is no possibility of confusion.

We denote by $D$ the operator $-i(d/dx)$, by $\langle x \rangle$ the smooth function $(1+x^2)^{1/2}$ and by $\langle D \rangle$ the Fourier multiplier with symbol
$ (1+|\xi|^2)^{1/2}$.  Let  $L^{2,k}(\bbR)$  be the space of  $L^2(\bbR)$-functions with $\langle \dotarg \rangle^k f (\dotarg) \in L^2(\bbR)$
and $H^{\alpha,\beta}(\bbR)$ the completion of $\calS(\bbR)$  in  the norm
$$
\norm{f}{H^{\alpha,\beta}} = 
	\left( \norm{\langle D \rangle^\alpha f}{2}^2 + \norm{\langle x \rangle^\beta f}{2}^2 \right)^{1/2}. 
$$
The analysis of the scattering maps will be performed in the space   $H^{2,2}(\bbR)$.
Note that  $\norm{\langle x \rangle u'}{2} \le C \norm{u}{H^{2,2}}.$
We normalize the Fourier transform as follows:
\begin{align*}
\widehat f( \lambda)  &:= \left( \calF f \right)(\lam)  
								= \int_{-\infty}^\infty e^{-2i\lam x} f(x) \, dx\\
\widecheck{g}(x)  		&:=  \left( \calF^{-1} g \right) (x) 
								=	\frac{1}{\pi} \int_{-\infty}^\infty e^{2i\lam x} g(\lam) \, d\lam.
\end{align*}

\subsubsection{Volterra Integral Equations}\label{sec:volterra}

\begin{lemma}
\label{lemma:Volterra}
Suppose that $X$ is a Banach space and consider the Volterra-type integral equation 
\begin{equation}
\label{Volterra}
u(x) = f(x) + (Tu)(x)
\end{equation}
on the space $C^0(\bbR^+,X)$, where $f \in C^0(\bbR^+,X)$ and $T$ is an integral operator on $C^0(\bbR^+,X)$. %For $f \in C^0(\bbR^+,X)$,
$ f^*(x) = \sup_{y \geq x} \norm{f(y)}{X},$
and assume  there is a nonnegative function $h \in L^1(\bbR^+)$  such that
\begin{equation}\label{volt}
(Tf)^*(x) \leq \int_x^\infty h(t) f^*(t) \, dt. 
\end{equation}
Then   for each $f$, equation \eqref{Volterra} has a unique solution. Moreover, the resolvent 
$(I-T)^{-1}$ obeys the bound
\begin{equation} \label{resol:est}
 \norm{(I-T)^{-1}}{\calB(C^0(I,X))} \leq \exp\left( \int_0^\infty h(t) \, dy \right). 
 \end{equation}
\end{lemma}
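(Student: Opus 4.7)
The plan is to expand $(I-T)^{-1}$ as a Neumann series and control each iterate $T^n$ by repeated application of the hypothesis \eqref{volt}. Once the bound $\norm{T^n}{\calB(C^0(\bbR^+,X))} \leq \frac{1}{n!}\bigl(\int_0^\infty h(t)\,dt\bigr)^n$ is established, absolute convergence of $\sum_n T^n$ yields existence, uniqueness, and the stated exponential estimate for the resolvent in one stroke.

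The first step is a pointwise induction on $n \geq 0$ showing
$$ (T^n f)^*(x) \;\leq\; \frac{1}{n!}\left(\int_x^\infty h(t)\,dt\right)^n f^*(x) $$
for every $f \in C^0(\bbR^+,X)$. The base case $n=0$ is immediate, and $n=1$ is \eqref{volt} combined with the monotonicity $f^*(t) \leq f^*(x)$ for $t \geq x$. For the inductive step, I would apply \eqref{volt} to $T^n f$ (which still lies in $C^0(\bbR^+,X)$, so the hypothesis is legitimate), insert the induction bound, and pull $f^*(s) \leq f^*(x)$ outside:
$$ (T^{n+1} f)^*(x) \;\leq\; \frac{f^*(x)}{n!}\int_x^\infty h(s)\left(\int_s^\infty h(t)\,dt\right)^n ds. $$
Setting $H(s) = \int_s^\infty h(t)\,dt$ and observing $h(s) H(s)^n = -\tfrac{1}{n+1} \tfrac{d}{ds}\bigl(H(s)^{n+1}\bigr)$, the remaining integral evaluates to $\frac{1}{n+1} H(x)^{n+1}$, closing the induction.

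The second step is to evaluate at $x=0$ and take the supremum over unit vectors $f$: this gives the operator bound on $T^n$ above. Consequently $\sum_{n \geq 0} T^n$ converges absolutely in $\calB(C^0(\bbR^+,X))$, its sum inverts $I-T$ on both sides, and its norm is dominated by $\exp\bigl(\int_0^\infty h(t)\,dt\bigr)$, yielding \eqref{resol:est}. Existence and uniqueness of the solution to \eqref{Volterra} follow by setting $u = (I-T)^{-1} f$.

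There is no serious obstacle here: the argument is the standard quantitative fixed-point computation for Volterra operators. The only moment of care is ensuring that the hypothesis \eqref{volt} applies to each iterate $T^n f$ (it does, since \eqref{volt} is assumed for all $f \in C^0(\bbR^+,X)$ and $T$ maps this space into itself), and that the elementary identity for $h(s) H(s)^n$ holds in a pointwise-a.e.\ sense sufficient to justify the integration. Subsequent sections will invoke this lemma to bound solutions of integral equations arising from the Lax pair \eqref{LS.red} in terms of $L^1$-type control of the potential.
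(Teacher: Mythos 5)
Your proposal is correct and follows essentially the same route as the paper: the paper's proof consists precisely of expanding $(I-T)^{-1}$ in a Neumann series and iterating \eqref{volt} to obtain $(T^n f)^*(x) \leq \frac{1}{n!}\left(\int_x^\infty h\right)^n f^*(x)$, which is your inductive bound. Your write-up merely supplies the details (the monotonicity of $f^*$ and the antiderivative identity for $h\,H^n$) that the paper leaves implicit.
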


Estimate \eqref{resol:est} is obtained by expanding  $(I-T)^{-1}$ in powers of $T$ and using \eqref{volt}
iteratively
in the form 
$$
(T^n f)^*(x)	\leq \frac{1}{n!}	 \left( \int_x^ \infty h(y) \, dy \right)^n f^*(x).
$$
to get a convergent series.

\begin{remark}
There is an obvious analogue of Lemma \ref{lemma:Volterra} for % the positive half-line replaced by 
the negative half-line. 
\end{remark}

\subsection{Matrix Operations}
\label{sec:matrix}
For  a $2\times 2$ matrix 
$$A = \twomat{a}{b}{c}{d}, $$
we denote its Frobenius norm
$|A| = \sqrt{a^2+b^2+c^2+d^2}.$
 We will often use the decomposition
$$ A = A_\diag+ A_\offd$$
where
%\begin{equation}
%\label{diagoff}
$$
A_{\diag} = \twomat{a}{0}{0}{d}, \quad 
A_{\offd} = \twomat{0}{b}{c}{0}.
$$
%\end{equation}
Note that
$$
(AB)_\diag = A_\diag B_\diag + A_\offd B_\offd, \quad (AB)_\offd = A_\diag B_\offd + A_\offd B_\diag.
$$
Let
$$ \sigma = \twomat{1}{0}{0}{-1}$$
and define
$ \ad \sigma (A) = [\sigma,A]= \sigma A - A \sigma.$ We have
$$\ad \sigma (A) = \twomat{0}{2b}{-2c}{0}$$  and  $\ad \sigma (A_\diag)=0$. 
If $A$ is off-diagonal, 
$$ 
(\ad \sigma)^{-1} \twomat{0}{b}{c}{0} 
	= \frac{1}{2} \twomat{0}{b}{-c}{0}.
$$
The exponential operator
$e^{i\theta \ad \sigma }$ 
acts linearly on $2 \times 2$ matrices:
$$ e^{i\theta\ad \sigma } \twomat{a}{b}{c}{d} = \twomat{a}{e^{2i\theta}b}{e^{-2i \theta}c}{d}. $$

\subsection{Cauchy Projections and Hilbert Transform}
\label{sec:cauchy}
\label{sec:prelim.Cauchy}

\subsubsection{Contours}
\label{sec:prelim.contour}

Figure \ref{fig:contours} displays the oriented contours under consideration.
Integration on the oriented contour $\Sigma$  
may be parameterized as follows:
\begin{align}
\label{Sigma.int}
\int_\Sigma f(\zeta) \, d\zeta 
 	%&
	=	\int_0^\infty f(t) \, dt - i \int_0^\infty f(it) \, dt %\\
%\nonumber
 	%&\quad
	 - \int_0^\infty f(-t) \, dt + i \int_0^\infty f(-it) \, dt
\end{align} 
and
integration with respect to arc length is parameterized as
$$\int_\Sigma f(\zeta) \, \left| d\zeta \right|
	=	\int_0^\infty \left( f(t) + f(-t) + f(it) + f(-it) \right) \, dt.
$$
Denote  by $L^p(\Sigma)$ the space of measurable functions on $\Sigma$ with norm
$$ \norm{f}{L^p(\Sigma)} \equiv \left( \int_\Sigma |f(\zeta)|^p \, \left| d\zeta \right| \right)^{1/p} $$
finite.  We
say that $f \in L^1(\Sigma)$ is \emph{even} if $f(-\zeta)= f(\zeta)$ and \emph{odd} if $f(-\zeta)=-f(\zeta)$. It is easy to see that the integral of an even function is zero while the integral of an odd function is given by
$$ \int_\Sigma f(\zeta) \, d\zeta = 2\int_0^\infty f(t) \, dt -2i \int_0^\infty f(it) \,  dt. $$ 
A short computation using \eqref{Sigma.int} shows that for any function $f \in H^1(\Sigma)$,  
\begin{equation}
\label{lee.cv}
 \int_\Sigma f(\zeta) \, d\zeta =
	\int_{\bbR} \left(f(\sqrt{u}) - f(\sqrt{-u})\right) \frac{du}{2 \sqrt{u}}. 
\end{equation}
where $\bbR$ is given the usual orientation.

\subsubsection{Cauchy Projectors}
We recall some basic facts about the Cauchy transform and the Cauchy projectors. 
See, for example, Deift-Zhou \cite[Section 2]{DZ03} for details and references.

Let  $\Lambda$ denote an oriented contour in the complex plane which can be either $\Sigma$ or $\bbR$  (as plotted in Figure \ref{fig:contours}). $\Omega^\pm$ denotes the region $\pm\Imag(\zeta^2)>0$ if $\Lambda = \Sigma$, %denotes
and the region $\pm \Imag(\lambda) >0$ if $\Lambda = \bbR$.

For $f \in L^p(\Lambda)$,  $p \in (1,\infty)$, the Cauchy integral
$$ 
(Cf)(z) 
	= \frac{1}{2\pi i} \int_\Lambda \frac{1}{s-z} f(s) \, ds
$$
defines a function bounded and analytic in $\bbC \setminus \Lambda$. The nontangential limits 
%\begin{equation}
%\label{Cpm}
$$
(C^\pm f)(\zeta) = \lim_{z \rarr \zeta, \,\, z \in \Omega^\pm} (Cf)(z)
$$
%\end{equation}
exist for almost every $z \in \Lambda$, and the estimate
$$ 
\norm{C^\pm f}{p} \leq c_p \norm{f}{p}
$$
holds. We have the Plemelj-Sokhotski formula
\begin{equation}
\label{Plemelj-Sokhotski}
C^\pm f = \pm \frac{1}{2}f -  \frac{1}{2} Hf
\end{equation}
where $H$ is the Hilbert transform
%\begin{equation}
%\label{Hilbert}
$$
(Hf)(z) = 
	\lim_{\eps \darr 0} \frac{1}{\pi i} \int_{\Lambda \cap \{|s-z|>\eps\}} \frac{1}{z-s} f(s) \, ds
$$
%\end{equation}
From this, it follows that $C^+ - C^- = I$ on $L^p(\Lambda)$. If $p=2$,  $C^\pm$ are orthogonal projections. 
In particular, if the contour is $\bbR$,  the Cauchy projectors $C^\pm$ are  simply defined  
via the Fourier transform :
\begin{align}
\label{C-+.Fourier}
\left(C^+ f\right)(\lam)	&=		\frac{1}{\pi} \int_0^\infty e^{2i\lam \zeta} \widehat{f} (\zeta) \, d\zeta\\
\label{C-.Fourier}
%\left(C^-f \right)(\lam)	&=       - \frac{1}{\pi}  \int_0^\infty e^{2i\lam \zeta} \left(\calF f\right)(\zeta) \, d\zeta\\
\left(C^-f \right)(\lam)	&=    \frac{1}{\pi}  \int_0^{-\infty}  e^{2i\lam \zeta} \widehat{ f}(\zeta) \, d\zeta
\end{align}
A short computation using \eqref{Sigma.int} shows that the Hilbert transform $H$ on $\Sigma$ preserves the subspaces of odd and even functions on $\Sigma$. We will use this fact in the sequel. 
We will also make use of the following commutator identities. %for the Cauchy projectors $C^\pm$. 

\begin{lemma}
\label{lemma:commutator}
Suppose that $m$ is a nonnegative integer and $f \in L^{2,m}(\bbR)$. Then
\begin{align}
\label{Cpm.com}
\zeta^m C^\pm \left[ f \right](\zeta) 
	&= 			C^\pm \left[ (\dotarg)^m f(\dotarg) \right] - \sum_{j=0}^{m-1} \zeta^{m-1-j} \frac{f_j}{2 \pi i}, \quad
f_j \equiv \int_\Sigma s^j f(s) \, ds.
\end{align}
\end{lemma}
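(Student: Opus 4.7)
The plan is to prove the identity off the contour first, for the Cauchy integral $(Cf)(z) = \frac{1}{2\pi i}\int_\Sigma (s-z)^{-1} f(s)\,ds$ with $z \in \Omega^\pm$, and then pass to the nontangential boundary values $C^\pm f$. The algebraic heart of the argument is the elementary polynomial identity
\[
\frac{z^m - s^m}{s - z} = -\sum_{j=0}^{m-1} z^{m-1-j}\, s^j,
\]
which, once multiplied by $f(s)$ and integrated termwise against $ds$ on $\Sigma$, gives
\[
z^m (Cf)(z) - C\!\left[(\dotarg)^m f(\dotarg)\right]\!(z) = -\sum_{j=0}^{m-1} z^{m-1-j}\, \frac{f_j}{2\pi i}.
\]

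Before splitting the integral in this way I would first verify that each constant $f_j = \int_\Sigma s^j f(s)\,ds$ is well defined for $0 \le j \le m-1$. Writing $s^j f(s) = s^j \langle s\rangle^{-m} \cdot \langle s\rangle^m f(s)$ and applying Cauchy--Schwarz along each of the four rays of $\Sigma$, absolute convergence reduces to
\[
\int_0^\infty t^{2j} \langle t\rangle^{-2m}\,dt < \infty,
\]
which holds precisely when $j \le m-1$. This same estimate shows that the identity for $Cf(z)$ extends with absolutely convergent integrals, so termwise splitting is legitimate. The hypothesis $f \in L^{2,m}(\bbR)$ also ensures $(\dotarg)^m f \in L^2(\Sigma)$, so that $C^\pm[(\dotarg)^m f]$ is a bona fide $L^2$ function and both sides of the sought identity live in the same function space.

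To pass from the $C$-identity to the $C^\pm$-identity I would take nontangential limits as $z \to \zeta$ from $\Omega^\pm$. The right side is manifestly continuous in $z$ up to $\Sigma$ (it is a polynomial in $z$), while the two Cauchy integrals on the left each possess nontangential boundary values in $L^2(\Sigma)$ by the standard boundedness of $C^\pm$ recalled in Section~\ref{sec:prelim.Cauchy}; these limits are $\zeta^m C^\pm f$ and $C^\pm[(\dotarg)^m f]$, respectively. Taking the limit yields the claimed formula \eqref{Cpm.com}, valid for almost every $\zeta \in \Sigma$.

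The argument is essentially bookkeeping once the polynomial factorization is in hand; the only place that requires attention is the integrability verification for $f_j$, since $f$ is only assumed to lie in the weighted $L^2$-space $L^{2,m}(\bbR)$ rather than an $L^1$-based space. I do not anticipate any genuine obstacle — a direct induction on $m$ would give an alternative proof, but the single polynomial identity above handles all $m$ at once with no extra effort.
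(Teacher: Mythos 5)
Your proof is correct, but it takes a genuinely different route from the paper's. The paper proves only the case $m=1$ directly, writing $C^\pm\left[(\dotarg)f\right]-\zeta C^\pm f=[C^\pm,\zeta]f=-\tfrac12[H,\zeta]f=\tfrac{1}{2\pi i}\int_\Sigma f$ via the Plemelj--Sokhotski formula and the elementary commutator identity for the Hilbert transform, and then obtains the general case by induction on $m$; the whole argument stays on the contour. You instead work off the contour with the Cauchy integral $(Cf)(z)$, use the factorization $\frac{z^m-s^m}{s-z}=-\sum_{j=0}^{m-1}z^{m-1-j}s^j$ to produce all the constants $f_j$ in one stroke, and then pass to nontangential boundary values. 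Your version buys uniformity in $m$ (no induction, and the provenance of the lower-order terms $f_j/2\pi i$ is transparent from the polynomial identity) at the cost of an extra limiting step, which is harmless given the $L^2$ boundedness of $C^\pm$ already recalled in Section~\ref{sec:prelim.Cauchy}. You are also more careful than the paper on one point it leaves implicit: the verification via Cauchy--Schwarz that $f_j$ is finite exactly for $j\leq m-1$ when $f$ lies only in the weighted $L^2$ space, which is precisely what makes the termwise splitting of the integral legitimate. Both arguments are sound; yours is arguably the cleaner one for general $m$.
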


\begin{proof}  The case $m=1$  follows from \eqref{Plemelj-Sokhotski} and the commutator identity
\begin{equation} \label{comm-id}
[z,H] f = \frac{1}{\pi i} \int_\Sigma f. 
\end{equation}
Indeed, one writes
\begin{align*}
C^\pm \left((\dotarg)  f(\dotarg) \right)   &-  \zeta (C^\pm f)(\zeta) = [C^\pm ,\zeta] f \\
&= -\frac{1}{2} [H,\zeta] f =\frac{1}{2\pi i } \int_\Sigma  f(s) ds.
\end{align*}

The general formula  is derived by induction.
\end{proof}

\subsection{Change of Variables in Cauchy Projectors}
\label{sec:lee.cv}

The following change of variables formula for the Cauchy transform 
appears in Lee  \cite[\S 8]{Lee83}. We reproduce it here for the reader's convenience.

Let $\Sigma = \left\{ \zeta \in \bbC: \Im(\zeta^2)=0 \right\}$ with the orientation shown in Figure \ref{fig:contours}.  The mapping
 $\zeta \mapsto \zeta^2$ maps $\Sigma$ onto $\bbR$ and induces the usual orientation. Let  
 $C_\Sigma$ and $C_{\bbR}$  be the respective Cauchy integrals for $\Sigma$ and $\bbR$. For $u \in \bbR$ we denote by $\sqrt{u}$ the principal branch of the square root function, so that, referring to Figure \ref{fig:contours}:
\begin{itemize}
\item	 $u \mapsto \sqrt{u}$ maps $\bbR$ onto $\Sigma_1 \cup \Sigma_2$, and \smallskip
\item	$u \mapsto -\sqrt{u}$ maps $\bbR$ onto $\Sigma_3 \cup \Sigma_4$.
\end{itemize}

For $f \in H^1(\Sigma)$, define
\begin{align*}
g(u) &= \frac{1}{2} \left( f\left(\sqrt{u})+f(-\sqrt{u}\right) \right),\\
h(u) &= \frac{1}{2\sqrt{u}}\left(f(\sqrt{u})-f(-\sqrt{u})\right).
\end{align*}

\begin{lemma}
\label{lemma:lee.cv}
Let $f \in H^1(\Sigma)$ and $z \in \bbC \setminus \Sigma$. The identities
\begin{equation}
\label{Cauchy.Sigma.to.R}
\left( { C_\Sigma} f \right)(z) = 
		\left( 
			{ C_\bbR} g 
		\right)(z^2) 
	+ 	z \left( 
			{ C_\bbR  } h
		\right)(z^2)
\end{equation}
hold. Moreover, for any $\zeta \in \Sigma$,
\begin{equation}
\label{Cauchypm.Sigma.to.R}
\left( C_\Sigma^\pm f \right)(\zeta) =
\left( C_\bbR^\pm g\right)(\zeta^2) +
\zeta \left( C_\bbR^\pm h\right)(\zeta^2)
\end{equation}
\end{lemma}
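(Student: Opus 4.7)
The plan is a direct computation: reduce the contour integral over $\Sigma$ to an integral over $\bbR$ and then rearrange until the formula falls out. Starting from the defining integral
\[
(C_\Sigma f)(z) = \frac{1}{2\pi i}\int_\Sigma \frac{f(s)}{s-z}\,ds,
\]
the first step is to apply the change-of-variable identity \eqref{lee.cv} with $F(s) = f(s)/(s-z)$. This rewrites the contour integral as a single integral over $\bbR$ whose integrand is the difference of the values of $F$ at $\sqrt u$ and $-\sqrt u$, multiplied by the measure $du/(2\sqrt u)$.

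The second step is purely algebraic. I would place the two resulting fractions over the common denominator $(\sqrt u - z)(-\sqrt u - z) = z^2 - u$ and expand the numerator as
\[
-\sqrt u \bigl(f(\sqrt u) + f(-\sqrt u)\bigr) - z \bigl(f(\sqrt u) - f(-\sqrt u)\bigr) = -2\sqrt u \bigl(g(u) + z h(u)\bigr),
\]
where the last equality uses the definitions of $g$ and $h$. The factor of $\sqrt u$ cancels against the $1/(2\sqrt u)$ coming from the measure, leaving
\[
(C_\Sigma f)(z) = \frac{1}{2\pi i}\int_{\bbR} \frac{g(u) + z h(u)}{u - z^2}\,du = (C_\bbR g)(z^2) + z\,(C_\bbR h)(z^2),
\]
which is exactly \eqref{Cauchy.Sigma.to.R}.

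For the boundary-value identity \eqref{Cauchypm.Sigma.to.R}, I would pass to nontangential limits as $z \to \zeta \in \Sigma$ from $\Omega^\pm$ in the formula just established. Since the squaring map $z \mapsto z^2$ is locally biholomorphic away from the origin and sends $\Omega^\pm$ onto $\bbC^\pm$, a nontangential approach to $\zeta \ne 0$ from within $\Omega^\pm$ corresponds to a nontangential approach to $\zeta^2$ from within $\bbC^\pm$. Consequently the right-hand side of \eqref{Cauchy.Sigma.to.R} converges a.e.\ to $(C_\bbR^\pm g)(\zeta^2) + \zeta(C_\bbR^\pm h)(\zeta^2)$, while the left-hand side converges by definition to $(C_\Sigma^\pm f)(\zeta)$.

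The only step requiring care is that the derived functions $g$ and $h$ lie in a space on which the projectors $C_\bbR^\pm$ act sensibly, so that the limits on the right actually exist. The hypothesis $f \in H^1(\Sigma)$ ensures $f$ is continuous and $f(\sqrt u) - f(-\sqrt u) = O(\sqrt{|u|})$ near $u=0$, so $h$ is bounded there, and $L^2(\bbR)$ bounds on $g$ and $h$ follow from the parametrization of $\Sigma$ together with $f \in L^2(\Sigma)$. This is essentially bookkeeping rather than a genuine obstacle; the real content of the lemma is the short algebraic identity worked out in the first two paragraphs.
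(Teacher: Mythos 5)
Your proof is correct and follows essentially the same route as the paper: apply the change-of-variables identity \eqref{lee.cv} to $f(s)/(s-z)$, combine the two fractions over the common denominator $z^2-u$ to produce $g$ and $h$, and then pass to nontangential boundary values using the fact that $z\mapsto z^2$ carries $\Omega^\pm$ onto $\bbC^\pm$ and preserves nontangential approach. The closing remark on the integrability of $g$ and $h$ is a harmless extra check that the paper leaves implicit.
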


\begin{proof}
Using \eqref{lee.cv}, we compute
\begin{align*}
\int_\Sigma \frac{f(s)}{s-z} \, \frac{ds}{2\pi i}	
		&=	\frac{1}{2\pi i} 
							\int_\bbR	
								\left(
									\frac{f(\sqrt{u})}{\sqrt{u}-z} -\frac{f(-\sqrt{u})}{-\sqrt{u}-z} 
								\right)
							\, \frac{du}{2\sqrt{u}}	\\[5pt]
		&=			-\frac{1}{2} 
							\int_\bbR 
								\frac{f(\sqrt{u})+f(-\sqrt{u})}{z^2-u} 
							\, \frac{du}{2\pi i} \\[5pt]
		&\quad	-	z 
							\left( 
								\frac{1}{2} 
									\int_\bbR 
										\frac{f(\sqrt{u})-f(-\sqrt{u})}{\sqrt{u}} 
										\frac{1}{z^2 -u} 
									\, \frac{du}{2 \pi i}
							\right).
\end{align*}
This gives the formula for { $C_\Sigma f$}. Observe that the quadratic mapping $z \mapsto z^2$
takes the regions $\Omega^\pm$ to the half-planes $\bbC^\pm$, and paths approaching $\Sigma$ non-tangentially from $\Omega^+$ (resp.\ $\Omega^-$) are mapped to paths approaching $\bbR$ non-tangentially from $\bbC^+$ (resp.\ $\bbC^-$).  Formula \eqref{Cauchypm.Sigma.to.R}  is now an immediate consequence of \eqref{Cauchy.Sigma.to.R}.
\end{proof}

\begin{remark}
\label{rem:lee.cv}
From Lemma \ref{lemma:lee.cv}, we easily deduce that if $f$ is an odd function on $\Sigma$ and
$ h(u)= f(\sqrt{u})/\sqrt{u} $ then
\begin{equation}
\label{Cauchypm.R.to.Sigma.odd}
\left(C_\Sigma^\pm f\right)(\zeta) = \zeta \left( C_\bbR^\pm h\right)(\zeta^2).
\end{equation}
On the other hand, if $f$ is an even function on $\Sigma$ and 
$g(u) = f(\sqrt{u})$, then
\begin{equation}
\label{Cauchypm.R.to.Sigma.even}
\left(C_\Sigma ^\pm f \right)(\zeta) = \left( C_\bbR^\pm g\right)(\zeta^2).
\end{equation}
\end{remark}

\subsection{Riemann-Hilbert Problem and Beals-Coifman Integral Equation}
\label{sec:BC-integral-eq}

We recall briefly the Beals-Coifman \cite{BC84} approach to RHPs: see, for example,
\cite[Section 2]{DZ03} for a detailed exposition and further references. Let $\Lambda$ be an oriented contour (for our 
purpose, a finite union of oriented lines) that divides $\bbC \setminus \Lambda$ into disjoint open sets $\Omega^+$ 
and $\Omega^-$. Suppose given a $2 \times 2$ measurable, matrix-valued function $v$ on $\Lambda$ with 
$v, v^{-1} \in L^\infty(\Lambda)$. Formally, the normalized RHP $(\Lambda,v)$ is stated as follows:

\begin{problem} \label{problem-RHP}
Find a piecewise analytic function $M(z)$ on $\bbC \setminus \Lambda$ so that 
\begin{itemize}
\item	$M(z) \rarr \One$ as $|z| \rarr \infty$, and

\smallskip

\item	the boundary values  $M_\pm(\zeta)$ obey the jump relation $M_+(\zeta) = M_-(\zeta) v(\zeta)$. 
\end{itemize}
\end{problem}
To formulate this notion more rigorously, we say that a pair of measurable functions $(f_+,f_-)$ on $\Lambda$
belong to $\dee C_\Lambda(L^p)$ if there is a function $h \in L^p(\Lambda)$ with the property that $f_\pm = C_\Lambda^\pm h$. In this case, $f_\pm$ are boundary values of the piecewise analytic function
$$ F(z) = \frac{1}{2\pi i} \int_\Lambda \frac{1}{s-z} \, h(s) \, ds. $$ 
Here $p \in (1,\infty)$; in the sequel, we will be concerned exclusively with the case $p=2$. We now reformulate the normalized RHP 
 $(\Lambda,v)$ as follows: 
 
\begin{problem}
Find a pair of matrix-valued functions $(M_+,M_-)$ with 
\begin{itemize}
\item		$M_\pm - \One \in \dee C_\Lambda(L^p)$, and

\smallskip

\item		$M_+(\zeta) = M_-(\zeta) v(\zeta)$ for a.e.\ $\zeta \in \Lambda$
\end{itemize}
\end{problem}

Given a solution of  the RHP 
$(\Lambda,v)$, we can then recover the piecewise
analytic function $M(z)$ through the Cauchy transform of the function $h$ with $M_\pm - \One  = C^\pm_\Lambda h$:
$$ M(z) = \One + \int_\Lambda \frac{h(s)}{s-z} \frac{ds}{2\pi i}. $$

To derive the Beals-Coifman integral equation, we assume that   the jump matrix $v(\zeta)$ admits a matrix factorization of the 
form
$$ v(\zeta) = (\One - w^-(\zeta))^{-1} (\One + w^+(\zeta))$$
for weight functions $w^\pm \in L^\infty(\Lambda)$. 
If we set
$$ \mu(\zeta) = M_+(\zeta)(\One + w^+(\zeta))^{-1} = M_-(\zeta)(\One - w^-(\zeta))^{-1} $$
assuming that $M_\pm$ solve the RHP,  it follows that the additive jump $M_+  - M_-$ is given by
$$ M_+(\zeta) - M_-(\zeta) = \mu(\zeta)(w^+(\zeta)+w^-(\zeta)) $$ so that
the piecewise analytic function $M(z)$ is given by
$$ 
M(z) = \One + \int_\Lambda \frac{1}{s-z} \left[ \mu(s)(w^+(s)+w^-(s))  \right] \frac{ds}{2\pi i}. $$
Taking boundary values from $\Omega^+$, we find that
$$
M_+(\zeta) 
	=	 \mu(\zeta) (\One +w^+(\zeta) )
	=  \One + C^+_\Lambda \left[ \mu(\dotarg) (w^+(\dotarg) + w^-(\dotarg) \right](\zeta).
$$
Using $I = C^+_\Lambda-C^-_\Lambda$, we conclude that
$\mu$ obeys the \emph{Beals-Coifman integral equation}
\begin{equation}
\label{BC.int.eq}
\mu = \One + \calC_w \mu
\end{equation}
where, for any $2 \times 2$ matrix-valued function $h \in L^p(\Lambda)$,
\begin{equation}
\label{BC.int.op}
\calC_w (h) = C_\Lambda^+ (h w^-) + C_\Lambda^-(h w^+).
\end{equation}
In \eqref{BC.int.op}, the operators $C_\Lambda^\pm$ act componentwise on matrix-valued functions. 
Note that $\calC_w$ is a bounded operator from $L^p(\Lambda)$ to itself for any $p \in (1,\infty)$ since
$w^\pm \in L^\infty(\Lambda)$ and $C^\pm_\Lambda$ are bounded operators on $L^p(\Lambda)$. 
An { important} result of the theory is the following 
(see for example \cite[Proposition 2.6]{DZ03}).

\begin{proposition}
\label{prop:BC.op}
The operator $(I-\calC_w)$ has trivial kernel as an operator on $L^p(\Lambda)$ if and only if the RHP $(\Lambda,v)$ on $L^p$ has a unique solution.
\end{proposition}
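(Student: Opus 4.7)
My plan is to reduce the Proposition to a bijective correspondence between solutions of the RHP and of the Beals--Coifman integral equation. Specifically, I will establish two parallel bijections: one between $L^p(\Lambda)$-solutions $\mu$ of $(I - \calC_w)\mu = \One$ and solutions $M$ of the normalized RHP (Problem \ref{problem-RHP}), and a second between $\ker(I - \calC_w) \subset L^p(\Lambda)$ and solutions of the \emph{homogeneous} RHP, obtained from Problem \ref{problem-RHP} by replacing the normalization $M(z) \to \One$ at infinity by $M(z) \to 0$. Once these are in place the equivalence asserted in the Proposition is immediate, since the difference of two distinct solutions of the normalized RHP is a nonzero solution of the homogeneous one, which corresponds via the second bijection to a nonzero element of $\ker(I - \calC_w)$.

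To produce the RHP solution from $\mu$, I would take
\[
M(z) \;=\; \One + \int_\Lambda \frac{\mu(s)\bigl(w^+(s) + w^-(s)\bigr)}{s - z} \, \frac{ds}{2\pi i},
\]
which is piecewise analytic on $\bbC \setminus \Lambda$ and tends to $\One$ at infinity, with boundary values $M_\pm - \One = C_\Lambda^\pm\bigl(\mu(w^+ + w^-)\bigr) \in \dee C_\Lambda(L^p)$. Combining $C_\Lambda^+ - C_\Lambda^- = I$ with the integral equation rewritten as $\mu - \One = C_\Lambda^+(\mu w^-) + C_\Lambda^-(\mu w^+)$ should quickly yield the key identities $M_+ = \mu(\One + w^+)$ and $M_- = \mu(\One - w^-)$, whence the jump relation $M_+ = M_- v$ follows upon multiplying by the factorization $v = (\One - w^-)^{-1}(\One + w^+)$. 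Conversely, given a solution $M$ of the RHP, I would set $\mu := M_+(\One + w^+)^{-1} = M_-(\One - w^-)^{-1}$ (the two expressions agree by the jump relation), verify $\mu \in \One + L^p(\Lambda)$ using $M_\pm - \One \in \dee C_\Lambda(L^p)$ together with $L^\infty$-invertibility of $\One \pm w^\pm$, and recover $(I - \calC_w)\mu = \One$ by reversing the Plemelj--Sokhotski computation above. The homogeneous bijection is established verbatim with $\One$ replaced by $0$ throughout.

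The main obstacle lies not in uniqueness but in \emph{existence} in the direction $\ker(I - \calC_w) = 0 \Rightarrow$ the normalized RHP has a solution. The two bijections immediately give uniqueness of an RHP solution, but to produce $M$ I need $I - \calC_w$ to be surjective on $L^p(\Lambda)$, which forces $\calC_w$ to be Fredholm of index zero. In the Beals--Coifman framework this is established either by a homotopy $t \mapsto \calC_{t w^\pm}$ connecting $\calC_w$ to the invertible base point $\calC_0 = 0$ while preserving the Fredholm index, or by exhibiting $\calC_w$ as a compact perturbation of an invertible operator. In the present setting the weights $w^\pm$ built from scattering data in $H^{2,2}(\bbR)$ will carry enough decay and analytic structure for such an argument to apply, and I would invoke this general Beals--Coifman result rather than reprove it.
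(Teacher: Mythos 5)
Your argument is correct and is essentially the one the paper relies on: the paper does not prove this proposition itself but cites \cite[Proposition 2.6]{DZ03}, and your two bijections are exactly the correspondence $\mu = M_+(\One+w^+)^{-1} = M_-(\One-w^-)^{-1}$ together with the Cauchy representation $M = \One + C\bigl(\mu(w^++w^-)\bigr)$ already set up in \S\ref{sec:BC-integral-eq} and reused in Proposition \ref{prop:BC.sol}. You also correctly isolate the only delicate point --- that passing from triviality of $\ker(I-\calC_w)$ to \emph{existence} of a solution requires $I-\calC_w$ to be Fredholm of index zero --- and your resolution (invoke the general Beals--Coifman homotopy/compactness argument rather than reprove it) mirrors what the paper does by imposing Fredholmness as a separate hypothesis precisely where existence is needed.
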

In applications, $(I - \calC_w)$ { will be}  a Fredholm operator on $L^p(\Lambda)$. In this case, 
we have:

\begin{proposition}
\label{prop:BC.sol}
Suppose that the normalized RHP $(\Lambda,v)$ has at most one solution $M_\pm$. Let 
$$v(\zeta) = (\One - w^-(\zeta))^{-1} (\One + w^+(\zeta))$$
be a factorization of $v$, with the property that the operator $(I-\calC_w)$ is Fredholm. Then,
there exists a unique solution of the Beals-Coifman integral equation \eqref{BC.int.eq} and the unique solution
to the RHP is given by
$M_\pm = \One + C^\pm h$ where $h= \mu(w^+ + w^-)$.
\end{proposition}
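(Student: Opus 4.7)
The plan is to prove this in three steps: obtain bounded invertibility of $I-\calC_w$ on $L^p(\Lambda)$, construct the candidate RHP solution from a Cauchy integral, and then verify the jump relation directly.

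\emph{Step one.} I would rewrite the Beals-Coifman equation \eqref{BC.int.eq} in the equivalent form
$$(I-\calC_w)(\mu-\One) = \calC_w\One,$$
so that the existence and uniqueness of $\mu$ with $\mu-\One\in L^p(\Lambda)$ reduces to bounded invertibility of $I-\calC_w$ on $L^p(\Lambda)$ (the right-hand side $\calC_w\One = C_\Lambda^+(w^-)+C_\Lambda^-(w^+)$ lying in $L^p$ under the standard integrability $w^\pm\in L^p\cap L^\infty$ that holds in applications). By Proposition \ref{prop:BC.op}, the hypothesis that the RHP has at most one solution forces $\ker(I-\calC_w)=\{0\}$ on $L^p(\Lambda)$.

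\emph{The main technical obstacle} is upgrading this injectivity to invertibility: since $I-\calC_w$ is Fredholm by assumption, it suffices to show that its Fredholm index equals zero. The standard route is a homotopy argument along the family $t\mapsto I-\calC_{tw}$ for $t\in[0,1]$. This family is continuous in operator norm on $L^p(\Lambda)$, remains inside the open set of Fredholm operators (shrinking $w$ only reduces $\norm{\calC_{tw}}{}$), and at $t=0$ reduces to the identity, which has index zero; invariance of the Fredholm index under continuous deformations then yields index zero at $t=1$. Alternatively, one can identify $\mathrm{coker}(I-\calC_w)$ with the kernel of the transposed Beals-Coifman operator associated with the dual RHP $(\Lambda,v^{-1})$ and appeal to the same uniqueness principle to conclude that the cokernel is trivial.

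\emph{Step two.} Given the unique solution $\mu$, I set $h=\mu(w^++w^-)$, which lies in $L^p(\Lambda)$ because $w^\pm\in L^\infty(\Lambda)$, and define
$$M(z) = \One + \frac{1}{2\pi i}\int_\Lambda \frac{h(s)}{s-z}\,ds.$$
Standard properties of the Cauchy integral guarantee that $M$ is analytic on $\bbC\setminus\Lambda$, tends to $\One$ as $|z|\to\infty$, and has nontangential boundary values $M_\pm = \One + C_\Lambda^\pm h$ with $M_\pm - \One\in\dee C_\Lambda(L^p)$.

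\emph{Step three.} I verify the jump relation algebraically. Using $C^+-C^-=I$ together with the Beals-Coifman identity $\mu = \One + C^+(\mu w^-)+C^-(\mu w^+)$, a direct computation gives
$$M_- = \One + C^-(\mu w^-) + C^-(\mu w^+) = \One - C^+(\mu w^-) + C^-(\mu w^+) + \mu w^- = \mu(\One - w^-),$$
and analogously $M_+ = \mu(\One+w^+)$. Hence $M_-v = \mu(\One-w^-)(\One-w^-)^{-1}(\One+w^+) = M_+$, so $M$ solves the normalized RHP. Uniqueness of the RHP solution then follows from the assumption combined with Proposition \ref{prop:BC.op}, and uniqueness of $\mu$ from the invertibility established in Step one.
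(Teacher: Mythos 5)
Your Steps two and three are correct and coincide with what the paper itself does: the identities $M_+=\mu(\One+w^+)$ and $M_-=\mu(\One-w^-)$, hence $M_+=M_-v$, are exactly the computation of Section \ref{sec:BC-integral-eq} read in the reverse direction, and the paper offers no further proof of Proposition \ref{prop:BC.sol} beyond this derivation and the citation of \cite{DZ03}. (There is a sign slip in your displayed intermediate line: since $C^-(\mu w^-)=C^+(\mu w^-)-\mu w^-$, it should read $\One+C^+(\mu w^-)+C^-(\mu w^+)-\mu w^-=\mu-\mu w^-$; the conclusion $M_-=\mu(\One-w^-)$ is nevertheless right.)

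The genuine gap is in Step one, in the passage from injectivity to invertibility. Because $\calC_{tw}=t\,\calC_w$, your homotopy is the segment $t\mapsto I-t\,\calC_w$, and nothing in the hypotheses guarantees that this segment stays inside the Fredholm operators for intermediate $t$: the assumption only excludes $1$ from the essential spectrum of $\calC_w$, not the points $1/t$ for $t\in(0,1)$. The parenthetical justification (``shrinking $w$ only reduces $\norm{\calC_{tw}}{}$'') yields invertibility only for $t<\norm{\calC_w}{\calB(L^p)}^{-1}$, which may fall well short of $1$, so homotopy invariance of the index cannot be invoked along this path. Your alternative route---identifying the cokernel with the kernel of the operator attached to a dual factorization---is the argument that actually works in the literature: one shows $(I-\calC_w)(I-\calC_{\widetilde w})=I+(\text{compact})$ with $\widetilde w^\pm=-w^\mp$, using compactness of the commutators $[C_\Lambda^\pm,f]$, and then applies a vanishing lemma to the dual problem. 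But as you state it, it silently imports a uniqueness statement for the dual RHP that is not among the hypotheses of the proposition. The intended reading, consistent with the cited source, is that $I-\calC_w$ is Fredholm \emph{of index zero}; with that reading Proposition \ref{prop:BC.op} gives trivial kernel, index zero gives surjectivity, and your Steps two and three finish the proof with no further work.
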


\section{The Direct Scattering Map}
\label{sec:direct}

%%%%%%%%%%%%%%%%%%%%%%%%%%%%%%%%%%%%%%%%%%%
%
%		Section 3 as revised by PAP and checked by CS, 12.22.2016
%
%%%%%%%%%%%%%%%%%%%%%%%%%%%%%%%%%%%%%%%%%%%

This section is devoted to the analysis of the map $q \mapsto \rho$ defined by \eqref{n.T} and \eqref{rho} and  the proof of
Theorem \ref{thm:R}. We prove all estimates for $q \in \calS(\bbR)$ and use without comment Lee's result that the Jost solutions and scattering data are infinitely differentiable in their arguments. This fact justifies integrations by parts and differentiation of integral equations that occur in the proofs. The estimates so obtained
show that the map $q \mapsto \rho$ extends to a Lipschitz continuous map on $H^{2,2}(\bbR) \cap U$, where $U$ is the spectrally determined set in Theorem \ref{thm:R}.

Using the symmetry reduction \eqref{n.sym}, it suffices to study the column vector $ (n_{11}^\pm, n_{21}^\pm)^T$. 
Setting
$$ e_\lam(x) =  e^{-2i\lam x} , $$
we have from \eqref{n.de}-\eqref{n.ac}
that $n_{11}^\pm$ and $n_{21}^\pm$ obey the integral equations
\begin{subequations}
\label{n.ie1}
\begin{align}
\label{n11.pre}
n_{11}^\pm(x,\lam)	
	&=	1	-	\intpm
							\left(
								q(y) n_{21}^\pm(y,\lam) + p_1(y) n_{11}^\pm(y,\lam) 
							\right)
					\, dy\\
\label{n21.pre}
 \lam^{-1} n_{21}^\pm (x,\lam)
	&=	-\intpm    e_\lam(y-x)  
					\left( 
						\overline{q(y)} n^\pm_{11}(y,\lam) + p_2(y) \left( \lam^{-1} n_{21}^\pm(y,\lam) \right)
					\right)
					\, dy.					
\end{align}
\end{subequations}
As we will see, these equations imply that $n_{21}^\pm(x,\lam)/\lam$ is regular at $\lam=0$.
To compute $\alpha$ and $\beta$ from $n_{11}^\pm$ and $n_{21}^\pm$, we evaluate \eqref{n.T} at $x=0$ and use the symmetry \eqref{n.sym} to conclude that
\begin{align}
\label{alpha}
\alpha(\lam)	&=	n_{11}^+(0,\lam) \overline{n_{11}^-(0,\lam)} - \lam^{-1} n_{21}^-(0,\lam) \overline{n_{21}^+(0,\lam)},\\[10pt]
\label{beta}
\beta(\lam)		&=	\frac{1}{\lam}\left( n_{11}^-(0,\lam) \overline{n_{21}^+(0,\lam)} - n_{11}^+(0,\lam) \overline{n_{21}^-(0,\lam)}\right).
\end{align}
To prove Theorem \ref{thm:R}, we  need  estimates on the solutions of \eqref{n.ie1} and their derivatives in $\lambda$ as $L^2(\bbR)$-valued functions of $x$. For $\lambda$ in a bounded interval
{$I_0$}, we can study the equations \eqref{n.ie1} directly. In subsection \ref{sec:direct.small}, we will prove:

\begin{proposition}
\label{prop:n.small}
Let $I_0$ be a bounded interval in $\bbR$.
The maps 
$$ q \rarr n_{11}^+(0,\lam), \quad q \rarr n_{21}^\pm(0,\lam)/\lam,$$ 
initially defined for $q \in \calS(\bbR)$, extend to Lipschitz continuous maps from $H^{2,2}(\bbR)$ into 
{$H^2(I_0)$}. 
\end{proposition}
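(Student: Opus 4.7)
The strategy is to solve the integral equations \eqref{n11.pre}--\eqref{n21.pre} by Volterra iteration and propagate $\lambda$-regularity by successively differentiating the fixed-point equation. Fix the $+$ case (the $-$ case is symmetric). Setting $\mathbf u=(n_{11}^+,\,\lambda^{-1} n_{21}^+)^T$, the system takes the form $\mathbf u=\mathbf 1+T_\lambda\mathbf u$, where $T_\lambda$ is a Volterra operator on $[0,\infty)$ whose kernel is built from $q$, $p_1$, $\bar q\,e^{-2i\lambda(y-x)}$, and $p_2\,e^{-2i\lambda(y-x)}$. Since $|e^{-2i\lambda(y-x)}|=1$ and $q,\,|q|^2\in L^1(\bbR)$ (by Cauchy--Schwarz, using the embedding $H^{2,2}\hookrightarrow L^{1,1}$), Lemma \ref{lemma:Volterra} applied pointwise in $\lambda$ gives a unique solution with $\|\mathbf u\|_{L^\infty([0,\infty)\times I_0)}$ bounded polynomially in $\|q\|_{H^{2,2}}$.

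Each $\lambda$-derivative of $\mathbf u$ satisfies a Volterra equation with the same operator $T_\lambda$ and a source obtained by differentiating the fixed-point equation. For $\partial_\lambda\mathbf u$, the source $(\partial_\lambda T_\lambda)\mathbf u$ contains integrals of the form $\int_x^\infty (y-x)\,e^{-2i\lambda(y-x)}K(y)\,\mathbf u(y,\lambda)\,dy$ with $K\in\{\bar q,\,p_2\}$; because $\langle y\rangle q,\,\langle y\rangle |q|^2\in L^1(\bbR)$, this source is bounded in $L^\infty_x L^\infty_\lambda(I_0)$, and Lemma \ref{lemma:Volterra} extends the same bound to $\partial_\lambda \mathbf u$.

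The second derivative is the critical step. The source for $\partial_\lambda^2 \mathbf u$ contains $-4\int_x^\infty (y-x)^2 e^{-2i\lambda(y-x)} K(y)\,\mathbf u(y,\lambda)\,dy$, and $(y-x)^2|q(y)|$ is \emph{not} in $L^1$ for general $q\in H^{2,2}$, so the preceding pointwise-in-$\lambda$ argument fails. We therefore relax from $L^\infty_\lambda$ to $L^2_\lambda(I_0)$: the operator $T_\lambda$ remains bounded on $L^\infty_x L^2_\lambda(I_0)$ with a $\lambda$-independent integrable kernel, since $|e^{-2i\lambda(y-x)}|=1$. To estimate the source in $L^2_\lambda(I_0)$ we decompose $\mathbf u(y,\lambda)=\mathbf u(y,0)+\int_0^\lambda \partial_\mu \mathbf u(y,\mu)\,d\mu$ and handle each piece by Plancherel in $\lambda$: for each fixed value of the inner parameter, the integral is a Fourier transform in $(y-x)$ whose $L^2_\lambda(\bbR)$-norm is controlled by $\|(y-x)^2 K(y)\,\mathbf w(y)\|_{L^2_y}$ with $\mathbf w\in\{\mathbf u(\cdot,0),\,\partial_\mu \mathbf u(\cdot,\mu)\}$. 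Using $(y-x)\le y\le\langle y\rangle$ for $y\ge x\ge 0$ together with the uniform bounds on $\mathbf u$ and $\partial_\mu \mathbf u$ from the previous paragraph, this is bounded by $\|q\|_{L^{2,2}}$; for the second piece a Cauchy--Schwarz in $\mu$ absorbs the $\int_0^\lambda d\mu$ without additional cost. Lemma \ref{lemma:Volterra} then gives $\partial_\lambda^2 \mathbf u\in L^\infty_x L^2_\lambda(I_0)$.

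Evaluating at $x=0$, the components $n_{11}^+(0,\cdot)$ and $\lambda^{-1} n_{21}^+(0,\cdot)$ belong to $H^2(I_0)$. Lipschitz dependence on $q$ follows the same template: the difference $\mathbf u_{q_1}-\mathbf u_{q_2}$ solves a Volterra equation with source linear in $q_1-q_2$, and the same Plancherel-based chain of estimates produces $\|\mathbf u_{q_1}(0,\cdot)-\mathbf u_{q_2}(0,\cdot)\|_{H^2(I_0)}\le C\|q_1-q_2\|_{H^{2,2}}$. The main obstacle is precisely the second-derivative estimate: only by moving from $L^\infty_\lambda$ to $L^2_\lambda$ and invoking Plancherel can the $(y-x)^2$ weight be absorbed using merely the $L^{2,2}$ decay of $q$.
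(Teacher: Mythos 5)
Your proposal is correct, and its skeleton — Volterra iteration in $x$ uniformly in $\lambda$, successive differentiation of the fixed-point equation, and resolvent bounds from the Volterra series — is the same as the paper's. The difference lies in how the critical source term $\int_x^\infty (y-x)^2 e^{-2i\lambda(y-x)}K(y)\,\mathbf u(y,\lambda)\,dy$ for the second $\lambda$-derivative is controlled. The paper (Lemmas \ref{lemma:n0}--\ref{lemma:n.lamlam}) sets up the whole scheme in the pair of spaces $C^0(\bbR^+,L^2(I_0))$ and $L^2(\bbR^+\times I_0)$ from the start: it first proves that $\bfn$ and $\bfn_\lam$ are square-integrable in $x$ as well as in $\lambda$, and then absorbs the weight $(y-x)^2\le y^2$ by Cauchy--Schwarz in $y$ against $\norm{q}{L^{2,2}}$ and the $L^2_y$-norm of the unknown (Lemma \ref{lemma:T0.lam}). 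You instead keep only $L^\infty$ control of $\mathbf u$ and $\partial_\lambda\mathbf u$, freeze the $\lambda$-dependence of the unknown via $\mathbf u(y,\lambda)=\mathbf u(y,0)+\int_0^\lambda\partial_\mu\mathbf u(y,\mu)\,d\mu$, and apply Plancherel in $\lambda$ to each frozen piece (with Minkowski/Cauchy--Schwarz in $\mu$), again landing on $\norm{q}{L^{2,2}}$. Both devices work; the paper's choice has the side benefit that the $L^2(\bbR^+\times I_0)$ bounds on $\bfn$ and $\bfn_\lam$ are reused verbatim in the large-$\lambda$ analysis and in the proof of Theorem \ref{thm:R}, while yours avoids having to propagate $x$-square-integrability of the solution through the iteration. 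Note also that the paper already uses your Plancherel mechanism, in essence, for the \emph{inhomogeneous} terms $(\bfn_0)_\lam$ and $(\bfn_0)_{\lam\lam}$ (the duality argument with $\widehat\varphi$ in Lemma \ref{lemma:n0}); your contribution is to extend that mechanism to the terms containing the unknown by the fundamental-theorem-of-calculus decomposition. One small point to make explicit when writing this up: the Lipschitz statement at second-derivative order requires Lipschitz dependence of $\partial_\lambda\mathbf u$ on $q$ in $L^\infty$ as an input to your decomposition, which follows from the same scheme at first order but should be recorded.
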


To obtain uniform estimates for large $\lam$, we begin with some simple algebraic manipulations on the solutions of \eqref{n}. 
Using the identity
$$ (-2i\lam)^{-1} (d/dx) e_\lam(x) = e_\lam(x) $$
and integrating by parts in \eqref{n21.pre}, we may remove the 
factor of $\lam$ at the expense of taking derivatives of $q$. Inserting \eqref{n.de} to evaluate the derivative of $n_{11}^\pm$ that occurs in the computation,% and 
%the resulting equation in \eqref{n11.pre}, 
we observe some cancellations and obtain that
\begin{subequations}
\label{n.ie2}
\begin{align}
\label{n11}
n_{11}^\pm(x,\lam)
	&=	1	+	\frac{1}{2i} \intpm q(y)	\int_y^{\pm \infty} { e_\lam(z-y)} \qsharp(z) n_{11}^\pm(z,\lam) \, dz \, dy\\
\label{n21}
n_{21}^\pm(x,\lam)
	&=		-	\frac{1}{2i}\overline{q(x)} n_{11}^\pm(x,\lam) 
				- 	\frac{1}{2i} \intpm { e_\lam(y-x) } \qsharp(y) n_{11}^\pm(y,\lam) \, dy	
\end{align}
\end{subequations}
where
\begin{equation}
\label{qsharp}
\qsharp(x) = \overline{q'(x)}+ \overline{q(x)} p_1(x) = \overline{q'(x)}-\frac{i}{2}|q(x)|^2 \overline{q(x)}.
\end{equation}
Note that
$n_{21}^\pm$ does not appear in the equation for $n_{11}^\pm$. 
 We first solve the integral equation \eqref{n11} for $n_{11}^\pm$, and then use its solution to compute $n_{21}^\pm$. 

It is helpful to 
extract the leading order behavior of $n_{11}^\pm$ and $n_{21}^\pm$ for large $\lam$ by setting
\begin{equation}
\label{eta}
\eta_{11}^\pm(x,\lam) = n_{11}^\pm(x,\lam) -1, 
\quad
\eta_{21}^\pm(x,\lam) = n_{21}^\pm(x,\lam) + \frac{1}{2i} \overline{q(x)}. 
\end{equation}
From \eqref{n.ie2} and \eqref{eta}, we conclude that
\begin{subequations}
%\label{eta.ie}
\begin{align}
\label{eta11}
\eta_{11}^\pm(x,\lam) &= F_\pm(x,\lam)
		+ \left( T_\pm \eta_{11}^\pm\right)(x,\lam)\\
\label{eta21}
\eta_{21}^\pm(x,\lam)		&= G_\pm(x,\lam) -\frac{1}{2i}\overline{q(x)} \eta_{11}^\pm 
- \frac{1}{2i} \intpm e_\lam(y-x)  \qsharp(y) \eta_{11}^\pm(y,\lam) \, dy		
\end{align}
\end{subequations}
where
\begin{subequations}
\begin{align}
\label{F}
F_\pm(x,\lam)	&=	-\intpm q(y) G_\pm(y,\lam) \, dy, 
\\[5pt]
\label{G}
G_\pm(x,\lam)	&=	 -\frac{1}{2i} \intpm  e_\lam(y-x)  \qsharp(y) \, dy	\\[5pt]
\label{T}
\left( T_\pm f \right)(x,\lam) 	
						&=	 \frac{1}{2i} \intpm q(y)	
										\int_y^{\pm \infty}{  e_\lam(z-y)} \qsharp(z) f(z)  \, dz 
\end{align}
\end{subequations}
In terms of the solutions $\eta_{11}^\pm$ and $\eta_{21}^\pm$,
the  functions $\alpha(\lambda)$ and $\beta(\lambda)$ defined in
\eqref{alpha} and \eqref{beta} are expressed as
\begin{align}
\label{alpha2}
\alpha(\lam) -1 	
	&=	\alpha_1(\lam) - \frac{1}{\lam} \alpha_2(\lam)\\
\label{beta2}
\lam \beta(\lam)		
		&=	\beta_1(\lam) + \beta_2(\lam)
\end{align}
where
\begin{align*}
\alpha_1(\lam)	&=		\eta_{11}^+(0,\lam) + \overline{\eta_{11}^-(0,\lam)}
								+	\eta_{11}^+(0,\lam)\overline{\eta_{11}^-(0,\lam)} \\[5pt]
\alpha_2(\lam)	&=		\frac{|q(0)|^2}{4} 		
										+\frac{1}{2i}q(0)\eta_{21}^-(0,\lam)	
										-\frac{1}{2i}\overline{q(0)\eta_{21}^+(0,\lam)}
										+\eta_{21}^-(0,\lam)\overline{\eta_{21}^+(0,\lam)}	
\end{align*}
and
\begin{align*}
\beta_1(\lam)		&=	\left(
										\overline{\eta_{21}^+(0,\lam)}  - \frac{1}{2i}q(0)\eta_{11}^+(0,\lam)
								\right)
							-	\left(
										\overline{\eta_{21}^-(0,\lam)} {  -} \frac{1}{2i}q(0)\eta_{11}^-(0,\lam)
								\right)\\[5pt]
\beta_2(\lam)		&=	\eta_{11}^-(0,\lam)\overline{\eta_{21}^+(0,\lam)}  
							- 	\eta_{11}^+(0,\lam) \overline{\eta_{21}^-(0,\lam)}
\end{align*}
Let $\eta^\pm = (\eta_{11}^\pm, \eta^\pm_{21})$ { and } 
$ I_\infty \equiv \left\{ \lam \in \bbR: |\lam| > 1 \right\}. $
In Section \ref{sec:direct.large}, we will prove:
 
\begin{proposition}
\label{prop:n.large}
The maps 
$$ 
q \rarr \eta^\pm(0,\lam), \quad 
q \rarr \eta^\pm_\lam(0,\lam), \quad 
q \rarr \lam^{-1} \eta^\pm_{\lam\lam}(0,\lam),
$$
initially defined for $q \in \calS(\bbR)$, extend to Lipschitz continuous from $H^{2,2}(\bbR)$ to $L^2(I_\infty)$.  
\end{proposition}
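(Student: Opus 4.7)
\medskip

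\textbf{Proposal.} My plan is to solve the Volterra equation \eqref{eta11} for $\eta_{11}^\pm$ by applying Lemma \ref{lemma:Volterra} on a space of $L^2(I_\infty)$-valued functions of $x$, and then to bound $\eta_{21}^\pm$ algebraically via \eqref{eta21}. The heart of the matter is to show that the forcing terms $F_\pm(0,\lam)$ and $G_\pm(0,\lam)$, as well as their first and second $\lam$-derivatives (with the appropriate $\lam^{-1}$ weight) lie in $L^2(I_\infty)$ with a norm that is Lipschitz in $q \in H^{2,2}(\bbR)$. Differentiating \eqref{eta11} in $\lam$ produces Volterra equations of the same type for $\partial_\lam \eta_{11}^\pm$ and $\partial_\lam^2\eta_{11}^\pm$, with new forcing terms built from $\partial_\lam F_\pm$ and kernels obtained by differentiating the exponential $e^{-2i\lam(z-y)}$. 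Provided each such forcing lies in $L^2(I_\infty)$, the uniform Volterra resolvent bound \eqref{resol:est} transfers the bound to $\eta_{11}^\pm$ itself.

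The key reformulation is to change variables $w = z - y$ in the double integral \eqref{F}, writing
\begin{equation*}
F_\pm(0,\lam) \;=\; \frac{1}{2i}\int_0^{\pm\infty} e^{-2i\lam w}\, h_\pm(w)\,dw, \qquad h_\pm(w) = \intnot q(y)\,\qsharp(y+w)\,dy,
\end{equation*}
so that $F_\pm(0,\cdot)$ is essentially a (half-line) Fourier transform of the correlation $h_\pm$, and similarly $G_\pm(0,\cdot)$ is essentially the Fourier transform of $\qsharp$. Plancherel combined with Young's inequality gives $\|F_\pm(0,\cdot)\|_{L^2}\le C\|q\|_1\|\qsharp\|_2$, both factors controlled by $\|q\|_{H^{2,2}}$ (using $\|q\|_1 \leq C\|\langle\cdot\rangle q\|_2$ for the first and the algebra property of $H^{2,2}$ for $\qsharp = \bar{q}' - (i/2)|q|^2\bar q$ for the second). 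Differentiating $e^{-2i\lam w}$ once in $\lam$ brings down a factor $w$; the identity $w = (y+w) - y$ redistributes this as a correlation of $q$ with $(\cdot)\qsharp$ plus a correlation of $(\cdot)q$ with $\qsharp$, each of which is controlled in $L^2$ via Young's inequality and the bound $\|\langle \cdot\rangle u'\|_2 \le C\|u\|_{H^{2,2}}$ for the linear part of $\qsharp$.

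The main obstacle is the second $\lam$-derivative: a naive differentiation brings down $w^2$, which under the same decomposition would require $\langle\cdot\rangle^2 q' \in L^2$, a property \emph{not} implied by $q\in H^{2,2}$. This is precisely where the factor $\lam^{-1}$ in the third map is essential. Using the identity $(-2i\lam)^{-1}\partial_w e^{-2i\lam w} = e^{-2i\lam w}$, one can perform a single integration by parts in $w$ inside the Fourier representation of $\partial_\lam^2 F_\pm$, trading one factor of $\lam$ for one derivative:
\begin{equation*}
\lam^{-1}\partial_\lam^2 F_\pm(0,\lam) \;=\; \frac{C}{\lam^2}\int_0^{\pm\infty} e^{-2i\lam w}\bigl(2w h_\pm(w) + w^2 h_\pm'(w)\bigr)\,dw \;+\; (\text{boundary}),
\end{equation*}
where boundary contributions at $w=0$ decay as $\lam^{-k}$ and are easily absorbed. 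Since $\|\lam^{-2}\|_{L^\infty(I_\infty)}\le 1$, Plancherel reduces matters to $L^2$-bounds on $2wh_\pm$ and $w^2 h_\pm'$; after integration by parts in the $y$-variable inside $h_\pm'$ to move the derivative from $\qsharp$ onto $q$, all resulting terms are controlled by $H^{2,2}$-norms of $q$ (including $\|q''\|_2$ and $\|\langle\cdot\rangle q'\|_2$ but avoiding $\|\langle\cdot\rangle^2 q''\|_2$).

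With the forcing estimates in hand, the Volterra resolvent \eqref{resol:est} yields the desired $L^2(I_\infty)$ bounds on $\eta_{11}^\pm(0,\cdot)$ and its $\lam$-derivatives. The estimates on $\eta_{21}^\pm(0,\cdot)$ then follow from \eqref{eta21} together with one more application of the same IBP-Plancherel machinery on the remaining Fourier-type integral. Because every estimate in this scheme is bilinear (or multilinear) in the data through the pairs $(q,\qsharp)$, replacing $q$ by $q_1 - q_2$ and using the triangle inequality yields Lipschitz continuity from $H^{2,2}(\bbR)$ into $L^2(I_\infty)$, completing the proof.
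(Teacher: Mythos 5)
Your overall architecture coincides with the paper's: Volterra resolvent bounds uniform in $\lam$, Plancherel--Young estimates on the forcing terms viewed as half-line Fourier transforms of correlations, and an integration by parts to pay for the second $\lam$-derivative with the $\lam^{-1}$ weight. The treatment of $F_\pm$, $G_\pm$ and their first derivatives is sound. However, the key step --- the bound on $\lam^{-1}\partial_\lam^2 F_\pm$ --- has a genuine gap. Your integration by parts in $w$ replaces $w^2h_\pm(w)$ by $2wh_\pm(w)+w^2h_\pm'(w)$, and you then need $w^2h_\pm'\in L^2$. Isolate the $\overline{q'}$ part of $\qsharp$ in $h_+(w)=\int_0^\infty q(y)\,\qsharp(y+w)\,dy$. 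After your integration by parts in $y$, $w^2h_+'(w)$ contains the boundary term $-q(0)\,w^2\,\qsharp(w)$ and the correlation $\int_0^\infty w^2\,q'(y)\,\overline{q'}(y+w)\,dy$. Neither is controlled by $\|q\|_{H^{2,2}}$: the first requires $\langle\cdot\rangle^2 q'\in L^2$, and for the second, every splitting of $w^2=\bigl((y+w)-y\bigr)^2$ leaves two powers of the spatial variable and two derivatives to distribute over the two factors, forcing one factor into $\langle\cdot\rangle^2q'$ or $\langle\cdot\rangle q''$ in $L^2$, or $\langle\cdot\rangle q'$ in $L^1$ --- none of which follows from $q\in H^{2,2}(\bbR)$, since interpolation yields only $\|\langle\cdot\rangle q'\|_{2}\lesssim\|q\|_{H^{2,2}}$. (For instance $q(x)=\langle x\rangle^{-2.6}e^{i\langle x\rangle^{1.5}}$ lies in $H^{2,2}(\bbR)$ but $x^2q'\notin L^2$.) The same obstruction reappears, unaddressed, in your ``one more application of the same machinery'' for $\partial_\lam^2\eta_{21}$, where the analogous term is $(y-x)^2\overline{q'}\,\eta_{11}$.

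The repair is a different integration by parts, which is what the paper does in Lemmas \ref{lemma:FG}(v), \ref{lemma:T.lam}(iii), and in the $W_{11}$ computation of Lemma \ref{lemma:eta21}: split $\qsharp=\overline{q'}-\tfrac{i}{2}|q|^2\overline{q}$ \emph{first}, leave the cubic part alone (no derivative, so the weight $w^2$ distributes harmlessly onto underived copies of $q$), and for the $\overline{q'}$ part write $\overline{q'}(y+w)=\partial_w\overline{q}(y+w)$ and integrate by parts against $e^{-2i\lam w}w^2$ directly. This \emph{removes} the derivative from the correlation rather than redistributing it, producing $(2i\lam w^2+2w)\,q(y)\overline{q}(y+w)$, whose factor of $\lam$ is exactly absorbed by the available $\lam^{-1}$ and whose weights land on $\langle\cdot\rangle^2 q\in L^2$ and $\langle\cdot\rangle q\in L^1$. (In the $\eta_{21}$ estimate this same move generates $(\eta_{11})_x$, which must then be eliminated using the differential equation \eqref{n11.pre}, reintroducing $\eta_{21}$ on the right with integrable coefficients.) A secondary point you should also address: the forcings of the differentiated Volterra equations contain $T_\lam[\eta_{11}]$ and $T_{\lam\lam}[\eta_{11}]+2T_\lam[(\eta_{11})_\lam]$, and bounding these requires control of $\eta_{11}$ and $(\eta_{11})_\lam$ in $L^2(\bbR^+\times I_\infty)$, not merely in $C^0(\bbR^+,L^2(I_\infty))$; the paper carries both norms through the whole argument for exactly this reason, while your scheme tracks only one.
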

 
\begin{proof}[Proof of Theorem \ref{thm:R}, given Propositions \ref{prop:n.small} and \ref{prop:n.large}]
Propositions \ref{prop:n.small}, \ref{prop:n.large}, and Sobolev embedding show that $q \mapsto \eta(0,\dotarg)$ is Lipschitz continuous from $H^{2,2}(\bbR)$ into $H^1(\bbR)$. It follows from this fact and \eqref{alpha2} that $q \mapsto \alpha-1$ is Lipschitz continuous from $H^{2,2}(\bbR)$ to $H^1(\bbR)$.
Since $\alpha \equiv 1$ if $q=0$, there is an open neighborhood $U$ of zero in $H^{2,2}(\bbR)$ so that $\inf_{\lam \in \bbR} |\alpha(q)(\lam)| >0 $ for all $q \in U$. The map $q \rarr 1/\alpha-1$ is locally Lipschitz continuous from $U$ into $H^1(\bbR)$. 

It follows from Proposition \ref{prop:n.small} and \eqref{alpha}--\eqref{beta} that the map $q \rarr \rho$ is Lipschitz from $U$ to $H^2(I_0)$ for any bounded interval $I_0$. To show that $q \mapsto \rho$ is also Lipschitz from $U$ to $H^{2,2}(I_\infty)$, we need to show that the maps $q \rarr \lam^2 \rho$ and $q \rarr \rho''$ are Lipschitz continuous on $U$.

To prove that $q \rarr \lam^2 \rho$ is Lipschitz continuous from $U$ to $L^2(I_\infty)$, it suffices to show that $q \rarr \lam^2 \beta$ has the same Lipschitz continuity. From \eqref{beta2}, we compute
\begin{align*}
\lambda^2 \beta(\lambda)
	&=	\lambda
					\left(
						\overline{\eta_{21}^+(0,\lambda)} { + }\frac{1}{2i}q(0)\eta_{11}^+(0,\lambda)
					\right)
					%\\
%	&\quad
			-	\lambda
					\left(
						\overline{\eta_{21}^-(0,\lambda)} { - } \frac{1}{2i}q(0)\eta_{11}^-(0,\lambda) 
					\right)\\
	&\quad
			+\lambda
					\left( 
						\overline{\eta_{21}^+(0,\lambda)}\eta_{11}^-(0,\lambda)-
						\overline{\eta_{21}^-(0,\lambda)} \eta_{11}^+(0,\lambda)
					\right)
\end{align*}
To estimate the three right-hand terms, 
we rewrite \eqref{eta21} as 
$$ 
\eta_{21}^\pm =  { -} 
	\frac{1}{2i} \qbar \eta_{11}^\pm 
	-\frac{1}{2i}
		\int_x^{\pm \infty}{ e^{-2i\lambda(y-x)}}q^\sharp(1+\eta_{11}^\pm) \, dy.
$$
Setting $x=0$ and integrating by parts to remove the power of $\lambda$ we obtain
\begin{align*}
\lambda\left(\eta_{21}^\pm (0,\lambda) + \frac{1}{2i}\qbar(0) \eta_{11}^\pm(0,\lambda)\right)
	&=	-\frac{\lambda}{2i} \int_0^{\pm \infty} e^{-2i\lambda y } q^\sharp (1+\eta_{11}^\pm) \, dy\\%
	&=	\frac{1}{4}q^\sharp(0) + \frac{1}{4}q^\sharp(0)\eta_{11}^\pm(0,\lambda)+R^\pm(\lambda)
\end{align*}
where 
\begin{align*}
R^\pm(\lambda) 
	&= 		\left(	
					\frac{1}{4}\int_0^{\pm \infty} e^{-2i\lambda y} (q^\sharp)'(1+\eta_{11}^\pm)
			\right)\\
	&\quad +
			\left(
				\frac{1}{4}\int_0^{\pm \infty} e^{-2i\lambda y} 
							q^\sharp \left[ q  \eta_{21}^\pm + \frac{1}{2i}|q|^2 \eta_{11}^\pm \right]							
			\right).
\end{align*}
We can then compute 
\begin{align*}
\lambda^2 \beta(\lambda) 
	= 	&		\overline{R^+(\lambda)}-\overline{R^-(\lambda)}
				+	\eta_{11}^-(0,\lambda)\overline{R^+(\lambda)}-
					\eta_{11}^+(0,\lambda)\overline{R^-(\lambda)} \\
&+ \frac{1}{4} \overline {q}^\sharp \left[ \overline \eta_{11}^+(0,\lambda)  -\overline \eta_{11}^-(0,\lambda)  \right]	
\end{align*}				
Since $q \mapsto \eta_{11}^\pm(0,\lam)$ is Lipschitz from $H^{2,2}(\bbR)$ to $C^0(I_\infty)$ by Sobolev embedding, it suffices to show that $q \mapsto R^\pm(\lam)$ is Lipschitz from $H^{2,2}(\bbR)$ to $L^2(I_\infty)$. This follows from the estimate
\begin{align*}
\norm{R^\pm}{L^2(I_\infty)}
	&\leq		\norm{(\qsharp)_x}{2}
					\left(
							1	+	\norm{\eta_{11}^\pm}{L^2(\bbR^\pm \times I_\infty)}
					\right)\\[5pt]
	&\quad	 +
				\left(\norm{q^\sharp q}{2} + \norm{q^\sharp|q|^2}{2}\right)
				\left(
						1	+ \norm{\eta_{11}^\pm}{L^2(\bbR^\pm \times I_\infty)} 
							+ 	\norm{\eta_{21}^\pm}{L^2(\bbR^\pm \times I_\infty)}
				\right).
\end{align*}

To prove that $q \rarr \rho''$ is Lipschitz continuous on $U$, we exploit the identity
$$ \left(\frac{\beta}{\alpha}\right)'' 
	= \frac{(\beta)''}{\alpha} 
				- 2 \frac{(\beta)'(\alpha)'}{(\alpha)^2} 
				+ \frac{\beta}{\alpha}
						\left( 
								-\frac{\alpha''}{\alpha}+\frac{ 2(\alpha')^2}{\alpha^2} 
						\right).
$$
From this identity, it suffices to show that the maps
\begin{equation}
\label{maps.lip}
q \mapsto \beta''(\lam),
\quad 
q \mapsto \beta'(\lam), 
\quad 
q \mapsto \lam \beta'(\lam),
\quad
q \mapsto \alpha'(\lam),
\quad
q \mapsto \lam ^{-1}\alpha''(\lam), 
\end{equation}
are Lipschitz continuous from $H^{2,2}(\bbR)$ to $L^2(I_\infty)$, and that the map 
$q \mapsto \lam^{-1}\alpha'(\lam)$ is continuous from $H^{2,2}(\bbR)$ to $C^0(I_\infty)$. 
This last fact will follow from Lipschitz continuity of the maps $q \mapsto \alpha'(\lam)$ and 
$q \mapsto \lam^{-1} \alpha''(\lam)$ from $H^{2,2}(\bbR)$ to $L^2(I_\infty)$ and Sobolev embedding. 
Lipschitz continuity of the maps \eqref{maps.lip} is easily deduced from \eqref{alpha2}, \eqref{beta2}, and Proposition \ref{prop:n.large}.
\end{proof}

\subsection{Small-$\lam$ Estimates}
\label{sec:direct.small}

In this subsection we prove Proposition \ref{prop:n.small}. We give the proofs for $n_{11}^+$ and $n_{21}^+$ since the others are similar. We set
\begin{equation}
\label{bold.n}
\bfn = (n_{11}^+-1,\lam^{-1} n_{21}^+)
\end{equation}
so that \eqref{n.ie1} becomes 
\begin{equation}
\label{n.small}
\bfn = \bfn_0 + T_0 \bfn, \quad \bfn_0 \equiv T_0 \bfe_1
\end{equation}
where
\begin{equation}
\label{T0}
(T_0 h )(x) = -\int_x^\infty K_0(x,y,\lam) h(y) \, dy
\end{equation}
and
\begin{equation}
\label{K0}
K_0(x,y,\lam) = \Twomat{p_1(y)}{q(y)}{e_\lam(y-x) \overline{q(y)}}{p_2(y)}
\end{equation}
so that
\begin{equation}
\label{n0}
\bfn_0	=	\int_x^\infty \Twovec{p_1(y)}{e_\lam(y-x) \overline{q(y)}} \, dy.
\end{equation}
We will establish existence, uniqueness, and estimates on $\bfn$ by studying \eqref{n.small} as a Volterra integral equation. To study $\lam$-derivatives of the solution, we will solve the integral equations
\begin{align}
\label{n.lam}
\bfn_\lam			&=	\bfn_1+ T_0 (\bfn_\lam), & \bfn_1 &\equiv  (\bfn_0)_\lam + (T_0)_\lam \bfn \\[5pt]
\label{n.lamlam}
\bfn_{\lam\lam}		&= \bfn_2 + T_0(\bfn_{\lam\lam}),
&\bfn_2  &\equiv (\bfn_0)_{\lam\lam} +	(T_0)_{\lam\lam}\bfn + 2(T_0)_\lam \bfn_\lam 
\end{align}

We will prove Proposition \ref{prop:n.small} in the following steps. Let $I_0$ denote a bounded interval of $\bbR$, 
which we'll finally set to $I=(-2,2)$. We will write $T_0$ as $T_0(\lam)$ or $T_0(\lam,q)$ to emphasize its dependence on $\lam \in I$ and $q \in H^{2,2}(\bbR)$. First, we obtain basic estimates on $\bfn_0$ and its derivatives (Lemma \ref{lemma:n0})  and obtain 
mapping properties of the operators $T_0$, $(T_0)_\lam$, and $(T_0)_{\lam\lam}$ (Lemma \ref{lemma:T0.lam}). Second, we show that the 
family of operators $(I-T_0(\lam))^{-1}- I$ indexed by $\lam \in I_0$ induces bounded operators  
$$ \widehat{L_0}: C^0(\bbR^+,L^2({ I_0})) \rarr C^0(\bbR^+,L^2({ I_0})), \qquad \widehat{L_0}:  L^2(\bbR^+ \times { I_0}) \rarr L^2(\bbR^+ \times { I_0})$$  
(Lemmas \ref{lemma:res.T0}, \ref{lemma:R0} and Remark 
\ref{rem:R0}).
Third, we solve \eqref{n.small} to prove that
$$\bfn \in C^0(\bbR^+, L^2({ I_0})) \cap L^2(\bbR^+ \times { I_0})$$  
(Lemma \ref{lemma:n}).
Fourth, we use this result to show that 
$$\bfn_1 \in C^0(\bbR^+, L^2({ I_0})) \cap L^2(\bbR^+ \times { I_0})$$ 
and solve \eqref{n.lam} to show that 
$$\bfn_\lam \in C^0(\bbR^+, L^2({ I_0})) \cap L^2(\bbR^+ \times { I_0})$$
(Lemma \ref{lemma:n.lam}).
Fifth, we use this result to show that 
$\bfn_2 \in C^0(\bbR^+,L^2({ I_0}))$ and solve \eqref{n.lamlam} to prove that 
$$\bfn_{\lam\lam} \in C^0(\bbR^+,L^2({ I_0}))$$ 
(Lemma \ref{lemma:n.lamlam}).
Combining these results, we conclude that $\bfn(0,\lam) \in H^2({ I_0})$. 
Lipschitz continuity of the map $q \mapsto \bfn(0,\lam)$ follows from resolvent bounds established on $(I-T_0)^{-1}$ and the second resolvent formula.

In what follows, we define 
\begin{align*}
\gamma_1(y) 			&=2 |q(y)| + |p_1(y)| + |p_2(y)|.
\end{align*}
Note that $\norm{q}{H^{2,2}}$ bounds $\norm{\gamma_1}{L^1}$ and $\norm{\gamma_1}{L^{2,2}}$. 

\bigskip

(1) \textbf{Estimates on $\bfn_0$ and $T_0$}.
Let 
\begin{align*}
g_1(x,y,\lam)	&= 	-2i(x-y) e_\lam(x-y) \overline{q(y)}, \\
g_2(x,y,\lam)	&=	4(x-y)^2 e_\lam(x-y) \overline{q(y)}
\end{align*}
Then
\begin{equation}
\label{n0.lams}
(\bfn_0)_\lam	= \Twovec{0}{\dint_x^\infty g_1(x,y,\lam) \, dy}, 
\quad
(\bfn_0)_{\lam\lam} = \Twovec{0}{\dint_x^\infty g_2(x,y,\lam) \, dy}
\end{equation}
while the integral kernels of $(T_0)_\lam$ and $(T_0)_{\lam\lam}$ are
\begin{align}
\label{K0.lam}
(K_0)_\lam(x,y,\lam) &= \Twomat{0}{0}{g_1(x,y,\lam)}{0},  
\\[5pt]
\label{K0.lamlam}
(K_0)_{\lam\lam}(x,y,\lam) &= \Twomat{0}{0}{g_2(x,y,\lam)}{0}.
\end{align}

\begin{lemma}
\label{lemma:n0}
Let $I_0$ be a bounded interval. The following estimates hold.
\begin{align}
\label{n.est}
\norm{\bfn_0}{C^0(\bbR^+,L^2({ I_0}))} &\leq \norm{p_1}{L^1} + \norm{q}{L^2}, \quad
\norm{\bfn_0}{L^2(\bbR^+ \times { I_0})}\leq \norm{q}{L^{2,1/2}}.\\
\label{n.lam.est}
\norm{(\bfn_0)_\lam}{C^0(\bbR^+,L^2({ I_0}))} &\leq \norm{q}{L^{2,1/2}} , \quad
\norm{(\bfn_0)_\lam}{L^2(\bbR^+ \times I)} \lesssim \norm{q}{L^{2,1}} \\
\label{n.lamlam.est}
\norm{(\bfn_0)_{\lam\lam}}{C^0(\bbR^+,L^2({ I_0}))} &\leq \norm{q}{L^{2,1}}.
\end{align}
\end{lemma}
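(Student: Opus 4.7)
The plan is to exploit the block structure of $\bfn_0$: its first entry is $\lam$-independent while its second entry is a windowed Fourier transform of $\overline{q}$, so Plancherel's theorem is the natural tool. Moreover, since the $\lam$-derivatives $(\bfn_0)_\lam$ and $(\bfn_0)_{\lam\lam}$ have vanishing first entries by \eqref{n0.lams}, only the second component enters in those bounds.

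For the second component, write
$$
f(x,\lam) = \int_x^\infty e^{-2i\lam(y-x)}\overline{q(y)}\,dy = e^{2i\lam x}\,\calF\bigl(\chi_{[x,\infty)}\overline{q}\bigr)(\lam),
$$
so Plancherel yields $\|f(x,\cdot)\|_{L^2(\bbR)}^2 = \pi\int_x^\infty|q(y)|^2\,dy$. Since $I_0 \subset \bbR$ is bounded, dominating the $L^2(I_0)$ norm by the $L^2(\bbR)$ norm and taking the supremum in $x$ gives the $C^0(\bbR^+,L^2(I_0))$ part of \eqref{n.est}. The $L^2(\bbR^+ \times I_0)$ bound follows by a Fubini swap after Plancherel:
$$
\int_0^\infty\|f(x,\cdot)\|_{L^2(I_0)}^2\,dx \le \pi\int_0^\infty\!\int_x^\infty|q(y)|^2\,dy\,dx = \pi\int_0^\infty y|q(y)|^2\,dy \le \pi\|q\|_{L^{2,1/2}}^2.
$$
For the first component $P(x) = -\int_x^\infty p_1(y)\,dy$, the $\lam$-independence gives $\|P(x)\|_{L^2(I_0)} \le |I_0|^{1/2}\|p_1\|_{L^1}$ uniformly in $x$; for the $L^2(\bbR^+)$ piece one exploits the quadratic structure $p_1 = -(i/2)|q|^2$ via a Hardy-type integration by parts ($P' = -p_1$, $P(\infty)=0$) to bound $\|P\|_{L^2(\bbR^+)}$ by a polynomial expression in $\|q\|_{L^2}$ and $\|q\|_{L^{2,1/2}}$, which is harmless downstream.

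For the derivative estimates, differentiating under the integral brings down $(-2i(y-x))^k$, and Plancherel gives
$$
\|\partial_\lam^k f(x,\cdot)\|_{L^2(\bbR)}^2 = 4^k\pi\int_x^\infty(y-x)^{2k}|q(y)|^2\,dy.
$$
For $x \ge 0$ and $y \ge x$ the elementary bound $0 \le y-x \le y$ turns this into $\lesssim \|q\|_{L^{2,k}}^2$, yielding the $C^0$ estimates \eqref{n.lam.est} and \eqref{n.lamlam.est}. For the $L^2(\bbR^+\times I)$ bound on $(\bfn_0)_\lam$, Fubini produces $\int_0^y(y-x)^2\,dx = y^3/3$, controlling the norm by a multiple of $\|q\|_{L^{2,3/2}} \le \|q\|_{L^{2,2}}$, comfortably inside the $H^{2,2}$ norm.

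The main technical subtlety is the $L^2(\bbR^+)$ bound on the non-oscillatory first component: the pointwise estimate $|P(x)| \le \|p_1\|_{L^1}$ alone is not $L^2(\bbR^+)$-integrable in $x$, so one must exploit the quadratic structure of $p_1$. All other bounds reduce cleanly to Plancherel plus a Fubini swap that converts powers of $(y-x)$ into polynomial weights on $q$.
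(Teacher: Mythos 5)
Your proof is correct and follows essentially the same route as the paper: the paper bounds the $L^2(I_0)$-norm in $\lambda$ by pairing against a test function $\varphi$ and using $\widehat{\varphi}$, which is just the duality form of your direct Plancherel computation, and then converts the mixed norms by the same Fubini swap turning powers of $(y-x)$ into polynomial weights on $q$. You are in fact slightly more careful than the paper in two harmless respects: you justify the $L^2_x$-integrability of the $\lambda$-independent first component via the quadratic structure $p_1=-(i/2)|q|^2$ (the paper leaves this implicit), and your weights $\norm{q}{L^{2,k}}$, $\norm{q}{L^{2,3/2}}$ for the derivative estimates are the accurate ones, all still dominated by $\norm{q}{H^{2,2}}$ as required downstream.
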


\begin{proof}
To prove \eqref{n.est}, we note that the first component is independent of $\lam$, bounded by $\norm{p_1}{L^1}$, and continuous. To bound the second component, let $\varphi \in C_0^\infty({ I_0})$,  compute 
$$ \int_I \varphi(\lam) \int_x^\infty e_\lam(y-x) \overline{q(y)} \, dy =
    \int_x^\infty \widehat{\varphi}(y-x) \overline{q(y)} \, dy
 $$
so that 
$$ 
\norm{\int_x^\infty e_{(\dotarg)}(x-y) \overline{q(y)} \, dy}{L^2({ I_0})} 
\leq
\left(\int_x^\infty |q(y)|^2 \, dy\right)^{1/2}.
$$
The first estimate is immediate and the second follows by integration in $x$.

A similar argument shows that
$$ 
\norm{\int_x^\infty g_1(x,y,\dotarg) \, dy}{L^2({ I_0})} 
\leq 
\left( \int_x^\infty y |q(y)|^2 \, dy \right)^{1/2}
$$
from which \eqref{n.lam.est} follows.

Similarly,
$$
\norm{\int_x^\infty g_2(x,y,\dotarg) \, dy}{L^2({ I_0})}
\leq
\left( \int_x^\infty y^2 |q(y)|^2 \, dy \right)^{1/2}.
$$
\end{proof}

The operator $(T_0)_\lam$ induces  linear mappings $L^2(\bbR^+ \times { I_0}) \rarr L^2(\bbR^+\times { I_0})$ and $L^2(\bbR^+ \times { I_0}) \rarr C^0(\bbR^+, L^2({ I_0}))$ by the formula
$ g(x,\lam) = (T_0)_\lam(f(\dotarg,\lam))(x)$, and similarly for $(T_0)_{\lam\lam}$. We will need the following estimates on these induced maps.

\begin{lemma}
\label{lemma:T0.lam}
Suppose that $q \in H^{2,2}(\bbR)$. The following operator bounds hold uniformly 
in $q \in H^{2,2}(\bbR)$, and the operators are Lipschitz functions of $q$.
\begin{itemize}
\item[(i)]		$\norm{(T_0)_\lam}{L^2(\bbR^+ \times { I_0}) \rarr L^2(\bbR^+\times { I_0})} 
						\lesssim \norm{q}{L^{2,3/2}}$,
\item[(ii)]	$\norm{(T_0)_\lam}{L^2(\bbR^+ \times { I_0}) \rarr C^0(\bbR^+, L^2({ I_0}))} 
						\lesssim \norm{q}{L^{2,2}}$
\item[(iii)]	$\norm{(T_0)_{\lam\lam}}{L^2(\bbR^+ \times { I_0}) \rarr C^0(\bbR^+,L^2({ I_0}))} 
						\lesssim \norm{q}{L^{2,2}}$
\end{itemize}
\end{lemma}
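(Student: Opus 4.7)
The plan is to estimate each operator directly using Minkowski's inequality to interchange the $L^2_\lambda$-norm with the $y$-integration, reducing everything to one-dimensional weighted convolution-type bounds.

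For (i), I would start from the explicit kernel: $((T_0)_\lam f)_1 \equiv 0$ and
$$ ((T_0)_\lam f)_2(x,\lam) = -\int_x^\infty g_1(x,y,\lam) f_1(y,\lam)\,dy, \qquad |g_1(x,y,\lam)| = 2|y-x|\,|q(y)|. $$
Minkowski's inequality in $\lambda \in I_0$ and then Cauchy-Schwarz in $y$ give, with $F(y):=\|f_1(y,\cdot)\|_{L^2(I_0)}$,
$$ \|((T_0)_\lam f)(x,\cdot)\|_{L^2(I_0)} \leq 2\int_x^\infty (y-x)|q(y)|F(y)\,dy \leq 2\left(\int_x^\infty (y-x)^2|q(y)|^2\,dy\right)^{1/2}\|F\|_{L^2(\bbR^+)}. $$
Squaring, integrating in $x$ over $\bbR^+$, and using Fubini produces $\int_0^\infty |q(y)|^2 (y^3/3)\,dy \lesssim \|q\|_{L^{2,3/2}}^2$, while $\|F\|_{L^2(\bbR^+)} = \|f_1\|_{L^2(\bbR^+\times I_0)}$. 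This yields (i).

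For (ii), the same pointwise-in-$x$ bound derived above already provides
$$ \sup_x \|((T_0)_\lam f)(x,\cdot)\|_{L^2(I_0)} \leq 2\left(\int_0^\infty y^2|q(y)|^2\,dy\right)^{1/2}\|f\|_{L^2(\bbR^+\times I_0)} \lesssim \|q\|_{L^{2,2}}\|f\|_{L^2(\bbR^+\times I_0)}. $$
It remains to verify the $x$-continuity as an $L^2(I_0)$-valued function: for $x'\to x$ one splits $\int_x^\infty - \int_{x'}^\infty$ into a boundary piece $\int_x^{x'}$ (vanishing by dominated convergence against the $L^2$-bound on the integrand) and $\int_{x'}^\infty[g_1(x,y,\cdot)-g_1(x',y,\cdot)]f_1(y,\cdot)\,dy$, which tends to zero by pointwise continuity of $g_1$ in $x$ and dominated convergence. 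For (iii), the kernel of $(T_0)_{\lam\lam}$ satisfies $|g_2(x,y,\lam)|=4(y-x)^2|q(y)|$, so Minkowski and Cauchy-Schwarz give
$$ \|((T_0)_{\lam\lam} f)(x,\cdot)\|_{L^2(I_0)} \leq 4\left(\int_x^\infty (y-x)^4 |q(y)|^2 dy\right)^{1/2}\|F\|_{L^2(\bbR^+)} \lesssim \|q\|_{L^{2,2}}\|f\|_{L^2(\bbR^+\times I_0)}, $$
and the same continuity argument as in (ii) handles the $C^0(\bbR^+,L^2(I_0))$ valued-ness.

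Lipschitz dependence on $q\in H^{2,2}(\bbR)$ is essentially free: each operator is \emph{linear} in $q$ (the kernels $g_1,g_2$ depend linearly on $\bar q$), so $T_0(\lam,q_1)-T_0(\lam,q_2)$ is of the same form with $q$ replaced by $q_1-q_2$, and the bounds above applied to $q_1-q_2$ yield Lipschitz continuity with constant controlled by $\|q_1-q_2\|_{L^{2,3/2}}$ or $\|q_1-q_2\|_{L^{2,2}}$, which is in turn controlled by $\|q_1-q_2\|_{H^{2,2}}$. I do not anticipate a serious obstacle here; the only care needed is in the continuity-in-$x$ verification for (ii)--(iii), which is a standard dominated-convergence argument once the uniform-in-$x$ bounds are in place.
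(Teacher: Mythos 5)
Your proposal is correct and follows essentially the same route as the paper: a kernel bound uniform in $\lambda$ ($|g_1|\leq |y||q(y)|$, $|g_2|\leq y^2|q(y)|$) combined with Cauchy--Schwarz in $y$ and, for the $L^2$-to-$L^2$ bound, Fubini in $x$, with Lipschitz dependence coming for free from linearity of the kernels in $\overline{q}$. Your explicit Minkowski step and the dominated-convergence check of continuity in $x$ are just slightly more detailed versions of what the paper leaves implicit.
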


\begin{proof}
For an operator $T(\lam)$ with integral kernel $k(x,y,\lam)$ satisfying the estimate
$ \sup_{\lam \in I} |k(x,y,\lam)| \leq h(y)$ and satisfying $k(x,y,\lam)=0$ if $x>y$, the 
$\calB(L^2(\bbR^+ \times { I_0}))$-norm is controlled by 
$$
\left(\int_0^\infty \int_x^\infty h(y)^2 \, dy\, dx\right)^{1/2} 
= 
\left(  \int_0^\infty y \, h(y)^2 \, dy\right)^{1/2}
$$
and the $\calB(L^2(\bbR^+ \times { I_0}),  C^0(\bbR^+,L^2({ I_0})))$-norm is controlled by 
$$\sup_x \left(\int_0^\infty h(y)^2 \, dy\right)^{1/2}.$$ 
The conclusions follow from this observation and 
the estimates
$$ |g_1(x,y,\lam)| \leq |y||q(y)|, \quad |g_2(x,y,\lam)| \leq y^2 |q(y)| $$
true for $x \leq y$.  Since these operators are linear in $q$ the Lipschitz continuity is immediate.
\end{proof}

(2) \textbf{Resolvent estimates}.  Our construction of the resolvent is based on the estimate (see Lemma \ref{lemma:Volterra} and \eqref{volt})
\begin{equation}
\label{T0.volt}
(T_0 f)^*(x)  \leq \int_x^\infty \gamma_1(y) f^*(y) \, dy.
\end{equation}
which is an easy consequence of \eqref{K0}.

\begin{lemma}
\label{lemma:res.T0}
For each $\lam \in \bbR$ and $q \in H^{2,2}(\bbR)$, 
the operator $(I-T_0)^{-1}$ exists as a bounded operator 
from $C^0(\bbR^+)\otimes \bbC^2$ to itself. Moreover, 
$(I-T_0)^{-1}-I$ is an integral operator with continuous integral kernel $L_0(x,y,\lam)$, $L_0(x,y,\lam)=0$ for $x >y$. The integral kernel $L_0(x,y,\lam)$ satisfies 
the estimate
\begin{equation}
\label{L0}
\left| L_0(x,y,\lam) \right| \leq \exp\left( \norm{\gamma_1}{L^1}\right) \gamma_1(y)
\end{equation}
\end{lemma}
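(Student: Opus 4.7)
The plan is to invoke the Volterra machinery of Lemma \ref{lemma:Volterra} and then unpack the resulting Neumann series as an integral operator.

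First, I take $X = \mathbb{C}^2$ (with, say, the Euclidean norm) and view $T_0$ as an operator on $C^0(\bbR^+, \mathbb{C}^2)$. The kernel bound $|K_0(x,y,\lambda)| \leq \gamma_1(y)$ (immediate from the definition \eqref{K0} and the fact that $|e_\lam(y-x)|=1$, uniformly in $\lambda \in \bbR$) gives exactly the hypothesis \eqref{volt} in the form \eqref{T0.volt} with $h = \gamma_1 \in L^1(\bbR^+)$. Lemma \ref{lemma:Volterra} then yields existence of $(I-T_0)^{-1}$ on $C^0(\bbR^+,\mathbb{C}^2)$ together with the operator norm bound $\exp(\|\gamma_1\|_{L^1})$, uniformly in $\lambda \in \bbR$ and locally uniformly in $q \in H^{2,2}(\bbR)$.

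Next, I expand in a Neumann series $(I-T_0)^{-1} - I = \sum_{n\geq 1} T_0^n$. For $n \geq 1$, each $T_0^n$ is an iterated Volterra integral operator with kernel
\[
L_0^{(n)}(x,y,\lambda) = (-1)^n \int_{x \leq s_1 \leq \cdots \leq s_{n-1} \leq y} K_0(x,s_1,\lambda)K_0(s_1,s_2,\lambda)\cdots K_0(s_{n-1},y,\lambda)\, ds_1\cdots ds_{n-1},
\]
supported on $\{y \geq x\}$ and continuous there (since $q$, $p_1$, $p_2$ are continuous by Sobolev embedding on $H^{2,2}(\bbR)$, and the phase $e_\lambda$ is continuous). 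The pointwise bound $|K_0(u,v,\lambda)| \leq \gamma_1(v)$ propagates through the iterated integral to give
\[
\bigl|L_0^{(n)}(x,y,\lambda)\bigr| \leq \gamma_1(y)\, \frac{1}{(n-1)!}\left(\int_x^y \gamma_1(s)\, ds\right)^{n-1},
\]
which I establish by induction on $n$.

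Summing the series then defines $L_0(x,y,\lambda) = \sum_{n\geq 1} L_0^{(n)}(x,y,\lambda)$, with the majorant
\[
|L_0(x,y,\lambda)| \leq \gamma_1(y) \exp\!\left(\int_x^y \gamma_1(s)\,ds\right) \leq \exp\!\bigl(\|\gamma_1\|_{L^1}\bigr)\, \gamma_1(y),
\]
which is the claimed estimate \eqref{L0}. The same majorant shows the series of (continuous) kernels converges uniformly on compact subsets of $\{(x,y) : y \geq x\}$, hence $L_0$ is continuous there, and it vanishes for $x > y$ by construction. Finally, for any $f \in C^0(\bbR^+,\mathbb{C}^2)$, Fubini combined with absolute convergence allows one to exchange sum and integral, yielding $(I-T_0)^{-1}f - f = \int_x^\infty L_0(x,y,\lambda) f(y)\, dy$, completing the claim.

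The only mildly delicate point is justifying the uniform convergence of the kernel series and the Fubini interchange; this is controlled entirely by the majorant $\gamma_1(y)\exp(\|\gamma_1\|_{L^1})$, so the argument is routine. Everything else is a direct, $\lambda$-uniform reading of Lemma \ref{lemma:Volterra} once the pointwise estimate $|K_0|\leq \gamma_1$ is in hand.
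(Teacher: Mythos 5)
Your proposal is correct and follows essentially the same route as the paper: the Volterra bound $|K_0(x,y,\lam)|\leq\gamma_1(y)$ feeding into Lemma \ref{lemma:Volterra}, followed by the Neumann series with the factorial estimate on the iterated kernels and summation to obtain \eqref{L0}. The only differences are cosmetic (you track the sign $(-1)^n$ and use the slightly sharper majorant $\int_x^y\gamma_1$ in place of $\int_x^\infty\gamma_1$, and you spell out the continuity and Fubini details that the paper leaves implicit).
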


\begin{proof}
Because $T_0$ is a Volterra operator, we deduce from Lemma \ref{lemma:Volterra} that $(I-T_0)^{-1}$ exists as a bounded operator on $C^0(\bbR^+)\otimes \bbC^2$. We can obtain rather precise estimates on the resolvent through the Volterra series. The integral kernel $K_0(x,y,\lam)$ obeys the estimate
$\left| K_0(x,y,\lam) \right| \leq \gamma_1(y)$ where on the left, $\left| \dotarg \right|$ denotes the operator norm on $2 \times 2$ matrices. The operator 
$$ L_0 \equiv (I-T_0)^{-1} - I$$ 
is an integral operator with integral kernel 
$L_0(x,y,\lam)$ given by 
$$ L_0(x,y,\lam) = \begin{cases}
						 	\sum_{n=1}^\infty K_n(x,y,\lam), 	&	x \leq y \\
						 	\\
						 	0,												&  x > y
						\end{cases}
$$
where

\begin{multline*}
K_n(x,y,\lam) = \\
	\int_{x \leq y_1 \leq \dots \leq y_{n-1}}
		K_0(x,y_1,\lam) K_0(y_1,y_2,\lam) \ldots K_0(y_{n-1},y,\lam)
	\, dy_{n-1} \, \ldots \, dy_1
\end{multline*}

\noindent
and the estimate
$$ 
\left| K_n(x,y,\lam) \right| \leq \frac{1}{(n-1)!} \left( \int_x^\infty \gamma_1(t)   dt \right)^{n-1} \gamma_1(y)	
$$
holds. The estimate \eqref{L0} follows.
\end{proof}

Now suppose that $f \in C^0(\bbR^+,L^2({ I_0}))$ and let
\begin{equation}
\label{fLg.def}
g(x,\lam)= \int_x^\infty L_0(x,y,\lam) f(y,\lam) \, dy.
\end{equation}
Denote by $\widehat{L}_0$ the map $f \rarr g$. We will prove:

\begin{lemma}
\label{lemma:R0} The estimates
\begin{equation}
\label{R0.1}
\Norm{\widehat{L}_0}{\calB(C^0(\bbR^+,L^2({ I_0})))} \leq e^{\norm{\gamma_1}{L^1}} \norm{\gamma_1}{L^1}.
\end{equation}
and
\begin{equation}
\label{R0.2}
 \Norm{\widehat{L}_0}{\calB(L^2(\bbR^+ \times { I_0}))} 
 \leq e^{\norm{\gamma}{L^1}}\norm{\gamma_1}{L^{2,2}} 
\end{equation}
hold.
\end{lemma}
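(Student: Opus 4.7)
The plan is to use the explicit pointwise bound on the integral kernel $L_0(x,y,\lambda)$ provided by Lemma \ref{lemma:res.T0}, namely $|L_0(x,y,\lambda)| \leq e^{\|\gamma_1\|_{L^1}} \gamma_1(y)$ with $L_0(x,y,\lambda) = 0$ for $x > y$. The crucial observation is that this bound is independent of $\lambda$, so all estimates in the $\lambda$ variable can be handled by pulling scalar kernel factors outside $L^2(I_0)$ norms.

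For the first estimate \eqref{R0.1}, I would fix $x \geq 0$, apply Minkowski's integral inequality to move the $L^2(I_0)$ norm inside the integral in $y$, and use the pointwise bound:
\begin{equation*}
\|g(x,\cdot)\|_{L^2(I_0)} \leq \int_x^\infty \|L_0(x,y,\cdot) f(y,\cdot)\|_{L^2(I_0)} \, dy \leq e^{\|\gamma_1\|_{L^1}} \int_x^\infty \gamma_1(y) \|f(y,\cdot)\|_{L^2(I_0)} \, dy.
\end{equation*}
Bounding $\|f(y,\cdot)\|_{L^2(I_0)} \leq \|f\|_{C^0(\mathbb{R}^+,L^2(I_0))}$ and integrating $\gamma_1$ gives \eqref{R0.1} immediately.

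For the second estimate \eqref{R0.2}, set $F(y) = \|f(y,\cdot)\|_{L^2(I_0)}$, so that $\|F\|_{L^2(\mathbb{R}^+)} = \|f\|_{L^2(\mathbb{R}^+ \times I_0)}$. As above,
\begin{equation*}
\|g(x,\cdot)\|_{L^2(I_0)} \leq e^{\|\gamma_1\|_{L^1}} \int_x^\infty \gamma_1(y) F(y) \, dy.
\end{equation*}
Now I would introduce a weight to apply Cauchy–Schwarz, writing $\gamma_1(y) F(y) = \bigl[(1+y)^2 \gamma_1(y)\bigr]\bigl[(1+y)^{-2} F(y)\bigr]$, which yields
\begin{equation*}
\left(\int_x^\infty \gamma_1(y) F(y)\, dy\right)^2 \leq \|\gamma_1\|_{L^{2,2}}^2 \int_x^\infty (1+y)^{-4} F(y)^2 \, dy.
\end{equation*}
Squaring the bound on $\|g(x,\cdot)\|_{L^2(I_0)}$, integrating over $x \in \mathbb{R}^+$, and applying Fubini converts the iterated integral $\int_0^\infty \int_x^\infty (1+y)^{-4} F(y)^2 \, dy \, dx$ into $\int_0^\infty y(1+y)^{-4} F(y)^2 \, dy$, which is bounded by $\|F\|_{L^2}^2$ since $y(1+y)^{-4} \leq 1$. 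Combining yields \eqref{R0.2}.

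No deep obstacle is expected here; the only minor subtlety is the choice of weight in the Cauchy–Schwarz step, which must be tuned so that the resulting Fubini integral in $x$ produces a weight $y(1+y)^{-4}$ that is uniformly bounded on $\mathbb{R}^+$, thereby avoiding any weighted control on $F$ itself. The independence of the bound on $L_0$ from $\lambda$ is what allows the two estimates to be derived in parallel without any further resolvent analysis beyond what Lemma \ref{lemma:res.T0} already provides.
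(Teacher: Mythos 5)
Your argument is correct and follows essentially the same route as the paper: both estimates rest on the $\lambda$-independent kernel bound $|L_0(x,y,\lambda)|\le e^{\norm{\gamma_1}{L^1}}\gamma_1(y)$ from Lemma \ref{lemma:res.T0}, with Minkowski's integral inequality giving \eqref{R0.1} and a Cauchy--Schwarz-plus-Fubini argument giving \eqref{R0.2}. The only differences are cosmetic: the paper lets the Fubini step act on the kernel factor (via $\int_0^\infty\int_x^\infty\gamma_1(y)^2\,dy\,dx=\int_0^\infty y\,\gamma_1(y)^2\,dy$) rather than on $F$, and your choice of weight $(1+y)^2$ in place of $\langle y\rangle^2=(1+y^2)$ costs only a harmless absolute constant in \eqref{R0.2}.
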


\begin{proof}
Suppose that $g\in C^0(\bbR^+,L^2({ I_0}))$. 
Then $f$ belongs to $C^0(\bbR^+,L^2({ I_0}))$ since 
$$ |g(x,\lam)| \leq e^{\norm{\gamma_1}{L^1}} \int_x^\infty  \gamma_1(y) |f(y,\lam)| \, dy $$
and we may conclude from Minkowski's integral equality that
$$
\norm{g(x,\dotarg)}{L^2({ I_0})} \leq e^{\norm{\gamma_1}{L^1}} \int_x^\infty \gamma_1(y) \norm{f}{C^0(\bbR^+,L^2({ I_0}))}  dy. 
$$
It follows that $L$ induces a bounded mapping $\widehat{L}_0$ from $ C^0(\bbR^+,L^2({ I_0}))$ to itself obeying the estimate
\eqref{R0.1}.

Similarly, suppose that $f \in L^2(\bbR^+ \times { I_0})$. Defining $g$ as in \eqref{fLg.def}, we estimate
$$
|g(x,\lam)| 
	\leq \left( \int_x^\infty |L_0(x,y,\lam)|^2 \, dy \right)^{1/2} \left(\int_x^\infty |g(y,\lam)|^2 \, dy \right)^{1/2}
$$
so that
\begin{align*}
\norm{g}{L^2(\bbR^+ \times { I_0})}^2
	&\leq 	\int_0^\infty \left( \sup_{\lam \in I} \int_x^\infty |L_0(x,y,\lam)|^2 \, dy\right) 
							\left( \int_I \left( \int_x^\infty |g(y',\lam)|^2 \, dy' \right) \, d\lam\right) \, dx\\[5pt]
	&\leq		\left( \int_0^\infty \int_x^\infty \gamma_1(y) e^{\norm{\gamma_1}{L^1}}\, dy \, dx\right)
				\norm{g}{L^2(\bbR^+ \times { I_0})}\\[5pt]
	&\leq		e^{\norm{\gamma_1}{L^1}} \norm{\gamma_1}{L^{2,2}} \norm{g}{L^2(\bbR^+ \times { I_0})}.
\end{align*}
so that the operator bound \eqref{R0.2} holds.
\end{proof}

\begin{remark}
\label{rem:R0}
As an immediate consequence of Lemma \ref{lemma:R0}, we see that $(I-T_0)^{-1}$ induces bounded operators 
on $\calB(C^0(\bbR^+,L^2({ I_0})))$ and $\calB(L^2(\bbR \times { I_0}))$ with respective norms bounded by
$ 1+\norm{\gamma_1}{L^1} \exp{\norm{\gamma_1}{L^1}}$
and
$ 1+\norm{\gamma_1}{L^1} \exp{\norm{\gamma_1}{L^{2,2}}}$.
\end{remark}

\textbf{(3) Solving for $\bfn$.} We can now use these resolvent estimates to solve \eqref{n.small}.

\begin{lemma}
\label{lemma:n}
Suppose that $q \in H^{2,2}(\bbR)$ and let $I_0 \subset \bbR$ be a bounded interval. There exists a unique solution of \eqref{n.small} for each $\lam \in 
{ I_0}$ so that 
$\bfn \in C^0(\bbR^+,L^2({ I_0})) \cap L^2(\bbR^+ \times { I_0})$. Moreover the map $q \rarr \bfn$ is Lipschitz continuous from $H^{2,2}(\bbR)$ to $C^0(\bbR^+,L^2({ I_0})) \cap L^2(\bbR^+ \times { I_0})$.
\end{lemma}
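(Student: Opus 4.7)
\textbf{Proof plan for Lemma \ref{lemma:n}.} The plan is to construct $\bfn$ via the Volterra resolvent from Lemma \ref{lemma:res.T0} and then verify the joint $(x,\lam)$-regularity using the operator bounds from Lemma \ref{lemma:R0}. First I would record that for every fixed $\lam \in I_0$ and every $q \in H^{2,2}(\bbR)$, Lemma \ref{lemma:res.T0} already supplies a unique pointwise-in-$\lam$ solution in $C^0(\bbR^+)\otimes\bbC^2$, expressed explicitly as
\begin{equation*}
\bfn(x,\lam) \;=\; \bfn_0(x,\lam) + \int_x^\infty L_0(x,y,\lam)\,\bfn_0(y,\lam)\,dy \;=\; \bfn_0 + \widehat{L}_0\bfn_0 .
\end{equation*}
Thus uniqueness is free (it even holds pointwise in $\lam$), and the existence portion reduces to showing that this formula places $\bfn$ into $C^0(\bbR^+,L^2(I_0))\cap L^2(\bbR^+\times I_0)$. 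That is a direct combination of Lemma \ref{lemma:n0}, which puts $\bfn_0$ in both spaces with norm controlled by $\norm{q}{H^{2,2}}$, with Lemma \ref{lemma:R0} and Remark \ref{rem:R0}, which give the boundedness of $\widehat{L}_0$ on each of these spaces with operator norm again controlled by $\norm{q}{H^{2,2}}$.

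For Lipschitz continuity in $q$, I would use the second resolvent identity. Writing $R_i=(I-T_0^{(i)})^{-1}$ for the resolvent associated with $q_i\in H^{2,2}(\bbR)$, so that $\bfn^{(i)}=R_i\bfn_0^{(i)}$, the identity $R_1-R_2=R_1(T_0^{(1)}-T_0^{(2)})R_2$ yields the decomposition
\begin{equation*}
\bfn^{(1)}-\bfn^{(2)} \;=\; R_1\bigl(\bfn_0^{(1)}-\bfn_0^{(2)}\bigr) \;+\; R_1\bigl(T_0^{(1)}-T_0^{(2)}\bigr)\bfn^{(2)}.
\end{equation*}
This reduces matters to two pieces: Lipschitz continuity of $q\mapsto\bfn_0$ and of $q\mapsto T_0$ in the relevant operator norms. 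The first is visible from the explicit form \eqref{n0}: the first component depends quadratically on $q$ through $p_1$, the second linearly on $\bar q$, and repeating the proof of Lemma \ref{lemma:n0} with $q_1-q_2$ in place of $q$ yields local Lipschitz bounds. For the second, the kernel $K_0$ of \eqref{K0} is linear in $q,\bar q$ and quadratic in $q$ through $p_1,p_2$, so $T_0^{(1)}-T_0^{(2)}$ is Volterra with kernel pointwise bounded by a $\gamma_1$-type quantity built from $q_1-q_2$ (and weighted by $1+\norm{q_1}{H^{2,2}}+\norm{q_2}{H^{2,2}}$); the arguments of Lemmas \ref{lemma:res.T0}--\ref{lemma:R0} then furnish the corresponding mapping bounds with norm $\lesssim \norm{q_1-q_2}{H^{2,2}}$.

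The step I expect to be the main (though mild) obstacle is this Lipschitz bookkeeping: one must estimate $T_0^{(1)}-T_0^{(2)}$ simultaneously as a map $L^2(\bbR^+\times I_0)\to L^2(\bbR^+\times I_0)$ and as a map $L^2(\bbR^+\times I_0)\to C^0(\bbR^+,L^2(I_0))$, and the cubic nonlinearity in $p_1,p_2=\pm(i/2)|q|^2$ forces the constants to depend polynomially on $\norm{q_i}{H^{2,2}}$. Once the difference of kernels is written as $(q_1-q_2)$ times bounded factors in $q_1,q_2$, however, the estimates used for $T_0$ itself transfer verbatim, and combining with the uniform resolvent bounds of Remark \ref{rem:R0} closes the argument. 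The existence and regularity portions are direct packaging of the preceding lemmas.
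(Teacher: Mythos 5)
Your proposal is correct and follows essentially the same route as the paper: the paper's proof is the one-line observation that the result is "an immediate consequence of Lemma \ref{lemma:n0}, \eqref{n.est}, Lemma \ref{lemma:R0}, and Remark \ref{rem:R0}," with Lipschitz continuity obtained exactly as you describe, from the resolvent bounds together with the second resolvent formula. Your write-up simply makes explicit the bookkeeping that the paper leaves implicit.
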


\begin{proof}
An immediate consequence of Lemma \ref{lemma:n0}, \eqref{n.est}, Lemma \ref{lemma:R0}, and Remark \ref{rem:R0}.
\end{proof}

\textbf{(4) Solving for $\bfn_\lam$.} Next, we estimate $\bfn_\lam$ by controlling $\bfn_1$ and solving \eqref{n.lam}.

\begin{lemma}
\label{lemma:n.lam}
Suppose that $q \in H^{2,2}(\bbR)$ and let $I_0 \subset \bbR$ be a bounded interval. There exists a unique solution of \eqref{n.lam} belonging to $\bfn \in C^0(\bbR^+,L^2({ I_0})) \cap L^2(\bbR^+ \times { I_0})$. Moreover, the map $q \rarr \bfn$ is Lipschitz continuous from $H^{2,2}(\bbR)$ to 
$C^0(\bbR^+,L^2({ I_0})) \cap L^2(\bbR^+ \times { I_0})$.
\end{lemma}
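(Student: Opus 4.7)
The plan is to mimic the structure of Lemma \ref{lemma:n} but applied to the inhomogeneity $\bfn_1 = (\bfn_0)_\lam + (T_0)_\lam \bfn$. Since equation \eqref{n.lam} has the form $\bfn_\lam = \bfn_1 + T_0 \bfn_\lam$, once $\bfn_1$ is placed in the space $C^0(\bbR^+,L^2(I_0))\cap L^2(\bbR^+\times I_0)$, the resolvent estimates of Lemma \ref{lemma:R0} and Remark \ref{rem:R0} deliver a unique $\bfn_\lam$ in the same space.

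First I would control $(\bfn_0)_\lam$. By Lemma \ref{lemma:n0}, estimates \eqref{n.lam.est} directly give $(\bfn_0)_\lam \in C^0(\bbR^+,L^2(I_0))\cap L^2(\bbR^+\times I_0)$, with norms bounded by $\norm{q}{L^{2,1}}\lesssim \norm{q}{H^{2,2}}$. Next I would control $(T_0)_\lam \bfn$. Lemma \ref{lemma:n} gives $\bfn \in C^0(\bbR^+,L^2(I_0))\cap L^2(\bbR^+\times I_0)$, and Lemma \ref{lemma:T0.lam}(i)–(ii) shows that $(T_0)_\lam$ is bounded from $L^2(\bbR^+\times I_0)$ into \emph{both} $L^2(\bbR^+\times I_0)$ and $C^0(\bbR^+,L^2(I_0))$, with operator norm controlled by $\norm{q}{L^{2,2}}$. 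Applying this to $\bfn \in L^2(\bbR^+\times I_0)$ yields $(T_0)_\lam \bfn$ in the desired mixed space, with norm $\lesssim \norm{q}{H^{2,2}}\norm{\bfn}{L^2(\bbR^+\times I_0)}$.

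With $\bfn_1$ in hand, I would solve $\bfn_\lam = (I-T_0)^{-1}\bfn_1$ using Remark \ref{rem:R0}: the resolvent is bounded both on $C^0(\bbR^+,L^2(I_0))$ and on $L^2(\bbR^+\times I_0)$, with norms controlled by $\norm{\gamma_1}{L^1}$ and $\norm{\gamma_1}{L^{2,2}}$, respectively, both of which are controlled by $\norm{q}{H^{2,2}}$. This proves existence, uniqueness, and the norm bound on $\bfn_\lam$ in $C^0(\bbR^+,L^2(I_0))\cap L^2(\bbR^+\times I_0)$.

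For Lipschitz dependence on $q$, I would use the standard second resolvent formula: if $q,\tilde q \in H^{2,2}(\bbR)$ produce $T_0, \widetilde{T_0}$ and $\bfn_1,\widetilde{\bfn_1}$, then
\begin{equation*}
\bfn_\lam - \widetilde{\bfn_\lam}
= (I-T_0)^{-1}(\bfn_1-\widetilde{\bfn_1})
+ (I-T_0)^{-1}(T_0-\widetilde{T_0})(I-\widetilde{T_0})^{-1}\widetilde{\bfn_1}.
\end{equation*}
The linear dependence of $T_0$ and $(T_0)_\lam$ on $q$ makes $q\mapsto T_0$ and $q\mapsto (T_0)_\lam$ Lipschitz into the relevant operator norms, and Lemma \ref{lemma:n} gives Lipschitz dependence of $\bfn$ on $q$; combining these with the uniform resolvent bounds yields Lipschitz continuity of $q\mapsto \bfn_\lam$. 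The only subtlety — and the main obstacle — is that $(T_0)_\lam \bfn$ must land in $C^0(\bbR^+,L^2(I_0))$, not merely $L^2(\bbR^+\times I_0)$; this is exactly what part (ii) of Lemma \ref{lemma:T0.lam} provides, and it is the reason the $\langle y\rangle^2$-type weight (i.e.\ $q\in L^{2,2}$) is needed rather than just $L^{2,1}$.
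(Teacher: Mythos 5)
Your proposal is correct and follows essentially the same route as the paper's proof: bound $\bfn_1=(\bfn_0)_\lam+(T_0)_\lam\bfn$ using estimate \eqref{n.lam.est}, Lemma \ref{lemma:n}, and Lemma \ref{lemma:T0.lam}(i)--(ii), then invert $I-T_0$ via Lemma \ref{lemma:R0} and Remark \ref{rem:R0}, with Lipschitz dependence from the second resolvent identity and the (bi)linearity of the operators in $q$. The paper's argument is just a terser version of exactly this.
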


\begin{proof}
From Lemma \ref{lemma:n}, estimate \eqref{n.lam.est} of Lemma \ref{lemma:n0}, and Lemma 
\ref{lemma:T0.lam}(i) and (ii), we may conclude that 
$\bfn_1 \in C^0(\bbR^+,L^2({ I_0})) \cap L^2(\bbR^+ \times { I_0})$ 
and is Lipschitz continuous in $q$. We may then solve \eqref{n.lam} for 
$\bfn_\lam \in C^0(\bbR^+,L^2({ I_0})) \cap L^2(\bbR^+ \times { I_0})$ using Lemma \ref{lemma:R0} and 
Remark \ref{rem:R0}. The map $q \rarr \bfn_\lam$ is Lipschitz continuous from 
$H^{2,2}(\bbR)$ to $C^0(\bbR^+,L^2({ I_0})) \cap L^2(\bbR^+ \times { I_0})$ since $q \rarr \bfn_1$ has this continuity and the resolvents are Lipschitz continuous as operator-valued functions.
\end{proof}

\textbf{(5) Solving for $\bfn_{\lam\lam}$. } Finally, we control $\bfn_2$ and solve \eqref{n.lamlam} to estimate $\bfn_{\lam\lam}$.

\begin{lemma}
\label{lemma:n.lamlam}
Suppose that $q \in H^{2,2}(\bbR)$ and $I_0 \subset \bbR$ is a bounded interval. There exists a unique solution of \eqref{n.lamlam} in $C^0(\bbR^+,L^2({ I_0}))$. Moreover, the map $q \rarr \bfn_{\lam\lam}$ is Lipschitz continuous from $H^{2,2}(\bbR)$ to $C(\bbR^+,L^2({ I_0}))$.
\end{lemma}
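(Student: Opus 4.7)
The plan is to mimic the proof of Lemma \ref{lemma:n.lam} exactly one order higher: first I would show that the forcing term $\bfn_2$ in \eqref{n.lamlam} lies in $C^0(\bbR^+,L^2(I_0))$ and is Lipschitz continuous in $q$, and then I would invert $I - T_0$ using the resolvent estimates of Lemma \ref{lemma:R0} and Remark \ref{rem:R0}.

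For the first step, recall that $\bfn_2 = (\bfn_0)_{\lam\lam} + (T_0)_{\lam\lam}\bfn + 2(T_0)_\lam \bfn_\lam$. The free term $(\bfn_0)_{\lam\lam}$ is controlled in $C^0(\bbR^+,L^2(I_0))$ by estimate \eqref{n.lamlam.est} of Lemma \ref{lemma:n0}, with Lipschitz dependence on $q$ since $(\bfn_0)_{\lam\lam}$ is linear in $q$. For the second term I would combine Lemma \ref{lemma:T0.lam}(iii), which gives $(T_0)_{\lam\lam}\in \calB(L^2(\bbR^+\times I_0),\,C^0(\bbR^+,L^2(I_0)))$, with the fact that $\bfn \in L^2(\bbR^+\times I_0)$ from Lemma \ref{lemma:n}. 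For the third term I would combine Lemma \ref{lemma:T0.lam}(ii), which gives $(T_0)_\lam\in \calB(L^2(\bbR^+\times I_0),\,C^0(\bbR^+,L^2(I_0)))$, with $\bfn_\lam \in L^2(\bbR^+\times I_0)$ from Lemma \ref{lemma:n.lam}. Since each of $(T_0)_\lam$ and $(T_0)_{\lam\lam}$ depends linearly on $q$, and $\bfn$, $\bfn_\lam$ depend Lipschitz continuously on $q$ in their respective spaces, the product estimates deliver Lipschitz continuity of $q\mapsto \bfn_2$ from $H^{2,2}(\bbR)$ to $C^0(\bbR^+,L^2(I_0))$.

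For the second step, Remark \ref{rem:R0} supplies a uniform bound on $(I-T_0)^{-1}$ as an operator on $C^0(\bbR^+,L^2(I_0))$ over bounded sets of $q$ in $H^{2,2}(\bbR)$, which immediately produces the unique solution $\bfn_{\lam\lam} = (I-T_0)^{-1}\bfn_2 \in C^0(\bbR^+,L^2(I_0))$. Lipschitz continuity of the map $q\mapsto \bfn_{\lam\lam}$ then follows from the Lipschitz continuity of $q\mapsto \bfn_2$ established above, together with Lipschitz continuity of the resolvent $q\mapsto (I-T_0)^{-1}$ in operator norm (via the second resolvent identity, exactly as used in the proofs of Lemmas \ref{lemma:n} and \ref{lemma:n.lam}).

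I do not expect any genuine new obstacle here: the only subtlety is that, unlike for $\bfn$ and $\bfn_\lam$, we do not obtain $L^2(\bbR^+\times I_0)$ control on $\bfn_{\lam\lam}$. This loss is forced by the fact that $(T_0)_{\lam\lam}$ and $(T_0)_\lam$ acting on $L^2(\bbR^+\times I_0)$ inputs only return $C^0(\bbR^+,L^2(I_0))$ output, which is precisely why the induction stops here and why the statement is phrased only in $C^0(\bbR^+,L^2(I_0))$. Fortunately this is exactly the regularity needed, via Sobolev embedding in $\lam$, to conclude from Lemmas \ref{lemma:n}, \ref{lemma:n.lam}, \ref{lemma:n.lamlam} that $q\mapsto \bfn(0,\lam)$ is Lipschitz from $H^{2,2}(\bbR)$ into $H^2(I_0)$, which is the content of Proposition \ref{prop:n.small}.
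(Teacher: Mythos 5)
Your proposal is correct and follows essentially the same route as the paper's proof: control $\bfn_2$ in $C^0(\bbR^+,L^2(I_0))$ via \eqref{n.lamlam.est}, Lemma \ref{lemma:T0.lam}(ii),(iii), and the $L^2(\bbR^+\times I_0)$ bounds on $\bfn$ and $\bfn_\lam$ from Lemmas \ref{lemma:n} and \ref{lemma:n.lam}, then invert $I-T_0$ using Lemma \ref{lemma:R0} and Remark \ref{rem:R0}. Your closing observation about why the induction necessarily stops at $C^0(\bbR^+,L^2(I_0))$ regularity for $\bfn_{\lam\lam}$ is accurate and consistent with the statement.
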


\begin{proof}
From Lemma \ref{lemma:n0}, 
eq. \eqref{n.lamlam.est}, Lemma \ref{lemma:n}, Lemma \ref{lemma:n.lam}, 
and Lemma \ref{lemma:T0.lam}(ii), (iii), we deduce that 
$\bfn_2 \in C^0(\bbR^+;L^2({ I_0}))$ with $q \rarr \bfn_2$ Lipschitz as a map from 
$H^{2,2}(\bbR)$ to $C^0(\bbR^+,L^2({ I_0}))$. We now use Lemma \ref{lemma:R0} and Remark \ref{rem:R0} to solve for $\bfn_{\lam\lam}$ as before.
\end{proof}

\begin{proof}[Proof of Proposition \ref{prop:n.small}]
An immediate consequence of Lemmas \ref{lemma:n}, \ref{lemma:n.lam}, \ref{lemma:n.lamlam},
and the fact that the restriction map $f \rarr f(0)$ from $C^0(\bbR^+;L^2({ I_0}))$ to $L^2({ I_0})$ is continuous.
\end{proof}

%%%%%%%%%%%%%%%%%%%%%%%%%%%%%%%%%%%%%%%%%%%%%%%%%%%%

\subsection{Large-$\lam$ Estimates}
\label{sec:direct.large}

In this subsection we prove Proposition \ref{prop:n.large}. 
To study $\eta_{11}^+$, $(\eta_{11}^+)_{\lam}$ and $(\eta_{11}^+)_{\lam\lam}$, we solve \eqref{eta11} 
and the derived equations
\begin{align}
\label{eta11.lam}
(\eta_{11}^+)_\lam &= (F_+)_\lam + (T_+)_\lam \left[ \eta_{11} \right] + T_+ \left[ (\eta_{11}^+)_\lam \right] \\[5pt]
\label{eta11.lamlam}
(\eta_{11}^+)_{\lam\lam} &= (F_+)_{\lam\lam}+ 2 (T_+)_\lam \left[ (\eta_{11})_\lam \right] 
						+ \left(T_+ \right)_{\lam\lam} \left[\eta_{11} \right]
						+ T_+ \left[  (\eta_{11}^+)_{\lam\lam} \right].
\end{align}
With good estimates in hand for $\eta_{11}^+$ and its derivatives, it will be a simple matter to prove the corresponding estimates on $\eta_{21}^+$ using \eqref{eta21}.

In the rest of this section, we will drop the $\pm$ and obtain estimates $\eta_{11}^+$ and $\eta^+_{21}$ since 
the analogous estimates for $\eta_{11}^-$ and $\eta_{21}^-$ are similar. We will write $\eta_{11}$ for $
\eta_{11}^+$, $F$ for $F_+$, $T$ for $T^+$, etc. 
{ We recall that $I_\infty = \{ \lam \in \bbR: |\lam| > 1\}$.}

Overall, we follow a strategy similar to that of section \ref{sec:direct.small} to study the scalar equation 
\eqref{eta11} { for $\eta_{11}$}, and then use these results to obtain comparable estimates on $\eta_{21}$. First, we will obtain 
estimates on $F$ and $G$ and derivatives of these functions in $\lam$ 
(Lemmas \ref{lemma:FG}, \ref{lemma:T.est},  \ref{lemma:T.cont}, and \ref{lemma:T.lam}). Second, we will obtain resolvent estimates for $(I-T)^{-1}$ by a method similar to
that used in the previous subsection (Lemmas \ref{lemma:T+} and \ref{lemma:R}). Third, we will solve 
\eqref{eta11} for $\eta_{11}$ (Lemma \ref{lemma:eta11}). Fourth, we'll solve 
\eqref{eta11.lam} for $\dee \eta_{11}/\dee \lam$ (Lemma \ref{lemma:eta11.lam}).
Fifth, we'll solve \eqref{eta11.lamlam} for 
$\dee^2 \eta_{11}/\dee \lam^2$ (Lemma \ref{lemma:eta11.lamlam}). 
Finally we will use \eqref{eta21} to obtain estimates on $\eta_{21}$ (Lemma \ref{lemma:eta21}). 

\bigskip

\textbf{(1) Estimates on $F$,  $G$, and $T$.}

\medskip

\begin{lemma}
\label{lemma:FG}
Suppose $q \in H^{2,2}(\bbR)$. The following define Lipschitz maps from $H^{2,2}(\bbR)$ into $C^0(\bbR^+, L^2(I_\infty)) \cap L^2(\bbR^+ \times I_\infty)$:
$$ 
\mathrm{(i)}~G, \quad
\mathrm{(ii) }~F, \quad
\mathrm{(iii) }~\frac{\dee G}{\dee \lam}, \quad
\mathrm{(iv) }~\frac{\dee F}{\dee \lam}. 
$$
The following define Lipschitz maps from $H^{2,2}(\bbR)$ into 
$C^0(\bbR^+, L^2(I_\infty))$:
$$ 
\mathrm{(v) }~\lam^{-1} \frac{\dee^2 G}{\dee \lam^2}, \quad
\mathrm{(vi) }~\lam^{-1} \frac{\dee^2 F}{\dee \lam^2}
$$
\end{lemma}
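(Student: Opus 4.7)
The plan is to exploit the Fourier-analytic structure of $G$: the change of variable $u = y-x$ writes $G(x, \lambda) = -\frac{1}{2i}\int_0^\infty e^{-2i\lambda u}\, q^\sharp(x+u)\,du$, so $G(x, \cdot)$ is, up to a constant, the Fourier transform (in the paper's normalization) of $q^\sharp(x+\cdot)\chi_{[0,\infty)}$ evaluated at $\lambda$. Plancherel gives $\|G(x, \cdot)\|_{L^2_\lambda(\bbR)}^2 \lesssim \|q^\sharp\|_{L^2([x,\infty))}^2$, from which the $C^0(\bbR^+, L^2)$ bound follows by taking the supremum in $x$ and the $L^2(\bbR^+\times\bbR)$ bound follows by integrating in $x$, which introduces $\|q^\sharp\|_{L^{2,1/2}}$. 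Both norms of $q^\sharp$ are controlled by $\|q\|_{H^{2,2}}$ via $q^\sharp = \bar q' - (i/2)|q|^2 \bar q$, the paper's estimate $\|\langle x\rangle u'\|_2 \lesssim \|u\|_{H^{2,2}}$, and $H^{2,2}(\bbR) \hookrightarrow L^\infty$ for the cubic term. The bound (ii) for $F$ reduces to (i) via $F(x,\lambda) = -\int_x^\infty q(y) G(y,\lambda)\,dy$: Minkowski in $\lambda$ followed by Cauchy--Schwarz in $y$ gives $\|F(x,\cdot)\|_{L^2_\lambda} \lesssim \|q\|_{L^2}\, \|G\|_{L^2_y L^2_\lambda}$, and a similar argument handles the $L^2(\bbR^+\times\bbR)$ component.

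For (iii)-(iv), differentiation under the integral sign places a factor $(-2i(y-x))$ in front of $q^\sharp$, so Plancherel reduces $\|\partial_\lambda G(x,\cdot)\|_{L^2_\lambda}$ to the weighted norm $\|(y-x)q^\sharp\|_{L^2([x,\infty))}$. To control the $L^2(\bbR^+ \times I_\infty)$ norm (where the restriction $|\lambda|>1$ is essential), I would integrate by parts once in $y$ using $e_\lambda(y-x) = \frac{1}{-2i\lambda}\partial_y e_\lambda(y-x)$. Each IBP trades a factor of $(y-x)$ for a factor of $\lambda^{-1}$ (bounded on $I_\infty$) and moves the derivative onto the remaining polynomial weight or onto $q^\sharp$. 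The boundary term at $y=x$ vanishes thanks to the $(y-x)$ factor, and the boundary at infinity vanishes because $q^\sharp, (q^\sharp)' \in L^2$. The resulting integrand is again a weighted-$L^2$ Fourier integral, and Plancherel furnishes the required bound. Part (iv) follows from (iii) by the same Minkowski/Cauchy--Schwarz argument used above for (ii).

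For (v)-(vi), the same approach is applied to $\partial_\lambda^2 G \propto \int_x^\infty(y-x)^2 e_\lambda q^\sharp\,dy$: one integration by parts (justified as above) yields a factor $\lambda^{-1}$ together with an integrand of the form $[2(y-x)q^\sharp + (y-x)^2 (q^\sharp)']$. Combined with the explicit $\lambda^{-1}$ premultiplier the total decay is $\lambda^{-2}$, so Plancherel gives the $C^0(\bbR^+, L^2(I_\infty))$ bound; no $L^2(\bbR^+\times I_\infty)$ estimate is asserted here, precisely because the integration-by-parts bookkeeping at this order would demand moments of $(q^\sharp)''$ that $H^{2,2}$ does not supply. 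Lipschitz continuity throughout follows because $G$ is \emph{linear} in $q^\sharp$, $F$ is bilinear in $(q, G)$, and $q \mapsto q^\sharp$ is Lipschitz on bounded sets in $H^{2,2}(\bbR)$ into the relevant weighted $L^2$-spaces (handling the cubic nonlinearity by polarization and the $L^\infty$-embedding). The main technical obstacle is keeping track of which moments of $q^\sharp$ and $(q^\sharp)'$ appear after each IBP, and calibrating the number of integrations by parts per part (one for (iii)-(iv), one for (v)-(vi)) so as not to force a derivative onto $q^\sharp$ that $H^{2,2}$ cannot absorb.
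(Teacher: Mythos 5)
Your treatment of (i)--(ii) and of the $C^0(\bbR^+,L^2(I_\infty))$ bounds by Plancherel in $\lam$, together with the Minkowski/Cauchy--Schwarz reduction of $F$ to $G$, matches the paper's argument. The gap is in the orientation of your integrations by parts. Each time you use $e_\lam(y-x)=\tfrac{1}{-2i\lam}\dee_y e_\lam(y-x)$, the product rule puts a $y$-derivative \emph{onto} $\qsharp$, and $(\qsharp)'$ contains $\overline{q''}$. Concretely, one such step in (iii) leaves the term $\lam^{-1}\int_x^\infty e_\lam(y-x)(y-x)(\qsharp)'(y)\,dy$, whose $C^0_x L^2_\lam$ and $L^2_x L^2_\lam$ norms require $\langle y\rangle q''\in L^2$ and $\langle y\rangle^{3/2}q''\in L^2$ respectively; in (v) your integrand $(y-x)^2(\qsharp)'$ requires $\langle y\rangle^2 q''\in L^2$. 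None of these is controlled by $\norm{q}{H^{2,2}}$: interpolation of $H^{2,0}\cap H^{0,2}$ gives $\langle y\rangle q'\in L^2$ but no weighted moment of $q''$ at all (e.g.\ $q(y)=\langle y\rangle^{-5/2-\delta}e^{i\langle y\rangle^{a}}$ with $3/2<a<2$ lies in $H^{2,2}$ while $\langle y\rangle q''\notin L^2$). So the claim that ``Plancherel furnishes the required bound'' fails for exactly the terms your own closing sentence warns about.

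The fix --- and the paper's actual proof --- runs the integration by parts the other way, and only where the explicit $\lam^{-1}$ prefactor is present. For (iii)--(iv) the paper uses no integration by parts: it repeats the direct duality/Plancherel argument of Lemma \ref{lemma:n0}, bounding $\norm{(\dee G/\dee\lam)(x,\dotarg)}{L^2_\lam}$ by weighted $L^2$ moments of $\qsharp$ itself (available since $\langle y\rangle q'\in L^2$ and $\langle y\rangle^2|q|^3\in L^2$), never of $(\qsharp)'$. For (v) it splits $\dee^2 G/\dee\lam^2=h_1+h_2$ according to $\qsharp=\overline{q'}-\tfrac{i}{2}|q|^2\overline{q}$, estimates the cubic piece $h_2$ directly (it carries two weighted moments because $\langle y\rangle^2 q\in L^2$), and in $h_1$ writes $\overline{q'(y)}\,dy=d\overline{q}$ so that the derivative falls on $(y-x)^2 e_\lam(y-x)$. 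This \emph{removes} a derivative from $q$, producing $\int_x^\infty\bigl(2i\lam(y-x)^2+2(y-x)\bigr)e_\lam(y-x)\overline{q(y)}\,dy$, and the extra power of $\lam$ so generated is exactly absorbed by the $\lam^{-1}$ premultiplier in (v)--(vi). You should reorient your integrations by parts accordingly: the whole bookkeeping is about moving derivatives off of $q'$, never onto $\qsharp$.
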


\begin{proof}
Observing that
\begin{align*}
\norm{F}{C^0(\bbR^+,L^2(I_\infty))} 
&\leq \norm{q}{L^1} \norm{G}{C^0(\bbR^+,L^2(I_\infty))}, 
\\
\norm{F}{L^2(\bbR^+I_\infty)}
&\leq
\norm{q}{L^{2,1/2}}\norm{G}{L^2(\bbR^+ \times I_\infty)}
\end{align*}
we see that (i) $\Rightarrow$ (ii). To prove (i) we pick $\varphi \in C_0^\infty(I_\infty)$ and mimic the proof of Lemma \ref{lemma:n0}.
The Lipschitz continuity follows from the fact that $G$ is linear in $q$ and $F$ is bilinear in $q$.

One can similarly check that (iii) $\Rightarrow$ (iv), so it suffices to prove (iii). We do so by mimicking the proof of Lemma \ref{lemma:n0} for the function
$$ \frac{\dee G}{\dee \lam} = -\frac{1}{2i} \int_x^\infty -i(y-x)e_\lam(y-x)q^\sharp(y) \, dy. $$ 

Finally, it is easy to see that (v) $\Rightarrow$ (vi). To prove (v), we recall
{ $q^\sharp = \qbar' - \dfrac{i}{2}|q|^2\qbar$} and split
$$ 
\frac{\dee^2 G}{\dee \lam^2}(x,\lam)  = 
	h_1(x,\lam)+h_2(x,\lam)
$$
where
\begin{align*}
h_1(x,\lam)	&=	2i \int_x^\infty (y-x)^2 e_\lam(y-x) { \overline{q'(y)}} \, dy\\
h_2(x,\lam)	&=	- \int_x^\infty (y-x)^2 e_\lam(y-x) { \overline{q(y)}}|q(y)|^2 \, dy
\end{align*}
We can estimate $h_2$ as before but for $h_1$ we integrate by parts to obtain
$$ 
h_1(x,\lam) = \int_x^\infty  \left(2i\lam (y-x)^2 + 2(y-x) \right) q(x)e_\lam(y-x) 
$$
We can now use previous techniques to bound $\lam^{-1} h_1(x,\lam)$
for $I_\infty$. 
\end{proof}

\medskip

The operator $T$ defined in \eqref{T} has the integral kernel
\begin{equation}
\label{K2}
 K^q_+(x,y,\lam) 
 	= 	\begin{cases}
 				\left( \dint_{\hspace{-1.5mm}x}^{\hspace{.5mm}y} 
 					e_\lam({ y-z})q(z)\, dz\right) q^\sharp(y), 	& x < y, \\
 				\\
 				0																&  x > y.
 								\end{cases}
\end{equation}

From this computation, we can prove:

\begin{lemma}
\label{lemma:T.est}
The Volterra estimate
\begin{equation}
\label{T.Volt2}
(T f){ ^*}(x) 
	\leq \left(
					{  \norm{q}{1} } \int_x^\infty |\qsharp(y)| \, dy
			\right) f{ ^*}(x)
\end{equation} holds. Moreover
\begin{align}
\label{T.L2toC0.2}
&	\sup_{x \in \bbR^+} \left |  \intpm |K^q_+(x,y,\lam)|^2 \, dy\right |^{1/2}
	\lesssim	\lam^{-1}\norm{q}{H^{2,2}} \left | \intpm |q^\sharp(y)|^2 \, dy \right |^{1/2}\\
\label{T.Fred2}
&\left( \int_{\bbR^+ \times \bbR^+} |K^q_+(x,y,\lam)|^2 \, dx \, dy \right)^{1/2}
	\lesssim \lam^{-1} \norm{q}{H^{2,2}} \left | \int_0^{+ \infty} |y||\qsharp(y)|^2 \, dy\right |^{1/2}\\
\label{T.C0toL2.2}
&\left( 
			\int_{\bbR^+} 
				\left( 
					\int_{\bbR^+} 
						\left| K^q_+(x,y,\lam) \right| 
					 dy 
				\right)^2 
			 dx 
\right)^{1/2}
	\lesssim	\norm{q}{L^{2,1}} \norm{q^\sharp}{L^{2,1}}.
\end{align}
\end{lemma}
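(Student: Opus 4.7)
The plan is to exploit the factorization $K^q_+(x,y,\lam) = q^\sharp(y)\, I(x,y,\lam)$ on the triangle $\{x<y\}$, where
$$I(x,y,\lam) = \int_x^y e_\lam(y-z) q(z)\, dz,$$
and to estimate $I$ in two distinct ways depending on whether a factor of $\lam^{-1}$ is needed. For \eqref{T.Volt2}, I use only the crude bound $|I(x,y,\lam)| \leq \int_x^y |q(z)|\, dz \leq \norm{q}{1}$. Combining this with the monotonicity of $f^*$ and the fact that $K^q_+$ vanishes for $x>y$ gives
$$(Tf)^*(x) \leq \sup_{x'\geq x}\int_{x'}^\infty |q^\sharp(y)|\, \norm{q}{1}\, f^*(y)\, dy \leq \norm{q}{1}\left(\int_x^\infty |q^\sharp(y)|\, dy\right) f^*(x),$$
which is \eqref{T.Volt2}.

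For the $\lam^{-1}$ bounds in \eqref{T.L2toC0.2} and \eqref{T.Fred2}, the key step is to integrate by parts in $z$ using $e_\lam(y-z) = (2i\lam)^{-1} \dee_z\, e_\lam(y-z)$, which produces
$$I(x,y,\lam) = \frac{1}{2i\lam}\left[\, q(y) - e_\lam(y-x) q(x) - \int_x^y e_\lam(y-z)\, q'(z)\, dz \,\right].$$
Using the Sobolev embedding $H^{2,2}(\bbR) \hookrightarrow L^\infty(\bbR)$ and the interpolation bound $\norm{\langle \dotarg\rangle q'}{2} \lesssim \norm{q}{H^{2,2}}$ noted in the introduction, Cauchy--Schwarz yields $\norm{q'}{L^1} \lesssim \norm{q}{H^{2,2}}$. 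Therefore $|I(x,y,\lam)| \lesssim |\lam|^{-1}\norm{q}{H^{2,2}}$ uniformly in $0\le x\le y$, giving $|K^q_+(x,y,\lam)|^2 \lesssim \lam^{-2}\norm{q}{H^{2,2}}^2 |q^\sharp(y)|^2 \,\mathbf{1}_{x\le y}$. The bound \eqref{T.L2toC0.2} is then immediate, and for \eqref{T.Fred2} Fubini converts $\int_0^\infty\int_x^\infty|q^\sharp(y)|^2\, dy\, dx$ into $\int_0^\infty y\, |q^\sharp(y)|^2\, dy$.

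For \eqref{T.C0toL2.2}, no integration by parts is required; instead I keep $|I(x,y,\lam)| \leq \int_x^y|q(z)|\, dz$ and apply Fubini:
$$\int_0^\infty |K^q_+(x,y,\lam)|\, dy \leq \int_x^\infty |q(z)| \int_z^\infty |q^\sharp(y)|\, dy\, dz.$$
A Cauchy--Schwarz with weight $\langle y\rangle^{-1}$ gives $\int_z^\infty|q^\sharp(y)|\, dy \lesssim \langle z\rangle^{-1/2}\norm{q^\sharp}{L^{2,1}}$, and a second Cauchy--Schwarz with weight $\langle z\rangle^{-1}$ yields $\int_x^\infty |q(z)|\langle z\rangle^{-1/2}\, dz \lesssim \langle x\rangle^{-1}\norm{q}{L^{2,1}}$. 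Squaring, integrating in $x$, and using $\langle x\rangle^{-2} \in L^1(\bbR^+)$ produces \eqref{T.C0toL2.2}.

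The only mildly delicate point is bookkeeping of the polynomial weights in \eqref{T.C0toL2.2}, where one must choose the pairings so that the residual weights are integrable; the integration-by-parts device in (2)--(3) is the main technical idea and is clean once the interpolation inequality for $\langle x\rangle q'$ is invoked.
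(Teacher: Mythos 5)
Your proof is correct: the crude bound $|I|\leq\norm{q}{1}$ gives \eqref{T.Volt2}, the integration by parts in $z$ (justified for $q\in\calS(\bbR)$, with $\norm{q}{\infty}+\norm{q'}{L^1}\lesssim\norm{q}{H^{2,2}}$) yields the uniform $|\lam|^{-1}$ gain needed for \eqref{T.L2toC0.2}--\eqref{T.Fred2}, and the weighted Cauchy--Schwarz pairings for \eqref{T.C0toL2.2} close correctly since $\langle x\rangle^{-2}\in L^1(\bbR^+)$. The paper explicitly omits this proof, but your argument uses exactly the devices the authors deploy for the neighboring estimates (e.g.\ the integration-by-parts trick in \eqref{n.ie2} and Lemma \ref{lemma:T.lam}(iii)), so it is essentially the intended one.
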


We omit the proof.

\begin{remark}
\label{rem:T.bd}
It follows respectively from \eqref{T.Volt2}, \eqref{T.L2toC0.2}, \eqref{T.Fred2}, and \eqref{T.C0toL2.2} that $T$ is a bounded operator from $C^0(\bbR^+)$ to itself, from $L^2(\bbR^+)$ to $C^0(\bbR^+)$, and from $C^0(\bbR^+)$ to $L^2(\bbR^+)$. The map $q \mapsto T$
is bilinear and Lipschitz continuous from $H^{2,2}(\bbR)$ to the corresponding Banach spaces of bounded operators with constants uniform in $\lam \in I_\infty$.
\end{remark}

\begin{lemma}
\label{lemma:T.cont}
Let $\delta \in [0,1/2)$. Then, the continuity estimates
\begin{align}
\label{T.C0.lam}
\norm{T_{q,\lam_1} - T_{q,\lam_2}}{\calB(C^0)}
	&	\lesssim_\delta		|\lam_1-\lam_2|^\delta \norm{\qsharp}{L^{2,1}} \norm{q}{L^1},\\ %[5pt]
\label{T.C0.q}
\norm{T_{q_1,\lam} - T_{q_2,\lam}}{\calB(C^0)}
	&	\lesssim	 	\norm{\qsharp_1-\qsharp_2}{L^1} \norm{q_1}{L^1}+
						\norm{\qsharp_2}{L^1}\norm{q_1-q_2}{L^1},\\ %5pt]
\label{T.L2.lam}
\norm{T_{q,\lam_1} - T_{q,\lam_2}}{\calB(L^2)}
	&	\lesssim   |\lam_1-\lam_2|^\delta 
								\left( \int_0^\infty |y|^{1+2\delta} |\qsharp(y)|^2 \, dy \right)^{1/2} \\%[5pt]
\label{T.L2.q}
\norm{T_{q_1,\lam} - T_{q_2,\lam}}{\calB(L^2)}
	&	\lesssim		\left | \int_0^{\pm \infty} |y| |\qsharp_1-\qsharp_2|^2 \, dy \right |^{1/2} 
						\norm{q_1}{L^1} \\
\nonumber
	&\qquad		+
						\left | \int_0^{\pm \infty} |y| |\qsharp_2|^2 \, dy \right |^{1/2} 
						\norm{q_1-q_2}{L^1}
\end{align}
hold,
where the implied constants in \eqref{T.C0.q} and \eqref{T.L2.q} are uniform in $\lam$ with $I_\infty$.
\end{lemma}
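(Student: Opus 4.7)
The plan is to derive all four bounds from pointwise control of the kernel
\[
K^q_+(x,y,\lam) = \mathbf{1}_{x<y}\left(\int_x^y e^{-2i\lam(y-z)} q(z)\,dz\right) q^\sharp(y),
\]
then package them into $\calB(C^0)$ estimates via the Volterra bound \eqref{T.Volt2} and into $\calB(L^2)$ estimates via the Hilbert--Schmidt computation underlying \eqref{T.Fred2}.

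For the $q$-continuity estimates \eqref{T.C0.q} and \eqref{T.L2.q}, I would exploit the bilinear structure of $K^q_+$ in the pair $(q,q^\sharp)$ by writing
\[
K^{q_1}_+ - K^{q_2}_+ = \left(\int_x^y e_\lam(y-z)q_1(z)\,dz\right)(q_1^\sharp-q_2^\sharp)(y) + \left(\int_x^y e_\lam(y-z)(q_1-q_2)(z)\,dz\right) q_2^\sharp(y).
\]
Each summand factors, uniformly in $\lam\in I_\infty$ because $|e_\lam|=1$, as an $L^1$-norm of the integrand inside times an $L^1$-norm outside; applying \eqref{T.Volt2} to each summand yields \eqref{T.C0.q}, while squaring and applying the Hilbert--Schmidt bound (with $\int_0^y dx = y$) yields \eqref{T.L2.q}.

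The Hölder-in-$\lam$ estimates rest on the interpolation
\[
|e^{-2i\lam_1 t} - e^{-2i\lam_2 t}| \leq 2^{1-\delta}\bigl(2|\lam_1-\lam_2||t|\bigr)^\delta, \quad \delta\in[0,1],
\]
obtained from $|e^{ia}-e^{ib}|\leq\min(2,|a-b|)$. For $x<y$ and $z\in[x,y]$ we have $|y-z|\le y$, so
\[
|K^q_+(x,y,\lam_1)-K^q_+(x,y,\lam_2)| \lesssim |\lam_1-\lam_2|^\delta \, y^\delta \, \norm{q}{L^1}\, |q^\sharp(y)|.
\]
Feeding this into \eqref{T.Volt2} gives \eqref{T.C0.lam} once I bound $\int_0^\infty y^\delta |q^\sharp(y)|\,dy$ by Cauchy--Schwarz against $\langle y\rangle^{-1}$: the norm $\norm{\langle\cdot\rangle^{\delta-1}}{L^2(\bbR^+)}$ is finite precisely when $\delta<1/2$, and the remaining factor is $\norm{q^\sharp}{L^{2,1}}$. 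Feeding the same pointwise bound into the Hilbert--Schmidt computation, the inner integral $\int_0^y dx = y$ produces an extra factor of $y$, giving the weight $y^{1+2\delta}$ in \eqref{T.L2.lam}.

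The main technical point is the exponent threshold $\delta<1/2$, which is sharp for the Cauchy--Schwarz step in the $C^0$ bound and dictates the best Hölder regularity reachable by this direct kernel argument; this is what forces the restriction stated in the lemma. Uniformity of the implied constants in $\lam\in I_\infty$ is automatic, since $\lam$ enters the kernel difference only through $|\lam_1-\lam_2|$ and the unimodular phase $e_\lam$.
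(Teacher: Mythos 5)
Your argument is correct, and it is exactly the one the paper intends: the lemma's proof is omitted in the text, but the surrounding machinery (the Volterra bound \eqref{T.Volt2} for $\calB(C^0)$ and the Hilbert--Schmidt computation behind \eqref{T.Fred2} for $\calB(L^2)$, fed with the bilinear splitting of $K^{q_1}_+-K^{q_2}_+$ and the interpolation $|e^{ia}-e^{ib}|\le\min(2,|a-b|)\le 2^{1-\delta}|a-b|^\delta$) is precisely what is needed, and your Cauchy--Schwarz step correctly identifies $\delta<1/2$ as the source of the restriction. The only (harmless) discrepancy is that your version of \eqref{T.L2.lam} carries the extra factor $\norm{q}{L^1}$, which the bilinearity of the kernel in $(q,\qsharp)$ makes unavoidable and which the stated inequality evidently suppresses.
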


Finally we need mapping properties of the operators $T_\lam$ and $T_{\lam\lam}$.

\begin{lemma}
\label{lemma:T.lam}
The estimates
\begin{itemize}
\item[(i)]  $\norm{T_\lam}{\calB( L^2(\bbR^+ \times I_\infty)) } \leq 
			   \norm{q^\sharp}{L^{2,1}} \norm{q}{L^1}$,
\item[(ii)]  $\norm{T_\lam}{\calB(L^2(\bbR^+ \times I_\infty),C^0(\bbR^+,L^2(I_\infty)))} \leq \norm{q^\sharp}{L^{2,1}} \norm{q}{L^{2,2}}$,
\item[(iii)]  $\norm{\lam^{-1}T_{\lam\lam}\left[ h \right]}{C^0(\bbR^+,L^2(I_\infty))}$
				$\lesssim \norm{q}{H^{2,2}}^2 
				\left( 
						\norm{h}{L^2(\bbR^+ \times I_\infty)}+ 
						\norm{h_x}{L^2(\bbR^+ \times I_\infty)}
				\right)
				$.
\end{itemize}
hold.
\end{lemma}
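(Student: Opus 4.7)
The strategy is to differentiate the kernel $K^q_+(x,z,\lambda)$ from \eqref{K2} in $\lambda$, exploit the oscillatory factor $e_\lambda(z-y) = e^{-2i\lambda(z-y)}$ via Plancherel (as we did with $\bfn_0$ in Lemma \ref{lemma:n0}), and then apply Minkowski's integral inequality plus a Hardy/Fubini argument over the Volterra region $x<y<z$. Writing $T$ in its original form, we have
\[
(T_\lambda f)(x,\lambda) = -\int_x^\infty q(y)\int_y^\infty (z-y)\,e_\lambda(z-y)\,q^\sharp(z)\,f(z,\lambda)\,dz\,dy,
\]
and for (i), (ii) the inner integral is (essentially) the Fourier transform at $\lambda$ of the function $u\mapsto u\,q^\sharp(y+u)f(y+u,\lambda)$ supported on $[0,\infty)$. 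Plancherel (applied on all of $\bbR$, which dominates the integral on $I_\infty$) yields
\[
\Bigl\|\int_y^\infty (z-y)e_\lambda(z-y)q^\sharp(z)f(z,\lambda)\,dz\Bigr\|_{L^2_\lambda(I_\infty)}^2 \lesssim \int_y^\infty (z-y)^2|q^\sharp(z)|^2|f(z,\lambda)|^2\,dz.
\]

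For (i), apply Minkowski in $y$, Cauchy-Schwarz in the resulting integral against $|q(y)|$ using $\|q\|_{L^1}$, then use Fubini in $x,y,z$ to reduce to $\int z^2|q^\sharp(z)|^2|f|^2$ times $\|q\|_{L^1}$, yielding the stated bound in terms of $\|q^\sharp\|_{L^{2,1}}\|q\|_{L^1}$. For (ii), instead of integrating in $x$, take $\sup_{x\geq 0}$ and use $|q(y)|\leq\langle y\rangle^{-2}\,\langle y\rangle^2|q(y)|$ together with Cauchy-Schwarz to move to $\|q\|_{L^{2,2}}$ and $\|q^\sharp\|_{L^{2,1}}$. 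Lipschitz continuity in $q$ is immediate, as in Remark \ref{rem:T.bd}, because $T_\lambda$ is bilinear in $(q,q^\sharp)$ and $q\mapsto q^\sharp$ is smooth from $H^{2,2}(\bbR)$ by \eqref{qsharp} and the Sobolev algebra property.

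The hard part is (iii), because the second derivative $\partial_\lambda^2 K^q_+$ acquires a factor $(z-y)^2$, which cannot be absorbed by $|q(y)|$ alone. The remedy is to integrate by parts in $y$, writing
\[
e_\lambda(z-y) = \frac{1}{2i\lambda}\,\partial_y\!\left[e_\lambda(z-y)\right],
\]
which gains the factor $\lambda^{-1}$ we need. After one such integration by parts inside $\lambda^{-1}\partial_\lambda^2 K^q_+$, the boundary term at $y=x$ contributes $(z-x)^2 q(x)e_\lambda(z-x)$ and the bulk term splits into one piece with $(z-y)q(y)$ and one with $(z-y)^2 q'(y)$. We then apply the Plancherel-Minkowski scheme of part (ii) to each piece: the $q'$ piece is controlled by $\|q'\|_{L^{2,2}}\|q^\sharp\|_{L^{2,1}}\lesssim\|q\|_{H^{2,2}}^2$, and the remaining pieces require a uniform bound on $h$ and $h_x$ (which is why the statement involves both $\|h\|_{L^2(\bbR^+\times I_\infty)}$ and $\|h_x\|_{L^2(\bbR^+\times I_\infty)}$: the boundary term's dependence on $h(x,\cdot)$ requires Sobolev embedding $H^1_x\hookrightarrow C^0_x$, which in the mixed $L^2_\lambda$ setting reduces to controlling the $x$-derivative of $h$). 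The main technical obstacle is in this last step, tracking the precise dependence on $h$ and $h_x$; once the kernel has been reorganized by integration by parts, the estimates parallel those in (i)-(ii) with appropriate weights.
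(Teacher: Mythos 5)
Your plan for parts (i) and (ii) invokes Plancherel on the inner integral, but the function you propose to transform, $u\mapsto u\,\qsharp(y+u)f(y+u,\lam)$, depends on $\lam$ through $f$, so Plancherel in $\lam$ does not apply, and the displayed inequality is not justified (its right-hand side even retains a free $\lam$ while the left-hand side does not). The repair is easy and is what the paper does: a pointwise Cauchy--Schwarz in $z$, using $|z-y|\le|z|$ on the region $0\le y\le z$, gives
\[
\Bigl|\int_y^\infty (z-y)\,e_\lam(z-y)\,\qsharp(z)\,h(z,\lam)\,dz\Bigr| \le \norm{\qsharp}{L^{2,1}}\,\norm{h(\dotarg,\lam)}{L^2(\bbR^+)}
\]
uniformly in $\lam$, after which (i) and (ii) follow by taking the appropriate norms of $\int_x^\infty|q(y)|\,dy$ in $x$ and integrating in $\lam$. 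No oscillation is needed for these two parts.

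The serious gap is in (iii). The obstruction there is not the need to ``gain a factor $\lam^{-1}$'' --- that factor is already supplied by the statement --- but the fact that the weight $(z-y)^2$ lands on $\qsharp(z)=\overline{q'(z)}-\tfrac{i}{2}|q(z)|^2\overline{q(z)}$, and $z^2q'(z)$ is \emph{not} square integrable for general $q\in H^{2,2}(\bbR)$: interpolation gives $\norm{\langle x\rangle q'}{2}\lesssim\norm{q}{H^{2,2}}$ but not $\norm{\langle x\rangle^2 q'}{2}\lesssim\norm{q}{H^{2,2}}$, so your claimed bound $\norm{q'}{L^{2,2}}\lesssim\norm{q}{H^{2,2}}$ is false. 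Your integration by parts in $y$ leaves $\qsharp(z)$ untouched: it merely redistributes weights onto $q(y)$ and introduces an unneeded extra power of $\lam^{-1}$, and every resulting term --- including the boundary term at $y=x$ --- still carries $(z-\cdot)^2\qsharp(z)$ and hence the uncontrollable quantity $z^2\overline{q'(z)}$. The paper instead splits $\qsharp$ into its $\overline{q'}$ part and its cubic part, estimates the cubic part directly (since $\langle z\rangle^2|q|^2\overline{q}$ is controlled), and integrates by parts \emph{in $z$} in the $\overline{q'}$ part. That moves the derivative off $q'$, producing terms with $\lam(z-y)^2\overline{q(z)}$ (harmless once divided by the prefactor $\lam^{-1}$, since $\langle z\rangle^2 q\in L^2$), with $(z-y)\overline{q(z)}$, and with $(z-y)^2\overline{q(z)}\,h_z(z,\lam)$; this last term is the true source of the $\norm{h_x}{L^2(\bbR^+\times I_\infty)}$ contribution in the estimate, not a Sobolev embedding applied to a boundary term as you suggest.
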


\begin{proof}
(i), (ii)  From the formula
$$ 
\frac{\dee T}{\dee \lam} [h] (x,\lam) = \int_x^\infty q(y) \int_y^\infty (z-y) e_\lam(z-y) q^\sharp(z) h(z,\lam) \, dz \, dy.
$$
we may estimate 
$$
\left| \frac{\dee T}{\dee \lam}\left[h\right](x,\lam) \right| 
\leq 
\norm{q^\sharp}{L^{2,1}} \norm{h(\dotarg,\lam)}{L^2(\bbR^+)}  \int_x^\infty |q(y)| \, dy.
$$
We easily conclude that
\begin{align*}
\norm{\frac{\dee T}{\dee \lam} \left[ h \right]}{C^0(\bbR^+,L^2(I_\infty))}
	&	\leq 	\norm{q^\sharp}{L^{2,1}}\norm{q}{L^1} \norm{h}{L^2(\bbR^+ \times I_\infty)},\\
\norm{\frac{\dee T}{\dee \lam} \left[ h \right]}{L^2(\bbR^+ \times I_\infty)}
	& \lesssim	\norm{q^\sharp}{L^{2,1}} \norm{q}{L^{2,2}} \norm{h}{L^2(\bbR^+ \times I_\infty)}
\end{align*}
which imply (i) and (ii). The maps are Lipschitz since they are bilinear in $q$. 

(iii) From the formula
\begin{align*}
\frac{\dee^2 T}{\dee \lam^2} \left[ h \right](x,\lam)
&=	2i \int_x^\infty q(y) \int_y^\infty (z-y)^2 e_\lam(z-y) q^\sharp(z) h(z,\lam) \, dz \, dy \\
&=	I_1 + I_2
\end{align*}
where
\begin{align*}
I_1	&=	2i \int_x^\infty q(y) \int_y^\infty (z-y)^2 e_\lam(z-y) q'(z) h(z,\lam) \, dz \, dy, \\
I_2	&=	-  \int_x^\infty q(y) \int_y^\infty (z-y)^2 e_\lam(z-y) |q(z)|^2 h(z,\lam) \, dz \, dy.
\end{align*}
Since $z^2 |q(z)|^2 \in L^{2,1}$ for $q \in H^{2,2}(\bbR)$, we can estimate $I_2$ using the same techniques used for (i), (ii). The expression $I_1$ makes sense for $q \in \calS(\bbR)$ but we must integrate by parts to obtain an expression that is meaningful for arbitrary 
$q \in H^{2,2}(\bbR)$. We compute
\begin{align*}
I_2	
	&=		2i\int_x^\infty q(y)
						 \int_y^\infty 
						 		e_\lam(z-y)q(z) \left[ -2i\lam (z-y)^2 h(z,y) +  2(z-y)h(z,y)\right] 
						 \, dz 
					\, dy \\ 
	&\quad + 2i\int_x^\infty q(y)
						 \int_y^\infty 
						 		e_\lam(z-y)q(z) 	
						 			\left[(z-y)^2 h_z(z,\lam) \right] 
						 \, dz 
					\, dy.
\end{align*}
from which (iii) follows.
\end{proof}

\bigskip

\textbf{(2) Resolvent Estimates.} As before we exploit Volterra estimates to construct the resolvent,  obtain an integral kernel, and extend the resolvent to a bounded operator on the spaces $C^0(\bbR^+,L^2(I_\infty))$ and $L^2(\bbR^+ \times I_\infty)$.  

\begin{lemma}
\label{lemma:T+}
Suppose that $q \in H^{2,2}(\bbR)$. The resolvent $(I-T)^{-1}$ exists as a 
bounded operator in $C^0(\bbR^+)$ and the operator $L \equiv (I-T)^{-1}-I$
is an integral operator with integral kernel $L(x,y,\lam)$ so that $L(x,y,\lam)=0$ for $x>y$, $L(x,y,\lam)$ continuous in $(x,y,\lam)$ for $x<y$, and obeying the estimates
$$ 
|L(x,y,\lam)| \leq \exp\left(\norm{q}{L^1} \norm{q^\sharp}{L^1}\right)\norm{q}{L^1} |q^\sharp(y)|.
$$					
Moreover, the map $q \rarr \widehat{L}$ is Lipschitz continuous from $H^{2,2}(\bbR)$ into 
$$\calB(C^0(\bbR^+,L^2(I_\infty)) \cap \calB(L^2(\bbR^+ \times I_\infty)).$$
\end{lemma}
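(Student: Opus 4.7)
The structure mirrors that of Lemma \ref{lemma:res.T0} and Lemma \ref{lemma:R0}: first construct the resolvent on $C^0(\bbR^+)$ via the Volterra series, extract a pointwise estimate on its integral kernel, then bootstrap to boundedness on the mixed spaces $C^0(\bbR^+,L^2(I_\infty))$ and $L^2(\bbR^+\times I_\infty)$, and finally read off Lipschitz dependence on $q$ from the second resolvent identity.

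First, combine the Volterra bound \eqref{T.Volt2} with Lemma \ref{lemma:Volterra} to conclude that $(I-T)^{-1}$ is a bounded operator on $C^0(\bbR^+)$ with norm at most $\exp\bigl(\norm{q}{L^1}\norm{q^\sharp}{L^1}\bigr)$, uniformly in $\lam\in I_\infty$. The operator $L=(I-T)^{-1}-I=\sum_{n\ge 1} T^n$ has integral kernel
\[
L(x,y,\lam)=\sum_{n=1}^\infty K_n(x,y,\lam),\qquad K_1=K^q_+,
\]
where each $K_n$ is an iterated integral of $K^q_+$ supported in $\{x\le y\}$. Using \eqref{K2} and the pointwise bound $|K^q_+(x,y,\lam)|\le \norm{q}{L^1}|q^\sharp(y)|$, a straightforward induction identical to the one in Lemma \ref{lemma:res.T0} yields
\[
|K_n(x,y,\lam)|\le \tfrac{1}{(n-1)!}\bigl(\norm{q}{L^1}\norm{q^\sharp}{L^1}\bigr)^{n-1}\norm{q}{L^1}|q^\sharp(y)|,
\]
which sums to the claimed bound on $|L(x,y,\lam)|$. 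Continuity in $(x,y,\lam)$ on $\{x<y\}$ follows because each $K_n$ is continuous there (the exponential $e_\lam$ and the $L^1$ integrands depend continuously on the parameters) and the series converges uniformly on compact subsets.

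Second, to promote $\widehat L$ to a bounded operator on $C^0(\bbR^+,L^2(I_\infty))$ and on $L^2(\bbR^+\times I_\infty)$, I would repeat verbatim the argument of Lemma \ref{lemma:R0}, using the pointwise kernel bound just obtained together with Minkowski's integral inequality for the $C^0$ bound and a Schur-test / Cauchy-Schwarz argument for the $L^2$ bound. Specifically, the $C^0(\bbR^+,L^2(I_\infty))$ norm is controlled by $\exp\bigl(\norm{q}{L^1}\norm{q^\sharp}{L^1}\bigr)\norm{q}{L^1}\norm{q^\sharp}{L^1}$, and for the $L^2(\bbR^+\times I_\infty)$ norm I would apply Cauchy-Schwarz in $y$ and then estimate
\[
\int_0^\infty\int_x^\infty |q^\sharp(y)|^2\,dy\,dx=\int_0^\infty y\,|q^\sharp(y)|^2\,dy=\norm{q^\sharp}{L^{2,1/2}}^2,
\]
which is finite for $q\in H^{2,2}(\bbR)$ thanks to \eqref{qsharp}.

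Third, for Lipschitz continuity of $q\mapsto \widehat L$, I would use the second resolvent identity
\[
(I-T_{q_1})^{-1}-(I-T_{q_2})^{-1}=(I-T_{q_1})^{-1}(T_{q_1}-T_{q_2})(I-T_{q_2})^{-1},
\]
combined with the uniform-in-$\lam$ bilinear Lipschitz bounds on $T$ recorded in Remark \ref{rem:T.bd} (the $C^0$ version \eqref{T.C0.q} and the $L^2$ version \eqref{T.L2.q} at $\delta=0$), and the uniform resolvent bounds from the previous step. Since the estimates on $(I-T_q)^{-1}$ depend only on $\norm{q}{H^{2,2}}$, the result is local Lipschitz continuity from $H^{2,2}(\bbR)$ into $\calB(C^0(\bbR^+,L^2(I_\infty)))\cap \calB(L^2(\bbR^+\times I_\infty))$.

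The main technical hurdle is the $L^2(\bbR^+\times I_\infty)$ bound: unlike the small-$\lam$ setting of Lemma \ref{lemma:R0} where the weight $\gamma_1$ dominates everything, here one must verify that $q^\sharp$ (which involves $q'$ and $|q|^2\bar q$) sits in the weighted space $L^{2,1/2}$, and that the oscillatory factors $e_\lam(y-z)$ in \eqref{K2} do not obstruct the pointwise domination used in the Schur estimate. The saving grace is that the modulus $|K^q_+(x,y,\lam)|$ is independent of $\lam$, so the $\lam$-integration in $I_\infty$ is harmless once Cauchy-Schwarz is applied to the convolution-type inner integral.
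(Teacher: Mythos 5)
Your proof is correct and follows exactly the route the paper intends: the paper omits the proof of this lemma, stating only that it is ``very similar to the proof of Lemma \ref{lemma:res.T0}'', and your Volterra-series construction of the kernel, the factorial bound on the iterated kernels $K_n$ summing to $\exp\left(\norm{q}{L^1}\norm{q^\sharp}{L^1}\right)\norm{q}{L^1}|q^\sharp(y)|$, the Minkowski/Cauchy--Schwarz promotion to the mixed spaces, and the second resolvent identity for the Lipschitz dependence on $q$ are precisely the adaptation of Lemmas \ref{lemma:res.T0} and \ref{lemma:R0} that is called for. The one place you diverge slightly is the $L^2(\bbR^+\times I_\infty)$ operator bound, where the paper's Lemma \ref{lemma:R} instead uses the identity $(I-T)^{-1}-I=T+T(I-T)^{-1}T$ together with the $L^2\to C^0$ and $C^0\to L^2$ bounds on $T$ from Lemma \ref{lemma:T.est}, whereas your direct Schur estimate on the kernel of $L$ (using the finiteness of $\int_0^\infty y\,|q^\sharp(y)|^2\,dy$ for $q\in H^{2,2}(\bbR)$) reaches the same conclusion more directly.
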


We omit the proof, which is very similar to the proof of Lemma \ref{lemma:res.T0}. The integral kernel $L$ defines an operator $\widehat{L}$ much as the integral kernel $L_0$ defined an operator $\widehat{L}_0$ in \eqref{fLg.def} and Lemma \ref{lemma:R0}. Following 
that analysis, one has:

\begin{lemma}
\label{lemma:R}
The estimates
\begin{equation}
\label{R.1}
\Norm{\widehat{L}}{\calB(C^0(\bbR^+,L^2(I_\infty)))}
	\leq	\norm{q}{L^1} \norm{q^\sharp}{L^1} 
			\exp\left(   	\norm{q}{L^1} \norm{q^\sharp}{L^1}  \right)
\end{equation}
and
\begin{align}
\label{R.2}
\Norm{\widehat{L}}{\calB(L^2(\bbR^+,L^2(I_\infty)))}
	&\leq 	\norm{q}{L^{2,1}}\norm{q^\sharp}{L^{2,1}} 
	+ \norm{q}{H^{2.2}}^6
					\exp\left(   	\norm{q}{L^1} \norm{q^\sharp}{L^1}  \right)
\end{align}
\end{lemma}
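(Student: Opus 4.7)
The plan is to follow the template of Lemma \ref{lemma:R0} in Section \ref{sec:direct.small}, with the key input being the pointwise kernel bound
$$|L(x,y,\lambda)| \le C_q\,|q^\sharp(y)|\,\chi_{\{x\le y\}}, \qquad C_q := \norm{q}{L^1}\exp\bigl(\norm{q}{L^1}\norm{q^\sharp}{L^1}\bigr),$$
established in Lemma \ref{lemma:T+}. The crucial feature of this bound is that it is \emph{uniform in $\lambda\in I_\infty$}; everything that follows exploits this.

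For estimate \eqref{R.1}, suppose $f\in C^0(\bbR^+,L^2(I_\infty))$ and write
$\widehat L f(x,\lambda)=\int_x^\infty L(x,y,\lambda)f(y,\lambda)\,dy$. Apply Minkowski's integral inequality in $\lambda$ to get
$$\norm{\widehat L f(x,\dotarg)}{L^2(I_\infty)} \le C_q\int_x^\infty |q^\sharp(y)|\,\norm{f(y,\dotarg)}{L^2(I_\infty)}\,dy \le C_q\norm{q^\sharp}{L^1}\,\norm{f}{C^0(\bbR^+,L^2(I_\infty))},$$
and continuity in $x$ follows by dominated convergence. This yields \eqref{R.1}.

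For estimate \eqref{R.2}, given $f\in L^2(\bbR^+\times I_\infty)$, I apply the Cauchy--Schwarz inequality in $y$ pointwise in $(x,\lambda)$:
$$|\widehat L f(x,\lambda)|^2 \le \Bigl(\int_x^\infty |L(x,y,\lambda)|^2\,dy\Bigr)\Bigl(\int_x^\infty |f(y,\lambda)|^2\,dy\Bigr) \le C_q^2\Bigl(\int_x^\infty |q^\sharp(y)|^2\,dy\Bigr)\Bigl(\int_x^\infty |f(y,\lambda)|^2\,dy\Bigr).$$
Integrating in $(x,\lambda)$ and swapping the order of integration via Fubini gives
$$\norm{\widehat L f}{L^2(\bbR^+\times I_\infty)}^2 \le C_q^2 \Bigl(\int_0^\infty y\,|q^\sharp(y)|^2\,dy\Bigr)\,\norm{f}{L^2(\bbR^+\times I_\infty)}^2.$$
Since $\int_0^\infty y|q^\sharp|^2\,dy\le \norm{q^\sharp}{L^{2,1}}^2$ and Cauchy--Schwarz with the weight $\langle y\rangle^{-1}$ gives $\norm{q}{L^1}\lesssim \norm{q}{L^{2,1}}$, one obtains a total bound proportional to $\norm{q}{L^{2,1}}\norm{q^\sharp}{L^{2,1}}\,\exp(\norm{q}{L^1}\norm{q^\sharp}{L^1})$. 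The sum form in \eqref{R.2} results from separating the small-data polynomial part from a remainder that absorbs the exponential factor, using an elementary estimate of the form $e^t\le 1 + t^6 e^t$ together with $\norm{q}{L^1}\norm{q^\sharp}{L^1}\lesssim \norm{q}{H^{2,2}}^2$; Lipschitz continuity of $q\mapsto \widehat L$ is inherited from the Lipschitz statement of Lemma \ref{lemma:T+} via the second resolvent identity $\widehat L_1-\widehat L_2 = (I+\widehat L_1)(T_1-T_2)(I+\widehat L_2)$.

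The main obstacle is not a deep estimate but careful bookkeeping: one must track $\lambda$-uniformity throughout (which is automatic because the kernel bound is $\lambda$-independent) and match the somewhat awkward form of the final right-hand side in \eqref{R.2}, with its polynomial-plus-exponential split, to the natural product estimate that comes out of Fubini.
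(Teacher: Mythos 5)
Your proof of \eqref{R.1} is essentially the paper's: both rest on the pointwise kernel bound of Lemma \ref{lemma:T+} together with the Minkowski-inequality argument already used to prove \eqref{R0.1}, and you land on exactly the stated constant. For \eqref{R.2}, however, you take a genuinely different route. The paper writes $(I-T)^{-1}-I=T+T(I-T)^{-1}T$ and estimates the two pieces separately: the Hilbert--Schmidt-type bound \eqref{T.Fred2} for the lone $T$ produces the term $\norm{q}{L^{2,1}}\norm{q^\sharp}{L^{2,1}}$, while the composition $T(I-T)^{-1}T$ is controlled through the $L^2\rarr C^0$, $C^0\rarr C^0$, and $C^0\rarr L^2$ mapping bounds \eqref{T.L2toC0.2}, \eqref{R.1}, \eqref{T.C0toL2.2}, whence the $\norm{q}{H^{2,2}}^6\exp(\cdots)$ term. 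You instead apply the Cauchy--Schwarz/Fubini computation that the paper itself uses for $\widehat{L}_0$ in \eqref{R0.2} directly to the resolvent kernel $L(x,y,\lam)$; this is legitimate precisely because the bound $|L(x,y,\lam)|\le C_q |q^\sharp(y)|$ of Lemma \ref{lemma:T+} is uniform in $\lam\in I_\infty$, and it yields the single product bound $\norm{q}{L^1}\norm{q^\sharp}{L^{2,1}}\exp\left(\norm{q}{L^1}\norm{q^\sharp}{L^1}\right)$. That bound is entirely adequate for every downstream use of Lemma \ref{lemma:R} (boundedness of $\widehat{L}$ on $L^2(\bbR^+\times I_\infty)$ with a constant controlled by $\norm{q}{H^{2,2}}$, uniformly on bounded sets, plus Lipschitz dependence on $q$ via the second resolvent identity). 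Your route has the virtue of treating $\widehat{L}_0$ and $\widehat{L}$ by one and the same argument; what the paper's factorization buys is the additive form of \eqref{R.2}, in which the leading term carries no exponential factor.

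One repair is needed in your final massaging step: the inequality $e^t\le 1+t^6e^t$ is false for small $t$ (the left side is $1+t+O(t^2)$, the right side $1+O(t^6)$). Replace it with $e^t\le 1+te^t$; the extra factor $t=\norm{q}{L^1}\norm{q^\sharp}{L^1}$ is again polynomially bounded by $\norm{q}{H^{2,2}}$, so you still obtain a bound of the form ``product of weighted norms plus a polynomial in $\norm{q}{H^{2,2}}$ times $\exp(\cdots)$.'' Since the exponent $6$ in \eqref{R.2} already absorbs implied constants from \eqref{T.L2toC0.2} and \eqref{T.C0toL2.2}, the failure to reproduce that exponent exactly is cosmetic and affects none of the applications of the lemma.
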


\begin{proof}
The estimate \eqref{R.2} follows from \eqref{R.1}, the formula
$$(I-T)^{-1} - I = T + T(I-T)^{-1} T,$$
and the bounds on $T:L^2 \rarr C^0$, $T:C^0 \rarr L^2$, and $T:L^2 \rarr L^2$ obtained in Lemma \ref{lemma:T.est} and Remark \ref{rem:T.bd}.
The estimate \eqref{R.1} follows from the Volterra estimate \eqref{T.Volt2} and the same argument used to in the proof of Lemma \ref{lemma:R0} to prove \eqref{R0.1}.
\end{proof}

\bigskip

\textbf{(3) Solving for $\eta_{11}$.}  From the resolvent construction above, we can solve for $\eta_{11}$.

\begin{lemma}
\label{lemma:eta11}
For each $q \in H^{2,2}(\bbR)$ and $\lam \in I_\infty$,
the equation \eqref{eta11} admits a unique solution $\eta_{11} \in C^0(\bbR^+,L^2(I_\infty)) \cap L^2(\bbR^+ \times I_\infty)$.
Moreover, $q \rarr \eta_{11}$ is Lipschitz continuous as a map 
from $H^{2,2}(\bbR)$ to  
$C^0(\bbR^+,L^2(I_\infty)) \cap L^2(\bbR^+ \times I_\infty)$.
\end{lemma}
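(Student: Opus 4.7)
The plan is to apply the resolvent $(I-T)^{-1}$ constructed in Lemma \ref{lemma:T+} to the inhomogeneous term $F$, which is controlled by Lemma \ref{lemma:FG}(ii), and then verify that the resulting $\eta_{11}$ inherits the desired joint $C^0(\bbR^+,L^2(I_\infty)) \cap L^2(\bbR^+ \times I_\infty)$ regularity through the operator bounds in Lemma \ref{lemma:R}. The Lipschitz continuity in $q$ will then follow from a standard second-resolvent argument together with the Lipschitz dependence of $F$ and $\widehat{L}$ on $q$.

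First, I would fix $q \in H^{2,2}(\bbR)$ and observe that for each $\lam \in I_\infty$, the operator $T = T_+(\lam,q)$ is Volterra with kernel supported on $\{x \leq y\}$, so by Lemma \ref{lemma:T+}, $(I-T)^{-1} = I + \widehat{L}$ exists as a bounded operator on $C^0(\bbR^+)$ and the (formal) unique solution of \eqref{eta11} is
\begin{equation*}
\eta_{11} = F + \widehat{L} F.
\end{equation*}
By Lemma \ref{lemma:FG}(ii), $F \in C^0(\bbR^+, L^2(I_\infty)) \cap L^2(\bbR^+ \times I_\infty)$, and by Lemma \ref{lemma:R} the operator $\widehat{L}$ is bounded on each of these two spaces with norms controlled by $\norm{q}{H^{2,2}}$. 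Hence $\widehat{L}F$ lies in the intersection as well, and $\eta_{11} \in C^0(\bbR^+, L^2(I_\infty)) \cap L^2(\bbR^+ \times I_\infty)$ with
\begin{equation*}
\norm{\eta_{11}}{C^0(\bbR^+, L^2(I_\infty)) \cap L^2(\bbR^+ \times I_\infty)} \lesssim C(\norm{q}{H^{2,2}}) \norm{F}{C^0(\bbR^+, L^2(I_\infty)) \cap L^2(\bbR^+ \times I_\infty)}.
\end{equation*}
Uniqueness within this class follows from the Volterra estimate \eqref{T.Volt2} of Lemma \ref{lemma:T.est}: any solution of the homogeneous equation $\eta = T\eta$ must satisfy $\eta^*(x) \leq (\int_x^\infty \gamma(y)\,dy)\,\eta^*(x)$ pointwise in $x$ and $\lam$, forcing $\eta \equiv 0$ for $x$ large and then, by iteration, for all $x$.

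For the Lipschitz property, let $q_1, q_2 \in H^{2,2}(\bbR)$ with corresponding data $F_j$, operators $T_j$, and solutions $\eta^{(j)} = F_j + \widehat{L}_j F_j$ for $j=1,2$. Using the second resolvent identity
\begin{equation*}
(I - T_1)^{-1} - (I - T_2)^{-1} = (I-T_1)^{-1} (T_1 - T_2) (I-T_2)^{-1},
\end{equation*}
I would write
\begin{equation*}
\eta^{(1)} - \eta^{(2)} = (I + \widehat{L}_1)(F_1 - F_2) + (I + \widehat{L}_1)(T_1 - T_2)(I + \widehat{L}_2) F_2.
\end{equation*}
The first term is handled by Lemma \ref{lemma:FG}(ii) (Lipschitz dependence of $F$ on $q$) combined with the uniform operator bounds on $\widehat{L}_1$ from Lemma \ref{lemma:R}. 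For the second term, the Lipschitz bounds on $T$ in $q$ provided by \eqref{T.C0.q} and \eqref{T.L2.q} of Lemma \ref{lemma:T.cont} give
\begin{equation*}
\norm{T_1 - T_2}{\calB(C^0)} + \norm{T_1 - T_2}{\calB(L^2)} \lesssim \norm{q_1 - q_2}{H^{2,2}},
\end{equation*}
uniformly for $\lam \in I_\infty$. The mild obstacle is to verify that $T_1 - T_2$ can be inserted between the two spaces $C^0(\bbR^+,L^2(I_\infty))$ and $L^2(\bbR^+ \times I_\infty)$ with the right control, which is arranged by using Lemma \ref{lemma:T.est} (in particular the $L^2 \to C^0$ and $C^0 \to L^2$ bounds in Remark \ref{rem:T.bd}) to route from one space to the other; this lets us estimate the middle factor in the desired mixed norm. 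Assembling these pieces yields
\begin{equation*}
\norm{\eta^{(1)} - \eta^{(2)}}{C^0(\bbR^+,L^2(I_\infty)) \cap L^2(\bbR^+ \times I_\infty)} \lesssim C(\norm{q_1}{H^{2,2}}, \norm{q_2}{H^{2,2}}) \,\norm{q_1 - q_2}{H^{2,2}},
\end{equation*}
which is the claimed local Lipschitz continuity.
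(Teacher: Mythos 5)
Your proposal is correct and follows essentially the same route as the paper: the paper's own proof is a one-line citation of Lemma \ref{lemma:FG}(ii) and Lemma \ref{lemma:R}, i.e.\ precisely the decomposition $\eta_{11}=F+\widehat{L}F$ with the Lipschitz dependence of $F$, $T$, and the resolvent on $q$ that you spell out (the paper packages the Lipschitz continuity of $q\mapsto\widehat{L}$ directly into Lemma \ref{lemma:T+}, where you re-derive it via the second resolvent identity). Your write-up is a faithful, more detailed version of the intended argument.
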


\begin{proof}
A direct consequence of Lemma \ref{lemma:FG}(ii) and Lemma \ref{lemma:R}.
\end{proof}

\bigskip

\textbf{(4) Solving for $\dee \eta_{11}/\dee \lam$.} By controlling the inhomogeneous term in \eqref{eta11.lam}, we can estimate $(\eta_{11})_{\lam}$.

\begin{lemma}
\label{lemma:eta11.lam}
For each $q \in H^{2,2}(\bbR)$ and $\lam \in I_\infty$, 
the equation \eqref{eta11.lam} admits a unique solution 
$(\eta_{11})_\lam \in C^0(\bbR^+,L^2(I_\infty)) \cap L^2(\bbR^+ \times I_\infty)$. Moreover, $q \rarr (\eta_{11})_\lam$ is Lipschitz continuous as a map 
from $H^{2,2}(\bbR)$ to  
$C^0(\bbR^+,L^2(I_\infty)) \cap L^2(\bbR^+ \times I_\infty)$.
\end{lemma}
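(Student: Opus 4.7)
The plan is to treat \eqref{eta11.lam} as an inhomogeneous Volterra equation of the same type as \eqref{eta11}, rewriting it as
\[
(I-T)\,(\eta_{11})_\lam = (F)_\lam + (T)_\lam\,\eta_{11},
\]
and then invert $(I-T)$ using the resolvent bounds already established in Lemma \ref{lemma:R}. This parallels exactly the argument used for $\bfn_\lam$ in Lemma \ref{lemma:n.lam}, with $L^2(I_\infty)$ replacing $L^2(I_0)$, and with the large-$\lam$ operator $T$ and forcing terms in place of the small-$\lam$ versions.

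The first step is to verify that the inhomogeneous term $(F)_\lam + (T)_\lam\,\eta_{11}$ lies in the target space $C^0(\bbR^+,L^2(I_\infty)) \cap L^2(\bbR^+ \times I_\infty)$ and depends Lipschitz-continuously on $q \in H^{2,2}(\bbR)$. For the piece $(F)_\lam$ this is immediate from Lemma \ref{lemma:FG}(iv). For the piece $(T)_\lam\,\eta_{11}$, we combine Lemma \ref{lemma:eta11}, which provides
\[
\eta_{11} \in C^0(\bbR^+,L^2(I_\infty)) \cap L^2(\bbR^+ \times I_\infty)
\]
together with Lipschitz dependence on $q$, with Lemma \ref{lemma:T.lam}(i) and (ii), which yield the bounds
\[
\norm{(T)_\lam[h]}{L^2(\bbR^+ \times I_\infty)} \lesssim \norm{q^\sharp}{L^{2,1}}\norm{q}{L^1}\,\norm{h}{L^2(\bbR^+ \times I_\infty)},
\]
\[
\norm{(T)_\lam[h]}{C^0(\bbR^+,L^2(I_\infty))} \lesssim \norm{q^\sharp}{L^{2,1}}\norm{q}{L^{2,2}}\,\norm{h}{L^2(\bbR^+ \times I_\infty)},
\]
with constants Lipschitz-controlled by $\norm{q}{H^{2,2}}$ since $(T)_\lam$ is bilinear in $q$ and $q^\sharp$ is a Lipschitz function of $q$ in $H^{2,2}(\bbR)$.

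The second step is to apply Lemma \ref{lemma:R}: the resolvent operator $(I-T)^{-1} = I + \widehat{L}$ is bounded on both $C^0(\bbR^+,L^2(I_\infty))$ and $L^2(\bbR^+ \times I_\infty)$, uniformly in $\lam \in I_\infty$, with operator norms controlled by $\norm{q}{H^{2,2}}$ and depending Lipschitz-continuously on $q$. Hence
\[
(\eta_{11})_\lam = (I-T)^{-1}\bigl[\,(F)_\lam + (T)_\lam\,\eta_{11}\,\bigr]
\]
gives the unique solution of \eqref{eta11.lam} in the target space, and the Lipschitz continuity of $q \mapsto (\eta_{11})_\lam$ follows by combining the Lipschitz continuity of $q \mapsto (F)_\lam + (T)_\lam\,\eta_{11}$ with the Lipschitz continuity of the resolvent, via the second resolvent identity exactly as in Lemma \ref{lemma:n.lam}.

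No genuinely new obstacle arises: the one point requiring care is that $\eta_{11}$ itself must be known to lie in $L^2(\bbR^+ \times I_\infty)$ (not merely in $C^0$) in order for the bound on $(T)_\lam \eta_{11}$ in $C^0(\bbR^+,L^2(I_\infty))$ from Lemma \ref{lemma:T.lam}(ii) to apply. This is precisely why Lemma \ref{lemma:eta11} was formulated with the two simultaneous norms, so the bootstrap in \eqref{eta11.lam} closes cleanly.
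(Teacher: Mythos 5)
Your proposal is correct and follows essentially the same route as the paper: invert $(I-T)$ via Lemma \ref{lemma:R} after checking that the forcing term $F_\lam + T_\lam[\eta_{11}]$ lies in $C^0(\bbR^+,L^2(I_\infty)) \cap L^2(\bbR^+ \times I_\infty)$ using Lemma \ref{lemma:FG}(iv), Lemma \ref{lemma:T.lam}(i),(ii), and Lemma \ref{lemma:eta11}. Your closing remark about needing the $L^2(\bbR^+ \times I_\infty)$ control on $\eta_{11}$ to feed $T_\lam$ is exactly the point the paper relies on implicitly.
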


\begin{proof}
By \eqref{eta11.lam} and Lemma \ref{lemma:R}, it suffices to show that 
the inhomogeneous term
$$ F_\lam + T_\lam \left[ \eta_{11} \right] $$
belongs to $C^0(\bbR^+,L^2(I_\infty)) \cap L^2(\bbR^+ \times I_\infty)$. This follows from Lemma \ref{lemma:FG}(iv), Lemma \ref{lemma:T.lam}(i),(ii), and Lemma \ref{lemma:eta11}.
\end{proof}

\bigskip

\textbf{(5) Solving for $\dee^2 \eta_{11}/\dee \lam^2$.}  Next we obtain estimates on 
$(\eta_{11})_{\lam\lam}$ using \eqref{eta11.lamlam}.

\begin{lemma}
\label{lemma:eta11.lamlam}
For each $q \in H^{2,2}(\bbR)$, $\lam \in I_\infty$, 
equation \eqref{eta11.lamlam} admits a unique solution $(\eta_{11})_{\lam\lam}$ with
$\lam^{-1}(\eta_{11})_{\lam\lam} \in C^0(\bbR^+,L^2(I_\infty)$. Moreover, 
$q \rarr \lam^{-1}(\eta_{11})_{\lam\lam}$ is Lipschitz continuous as a map 
from $H^{2,2}(\bbR)$ to  
$C^0(\bbR^+,L^2(I_\infty))$.
\end{lemma}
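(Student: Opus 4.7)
The plan is to mimic the Volterra--resolvent approach used in Lemmas \ref{lemma:eta11} and \ref{lemma:eta11.lam}, but applied to the rescaled unknown $\lam^{-1}(\eta_{11})_{\lam\lam}$. Since at each fixed $\lam$ the operator $T$ acts as an integral operator in $x$ alone, multiplication by $\lam^{-1}$ commutes with $T$. Dividing \eqref{eta11.lamlam} by $\lam$ therefore gives the equivalent Volterra equation
$$\lam^{-1}(\eta_{11})_{\lam\lam} = g + T\bigl[\lam^{-1}(\eta_{11})_{\lam\lam}\bigr],$$
with inhomogeneous term
$$g := \lam^{-1} F_{\lam\lam} + 2\lam^{-1} T_\lam\bigl[(\eta_{11})_\lam\bigr] + \lam^{-1} T_{\lam\lam}\bigl[\eta_{11}\bigr].$$

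The key step is to show that $g \in C^0(\bbR^+, L^2(I_\infty))$ with $q\mapsto g$ Lipschitz from $H^{2,2}(\bbR)$ into this space. The first summand is handled directly by Lemma \ref{lemma:FG}(vi). For the second summand, I use $|\lam^{-1}|\leq 1$ on $I_\infty$ together with the $L^2(\bbR^+\times I_\infty)\to C^0(\bbR^+,L^2(I_\infty))$ operator bound on $T_\lam$ of Lemma \ref{lemma:T.lam}(ii), applied to $(\eta_{11})_\lam$, which lies in $L^2(\bbR^+\times I_\infty)$ by Lemma \ref{lemma:eta11.lam}. The third summand invokes Lemma \ref{lemma:T.lam}(iii), which requires both $\eta_{11}$ and $(\eta_{11})_x$ in $L^2(\bbR^+ \times I_\infty)$: the former is provided by Lemma \ref{lemma:eta11}, and the latter is obtained by differentiating \eqref{eta11} in $x$ to get the explicit representation
$$(\eta_{11})_x(x,\lam) = q(x)\,G(x,\lam) - \frac{q(x)}{2i}\int_x^\infty e_\lam(z-x)\,q^\sharp(z)\,\eta_{11}(z,\lam)\,dz,$$
then estimating with $\|q\|_\infty\lesssim \|q\|_{H^{2,2}}$, Lemma \ref{lemma:FG}(i), the Hilbert--Schmidt bound \eqref{T.Fred2}, and Lemma \ref{lemma:eta11}.

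With $g$ in hand, the equation is solved by applying the resolvent $(I-T)^{-1}$ constructed in Lemma \ref{lemma:T+}, which by Lemma \ref{lemma:R} is bounded on $C^0(\bbR^+,L^2(I_\infty))$, yielding $\lam^{-1}(\eta_{11})_{\lam\lam} = g + \widehat{L}g$. Uniqueness is immediate from the Volterra nature of $T$. The Lipschitz continuity of $q\mapsto \lam^{-1}(\eta_{11})_{\lam\lam}$ then follows from the Lipschitz continuity of $q\mapsto g$ established above, the Lipschitz dependence of $\widehat{L}$ on $q$ from Lemma \ref{lemma:T+}, and the second resolvent formula.

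I expect the main technical obstacle to be the third summand $\lam^{-1} T_{\lam\lam}[\eta_{11}]$: the factor $\lam^{-1}$ is essential to absorb the extra power of $\lam$ generated by the integration by parts implicit in Lemma \ref{lemma:T.lam}(iii), and without this rescaling no uniform $L^2(I_\infty)$ estimate on $T_{\lam\lam}[\eta_{11}]$ is available. This is precisely why the lemma is formulated for $\lam^{-1}(\eta_{11})_{\lam\lam}$ rather than $(\eta_{11})_{\lam\lam}$ itself, and it is also the reason the full $H^{2,2}(\bbR)$ regularity of $q$ must enter both directly and through the auxiliary estimate on $(\eta_{11})_x$.
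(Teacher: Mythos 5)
Your proposal is correct and follows essentially the same route as the paper: divide \eqref{eta11.lamlam} by $\lam$, show the inhomogeneous term lies in $C^0(\bbR^+,L^2(I_\infty))$ and depends Lipschitz-continuously on $q$ using Lemma \ref{lemma:FG}(vi) and Lemma \ref{lemma:T.lam}, and then apply the resolvent bound of Lemma \ref{lemma:R}. Your explicit verification that $(\eta_{11})_x \in L^2(\bbR^+ \times I_\infty)$, obtained by differentiating \eqref{eta11} in $x$, is a welcome addition, since Lemma \ref{lemma:T.lam}(iii) genuinely requires control of $h_x$ and the paper's one-line citation leaves this hypothesis implicit.
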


\begin{proof}
By \eqref{eta11.lamlam} and Lemma \ref{lemma:R}, it suffices to show that the inhomogeneous term
$$ \lam^{-1} F_{\lam\lam} + 2\lam^{-1} T_\lam \left[(\eta_{11})_\lam\right] + \lam^{-1} T_{\lam\lam} \left[\eta_{11}\right]
$$
belongs to $C^0(\bbR^+,L^2(I_\infty)$. For the first term, this follows from Lemma \ref{lemma:FG}(vi), for the second term from Lemma \ref{lemma:T.lam}(i), (ii), Lemma \ref{eta11}, and for the third term from Lemma \ref{lemma:T.lam}(iii) and Lemmas \ref{lemma:eta11} and \ref{lemma:eta11.lam}.
\end{proof}

\bigskip

\textbf{(6) Estimates on $\eta_{21}$ and its derivatives.} It is now a simple matter to use \eqref{eta21}, estimates on $G$, and results already proved for $\eta_{11}$ to obtain Lipschitz continuity of $\eta_{21}$ and its derivatives.

\begin{lemma}	
\label{lemma:eta21}
The maps $q \rarr \eta_{21}$, $q \rarr (\eta_{21})_\lam$, and
$q \rarr \lam^{-1} (\eta_{21})_{\lam\lam}$ are Lipschitz continuous 
from $H^{2,2}(\bbR)$ to $C^0(\bbR^+,L^2(I_\infty))$
\end{lemma}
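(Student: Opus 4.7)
The plan is to read off all three estimates directly from the explicit formula \eqref{eta21}, which expresses $\eta_{21}$ as a sum of three pieces: the boundary term $G_+(x,\lambda)$, the pointwise product $-\frac{1}{2i}\overline{q(x)}\eta_{11}(x,\lambda)$, and an integral operator
$$
S[h](x,\lambda) \;=\; -\frac{1}{2i}\int_x^{+\infty} e_\lambda(y-x)\, q^\sharp(y)\, h(y,\lambda)\, dy
$$
applied to $h=\eta_{11}$. Since $H^{2,2}(\mathbb{R}) \hookrightarrow L^\infty(\mathbb{R})$, the multiplication map $h \mapsto \overline{q}h$ is Lipschitz from $C^0(\mathbb{R}^+,L^2(I_\infty))$ to itself, and the integral operator $S$ is essentially a variant of the operator studied in the analysis of $G$, so its mapping properties can be extracted by the same methods as in Lemma \ref{lemma:FG}.

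For the estimate on $\eta_{21}$ itself, the term $G_+$ is controlled by Lemma \ref{lemma:FG}(i), the pointwise term by the $L^\infty$ bound on $q$ together with Lemma \ref{lemma:eta11}, and $S[\eta_{11}]$ by mimicking the argument for $G$: testing against $\varphi \in C_0^\infty(I_\infty)$ turns $\int e_\lambda(y-x) q^\sharp(y) \eta_{11}(y,\lambda)\, dy$ into an expression controlled by $\widehat{\varphi}$, $q^\sharp$, and the $L^2$ norms of $\eta_{11}$ in $\lambda$. For $(\eta_{21})_\lambda$, differentiating \eqref{eta21} in $\lambda$ yields
$$
(\eta_{21})_\lambda \;=\; (G_+)_\lambda \;-\; \tfrac{1}{2i}\overline{q}\,(\eta_{11})_\lambda \;+\; S_\lambda[\eta_{11}] \;+\; S[(\eta_{11})_\lambda],
$$
which we control term-by-term using Lemma \ref{lemma:FG}(iii), Lemma \ref{lemma:eta11.lam}, and the same weighted estimates on $S_\lambda$ (whose kernel has an extra factor of $(y-x)$, absorbed by the $L^{2,1}$ and higher weights on $q^\sharp$ coming from $q \in H^{2,2}$).

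The main obstacle, as in Lemma \ref{lemma:FG}(v) and Lemma \ref{lemma:T.lam}(iii), is controlling $\lambda^{-1}(\eta_{21})_{\lambda\lambda}$. Formally, two $\lambda$-derivatives produce a kernel with a factor $(y-x)^2$; multiplied by $q^\sharp$ this is not integrable with only $H^{2,2}$ weights on $q$. The remedy is to split $q^\sharp = \overline{q'} - \tfrac{i}{2}|q|^2 \overline{q}$: the cubic piece $|q|^2\overline{q}$ is handled by weighted $L^2$ estimates directly (using $y^2|q|^2|q|^2 \in L^1$ for $q \in H^{2,2}$), while for the piece containing $\overline{q'}$ one integrates by parts in $y$ using $e_\lambda(y-x) = (-2i\lambda)^{-1}\partial_y e_\lambda(y-x)$, trading a factor of $(y-x)$ for $\lambda^{-1}$ and derivatives of $(y-x)^2 \eta_{11}(y,\lambda)$. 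The boundary term at $y=x$ contributes a controlled pointwise expression, the derivative hitting $(y-x)^2$ produces an $(y-x)$ factor absorbed by the weights, and the derivative hitting $\eta_{11}$ in $y$ can be exchanged (via the differential equation \eqref{n.de}) for algebraic combinations of $q$, $p_1$, $p_2$ and $\eta_{11}$ and $\eta_{21}$ themselves, each of which has already been bounded in $C^0(\mathbb{R}^+, L^2(I_\infty))$ by Lemma \ref{lemma:eta11} and the estimate on $\eta_{21}$ established in the first step.

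Finally, differentiating twice in $\lambda$ also generates cross terms $S_\lambda[(\eta_{11})_\lambda]$ and $S[(\eta_{11})_{\lambda\lambda}]$; the first is controlled by Lemma \ref{lemma:eta11.lam} together with the estimate on $S_\lambda$ used for $(\eta_{21})_\lambda$, and the second, after dividing by $\lambda$, is controlled by Lemma \ref{lemma:eta11.lamlam}. All operations are bilinear or linear in the data, so Lipschitz continuity on $H^{2,2}(\mathbb{R})$ follows from the boundedness of these operators and the Lipschitz continuity of $q \mapsto \eta_{11}$ and its $\lambda$-derivatives already established, completing the proof.
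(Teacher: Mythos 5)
Your decomposition of \eqref{eta21} into $G_+$, the pointwise product $-\tfrac{1}{2i}\overline{q}\,\eta_{11}$, and the integral term $W=S[\eta_{11}]$, and your treatment of $\eta_{21}$ and $(\eta_{21})_\lambda$, follow the paper's proof essentially verbatim; so do the key ideas for the second derivative, namely splitting $q^\sharp=\overline{q'}-\tfrac{i}{2}|q|^2\overline{q}$, integrating by parts on the $\overline{q'}$ piece, and using the differential equation to trade $\partial_y\eta_{11}$ for algebraic expressions in $q$, $\eta_{11}$, $\eta_{21}$.

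One step is mis-stated in a way that would fail if executed literally. For the term $\int_x^\infty e_\lambda(y-x)(y-x)^2\,\overline{q'(y)}\,\eta_{11}(y,\lambda)\,dy$ you propose to integrate by parts ``using $e_\lambda(y-x)=(-2i\lambda)^{-1}\partial_y e_\lambda(y-x)$,'' i.e.\ integrating the exponential and differentiating the remaining factor. That direction produces, among other contributions,
\[
\frac{1}{2i\lambda}\int_x^\infty e_\lambda(y-x)\,(y-x)^2\,\overline{q''(y)}\,\eta_{11}(y,\lambda)\,dy,
\]
and $y^2q''$ is not controlled by $\norm{q}{H^{2,2}}$ (only $q''\in L^2$ and $\langle y\rangle^2 q\in L^2$ are available), so this term cannot be estimated; your enumeration of the resulting terms silently drops it. The integration by parts must go the other way, as in \eqref{W11}: write $\overline{q'}=\partial_y\overline{q}$ and move $\partial_y$ onto $e_\lambda(y-x)\,(y-x)^2\,\eta_{11}$. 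The list of contributions you give (derivative on $(y-x)^2$, derivative on $\eta_{11}$ handled via \eqref{n.de}) is in fact the list for this correct orientation — and the boundary term vanishes outright because of the $(y-x)^2$ factor — but you omit the term in which $\partial_y$ hits the exponential. That term carries a factor $-2i\lambda$ and is precisely why the lemma asserts Lipschitz continuity only of $\lambda^{-1}(\eta_{21})_{\lambda\lambda}$ rather than of $(\eta_{21})_{\lambda\lambda}$ itself; after dividing by $\lambda$ it is bounded by $\norm{q}{L^{2,2}}\norm{\eta_{11}}{L^2(\bbR^+\times I_\infty)}$, as in \eqref{W11.3}. With the integration by parts oriented correctly, the rest of your argument, including the cross terms $S_\lambda[(\eta_{11})_\lambda]$ and $\lambda^{-1}S[(\eta_{11})_{\lambda\lambda}]$, matches the paper's proof.
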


\begin{proof}
Referring to \eqref{eta21} and dropping the $\pm$ signs, the term $G(x,\lam)$ has the required properties by Lemma \ref{lemma:FG}(i), (iii), (v), and the second right-hand term of \eqref{eta21} has the required properties since $q$ is bounded and $\eta_{11}$ has the correct mapping properties by Lemmas \ref{lemma:eta11}, \ref{lemma:eta11.lam}, and \ref{lemma:eta11.lamlam}.
Thus, it remains to analyze the third term of \eqref{eta21} (dropping the $+$ sign on $\eta_{11}$)
$$
W(x,\lam) = \frac{1}{2i} \int_x^\infty e_\lam(y-x)  \qsharp(y) \eta_{11}(y,\lam) \, dy.	
$$
It is easy to see that
\begin{align*}
\norm{W}{C^0(\bbR^+,L^2(I_\infty))}
	&\leq \frac{1}{2} 
				\norm{q^\sharp}{L^2} 
				\norm{\eta_{11}}{L^2(\bbR^+ \times I_\infty)}\\
\norm{W_\lam}{C^0(\bbR^+,L^2(I_\infty))}
	&\leq	\frac{1}{2}
				\left(
					\norm{q^\sharp}{L^{2,1}} 
					\norm{\eta_{11}}{L^2(\bbR^+ \times I_\infty)} 
				+
					\norm{q^\sharp}{L^2}
					\norm{(\eta_{11})_\lam}
							{L^2(\bbR^+ \times I_\infty)}
				\right)		
\end{align*}
which shows that $q \rarr W$ and $q \rarr W_\lam$ have the required 
properties.  

To analyze $W_{\lam\lam}$, recall \eqref{qsharp} to write
$ W=W_1+W_2$ where
\begin{align*}
W_1(x,\lam)	
	&-=	\frac{1}{2i}
				\int_x^\infty e_\lam(y-x) \overline{q'(y)} \eta_{11}(y,\lam) \, dy,\\
W_2(x,\lam)
	&-=	-\int_x^\infty e_\lam(y-x) \overline{q(y)} |q(y)|^2 \eta_{11}(y,\lam) \, dy.
\end{align*}

We first control $W_1$. Differentiating in $\lam$ we have
$$
(W_1)_{\lam\lam}(x,\lam) = W_{11}(x,\lam) + W_{12}(x,\lam)
$$
where
\begin{align*}
W_{11}(x,\lam)	&=			-2i\int_x^\infty e_\lam(y-x)(y-x)^2 \overline{q'(y)} \eta_{11}(y,\lam), \, dy\\
W_{12}(x,\lam)	&=			-\int_x^\infty e_\lam(y-x) \overline{q'(y)} (\eta_{11})_{\lam\lam}(y,\lam) \, dy
\end{align*}
It is easy to see that
$$
\sup_{x \geq 0} \norm{(\dotarg)^{-1}W_{12}(x,\dotarg)}{L^2(I_\infty)} \leq \norm{q'}{L^2} \norm{(\diamond)^{-1}(\eta_{11})_\lam(\dotarg,\diamond)}{L^2(\bbR^+ \times I_\infty)}
$$
so that $q \rarr \lam^{-1} (W_{11})_{\lam\lam}$ has the correct mapping property. Turning to $W_{11}$, we integrate by parts to remove the derivative on $q$ and obtain
\begin{align}
\label{W11}
W_{11}(x,\lam)	&=	-2i \int_x^\infty e_\lam(y-x) \overline{q(y)}
											\left( 
												2(y-x) \eta_{11}(x,\lam) + (y-x)^2(\eta_{11})_x(y,\lam) 
											\right)
									\, dy \\
\nonumber
						&\quad	+ 4\lam \int_x^\infty e_\lam(y-x)(y-x)^2 \, \overline{q(y)} \eta_{11}(y,\lam) \, dy.
\end{align}
The first right-hand term in \eqref{W11} has $C^0(\bbR^+,L^2(I_\infty))$-norm bounded by 
\begin{equation}
\label{W11.1}
\norm{q}{L^{2,1}} \norm{\eta_{11}}{L^2(\bbR^+ \times I_\infty)}.
\end{equation}
Since, by \eqref{n11.pre} and \eqref{eta},
$$
(\eta_{11})_x 	= (n_{11})_x 
					= q(y)\left( \eta_{21}(x,\lam) - \frac{1}{2i}\overline{q(x)}   \right) 
							 -\frac{i}{2}|q(y)|^2 \left(1+\eta_{11}(x,\lam) \right), 
$$  
we can reexpress the second right-hand term in \eqref{W11} as
\begin{multline*}
-2i \int_x^\infty e_\lam(y-x) (y-x)^2 \left( |q(y)|^2(-\frac{1}{2i}\overline{q(y)} + \eta_{21}(y,\lam) \right) \, dy\\
	-2i \int_x^\infty e_\lam(y-x) (y-x)^2 \left( -\frac{i}{2} \overline{q(y)}|q(y)|^2(1+\eta_{11}(y,\lam) \right) \, dy
\end{multline*}
which has $C^0(\bbR^+,L^2(I_\infty))$-norm bounded by constants times
\begin{equation}
\label{W11.2}
\norm{q}{H^{2,2}}^3
	\left(
		1+ \norm{\eta_{11}}{L^2(\bbR^+ \times I_\infty)}  + \norm{\eta_{21}}{L^2(\bbR^+ \times I_\infty)} 
	\right).
\end{equation}
Finally, dividing by $\lam$ in the third term, we can estimate the $C^0(\bbR^+, L^2(I_\infty))$ norm of the quotient by 
\begin{equation}
\label{W11.3}
\norm{q}{L^{2,2}}\norm{\eta_{11}}{L^2(\bbR^+ \times I_\infty)}.
\end{equation}
Combining \eqref{W11.1}, \eqref{W11.2}, and \eqref{W11.3}, we see that
\begin{align}
\label{W1}
\norm{(\diamond)^{-1}W_{11}(\dotarg,\diamond)}{C^0(\bbR^+,I_\infty)} 
		&\lesssim \left(1+\norm{q}{H^{2,2}}^3 \right) \\
\nonumber
		&\quad
					\times  \left(
		1+ \norm{\eta_{11}}{L^2(\bbR^+ \times I_\infty)}  + \norm{\eta_{21}}{L^2(\bbR^+ \times I_\infty)} 
	\right),
\end{align}
which shows that $W_1$ has the required mapping property. 

Now we turn to $W_2$. Since
\begin{align*}
(W_2)_{\lam\lam}(x,\lam)	
	&=	-\int_x^\infty e_\lam(y-x)|q(y)|^2 
					\bigl([
							(\eta_{11})_{\lam\lam}(y,\lam)
					\bigr. \\
\nonumber
	&\qquad \qquad
					\bigl.
							 -2i(y-x) (\eta_{11})_\lam(y,\lam)
							-4(y-x)^2 \eta_{11}(y,\lam)
					\bigr]
				\, dy,
\end{align*}
we may estimate
\begin{align}
\label{W2}
\norm{(\diamond)^{-1} (W_2)_{\lam\lam}(\dotarg,\diamond)}{C^0(\bbR^+, L^2(I_\infty))}
	&	\leq	\norm{q}{H^{2,2}}^2  
						\Bigl( 
							\norm{\lam^{-1}(\eta_{11})_\lam}{C^0(\bbR^+,L^2(I_\infty ))} 
						\Bigr.\\
\nonumber
	&  \qquad \qquad
								+  \norm{(\eta_{11})_\lam}{C^0(\bbR^+,L^2(I_\infty ))} 
						\\
\nonumber
	&	\qquad	\qquad
						\Bigl.
								+  \norm{\eta_{11}}{C^0(\bbR^+,L^2(I_\infty))} 
						\Bigr).		
\end{align}
This shows that $q \rarr W_2$ has the correct mapping properties.

Combining \eqref{W1} and \eqref{W2}, we conclude that $q \mapsto W_{\lam\lam}$ has the correct mapping property, and hence, also, $q \mapsto (\diamond)^{-1} \eta_{\lam\lam}(\dotarg,\diamond)$. 
\end{proof}

\bigskip

\begin{proof}[Proof of Proposition \ref{prop:n.large}]
An immediate consequence of Lemmas \ref{lemma:eta11}, \ref{lemma:eta11.lam}, \ref{lemma:eta11.lamlam}, 
and \ref{lemma:eta21} and the fact that the restriction map $f \rarr f(0)$ from 
$C^0(\bbR^+;L^2(I_\infty))$ to $L^2(I_\infty)$ is continuous.
\end{proof}

\section{Beals-Coifman Solutions}
\label{sec:BC}

In this section we construct the Beals-Coifman solutions for \eqref{LS.red}. It follows from \eqref{LS.normal} and the discussion in the Introduction that 
the scattering data is given by 
\begin{equation}
\label{m.scatt}
m^+(x,\zeta) 
= m^-(x,\zeta) e^{-ix\zeta^2 \ad(\sigma)} \twomat{a(\zeta)}{\bb(\zeta)}{b(\zeta)}{\ba(\zeta)}
\end{equation} 
where the symmetries \eqref{ab.sym} hold. In order to elucidate properties of the 
scattering data we recall the integral equations for $m^\pm$.

Assuming that $q \in L^1 \cap L^2$ (so that both $Q$ and $P$ are $L^1$ matrix-valued functions), the Jost solutions $m^\pm$ are solutions
of the integral equations
\begin{align*}
m^+(x,\zeta)	&=	I
	-	\int_x^\infty e^{-i(x-y)\zeta^2 \ad(\sigma)} 
		\left[\left(\zeta Q(y) + P(y)\right) m(y,\zeta)\right] \, dy\\[5pt]
m^-(x,\zeta)	&=	I
	+	\int_{-\infty}^x  e^{-i(x-y)\zeta^2 \ad(\sigma)} 
		\left[\left(\zeta Q(y) + P(y)\right) m(y,\zeta)\right] \, dy
\end{align*}
with $\det m^+(x) = \det m^-(x) =1$. 

Observe that
\medskip
\begin{align}
\label{m1+}
\twovec{m_{11}^+(x,\zeta)}{m_{21}^+(x,\zeta)}
	&=	\twovec{1}{0}
			-\int_x^\infty 
				\twovec{\zeta qm _{21}^+ + p_1 m_{11}^+}
							{e^{2i\zeta^2 (x-y)}
								\left[ 
									\overline{q}m_{11}^+ + p_2m_{21}^+ 
								\right]}				
				\, dy	
\\[10pt]
\label{m2+}
\twovec{m_{12}^+(x,\zeta)}{m_{22}^+(x,\zeta)}
	&=	\twovec{0}{1}
			-\int_x^\infty
				\twovec{e^{-2i\zeta^2(x-y)}
								\left[\zeta q m_{22^+}+ p_1 m_{12}^+ \right]}
							{\zeta q m_{12}^+ + p_2 m_{22}^+}
					\, dy					
\\[10pt]
\label{m1-}
\twovec{m_{11}^-(x,\zeta)}{m_{21}^-(x,\zeta)}
	&=	\twovec{1}{0}
			+\int_{-\infty}^x
				\twovec{\zeta qm _{21}^- + p_1 m_{11}^-}
							{e^{2i\zeta^2 (x-y)}
								\left[ 
									\overline{q}m_{11}^- + p_2m_{21}^- 
								\right]}				
				\, dy	
\\[10pt]
\twovec{m_{12}^-(x,\zeta)}{m_{22}^-(x,\zeta)}
	&=	\twovec{0}{1}
			+ \int_{-\infty}^x
				\twovec{e^{-2i\zeta^2(x-y)}
					\left[\zeta q m_{22}^-+ p_1 m_{12}^- \right]
					}
					{\zeta q m_{12}^- + p_2 m_{22}^-}
				\, dy
\label{m2-}
\end{align}

Using the fact that $\det m^+ = \det m^- = 1$, it is easy to deduce that 
\begin{align}
\label{a.Wronski}
a(\zeta) 		&=
		\twodet{m_{11}^+}{m_{12}^-}{m_{21}^+}{m_{22}^-} =
		W\left(m_{(1)}^-, m_{(2)}^+ \right)\\
\nonumber\\
\label{ba.Wronski}
\ba(\zeta) &= 
		\twodet{m_{11}^-}{m_{12}^+}{m_{21}^-}{m_{22}^+}  =
		W\left(m^+_{(1)}, m_{(2)}^-\right)
\end{align}

\medskip

It follows from \eqref{m.scatt}, the first line of \eqref{m1+}, and 
the second line of \eqref{m2+} that
\begin{align}
\label{a.int.r}
a(\zeta)	
	&=	1-\int_{-\infty}^\infty \left(\zeta q m_{21}^+ +  p_1 m_{11}^+\right)\, dy,\\
\label{b.int.r}
\ba(\zeta)
	&=	1-\int_{-\infty}^\infty \left(\zeta q m_{12}^+ + p_2 m_{22}^+ \right) \, dy.\\
\end{align}
Using \eqref{scatt.det}, \eqref{m.scatt}, the first line of \eqref{m1-}, and the second line of \eqref{m2-}, we also have
\begin{align}
\label{a.int.l} 
a(\zeta)
	&=	1+\int_{-\infty}^\infty \left(\zeta q m_{12}^- + p_1 m_{22}^-\right) \, dy
\\
\ba(\zeta)
	&=	1+\int_{-\infty}^\infty \left(\zeta q m_{21}^- + p_1 m_{11}^-\right) \, dy
\label{b.int.l}
\end{align}

\medskip

Write $m^+_{(1)} = (m_{11}^+, m_{21}^+)^T$, 
$m^+_{(2)} = (m_{12}^+, m_{22}^+)^T$, and similarly for
$m^-_{(1)}$ and $m^-_{(2)}$.
An easy argument with Volterra estimates, paying attention to 
the modulus of $\exp(\pm 2i\zeta^2 (x-y))$,  shows that

\medskip

\begin{itemize}
\item	$m^+_{(1)}$ has a bounded analytic continuation to $\Im(\zeta^2) < 0$
\item	$m^+_{(2)}$ has a bounded analytic continuation to $\Im(\zeta^2) > 0$
\item $m^-_{(1)}$ has a bounded analytic continuation to $\Im(\zeta^2) >0 $
\item $m^-_{(2)}$ has a bounded  analytic continuation to $\Im(\zeta^2) < 0$
\end{itemize}
\medskip

It follows from these observations, \eqref{a.Wronski}, and \eqref{ba.Wronski}
that

\medskip

\begin{itemize}
\item	$a(\zeta)$ has an analytic extension to $\Im(\zeta^2) < 0$
\item	$\ba(\zeta)$ has an analytic extension to $\Im(\zeta^2)>0$
\end{itemize}

To construct the Beals-Coifman solutions, we will need the asymptotic behavior of 
$m_{(1)}^\pm$ as $x \rarr \mp \infty$ and  $m_{(2)}^\pm$ as $x \rarr \mp \infty$. An argument with the dominated convergence theorem, exploiting the decay of the 
exponential $\exp(\pm i (x-y)\zeta^2 \ad(\sigma))$, shows that
\begin{align*}
\lim_{x \rarr -\infty} m_{21}^+(x,\zeta) 	&=	0, \quad \Im(\zeta^2) < 0\\
\lim_{x \rarr -\infty} m_{12}^+(x,\zeta)	&=	0, \quad \Im(\zeta^2) > 0\\
\lim_{x \rarr +\infty} m_{12}^-(x,\zeta)	&=	0, \quad \Im(\zeta^2) < 0\\
\lim_{x \rarr +\infty} m_{21}^-(x,\zeta)	&=	0, \quad \Im(\zeta^2) >0.
\end{align*}
It follows from these relations, the integral equations \eqref{m1+}, \eqref{m2+}, 
\eqref{m1-}, \eqref{m2-}, and the integral identities \eqref{a.int.r}, \eqref{b.int.r}, \eqref{a.int.l}, 
and \eqref{b.int.l} that
\begin{align}
\label{m1+.lim}
\lim_{x \rarr -\infty} 	m_{(1)}^+(x,\zeta) 	&=	\twovec{a(\zeta)}{0}	\quad	\Im(\zeta^2) <0 \\[5pt]
\label{m2+.lim}
\lim_{x \rarr -\infty}	m_{(2)}^+(x,\zeta)	&=	\twovec{0}{\ba(\zeta)}	\quad	\Im(\zeta^2)>0\\[5pt]
\label{m1-.lim}
\lim_{x \rarr +\infty} m_{(1)}^-(x,\zeta)	&=	\twovec{\ba(\zeta)}{0}	\quad	\Im(\zeta^2)>0\\[5pt]
\label{m2-.lim}
\lim_{x \rarr +\infty}	m_{(2)}^-(x,\zeta)	&=	\twovec{0}{a(\zeta)}	\quad	\Im(\zeta^2)<0.
\end{align}

\subsection{Construction of Beals-Coifman Solutions}

We now define the right-hand Beals-Coifman solutions by 
\begin{align*}
M_r(x,\zeta)	&=	\begin{cases}
								\left( 
									\dfrac{m^-_{(1)}(x,\zeta)}
											{\ba(\zeta)}, \, 
											m^+_{(2)}(x,\zeta)
								\right),
								&		\Im(\zeta^2) > 0,\\
								\\
								\left(
										m^+_{(1)}(x,\zeta), \, 
										\dfrac{m^-_{(2)}(x,\zeta)}{a(\zeta)}
								\right),
								&		\Im(\zeta^2) < 0.
							\end{cases}
\end{align*}
These solutions are piecewise analytic, and bounded as $x \rarr -\infty$ by the boundedness of the normalized Jost solutions and the functions $a(\zeta)$ and $\ba(\zeta)$ (so long as $a(\zeta)$ and 
$\ba(\zeta)$ have no zeros). By \eqref{m1-.lim} and \eqref{m2+.lim}, they are 
normalized so that
$$\lim_{x \rarr \infty} M_r(x,\zeta) = I, \quad \Im \zeta^2 \neq 0,$$
and are bounded as $x \rarr -\infty$.

Similarly, the left-hand Beals-Coifman solutions, given by
\begin{align*}
M_\ell(x,\zeta)	&=	\begin{cases}
									\left(
										m_{(1)}^-(x,\zeta), \, 
										\dfrac{m_{(2)}^+(x,\zeta)}{\ba(\zeta)}
									\right),
									&	\Im(\zeta^2) > 0,
									\\
									\\
									\left(
										\dfrac{m_{(1)}^+(x,\zeta)}{a(\zeta)}, \, 
											m^-_{(2)}(x,\zeta)
									\right),
									&	\Im(\zeta^2) < 0
								\end{cases}
\end{align*}
are piecewise analytic, bounded as $x\rarr +\infty$, and normalized so that 
$$\lim_{x \rarr -\infty} M_\ell(x,\zeta) = I, \quad \Im \zeta^2 \neq 0.$$

Both $M_r$ and $M_\ell$ have boundary values as $\pm \Im \zeta^2 \darr 0$. We denote these respectively by $M_r^\pm$ and $M_\ell^\pm$. We now compute the jump relations for these boundary values. In what follows, we write 
$$f(x) \underset{x \rarr \pm \infty}{\sim} g(x)$$ 
if $\lim_{x \rarr \pm \infty} |f(x)-g(x)|=0$.

From \eqref{m.scatt} 
it is easy to see that, for $\Im \zeta^2 =0$, 
\begin{align*}
m^+_{(1)}(x,\zeta)	
	&\underset{x \rarr -\infty}{\sim} e^{-ix\zeta^2 \ad(\sigma)} \twovec{a(\zeta)}{b(\zeta)}\\[5pt]	
m^+_{(2)}(x,\zeta)	
	&\underset{x \rarr -\infty}{\sim} e^{-ix\zeta^2 \ad(\sigma)} \twovec{\bb(\zeta)}{\ba(\zeta)}\\[5pt]
m^-_{(1)}(x,\zeta)	
	&\underset{x \rarr +\infty}{\sim} e^{-ix\zeta^2 \ad(\sigma)} \twovec{\ba(\zeta)}{-b(\zeta)}\\[5pt]
m^-_{(2)}(x,\zeta)	&\underset{x \rarr +\infty}{\sim} e^{-ix\zeta^2 \ad(\sigma)} \twovec{-\bb(\zeta)}{a(\zeta)}
\end{align*}
It follows from these relations that 
\begin{align}
\label{Ml.+}
M^+_\ell(x,\zeta) 	
	&\underset{x \rarr +\infty}{\sim}	 e^{-ix\zeta^2 \ad(\sigma)} 
		\Twomat{\ba(\zeta)}{0}{-b(\zeta)}{\dfrac{1}{\ba(\zeta)}}
		\\[5pt]
\label{Ml.-}
M^-_\ell(x,\zeta)
	&\underset{x \rarr +\infty}{\sim} e^{-ix\zeta^2 \ad(\sigma)}
		\Twomat{\dfrac{1}{a(\zeta)}}{-\bb(\zeta)}{0}{a(\zeta)} 
\end{align}
and
\begin{align}
\label{Mr.+}
M^+_r(x,\zeta)	
	&\underset{x \rarr -\infty}{\sim}	e^{-ix\zeta^2 \ad(\sigma)} 
		\Twomat{\dfrac{1}{\ba(\zeta)}}{\bb(\zeta)}{0}{\ba(\zeta)}\\[5pt]
\label{Mr.-}
M^-_r(x,\zeta)
	&\underset{x \rarr -\infty}{\sim} e^{-ix\zeta^2 \ad(\sigma)}
		\Twomat{a(\zeta)}{0}{b(\zeta)}{\dfrac{1}{a(\zeta)}}
\end{align}
From \eqref{Ml.+} and \eqref{Ml.-}, we compute that
\begin{equation}
\label{Ml.jump}
M^+_\ell = M^-_\ell e^{-ix\zeta^2 \ad(\sigma)} v_\ell, \quad
v_\ell =\Twomat{1}{\bb/ \ba}{-b/a}{1- b \bb /a  \ba}
\end{equation}
while, from \eqref{Mr.+} and \eqref{Mr.-}, we see that
\begin{equation}
\label{Mr.jump}
M^+_r = M^-_r e^{-ix\zeta^2 \ad(\sigma)} v_r, \quad
v_r=\Twomat{1-b\bb/ a \ba}{\bb/a}{-b/\ba}{1}
\end{equation}

\subsection{The Riemann-Hilbert Problems in the $\lam$ Variables}

We can recast the left and right RHP's \eqref{Ml.jump} and \eqref{Mr.jump} in terms of the dependent variables $N^\pm$ and the spectral variable $z = \zeta^2$. The new RHP is an RHP with contour $\bbR$. 
Applying the automorphism 
$$ \varphi: \twomat{a}{b}{c}{d} \mapsto \twomat{a}{\zeta^{-1}b}{\zeta c}{d} $$
to the Jost and Beals-Coifman solutions and exploiting the odd symmetry of off-diagonal components, and even symmetry of diagonal components, with respect to the reflection $\zeta \mapsto -\zeta$, we may define first
$$n^\pm(x,\lam) =  \varphi(m^\pm(x,\zeta)), \quad \lam=\zeta^2$$
and then
$$N_*(x,\lam) = \varphi(N_*(x,\zeta)), \quad \lam=\zeta^2$$
where $*=\ell,r$. Set
$$\alpha(\lam) = a(\zeta), 
\quad
\beta(\lam) = \zeta^{-1} b(\zeta), 
\quad
\balpha(\lam) = \ba(\zeta), 
\quad
\bbeta(\lam) = \zeta^{-1} \bb(\zeta).
$$
From the symmetries \eqref{ab.sym}, it follows that
\begin{equation}
\label{alphabeta.sym}
\balpha(\lam) = \overline{\alpha(\lam)}, \quad
\bbeta(\lam) = \overline{\beta(\lam)}.
\end{equation}

We can now compute jump relations for the pairs $(N_\ell^+, N_\ell^-)$ and $(N_r^+, N_r^-)$.
It follows from \eqref{Ml.jump}, \eqref{Mr.jump}, and the definitions above that
\begin{align}
\label{Nl.jump.pre}
N_\ell^+ 	&=		N_\ell^-	\Twomat{1}
													{\bbeta/\balpha}
													{-\lam\beta/\alpha}
													{1-\lam\beta\bbeta/\alpha \balpha}\\
\nonumber \\
\label{Nr.jump.pre}
N_r^+		&=		N_r^-		\Twomat{1-\lam \beta\bbeta/\alpha\balpha}
													{\bbeta/\alpha}
													{-\lam \beta/\balpha}
													{1}
\end{align}
Setting $\brho=\bbeta/\balpha$  and $\rho=\bbeta/\alpha$ respectively in 
\eqref{Nl.jump.pre} and \eqref{Nr.jump.pre} and using the symmetries \eqref{alphabeta.sym}, we 
conclude that
\begin{align}
\label{Nl.jump}
N_\ell^+ 	&=		N_\ell^-	\Twomat{1}
													{\brho}
													{-\lam\overline{\brho}}
													{1-\lam|\brho|^2}\\
\nonumber \\
\label{Nr.jump}
N_r^+		&=		N_r^-		\Twomat{1-\lam |\rho|^2}
													{\rho}
													{-\lam \overline{\rho}}
													{1}
\end{align}

\section{The Riemann-Hilbert Problem for Schwartz Class Data}
\label{sec:RH.Schwartz}

%%%%%%%%%%%%%%%%%%%%%%%%%%%%%%%%%%%%
%
%		Section 4 as revised by CS			12.16.2015
%
%%%%%%%%%%%%%%%%%%%%%%%%%%%%%%%%%%%%

In this section we study the RHP that defines the inverse scattering map. We only discuss the `right'  RHP problem since the discussion for the `left' RHP is similar. 

We begin by formulating precisely the RHPs  $(\Sigma,v)$ (see Problem \ref{prob:RH.M})  and $(\bbR,J)$ (see Problem \ref{prob:RH.n}). Next, we prove that these two problems are equivalent under suitable hypotheses on the respective data $v$ and $J$. We then prove that the RHP $(\Sigma,v)$ has a unique solution for suitable $a$, $b$, $\ba$, $\bb$.
We use these facts to show that the Beals-Coifman integral equation associated to the RHP for $(\bbR,J)$ has a unique solution provided that $\rho \in H^{2,2}(\bbR)$ and $1-\lam|\rho(\lam)|^2 >0 $ 
strictly. Finally, we show that the solution $M_\pm$ of Problem \ref{prob:RH.M} obeys \eqref{LS.red} as a function of $x$, and obtain reconstruction formulas for $Q(x)$ and $P(x)$ in terms of the solution $\mu$ of the Beals-Coifman integral equation for Problem \ref{prob:RH.M}.
Using the equivalence of Problems \ref{prob:RH.M} and \ref{prob:RH.n}, we obtain the reconstruction formula \eqref{q.recon} that will be used in the next section to analyze the inverse scattering map.

\subsection{Two RH Problems}
\label{sec:RH}

We formulate precisely the RHP problems respectively on the contour $\Sigma$ and on the contour $\bbR$ for Schwartz class data. In the next subsection we prove their equivalence.

\begin{problem}
\label{prob:RH.M}
Find a matrix-valued function $M(x,z)$, analytic for $z \in \bbC \setminus \Sigma$, satisfying
\begin{align*}
M_+(x,\zeta) 					&= 	M_-(x,\zeta) e^{-ix\zeta^2 \ad (\sigma)} v(\zeta), 
									\\[5pt]
 v(\zeta) 						&= 	\Twomat{1-b(\zeta) \bb(\zeta)/ a(\zeta) \ba(\zeta)}
											{\bb(\zeta)/a(\zeta)}
											{-b(\zeta)/\ba(\zeta)}
											{1}\\[5pt]
M_\pm(x,\dotarg) - \One &\in 	\dee C_\Sigma (L^2).
\end{align*}
We assume the following properties of the given data $a$, $b$, $\ba$, $\bb$:
\begin{enumerate}
\item[(i)]  $b \in \calS(\Sigma)$, $b(-\zeta)= -b(\zeta)$
\item[(ii)] $a \in C^\infty$, $a(-\zeta) = a(\zeta)$
\item[(iii)] $a-1$ has a complete asymptotic expansion in $\zeta^{-2}$ for large $\zeta$ with $a(\zeta) = 1 + \mathcal{O}(\zeta^{-2})$
\item[(iv)] (analytic continuation) $a$ and $\ba$ satisfy the condition
$$
(\ba^{-1}-1,a -1) \in \dee C_\Sigma(L^2).
$$
\item[(v)] (symmetry reduction) 
$\ba(\zeta)=\overline{a(\zetabar)}$, $ \bb(\zeta)=\overline{b(\zetabar)}$
\item[(vi)] (determinant) $a(\zeta)\ba(\zeta) - b(\zeta) \bb(\zeta) =1$
\item[(vii)] (spectral condition) $\inf_{\zeta \in \Sigma} |a(\zeta)| = c >0$
\end{enumerate}
\end{problem}

We recall that the meaning of $\dee C_\Sigma(L^2)$
is given after the statement of Problem \ref{problem-RHP}.

\begin{problem}
\label{prob:RH.n}
Find a row vector-valued function $\bfN(x,z)$, analytic for $z \in \bbC \setminus \bbR$, satisfying
\begin{align*}
\bfN_+(x,\lam)					&=	\bfN_-(x,\lam) e^{-i\lam x \ad (\sigma)} J(\lam), 
											\\[5pt]
J(\lam)							&=	\Twomat{1-\lam |\rho(\lam)|^2}{\rho(\lam)}
															{-\lam \overline{\rho(\lam)}}{1}\\[5pt]
\bfN_\pm(x,\dotarg) - \bfe_1	&\in 	\dee C_\bbR(L^2)
\end{align*}
We assume the following properties of the data $\rho$:
\begin{enumerate}
\item[(i)] 	$\rho \in \calS(\bbR)$ 
\item[(ii)]  	$1 - \lam |\rho(\lam)|^2 > 0$ strictly. 
\end{enumerate}
\end{problem}

\begin{remark}
\label{rem:ab.rh}
Conditions (i)-(vi) in Problem \ref{prob:RH.M} are motivated by well-known constraints on the scattering data for the direct problem: see \eqref{ab.sym} in the introduction for (i), (ii), (v). The regularity conditions in (i), (ii), the asymptotic expansion in (iii),  and the analytic continuation properties of $a$ and $\ba$ follow from a careful study of the Volterra integral equations that define the normalized Jost solutions.
Condition (vii) in Problem \ref{prob:RH.M}  is a spectral assumption on the initial data that rules out algebraic soliton solutions. Condition (iv) means that there is a function $h \in L^2(\Sigma)$ so that
$\ba^{-1} -1 = C^+h$ and $a-1 = C^- h$. Conditions (iv) and (vi) imply that the functions $a_+^\sharp = 1/\ba$ and $a_-^\sharp = a$
are related by
$$ a_+^\sharp(\zeta)=a_-^\sharp(\zeta) \left(1-\frac{b(\zeta)\bb(\zeta)}{a(\zeta)\ba(\zeta)} \right). $$
This observation motivates the RHP \eqref{rho.to.ab.RH} used below to recover $a$ and $\ba$ from $\rho$.
Note that condition (iv) rules out bright soliton solutions by insuring that $\ba(\zeta)$ (and hence, also $a(\zeta)$) are zero-free in their respective regions of analytic continuation.
\end{remark}

\begin{remark}
Condition (iii) { in Problem \ref{prob:RH.M}} is not independent of the others; the asymptotic expansion can be deduced by viewing $a$ and $\ba$ as $1/a_+^\sharp$ and $a_-^\sharp$ in the RHP discussed in Remark \ref{rem:ab.rh}.
\end{remark}

\subsubsection{Scattering Data}

The data in Problems \ref{prob:RH.M} and \ref{prob:RH.n} are related in the following way. 

\medskip

For the direction ($a$, $b$, $\ba$, $\bb$) $\Longrightarrow \rho$, we have:

\begin{lemma}
\label{lemma:ab.to.rho}
Given $a(\zeta)$, $b(\zeta)$, $\ba(\zeta)$, $\bb(\zeta)$ satisfying conditions (i)--(vii) in Problem \ref{prob:RH.M}, the function $\rho(\lam)$ given by
\begin{equation}
\label{ab.to.rho}
\rho(\zeta^2) = \zeta^{-1}\bb(\zeta)/a(\zeta) 
\end{equation}
satisfies conditions (i)--(ii) in Problem \ref{prob:RH.n}.
\end{lemma}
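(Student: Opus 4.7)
The plan is to verify conditions (i) and (ii) of Problem \ref{prob:RH.n} directly by translating the hypotheses on $(a,b,\ba,\bb)$ across the change of variables $\lam = \zeta^2$. The bulk of the work is in condition (i); condition (ii) will then fall out of the determinant relation (vi) together with the spectral condition (vii).

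First I would observe that the symmetries (i) and (v) force $\bb$ to be odd on $\Sigma$, so $\bb(\zeta)/\zeta$ extends to an even, $C^\infty$, Schwartz function on $\Sigma$ (the apparent singularity at $\zeta=0$ is removable). Combined with (ii), (iii), and (vii), the denominator $a(\zeta)$ is even, $C^\infty$, bounded below by $c>0$, and tends to $1$ at infinity, so $\bb(\zeta)/(\zeta a(\zeta))$ is itself an even, $C^\infty$, Schwartz function on $\Sigma$. Since it is even in $\zeta$, it descends to a function $\rho(\lam)$ on $\bbR$ via the map $\zeta \mapsto \zeta^2$, which sends the positive real $\zeta$-axis to $\lam>0$ and the positive imaginary $\zeta$-axis to $\lam<0$. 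Rapid decay of $\rho$ and its derivatives as $|\lam| \to \infty$ follows because large $|\lam|$ corresponds to large $|\zeta|$ on $\Sigma$, where $\bb$ is Schwartz and $1/a$ is uniformly bounded; standard chain-rule considerations then give that all $\lam$-derivatives of $\rho$ remain rapidly decreasing.

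For condition (ii), I would combine (vi) with the symmetry (v) and the parity properties of $a$ and $b$ on the two arcs of $\Sigma$. On the real part (so $\lam = \zeta^2 > 0$), one has $\ba(\zeta) = \overline{a(\zeta)}$ and $\bb(\zeta) = \overline{b(\zeta)}$, so (vi) becomes $|a(\zeta)|^2 - |b(\zeta)|^2 = 1$; on the imaginary part (so $\zeta = i\tau$, $\lam = -\tau^2 < 0$), the oddness of $b$ flips a sign and (vi) reads $|a(i\tau)|^2 + |b(i\tau)|^2 = 1$. In both cases a short direct computation yields
\begin{equation*}
1 - \lam|\rho(\lam)|^2 = \frac{1}{|a(\zeta)|^2} \geq c^{-2} > 0
\end{equation*}
by the spectral condition (vii), which is condition (ii) with room to spare.

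The only genuinely non-routine point I anticipate is the $C^\infty$ matching of the two branches of $\rho$ at $\lam = 0$: one needs to know that the expansions obtained from $\zeta$ real and $\zeta$ imaginary agree to infinite order. This reduces to the elementary fact that a $C^\infty$ even function of $\zeta$ near the origin in $\Sigma$ is automatically a $C^\infty$ function of $\zeta^2$, since its Taylor series there involves only even powers of $\zeta$; applied to $\bb/(\zeta a)$, this gives the required smoothness of $\rho$ at the origin without extra work.
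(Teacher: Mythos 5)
Your overall route is the same as the paper's: use the symmetries (i) and (v) to see that $\bb$ is odd, so that $\zeta^{-1}\bb(\zeta)/a(\zeta)$ is even, smooth and rapidly decreasing on $\Sigma$ and hence descends to a Schwartz function of $\lam=\zeta^2$; then evaluate $1-\lam|\rho(\lam)|^2 = 1 - b(\zeta)\bb(\zeta)/(a(\zeta)\ba(\zeta))$ separately on the real and imaginary arcs of $\Sigma$ using the determinant relation (vi). Your unified identity $1-\lam|\rho(\lam)|^2 = 1/|a(\zeta)|^2$ on \emph{both} arcs is correct and slightly cleaner than the paper's case split (the paper writes $1+|b|^2/|a|^2$ on $i\bbR$, which equals $1/|a|^2$ by (vi)). Your treatment of condition (i), including the matching of the two branches at $\lam=0$, is at the same level of detail as the paper's.

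There is, however, one concrete error in the final step: the inequality is backwards. Condition (vii) states $\inf_{\zeta\in\Sigma}|a(\zeta)| = c>0$, i.e.\ $|a(\zeta)|\ge c$, which yields $1/|a(\zeta)|^2 \le c^{-2}$ --- an \emph{upper} bound on $1-\lam|\rho(\lam)|^2$, not the lower bound that condition (ii) of Problem \ref{prob:RH.n} requires. The uniform strict positivity instead needs an upper bound on $|a|$: on $i\bbR$ this is automatic, since $|a|^2 \le |a|^2+|b|^2 = 1$ there, while on $\bbR$ it follows from $|a|^2 = 1+|b|^2$ and the boundedness of the Schwartz function $b$ (equivalently, from continuity of $a$ and the asymptotic $a(\zeta)\to 1$, which is what the paper cites via conditions (ii)--(iii)). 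With that one substitution your argument closes; as written, the displayed bound does not follow from the hypothesis you invoke.
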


\begin{proof}
The function $\rho(\lam)$ is well-defined 
{
by (i), (ii)
}
since
since an even function on $\Sigma$ induces a well-defined function on $\bbR$. Since $b$ is odd, $\bb$ vanishes to first order at $\zeta=0$ (condition (i)) and the right-hand side of \eqref{ab.to.rho} has a removable singularity there. 
{
It easily follows from 
%the facts that $\bb \in \calS(\Sigma)$, that $a \in C^\infty(\Sigma)$, and $|a(\zeta)| \geq c>0$ (i.e.,
conditions (i), (ii), (vii)
%)
}
that $\rho \in \calS(\bbR)$.  Observe that, since $\zetabar^2 = \zeta^2$ on $\Sigma$, we may compute
$$
\overline{\rho(\zeta^2)} = \overline{\rho(\zetabar^2)} = \zeta^{-1} \frac{b(\zeta)}{\ba(\zeta)}
$$
where in the last step we used (v).

{
From conditions (i), (ii), (v), and \eqref{ab.to.rho}, we conclude that
$$ 
1-\zeta^2 \left|\rho(\zeta^2)\right|^2 
		= 		1-\frac{b(\zeta)\bb(\zeta)}{a(\zeta)\ba(\zeta)}.
$$
From the symmetries (i), (ii), and (v), we may compute
\begin{equation}
\label{T.det}
a(\zeta)\ba(\zeta) - b(\zeta)\bb(\zeta) =
\begin{cases}
|a(\zeta)|^2 - |b(\zeta)|^2, 	& 	\zeta \in \bbR,\\[5pt]
|a(\zeta)|^2 + |b(\zeta)|^2, 	&	\zeta \in i \bbR.
\end{cases}
\end{equation}
so that 
by (vi)
$$
1-\zeta^2 \left| \rho(\zeta^2) \right|^2 = 
\begin{cases}
\dfrac{1}{|a(\zeta)|^2}, 									&\zeta \in \bbR\\[10pt]
1+ \dfrac{|b(\zeta)|^2}{|a(\zeta)|^2},	&\zeta \in i \bbR 
\end{cases}
$$
It now follows from (ii), (iii) ($\zeta \in \bbR$) and positivity ($\zeta \in i \bbR)$ that $1-\lam |\rho(\lam)|^2 >0$ strictly.
}
\end{proof}

\medskip

For the direction $\rho \Longrightarrow $ $a$, $b$, $\ba$, $\bb$, we have:
\begin{lemma}
\label{lemma:rho.to.ab}
Given $\rho$ satisfying conditions (i) and (ii) of Problem \ref{prob:RH.n}, there exist unique functions $a$, $b$, $\ba$, $\bb$ satisfying (i)--(vii) of Problem \ref{prob:RH.M}.
\end{lemma}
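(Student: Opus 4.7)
The plan is to reduce the reconstruction to a single scalar Riemann-Hilbert problem on $\bbR$ that can be solved explicitly by the Plemelj formula. Setting $\lam=\zeta^2$, conditions (v), (vi), and the prescription $\rho(\lam) = \zeta^{-1}\bb(\zeta)/a(\zeta)$ combine algebraically to give
\[
1-\lam|\rho(\lam)|^2 \;=\; \frac{1}{a(\zeta)\ba(\zeta)} \qquad (\zeta\in\Sigma),
\]
so, as anticipated in Remark \ref{rem:ab.rh}, the functions $a$ and $1/\ba$ are expected to be the boundary values $\alpha^\sharp_-$ and $\alpha^\sharp_+$ of a single piecewise analytic scalar function $\alpha^\sharp$ on $\bbC\setminus\bbR$, normalized by $\alpha^\sharp(z)\to 1$ as $|z|\to\infty$, with multiplicative jump
\[
\alpha^\sharp_+(\lam) \;=\; \alpha^\sharp_-(\lam)\bigl(1-\lam|\rho(\lam)|^2\bigr),\qquad \lam\in\bbR.
\]
The whole proof then consists in constructing $\alpha^\sharp$ and lifting back through $\lam=\zeta^2$.

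Because $\rho\in\calS(\bbR)$ and $1-\lam|\rho(\lam)|^2>0$ strictly, the function $\log\bigl(1-\lam|\rho(\lam)|^2\bigr)$ is real-valued and belongs to $\calS(\bbR)$. I will define
\[
\alpha^\sharp(z) \;=\; \exp\!\left(\frac{1}{2\pi i}\int_\bbR \frac{\log\bigl(1-s|\rho(s)|^2\bigr)}{s-z}\,ds\right),\qquad z\in\bbC\setminus\bbR.
\]
By construction this is analytic and zero-free on $\bbC\setminus\bbR$, realizes the prescribed jump via Plemelj-Sokhotski, and satisfies $\alpha^\sharp(z)-1=O(|z|^{-N})$ for every $N$ (together with all $\lam$-derivatives of its boundary values) by iterated integration by parts on the Cauchy kernel. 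Since the integrand is real on $\bbR$, conjugation gives the Schwarz-type identity $\overline{\alpha^\sharp(z)}=1/\alpha^\sharp(\zbar)$, and taking boundary values on $\bbR$ yields $\alpha^\sharp_+\,\overline{\alpha^\sharp_-}=1$; together with the jump relation this forces $|\alpha^\sharp_-(\lam)|^2 = 1/(1-\lam|\rho(\lam)|^2)$.

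Next, I will set $\alpha(\lam):=\alpha^\sharp_-(\lam)$, $\balpha(\lam):=1/\alpha^\sharp_+(\lam)=\overline{\alpha(\lam)}$, $\beta(\lam):=\rho(\lam)\alpha(\lam)$, and define on $\Sigma$
\[
a(\zeta):=\alpha(\zeta^2),\quad \ba(\zeta):=\balpha(\zeta^2),\quad \bb(\zeta):=\zeta\,\beta(\zeta^2),\quad b(\zeta):=\zeta\,\overline{\beta(\zeta^2)}.
\]
Evenness of $a,\ba$ and oddness of $b,\bb$ are immediate, and because $\zeta\mapsto\zeta^2$ maps $\Omega^\pm$ onto $\bbC^\pm$ the zero-free analytic extension of $\alpha^\sharp$ to $\bbC\setminus\bbR$ transfers directly to the required analytic extensions of $\ba$ to $\Omega^+$ and $a$ to $\Omega^-$. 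Conditions (i)-(iii) follow from the Schwartz regularity and decay of $\log(1-\lam|\rho|^2)$ pushed through the Cauchy integral and the exponential; condition (v) is the Schwarz identity above together with the elementary check $b(\zeta)=\overline{\bb(\zetabar)}$ using $\zetabar^2=\zeta^2$ on $\Sigma$; condition (vi) is the computation $a\ba-b\bb=|\alpha|^2(1-\lam|\rho|^2)=1$; and condition (vii) is immediate from $|a|^2=1/(1-\lam|\rho|^2)$ combined with the fact that $\lam|\rho(\lam)|^2\to 0$ at $\pm\infty$, so $1-\lam|\rho|^2$ is bounded above on $\bbR$.

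For uniqueness, any other candidate $(a,b,\ba,\bb)$ satisfying (i)-(vii) with the same $\rho$ again furnishes, via its $a$ and $1/\ba$, a solution to the same scalar RHP for $\alpha^\sharp$; the ratio of two such solutions is entire, zero-free, and tends to $1$ at infinity, so by Liouville applied to its logarithm the scalar RHP has at most one solution, and then $\bb(\zeta)=\zeta a(\zeta)\rho(\zeta^2)$ together with $b(\zeta)=\overline{\bb(\zetabar)}$ determines $b,\bb$ as well. The one point that requires genuine care is the verification of condition (iv): one must produce a single $h\in L^2(\Sigma)$ with $C^+_\Sigma h = \ba^{-1}-1$ and $C^-_\Sigma h = a-1$. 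The natural candidate $h:=\ba^{-1}-a$ lies in $L^2(\Sigma)$ by the decay obtained in step (iii), and the Cauchy projector identities are the transcription, via the change-of-variables formula of Lemma \ref{lemma:lee.cv}, of the corresponding Hardy-type statement already built into the definition of $\alpha^\sharp$ on $\bbC\setminus\bbR$. Beyond this bookkeeping the argument is essentially a direct computation.
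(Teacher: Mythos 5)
Your proof is correct and follows essentially the same route as the paper: the same scalar RHP $\alpha^\sharp_+=\alpha^\sharp_-(1-\lam|\rho|^2)$ with the explicit Plemelj/exponential solution, the same pullback through $\lam=\zeta^2$, and the same algebraic verifications of (i)--(vii), with your treatment of condition (iv) and of uniqueness actually being more explicit than the paper's. One small slip: the Cauchy integral of $\log(1-\lam|\rho(\lam)|^2)$ is only $O(z^{-1})$ in general (its moments need not vanish), so $\alpha^\sharp(z)-1$ admits a complete asymptotic expansion in powers of $z^{-1}$ rather than being $O(|z|^{-N})$ for every $N$ --- but the expansion is all that condition (iii) requires, so the argument is unaffected.
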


\begin{proof}
Given $\rho \in \calS(\bbR)$ with $1 - \lam |\rho(\lam)|^2 >0$ strictly, we may recover $a$,
$b$, $\ba$, and $\bb$ satisfying conditions (i)--(vii) in Problem \ref{prob:RH.M}  as follows. The scalar RHP (with contour $\bbR$)
\begin{align}
\label{rho.to.ab.RH}
a^\sharp_+(\lam)	&=	a^\sharp_- (\lam) (1-\lam |\rho(\lam)|^2)\\
\nonumber
a^\sharp_\pm -1		&\in 	\dee C_\bbR (L^2)
\end{align}  
has the unique solution
\begin{equation}
\label{RH.asharp.sol}
a^\sharp(z) 
	= \exp
			\left( 
				\int_\bbR \frac{\log\left(1-\lam |\rho(\lam)|^2\right)}{\lam-z} \frac{d\lam}{2\pi i}
			\right)
\end{equation}
for $z \in \bbC \setminus \bbR$. 
We set
\begin{equation}
\label{rho.to.a}
a(\zeta) = a^\sharp_-(\zeta^2), \quad \ba(\zeta) = \left(a^\sharp_+(\zeta^2)\right)^{-1}.
\end{equation}

It is easy to see that:

(a) 	By construction, $a$ and $\ba$ satisfy property (iv), 

(b)	$a$ and $\ba$ so defined are bounded below by a strictly positive constant, verifying property (vii),

(c) 	$a$ and $\ba$ are smooth functions of $\zeta$,  verifying property (ii),

(d)  	$a(\zeta)=1+\mathcal{O}(\zeta^{-1})$, that $\ba(\zeta)=1+\mathcal{O}(\zeta^{-1})$, and that $a-1$ and $\ba-1$ have complete asymptotic expansions in powers of $\zeta^{-2}$ for large $\zeta$ since $\rho \in \calS(\bbR)$, verifying property (iii).

Note that
\begin{equation}
\label{rho.to.aa}
a(\zeta)  \ba(\zeta) = \left(1-\zeta^2 |\rho(\zeta^2)|^2 \right)^{-1} >  0 \text{ strictly.}
\end{equation}

From the solution \eqref{RH.asharp.sol} we may compute
$$
a_-^\sharp(\lam) \overline{a_+^\sharp(\lam)} 
 	= 	\exp\left[ C^+_\bbR f \right]
 			\overline{\exp\left[ C^-_\bbR f \right]}\\
 	= 1
$$
where we used $\overline{C^-_\bbR (f)} = -C^+_\bbR(\overline{f})$ and the fact that 
$f(\lam) = \log(1-\lam|\rho(\lam)|^2)$ is real. From this relation and the fact that $\zetabar^2 = \zeta^2$, we conclude that $\ba(\zeta) = \overline{a(\zetabar)}$, verifying property (v) for $a$ and $\ba$.

We then solve for $\bb(\zeta)$ in the relation
\begin{equation}
\label{rho.to.b}
\rho(\zeta^2) = \zeta^{-1} \frac{\bb(\zeta)}{a(\zeta)} 
\end{equation}
and compute $b(\zeta) = \overline{\bb(\zetabar)}$, so that property (v) for $b$ holds by construction. These formulae show that $b$ and $\bb$ are odd functions of $\zeta$ and belong to $\calS(\Sigma)$, verifying property (i) for $b$ and $\bb$.  Note that, $\rho(\zeta^2) = \rho(\zetabar^2)$ so that, by construction,
\begin{equation}
\label{rhobar}
\overline{\rho(\zeta^2)} 	= 	\overline{\rho(\zetabar^2)}
									=	\zeta^{-1} \frac{b(\zeta)}{\overline{a(\zetabar)}}
									=	\zeta^{-1} \frac{b(\zeta)}{\ba(\zeta)}.
\end{equation}
It also follows from the construction that
\begin{equation}
\label{rho.to.bb}
b(\zeta) \bb(\zeta) = \zeta^2 |\rho(\zeta^2)|^2 a(\zeta) \ba(\zeta).
\end{equation}
From \eqref{rho.to.aa} and \eqref{rho.to.bb}, we conclude that
$$ a(\zeta)\ba(\zeta) - b(\zeta) \bb(\zeta) = 1, $$
verifying property (vi). 
\end{proof}

\subsubsection{Beals-Coifman Integral Equations}

Recall the discussion in \S \ref{sec:BC-integral-eq} about the Beals-Coifman integral equation associated to a given factorization for the jump matrix of a RHP.  We may factorize the jump matrices of the respective RHPs \ref{prob:RH.M} and \ref{prob:RH.n}:
\begin{align*}
%\label{RH.M.fac}
v(\zeta) 							&= 	(\One-w^-(\zeta))^{-1}(\One+w^+(\zeta)),\\[5pt]
%\nonumber
(w^-(\zeta),w^+(\zeta)) 	&=	\left( 
												\twomat{0}{\bb(\zeta)/a(\zeta)}{0}{0}, \,\,
												\twomat{0}{0}{-b(\zeta)/\ba(\zeta)}{0}
											\right),
\end{align*}
and
\begin{align*}
%\label{RH.n.fac}
J(\zeta)						&=	(\One - W^-(\lam))^{-1} (\One + W^+(\lam)),\\[5pt]
\nonumber
(W^-(\lam),W^+(\lam))	&=		\left(
												\twomat{0}{\rho(\lam)}{0}{0}, \, \,
												\twomat{0}{0}{-\lam\overline{\rho(\lam)}}{0}
											\right).
\end{align*}
			
By Proposition \ref{prop:BC.sol}, unique solvability of Problem \ref{prob:RH.M} is equivalent to unique solvability of the Beals-Coifman integral equation
\begin{equation}
\label{RH.M.BC}
 \mu = \One + \calC_w \mu 
 \end{equation}
for $\mu$ with $\mu-\One \in L^2(\Sigma,|d\zeta|)$, where for a matrix-valued function $h$,
\begin{align*}
\calC_w h 	&= 	C^+_\Sigma (h w_x^-) + C^-_\Sigma (h w_x^+) \\[5pt]
				&= 	\Twomat{-C^-_\Sigma	\left[	h_{12} (b/\ba)e^{2ix(\dotarg)^2}	\right]}
									{C_\Sigma^+	\left[	h_{11}(\bb/a)e^{-2ix(\dotarg)^2}	\right]}
									{-C_\Sigma^-	\left[	h_{22}(b/\ba)e^{2ix(\dotarg)^2}	\right]}
									{C_\Sigma^+	\left[	h_{21}(\bb/a)e^{-2ix(\dotarg)^2}	\right]}.
\end{align*}
Here $C^\pm_\Sigma$ are Cauchy projectors for the contour $\Sigma$, and 
$$ w^\pm_x = e^{-ix\zeta^2 \ad \sigma } w^\pm. $$

Similarly, unique solvability of Problem \ref{prob:RH.n} is equivalent to unique solvability 
of the integral equation
\begin{equation}
\label{RH.n.BC}
\nu	=	\bfe_1  + \calC_W \nu
\end{equation}
for $\nu$ with $\nu-\bfe_1 \in L^2(\bbR)$, where for a row vector-valued function $h$, 
\begin{align*}
\calC_W h 	&= 	C^+_\bbR ( h W_x^-) + C^-_\bbR( h W_x^+) \\[5pt]
				&=	\left( 
							\begin{array}{ll} 
								-C^-_\bbR
									\bigl[ 
											h_{12}(\dotarg) (\dotarg)\overline{\rho(\dotarg)} e^{2ix\dotarg} 
									\bigr]
								&
								C^+_\bbR
									\bigl[ 
											h_{11}(\dotarg) \rho(\dotarg) e^{-2ix\dotarg} 
									\bigr]
							\end{array}
						\right).
\end{align*}
Here $C^\pm_\bbR$ are the Cauchy projectors for the contour $\bbR$ with its standard orientation, and 
$$ W^\pm_x = e^{-i\lam x \ad \sigma } W^\pm. $$

\subsection{Equivalence of the RHPs}
\label{sec:RH.equiv}

In this section, we continue to assume that:

\medskip
\textbf{(i)} For the RHP \ref{prob:RH.M}, the data $a$, $b$, $\ba$, $\bb$ satisfy conditions (i)-(vii) in Problem \ref{prob:RH.M}.

\medskip

\textbf{(ii)} For the RHP \ref{prob:RH.n}, $\rho$ satisfies conditions (i)--(ii) in Problem \ref{prob:RH.n}.

\bigskip

 First, we prove:

\begin{proposition}
\label{prop:mu.to.nu}
Suppose given $a$, $\ba$, $b$, $\bb$ satisfying (i)--(vii) of Problem \ref{prob:RH.M}, and 
suppose that $\mu$ solves the Beals-Coifman integral equation \eqref{RH.M.BC}. Define 
$$ \nu(x,\lam) = (\nu_{11}(x,\lam), \nu_{12}(x,\lam))$$ 
by
\begin{equation}
\label{mu.to.nu} 
\nu_{11}(x,\zeta^2) = \mu_{11}(x,\zeta), \quad \nu_{12}(x,\zeta^2) = \zeta^{-1}\mu_{12}(x,\zeta)
\end{equation}
and let $\rho$ be determined from ($a$, $\ba$, $b$, $\bb$) as in Lemma \ref{lemma:ab.to.rho}.
Then $\nu = (\nu_{11},\nu_{12})$ solves the Beals-Coifman integral equation \eqref{RH.n.BC}. 
\end{proposition}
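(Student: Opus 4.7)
The plan is to translate the Beals-Coifman equation \eqref{RH.M.BC} on $\Sigma$ into \eqref{RH.n.BC} on $\bbR$ using two ingredients: the $\zeta\mapsto-\zeta$ parity of the entries of $\mu$, and the change-of-variables formulas of Remark \ref{rem:lee.cv}. First I would verify that $\mu_{11},\mu_{22}$ are even and $\mu_{12},\mu_{21}$ odd in $\zeta$. Setting $\mu'(x,\zeta)\equiv\sigma\mu(x,-\zeta)\sigma$, one checks from the data symmetries $a(-\zeta)=a(\zeta)$, $\ba(-\zeta)=\ba(\zeta)$, $b(-\zeta)=-b(\zeta)$, $\bb(-\zeta)=-\bb(\zeta)$ in hypotheses (i),(ii),(v) of Problem \ref{prob:RH.M}, together with the identity $(C_\Sigma f(-\dotarg))(z)=(C_\Sigma f)(-z)$ for the Cauchy transform (which follows from the fact that $\zeta\mapsto-\zeta$ reverses orientation on $\Sigma$ while preserving each region $\Omega^\pm$), that $\mu'$ solves the same Beals-Coifman equation as $\mu$. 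Unique solvability of that equation then forces $\mu'=\mu$, giving the stated parities. In particular, \eqref{mu.to.nu} defines $\nu_{11}$ and $\nu_{12}$ unambiguously as functions of $\lam=\zeta^2$.

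Next I would write the first row of \eqref{RH.M.BC} explicitly as
\begin{align*}
\mu_{11}(x,\zeta)&=1-C^-_\Sigma\!\left[\mu_{12}(x,\dotarg)(b/\ba)(\dotarg)\,e^{2ix(\dotarg)^2}\right]\!(\zeta),\\
\mu_{12}(x,\zeta)&=C^+_\Sigma\!\left[\mu_{11}(x,\dotarg)(\bb/a)(\dotarg)\,e^{-2ix(\dotarg)^2}\right]\!(\zeta),
\end{align*}
and inspect the parity of each integrand. In the first line, $\mu_{12}$ and $b$ are odd while $\ba$ and the exponential are even, so the integrand is even in $\zeta$, and \eqref{Cauchypm.R.to.Sigma.even} converts $C^-_\Sigma$ into $C^-_\bbR$ evaluated at $\lam=\zeta^2$. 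In the second line, $\bb$ is odd while $\mu_{11}$, $a$ and the exponential are even, so the integrand is odd, and \eqref{Cauchypm.R.to.Sigma.odd} converts $C^+_\Sigma$ into $\zeta\cdot C^+_\bbR$ evaluated at $\lam=\zeta^2$.

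Finally I would substitute the identities $\bb(\zeta)/a(\zeta)=\zeta\,\rho(\zeta^2)$ (from \eqref{ab.to.rho}) and $b(\zeta)/\ba(\zeta)=\zeta\,\overline{\rho(\zeta^2)}$ (which follows by conjugating \eqref{ab.to.rho} and using the symmetry (v) together with $\zetabar^2=\zeta^2$ for $\zeta\in\Sigma$, verified separately on $\bbR$ and on $i\bbR$). In the first equation, the two factors of $\zeta$ coming from $\mu_{12}(x,\zeta)=\zeta\,\nu_{12}(x,\lam)$ and $b/\ba$ combine to produce the factor $\lam$ appearing in the $(1,2)$-entry of the jump $J$ of Problem \ref{prob:RH.n}, yielding $\nu_{11}=1-C^-_\bbR[\lam\,\nu_{12}\,\overline{\rho}\,e^{2ix\lam}]$. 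In the second, the $\zeta$ produced by \eqref{Cauchypm.R.to.Sigma.odd} cancels exactly against the $\zeta^{-1}$ built into the definition of $\nu_{12}$, while the $\zeta$ from $\bb/a$ is absorbed into $\rho(\lam)$, producing $\nu_{12}=C^+_\bbR[\nu_{11}\,\rho\,e^{-2ix\lam}]$. Together these are exactly \eqref{RH.n.BC}. The principal care required is in the bookkeeping of parities and factors of $\zeta$; the $\zeta^{-1}$ chosen in \eqref{mu.to.nu} is precisely what makes the odd-integrand transformation cancel cleanly.
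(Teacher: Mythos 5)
Your proof is correct and follows essentially the same route as the paper's: write the first row of \eqref{RH.M.BC} in components, use the parity of the integrands together with \eqref{Cauchypm.R.to.Sigma.even}--\eqref{Cauchypm.R.to.Sigma.odd} to convert $C^\pm_\Sigma$ into $C^\pm_\bbR$ at $\lam=\zeta^2$, and absorb the factors of $\zeta$ into $\rho(\lam)=\zeta^{-1}\bb(\zeta)/a(\zeta)$ and $b(\zeta)/\ba(\zeta)=\zeta\,\overline{\rho(\zeta^2)}$. Your preliminary step establishing that $\mu_{11},\mu_{22}$ are even and $\mu_{12},\mu_{21}$ odd is the content of Lemma \ref{lemma:mu.even-odd} (which the paper proves separately, via the reduced scalar system rather than conjugation by $\sigma$); one small correction to your justification there: the map $\zeta\mapsto-\zeta$ in fact \emph{preserves} the oriented contour $\Sigma$ (each outward-oriented ray maps to the opposite outward-oriented ray, and likewise for the inward ones, as is visible in \eqref{Sigma.int}), and it is this invariance that gives $(C_\Sigma f(-\dotarg))(z)=(C_\Sigma f)(-z)$ without a sign --- if the orientation were genuinely reversed the identity would pick up a minus sign and the argument would break.
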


\begin{proof}
From \eqref{RH.M.BC} we have
\begin{align}
\label{M.BC.11}
\mu_{11}(x,\zeta)		
									&=	1-C_\Sigma^-
												\left[ 
														(\dotarg) \mu_{12}(x,\dotarg) 
														\left(
															(\dotarg)^{-1} b(\dotarg)/\ba(\dotarg) 
														\right)
														e^{2ix(\dotarg)^2} 
												\right](\zeta),\\
\label{M.BC.12}
\zeta^{-1}\mu_{12}(x,\zeta)	
									&=	\zeta^{-1} C^+_\Sigma
												\left[(\dotarg)
														\mu_{11}(x,\dotarg)
														\left(
															(\dotarg)^{-1}(\bb(\dotarg)/a(\dotarg)
														\right)
														e^{-2ix(\dotarg)^2}
												\right](\zeta).
\end{align}
We now use the change-of-variables formula \eqref{Cauchypm.Sigma.to.R} and Remark \ref{rem:lee.cv},  noting that the argument of $C_\Sigma^-$ is an even function of $\zeta$, while the argument of $C^+_\Sigma$ is odd in $\zeta$. If we \emph{define} $\rho(\lam)$ as in \eqref{ab.to.rho}, it now follows from \eqref{Cauchypm.R.to.Sigma.even} and \eqref{Cauchypm.R.to.Sigma.odd}
\begin{align}
\label{n.BC.11}
\nu_{11}(x,\lam)		&=		1-C_\bbR^-
												\left( 
														\nu_{12}(x,\dotarg) (\dotarg) \overline{\rho(\dotarg)}
														e^{2ix(\dotarg)}
												\right)(\lam)\\
\label{n.BC.12}
\nu_{12}(x,\lam)		&=		C_\bbR^+
												\left(
														\nu_{11}(x,\dotarg) \rho(\dotarg)
														e^{-2ix(\dotarg)}
												\right)(\lam)
\end{align}
which is \eqref{RH.n.BC} in component form.
\end{proof}

On the other hand, the solution to the Beals-Coifman integral equation \eqref{RH.n.BC} for Problem \ref{prob:RH.n} can be completed, by symmetry, to a matrix-valued function, and transformed to furnish a solution to the Beals-Coifman integral equation \eqref{RH.M.BC} for Problem \ref{prob:RH.M}.

\begin{proposition}
\label{prop:nu.to.mu}
Suppose given $\rho$ satisfying (i)--(ii) of Problem \ref{prob:RH.n}, and suppose that $\nu=(\nu_{11},\nu_{12})$ solves the Beals-Coifman integral equation \eqref{RH.n.BC}. Define
\begin{equation}
\label{nu.to.mu}
\mu(x,\zeta) = \Twomat	{\nu_{11}(x,\zeta^2)}
									{\zeta \nu_{12}(x,\zeta^2)}
									{\zeta \overline{\nu_{12}(x,\zeta^2)}}
									{\overline{\nu_{11}(x,\zeta^2)}}
\end{equation}
and compute $a$, $\ba$, $b$, $\bb$ from $\rho$ as in Lemma \ref{lemma:rho.to.ab}.
Then $\mu$ solves \eqref{RH.n.BC}.
\end{proposition}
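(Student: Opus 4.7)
The plan is to verify that the matrix $\mu$ defined by \eqref{nu.to.mu} satisfies the Beals-Coifman integral equation \eqref{RH.M.BC} in component form (I read the statement with the presumption that the displayed equation should be \eqref{RH.M.BC}, since $\mu$ is the candidate solution for Problem \ref{prob:RH.M}). Reading off the formula for $\calC_w$ written out in \S \ref{sec:RH}, I need to check
\begin{align*}
\mu_{11}(x,\zeta) &= 1 - C^-_\Sigma\!\left[\mu_{12}(x,\cdot)(b/\ba)(\cdot)\,e^{2ix(\cdot)^2}\right]\!(\zeta),\\
\mu_{12}(x,\zeta) &= C^+_\Sigma\!\left[\mu_{11}(x,\cdot)(\bb/a)(\cdot)\,e^{-2ix(\cdot)^2}\right]\!(\zeta),
\end{align*}
together with the analogous equations for $\mu_{21},\mu_{22}$. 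The first two are the inverse of the calculation carried out in Proposition \ref{prop:mu.to.nu}; the last two will follow from complex conjugation and the symmetries imposed in the definition \eqref{nu.to.mu}.

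First, I would use the identities extracted from Lemma \ref{lemma:rho.to.ab}, namely $\bb(\zeta)/a(\zeta) = \zeta\,\rho(\zeta^2)$ (from \eqref{rho.to.b}) and $b(\zeta)/\ba(\zeta) = \zeta\,\overline{\rho(\zeta^2)}$ (from \eqref{rhobar}). Substituting $\mu_{11}(x,\zeta) = \nu_{11}(x,\zeta^2)$ and $\mu_{12}(x,\zeta) = \zeta\,\nu_{12}(x,\zeta^2)$, the integrand of the first Cauchy projector above becomes $s^2\,\nu_{12}(x,s^2)\,\overline{\rho(s^2)}\,e^{2ixs^2}$, which is even in $s\in\Sigma$, while the integrand in the second becomes $s\,\nu_{11}(x,s^2)\,\rho(s^2)\,e^{-2ixs^2}$, which is odd. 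Applying \eqref{Cauchypm.R.to.Sigma.even} and \eqref{Cauchypm.R.to.Sigma.odd} from Remark \ref{rem:lee.cv} in the reverse direction converts each $C^\pm_\Sigma$ into a $C^\pm_\bbR$ evaluated at $\lambda=\zeta^2$, and the resulting pair of equations is precisely \eqref{n.BC.11}--\eqref{n.BC.12}, which hold by hypothesis on $\nu$.

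For the third and fourth BC equations, I would take the complex conjugates of \eqref{n.BC.11}--\eqref{n.BC.12}. Using the standard identity $\overline{C^\pm_\bbR f(\lam)} = -C^\mp_\bbR \overline{f}(\lam)$ for $\lam\in\bbR$ (which follows from the definition of the Cauchy integral, noting that complex conjugation interchanges $\bbC^+$ and $\bbC^-$), one obtains
\begin{align*}
\overline{\nu_{11}(x,\lam)}-1 &= C^+_\bbR\!\left[\overline{\nu_{12}(x,\cdot)}\,(\cdot)\,\rho(\cdot)\,e^{-2ix\cdot}\right]\!(\lam),\\
\overline{\nu_{12}(x,\lam)} &= -C^-_\bbR\!\left[\overline{\nu_{11}(x,\cdot)}\,\overline{\rho(\cdot)}\,e^{2ix\cdot}\right]\!(\lam).
\end{align*}
With the substitutions $\mu_{22}(x,\zeta)=\overline{\nu_{11}(x,\zeta^2)}$ and $\mu_{21}(x,\zeta)=\zeta\,\overline{\nu_{12}(x,\zeta^2)}$, the same parity analysis as before (the relevant integrands on $\Sigma$ are again respectively even and odd in $\zeta$) and the same change of variables \eqref{Cauchypm.R.to.Sigma.even}--\eqref{Cauchypm.R.to.Sigma.odd} recover the two remaining BC equations for $\mu$. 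Finally, the membership $\mu - \One \in \dee C_\Sigma(L^2)$ is seen from \eqref{nu.to.mu} and the Jacobian factor in \eqref{lee.cv}: the change of variables $\lam=\zeta^2$ shows that the $L^2(\Sigma)$-norm of each entry of $\mu-\One$ is controlled by the $L^2(\bbR)$-norm of $\nu-\bfe_1$.

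The proof is conceptually the mirror image of Proposition \ref{prop:mu.to.nu}, so there is no genuine obstacle; the main care required is bookkeeping the parities and the sign flips coming from $\overline{C^\pm_\bbR} = -C^\mp_\bbR$, together with checking that the two columns of \eqref{nu.to.mu} are consistent with the symmetries (i), (ii), (v) of Problem \ref{prob:RH.M} satisfied by the reconstructed data from Lemma \ref{lemma:rho.to.ab}, so that the integrands indeed have the stated even/odd parity in $\zeta$.
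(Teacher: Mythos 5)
Your proposal is correct and takes essentially the same route as the paper's proof: the identities $\bb/a=\zeta\rho(\zeta^2)$ and $b/\ba=\zeta\overline{\rho(\zeta^2)}$ from Lemma \ref{lemma:rho.to.ab}, the even/odd parity of the integrands, the change-of-variables formulas \eqref{Cauchypm.R.to.Sigma.even}--\eqref{Cauchypm.R.to.Sigma.odd}, and the conjugation identity $\overline{C^{\pm}_{\bbR}f}=-C^{\mp}_{\bbR}\overline{f}$ for the second row. You also correctly read the conclusion as ``$\mu$ solves \eqref{RH.M.BC}'' and your signs in the conjugated exponentials are the right ones.
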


\begin{proof}
First we show that, if $\nu_{11}$ and $\nu_{12}$ solve \eqref{n.BC.11}--\eqref{n.BC.12}, then
$\mu_{11}$ and $\mu_{12}$ as defined by \eqref{nu.to.mu} solve \eqref{M.BC.11}--\eqref{M.BC.12}. We then argue by symmetry that $\mu$ solves \eqref{RH.M.BC}.

From \eqref{n.BC.11} and \eqref{Cauchypm.R.to.Sigma.even}, we deduce that
\begin{align}
\label{nu11.to.mu11}
\mu_{11}(x,\zeta)	&=	1-C_\Sigma^-
										\left[
											\mu_{12}(x,\dotarg)
											 (\dotarg) \overline{\rho((\dotarg)^2)} 
											e^{2ix(\dotarg)^2}
										\right](\zeta)\\
\nonumber
							&=	1-C_\Sigma^-
										\left[
											\mu_{12}(x,\dotarg)
											\left(b(\dotarg)/\ba(\dotarg)\right)
											e^{2ix(\dotarg)^2}
										\right](\zeta)
\end{align}
where we compute $a$, $b$, $\ba$, $\bb$ from $\rho$ following the steps outlined in the proof of Lemma \ref{lemma:rho.to.ab} above,  \eqref{rho.to.a} and \eqref{rho.to.b}.  In the last step, we used the identity \eqref{rhobar}. This is exactly the $(11)$ component of \eqref{RH.M.BC}.

Next, from \eqref{n.BC.12} and \eqref{Cauchypm.R.to.Sigma.odd}, we conclude that
\begin{align}
\label{nu12.to.mu12}
\mu_{12}(x,\zeta)	&=	\zeta C_\bbR^+
											\left[
												\nu_{11}(x,\dotarg) 
												\rho(\dotarg)
												e^{-2ix(\dotarg)}
											\right]
											(\zeta^2)\\
\nonumber
							&=	C_\Sigma^+
											\left[
												 \nu_{11}(x,(\dotarg)^2)
												 \left( \bb(\dotarg)/a(\dotarg) \right) e^{-2ix(\dotarg)^2}
											\right](\zeta)\\
\nonumber			
							&=	C_\Sigma^+
											\left[
												\mu_{11}(x,\dotarg)
												 \left( \bb(\dotarg)/a(\dotarg) \right) e^{-2ix(\dotarg)^2}
											\right](\zeta)
\end{align}
where in the second line we identified
$$h(\zeta^2) =  \nu_{11}(x,\zeta^2) \left(\zeta^{-1} \bb(\zeta)/a(\zeta) \right) e^{-2ix\zeta^2}$$
and 
$$ f(\zeta) =  \nu_{11}(x,\zeta^2) \left( \bb(\zeta)/a(\zeta) \right) e^{-2ix\zeta^2}$$
in order to apply \eqref{Cauchypm.R.to.Sigma.odd}.

Using \eqref{n.BC.12} again, we may compute
\begin{align}
\label{nu21.to.mu21}
\zeta \overline{\nu_{12}(x,\zeta^2)}
							&=	- \zeta C_\bbR^- 
											\left[ 
												\overline{\nu_{11}(x,\dotarg)} 
												\overline{\rho(\dotarg)} 
												e^{-2ix(\dotarg)}
											\right](\zeta^2)\\
\nonumber
							&=	-  C_\Sigma^-
											\left[
												\overline{\mu_{11}(x,\dotarg)}
												\left(b(\dotarg)/\ba(\dotarg)\right)
												e^{-2ix(\dotarg)^2} 
											\right](\zeta)
\end{align}
where in the first step we used the relation
$ \overline{C_\bbR^+( f )} = -C_\bbR^- (\overline{f}) $, 
and in the second line we used \eqref{Cauchypm.R.to.Sigma.odd} with the identifications
$$ 
h(\lam) = \overline{\nu_{11}(x,\lam)} 
					\overline{\rho(\lam)} 
					e^{-2ix(\lam)}
$$
and
$$
f(\zeta)	=	\overline{\mu_{11}(x,\zeta)} \frac{b(\zeta)}{\ba(\zeta)} e^{-2ix\zeta^2}.
$$

Finally, using \eqref{n.BC.11}, we have
\begin{align}
\label{nu22.to.mu22}
\overline{\nu_{11}(x,\zeta^2)}
							&=	1+C_\bbR^+
										\left[
											\overline{\nu_{12}(x,\dotarg)} (\dotarg) \rho(\dotarg) e^{2ix(\dotarg)}
										\right](\zeta^2)\\
\nonumber
							&=	1+C^+_\Sigma
										\left[
											\mu_{12}(x,\dotarg) 
											\left(\bb(\dotarg)/a(\dotarg)\right) 
											e^{2ix(\dotarg)^2}
										\right](\zeta)
\end{align}
where in the first step we used $\overline{C_\bbR^-(f)} = -C_\bbR^+(\overline{f})$ and in the second line we used \eqref{Cauchypm.R.to.Sigma.even} with the identifications
$$ g(\lam) = \overline{\nu_{12}(x,\lam)} \lam \rho(\lam) e^{2ix(\lam)}$$
and
$$ f(\zeta) = \mu_{12}(x,\zeta) \left(\bb(\zeta)/a(\zeta)\right) e^{2ix\zeta^2}. $$

Collecting \eqref{nu11.to.mu11}--\eqref{nu22.to.mu22} we see that $\mu$ as defined by \eqref{nu.to.mu} solves \eqref{RH.M.BC}.
\end{proof}

\subsection{Uniqueness Theorems}
\label{sec:RH.Unique}

In this section we prove that Problems \ref{prob:RH.M} and \ref{prob:RH.n} are uniquely solvable for the Schwartz class data described there.  First, we establish uniqueness for Problem \ref{prob:RH.M}.

\begin{lemma}
\label{lemma:unique.RH.M}
For given $a$, $b$, $\ba$, $\bb$ satisfying properties (i)--(vii) there exists a unique solution $M$ to Problem \ref{prob:RH.M} with $M_\pm - \One \in \dee C_\Sigma(L^2)$. 
\end{lemma}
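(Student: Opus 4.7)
The plan is to apply a Zhou-type vanishing-lemma argument adapted to the four-ray contour $\Sigma$. By Proposition~\ref{prop:BC.op} it suffices to show that $I-\calC_w$ is injective on $L^2(\Sigma)$ for the factorization $v=(\One-w^-)^{-1}(\One+w^+)$ of \S\ref{sec:RH}. So suppose $\mu\in L^2(\Sigma)$ satisfies $\mu=\calC_w\mu$. Setting $f=\mu(w^-_x+w^+_x)$ and $M(z)=(C_\Sigma f)(z)$, Plemelj--Sokhotski gives $M_\pm=\mu(\One\pm w^\pm_x)$, so that $M$ is piecewise analytic on $\bbC\setminus\Sigma$, $M(z)\to 0$ at infinity, $M_\pm\in L^2(\Sigma)$, and $M_+=M_-\tilde v$ with $\tilde v=e^{-ix\zeta^2\ad\sigma}v$. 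The goal is to prove $M\equiv 0$, for then $\mu=M_-(\One-w^-_x)^{-1}=0$.

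The key input is pointwise positivity of the Hermitian part of the jump matrix. Because $\tilde v=EvE^{-1}$ with $E=e^{-ix\zeta^2\sigma}$ unitary on $\Sigma$, one has $\tilde v+\tilde v^\ast=E(v+v^\ast)E^{-1}$, so it suffices to verify $v+v^\ast\succ 0$. On $\bbR\subset\Sigma$ the symmetry (v), $\ba=\overline a$ and $\bb=\overline b$, combined with the determinant identity (vi), $a\ba-b\bb=1$, forces the off-diagonal entries of $v+v^\ast$ to cancel, yielding
\[
v(\zeta)+v(\zeta)^\ast \;=\; \diag\!\left(2/|a(\zeta)|^2,\,2\right)\;\succ\;0.
\]
On $i\bbR\subset\Sigma$, the same symmetries combined with the parities (i), (ii) make $v$ itself Hermitian positive definite with determinant one. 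Hence $\tilde v+\tilde v^\ast\succ 0$ pointwise on $\Sigma$.

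The vanishing argument then runs via the auxiliary function $H(z)=M(z)\,M(\bar z)^\ast$. Since $\Sigma$ is symmetric under conjugation and $z\mapsto\bar z$ exchanges $\Omega^+\leftrightarrow\Omega^-$, $H$ is piecewise analytic on $\bbC\setminus\Sigma$, decays at infinity, and has $L^1$ boundary values, giving $\int_\Sigma H_+\,d\zeta=\int_\Sigma H_-\,d\zeta=0$ by Cauchy's theorem in each component of $\Omega^\pm$. On $\Sigma\cap\bbR$, a direct computation using $\bar\zeta=\zeta$ and the jump furnishes
\[
(H_++H_-)(\zeta) \;=\; M_-(\zeta)(\tilde v+\tilde v^\ast)(\zeta)\,M_-(\zeta)^\ast,
\]
which is Hermitian positive semidefinite and is integrated against a real line element. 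On $\Sigma\cap i\bbR$ the reflection $\bar\zeta=-\zeta$ pairs $M_-(\zeta)$ with $M_-(-\zeta)$; the parity identity $\tilde v(-\zeta)=\sigma\tilde v(\zeta)\sigma$ (from $a(-\zeta)=a(\zeta)$, $b(-\zeta)=-b(\zeta)$) together with the imaginary line element produces a Hermitian contribution to the integral. Pairing this Cauchy identity with a companion identity coming from the auxiliary function $\tilde H(z)=M(z)\sigma M(-\bar z)^\ast\sigma$, which exploits the parity $v(-\zeta)=\sigma v(\zeta)\sigma$, decouples the $\bbR$- and $i\bbR$-pieces and forces the Hermitian positive semidefinite $\bbR$-piece to vanish. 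Pointwise positivity of $\tilde v+\tilde v^\ast$ on $\bbR$ then yields $M_-\equiv 0$ a.e.\ on $\bbR$, and standard Hardy-space boundary-uniqueness on the two quadrants of $\Omega^-$ abutting $\bbR$ propagates this to $M\equiv 0$ on $\Omega^-$; the jump then forces $M_+\equiv 0$, and so $M\equiv 0$ throughout $\bbC\setminus\Sigma$.

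I expect the main technical obstacle to be the decoupling step on the multi-component contour $\Sigma$: the single Cauchy identity for $H$ alone mixes the $\bbR$- and $i\bbR$-contributions into a sum of Hermitian matrices, and a second independent identity (such as the one from $\tilde H$ above, or equivalently a symmetry reduction to the row-vector RHP of Proposition~\ref{prop:nu.to.mu}) is needed to isolate the Hermitian positive semidefinite part whose vanishing actually gives the conclusion. The conceptually crucial positivity input $v+v^\ast\succ 0$, by contrast, follows in one line from the structural hypotheses (i)--(vi).
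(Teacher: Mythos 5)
Your pointwise positivity computation is correct: on $\bbR$ the symmetries (v), (vi) give $v+v^\ast=\diag(2/|a|^2,\,2)$, and on $i\bbR$ the parities (i), (ii) make $v$ Hermitian with $\det v=1$ and $v_{22}=1$, hence positive definite. The gap is in the vanishing-lemma machinery you build on top of it, and it sits exactly where you flag it — but it is worse than a missing technicality. With the orientation of $\Sigma$ in Figure~\ref{fig:contours}, summing Cauchy's theorem over the four quadrants gives $\int_\Sigma (H_++H_-)\,d\zeta=0$ with $d\zeta=dt$ on $\Sigma_1$ but $d\zeta=-dt$ on $\Sigma_3$ (and $\mp i\,dt$ on $\Sigma_2,\Sigma_4$), so the real-axis contribution is $\int_0^\infty\bigl[(H_++H_-)(t)-(H_++H_-)(-t)\bigr]\,dt$, a \emph{difference} of positive semidefinite matrices with no sign. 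Moreover, since $w^\pm(-\zeta)=\sigma w^\pm(\zeta)\sigma$ and the Cauchy projectors commute with $\zeta\mapsto-\zeta$, the kernel of $I-\calC_w$ is invariant under $\mu\mapsto\sigma\mu(-\cdot)\sigma$; decomposing into symmetric and antisymmetric parts one finds $(H_++H_-)(-t)=\sigma(H_++H_-)(t)\sigma$, so the diagonal of the real-axis contribution cancels identically and positivity yields nothing. Your companion function $\tilde H$ meets the same cancellation with the roles of $\bbR$ and $i\bbR$ exchanged, and folding to the row-vector RHP on $\bbR$ does not rescue positivity either: for the reduced jump one has $(J+J^\ast)_{12}=(1-\lam)\rho(\lam)$ and $\det(J+J^\ast)=4-(|\lam|-1)^2|\rho(\lam)|^2$ for $\lam<0$, which can be negative while $1-\lam|\rho(\lam)|^2>0$ still holds. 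So the decoupling step is not merely unfinished; the identities you propose to use carry no usable positivity on this contour.

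The paper's proof is entirely different and needs no positivity at all. It uses only $\det v\equiv 1$: the scalar $D(z)=\det M(z)$ has no jump across $\Sigma$ and tends to $1$ at infinity, so $D\equiv 1$ by Liouville's theorem; then for two solutions $M_1,M_2$ the ratio $G=M_1M_2^{-1}$ is well defined, has no jump, tends to $\One$, and hence is $\equiv\One$. The cross-shaped contour and its orientation cause no difficulty in that argument. (Note that both your argument and the paper's really address only uniqueness; existence of a solution for the data at hand comes from the explicit Beals--Coifman construction of Section~\ref{sec:BC} and the Fredholm theory, not from this lemma's proof.) I recommend abandoning the vanishing-lemma route here in favor of the short Liouville argument.
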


\begin{proof} Given that $\det v(\zeta)=1$, this is actually a standard result in the theory of RHPs (see, for example, \cite[Theorem 9.1]{Zhou89}) but we give a proof for completeness. Suppose that $M$ solves the RHP and consider the scalar function $D(z)=\det M(z)$. The function $D(z)$ is analytic on $\bbC \setminus \Sigma$ with continuous boundary values $D_\pm$ on $\Sigma$, and $D(z) = 1 + \bigO{z^{-1}}$ as $|z| \rarr \infty$ in directions not tangent to $\Sigma$. From the jump relation for $M_\pm$ and the fact that $\det v(\zeta)=1$, we see that $D_+(\zeta) = D_-(\zeta)$. Hence $D(z)$ is entire and, by Liouville's Theorem, $D(z) \equiv 1$. Supposing now that $M_1$ and $M_2$ are two solutions, $M_1(z)^{-1}$ exists and the function $G(z) = M_1(z) M_2(z)^{-1}$ is analytic in $\bbC \setminus \Sigma$. By the jump relation for $M_1$ and $M_2$, $G_+(\zeta) = G_-(\zeta)$. Arguing as before we conclude that $G(z) \equiv 1$ and
$M_1 = M_2$.
\end{proof}

Now we use Proposition \ref{prop:nu.to.mu} to prove:

\begin{lemma}
\label{lemma:unique.RH.n}
For given $\rho \in \calS(\bbR)$ with $1-\lam |\rho(\lam)|^2 >0$ strictly, there exists a unique solution to the Problem \ref{prob:RH.n}.
\end{lemma}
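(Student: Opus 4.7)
The strategy is to transfer existence and uniqueness from Problem \ref{prob:RH.M} to Problem \ref{prob:RH.n} through the equivalence of Beals-Coifman integral equations established in Propositions \ref{prop:mu.to.nu} and \ref{prop:nu.to.mu}. Nothing new needs to be proved at the level of analysis; the task is purely to splice together the lemmas already at hand.

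For existence, given $\rho$ satisfying (i)--(ii) of Problem \ref{prob:RH.n}, I would first invoke Lemma \ref{lemma:rho.to.ab} to manufacture scattering data $(a,\ba,b,\bb)$ fulfilling conditions (i)--(vii) of Problem \ref{prob:RH.M}. Lemma \ref{lemma:unique.RH.M} then yields a unique solution $M_\pm$ to that RHP, and Proposition \ref{prop:BC.sol} (together with Proposition \ref{prop:BC.op}) produces a unique solution $\mu$ to the associated BC equation \eqref{RH.M.BC} with $\mu - \One \in L^2(\Sigma,|d\zeta|)$. Defining $\nu$ via \eqref{mu.to.nu}, Proposition \ref{prop:mu.to.nu} gives a solution to \eqref{RH.n.BC}, from which $\bfN_\pm - \bfe_1$ is recovered as the Cauchy integral of $\nu(W^+ + W^-)$, producing a solution to Problem \ref{prob:RH.n}.

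For uniqueness, suppose $\bfN^{(1)}$ and $\bfN^{(2)}$ both solve Problem \ref{prob:RH.n}. By Proposition \ref{prop:BC.op}, each gives rise to a solution $\nu^{(j)}$ of \eqref{RH.n.BC} with $\nu^{(j)} - \bfe_1 \in L^2(\bbR)$. Extend each via \eqref{nu.to.mu} to a $2\times 2$ matrix-valued $\mu^{(j)}$; Proposition \ref{prop:nu.to.mu} tells us these solve \eqref{RH.M.BC}. Lemma \ref{lemma:unique.RH.M} combined with Proposition \ref{prop:BC.op} (applied in the contrapositive: unique solvability of the RHP forces trivial kernel of $I - \calC_w$, hence unique solvability of the BC equation) forces $\mu^{(1)} = \mu^{(2)}$, so $\nu^{(1)} = \nu^{(2)}$, and $\bfN^{(1)} = \bfN^{(2)}$ follows by the Cauchy integral reconstruction.

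The main bookkeeping point, and probably the only place requiring care, is to verify that $L^2$ membership is preserved in both directions under the change of variables $\lambda = \zeta^2$ and the substitution \eqref{mu.to.nu}--\eqref{nu.to.mu}. The Jacobian satisfies $|d\lambda| = 2|\zeta|\,|d\zeta|$ on $\Sigma$, while \eqref{mu.to.nu}--\eqref{nu.to.mu} introduce precisely the compensating factor $\zeta^{\pm 1}$ on the off-diagonal entries. A short computation using \eqref{lee.cv} shows that $\mu - \One \in L^2(\Sigma,|d\zeta|)$ if and only if $\nu - \bfe_1 \in L^2(\bbR,d\lambda)$, which justifies invoking Proposition \ref{prop:BC.sol} on both sides of the correspondence.
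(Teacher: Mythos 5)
Your proposal is correct and follows essentially the same route as the paper: the paper's proof likewise lifts two putative solutions of Problem \ref{prob:RH.n} to matrix-valued solutions of Problem \ref{prob:RH.M} via \eqref{nu.to.mu} and then invokes Lemma \ref{lemma:unique.RH.M} to conclude they coincide. You are somewhat more thorough than the printed proof, which only treats uniqueness explicitly and leaves the existence direction (Lemma \ref{lemma:rho.to.ab} plus Proposition \ref{prop:mu.to.nu}) and the $L^2$ bookkeeping under $\lambda=\zeta^2$ implicit, but this is elaboration rather than a different argument.
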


\begin{proof}
Suppose that { $\nu^{(1)} $ and $\nu^{(2)}$ solve Problem \ref{prob:RH.n}. Let $\mu^{(1)}$ and $\mu^{(2)}$ 
be the solutions of Problem \ref{prob:RH.M} obtained from $\nu^{(1)}$ and $\nu^{(2)}$  via \eqref{nu.to.mu}. From Lemma \ref{lemma:unique.RH.M}, 
we have $\mu^{(1)} =
\mu^{(2)}$. It now follows from the formulas \eqref{mu.to.nu} that $\nu^{(1)} = \nu^{(2)}$.
}
\end{proof}

To study the integral equation \eqref{RH.n.BC}, we note its reduction to the scalar Fredholm integral equation
\begin{equation}
\label{BC.nusharp}
\widetilde{\nu} = 1+ S \widetilde{\nu}
\end{equation}
where, for a scalar function $h \in L^2(\bbR)$,
\begin{equation}
\label{S}
(Sh)(\lam) = -C^-_\bbR \left[ 
					C^+_\bbR 
							\left( 
								h(\dotarg) \rho(\dotarg) e^{-2ix(\dotarg)} 
							\right)(\diamond) 
					\left((\diamond) \overline{\rho(\diamond)} e^{2ix(\diamond)}\right) 
				\right](\lam).
\end{equation}

\begin{lemma}
\label{lemma:RH.n.BC.Fred}
Suppose that $\rho \in \calS(\bbR)$ with $1-\lam|\rho(\lam)|^2 > 0$ strictly, and fix $x \in \bbR$. Then 
\begin{itemize}
\item[(i)]		If $\nu=(\nu_{11},\nu_{12})$ with $\nu-\bfe_1 \in L^2(\bbR)$ solves \eqref{RH.n.BC}, then
 $\widetilde{\nu} = \nu_{11}$ solves the scalar integral equation \eqref{BC.nusharp}.
\item[(ii)]	If $\widetilde{\nu}$ with $\widetilde{\nu}-1 \in L^2(\bbR)$ solves \eqref{BC.nusharp}, then 
	$(\nu_{11},\nu_{22})$ given by
	$$
	\nu_{11} = \widetilde{\nu}, \quad 
	\nu_{12} = C_\bbR^+\left[ \widetilde{\nu}(\dotarg) \rho(\dotarg) e^{2ix(\dotarg)} \right]
	$$
	solves \eqref{RH.n.BC}.
\end{itemize}
\end{lemma}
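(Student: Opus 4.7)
The plan is to simply unpack the matrix Beals--Coifman equation \eqref{RH.n.BC} into its two scalar components and then eliminate $\nu_{12}$ algebraically. Using the description of $\calC_W$ given in the text, the system \eqref{RH.n.BC} for $\nu=(\nu_{11},\nu_{12})$ reads componentwise as
\begin{align*}
\nu_{11}(x,\lam) &= 1 - C^-_\bbR\bigl[(\dotarg)\,\overline{\rho(\dotarg)}\, e^{2ix(\dotarg)}\,\nu_{12}(x,\dotarg)\bigr](\lam),\\
\nu_{12}(x,\lam) &= C^+_\bbR\bigl[\rho(\dotarg)\, e^{-2ix(\dotarg)}\,\nu_{11}(x,\dotarg)\bigr](\lam).
\end{align*}
For part (i), if $\nu-\bfe_1\in L^2(\bbR)$ solves this system, I would substitute the second equation into the first; the resulting identity for $\widetilde\nu=\nu_{11}$ is precisely \eqref{BC.nusharp} with the operator $S$ defined by \eqref{S}, by inspection of signs and exponents. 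Note that $\widetilde\nu-1=\nu_{11}-1\in L^2$ is automatic from the hypothesis.

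For part (ii), conversely, given a solution $\widetilde\nu$ of \eqref{BC.nusharp} with $\widetilde\nu-1\in L^2$, I set $\nu_{11}=\widetilde\nu$ and define $\nu_{12}=C^+_\bbR[\rho(\dotarg)\,e^{-2ix(\dotarg)}\,\widetilde\nu(\dotarg)]$ (matching the second component equation by construction). I would first verify $\nu_{12}\in L^2(\bbR)$: write $\rho e^{-2ix(\dotarg)}\widetilde\nu = \rho e^{-2ix(\dotarg)} + \rho e^{-2ix(\dotarg)}(\widetilde\nu-1)$; since $\rho\in\calS(\bbR)\subset L^2\cap L^\infty$ and $\widetilde\nu-1\in L^2$, the product lies in $L^2$ and $C^+_\bbR$ is bounded on $L^2$. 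Substituting this definition of $\nu_{12}$ into the right-hand side of the first componentwise equation produces exactly $1+S\widetilde\nu=\widetilde\nu=\nu_{11}$, so \eqref{RH.n.BC} holds. For this step I would also note that $\lam\overline{\rho(\lam)}\in L^\infty$ (again since $\rho\in\calS$), which is what guarantees that the outer multiplication and $C^-_\bbR$ make sense on $L^2$, so that $S$ is bounded on $L^2(\bbR)$.

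Since the whole argument is a mechanical substitution and unraveling of definitions, there is no deep obstacle. The only thing to be careful about is the bookkeeping: matching the signs and the exponential phases $e^{\pm 2ix(\dotarg)}$ between \eqref{RH.n.BC} and \eqref{S}, and verifying $L^2$-membership at each step using the Schwartz hypothesis on $\rho$ (which gives both $\rho\in L^2\cap L^\infty$ and $\lam\rho\in L^2\cap L^\infty$) together with the $L^2$-boundedness of the Cauchy projectors $C^\pm_\bbR$ recalled in Section \ref{sec:prelim.Cauchy}. The strict positivity assumption $1-\lam|\rho(\lam)|^2>0$ plays no role at this stage; it enters later when one needs invertibility of $I-S$ rather than the present equivalence of solution sets.
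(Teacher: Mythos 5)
Your proposal is correct and is exactly the "straightforward computation" that the paper leaves to the reader: unpacking \eqref{RH.n.BC} into the component equations \eqref{n.BC.11}--\eqref{n.BC.12} and eliminating $\nu_{12}$, with the $L^2$-membership checks supplied by $\rho\in\calS(\bbR)$ and the boundedness of $C^\pm_\bbR$. One small point in your favor: your phase $e^{-2ix(\dotarg)}$ in the definition of $\nu_{12}$ is the one consistent with \eqref{RH.n.BC}, whereas the exponent printed in the lemma statement appears to carry a sign typo.
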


\begin{proof}
A straightforward computation.
\end{proof}

We will show in Lemma \ref{lemma:S[h]} and Remark \ref{rem:S.cont} that the operator 
$S$ on $L^2(\bbR)$ defined by \eqref{S} is compact  depends continuously on $\rho$. Since $\rho \in H^{2,2}(\bbR)$, and $C^\pm_\bbR$ are bounded operators on $L^2(\bbR)$, it is easy to see that 
$$ 
S(1) \coloneq
	-C^-_\bbR \left[ 
						C^+_\bbR 
								\left( 
									\rho(\dotarg) e^{-2ix(\dotarg)} 
								\right) 
						(\diamond) \left((\diamond)\overline{\rho(\diamond)} e^{2ix(\diamond} \right)
					\right](\lam)
$$
defines an element of $L^2(\bbR)$.

Now, we prove the main result of this section.

\begin{proposition}
\label{prop:RH.n.BC.unique}
Suppose that $\rho\in H^{2,2}(\bbR)$ with $1-\lam |\rho(\lam)|^2 >0$ strictly. Then, the integral equation \eqref{BC.nusharp} has a unique solution $\widetilde{\nu}$ with 
$\widetilde{\nu} - 1 \in L^2(\bbR)$. 
\end{proposition}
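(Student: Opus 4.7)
The plan is to apply the Fredholm alternative to the operator $I - S$ on $L^2(\bbR)$. Setting $\widetilde{\nu} = 1 + \psi$ with $\psi \in L^2(\bbR)$, the integral equation \eqref{BC.nusharp} is equivalent to $(I - S)\psi = S(1)$. Since $S$ is compact on $L^2(\bbR)$ and continuous in $\rho$ by the forthcoming Lemma \ref{lemma:S[h]} and Remark \ref{rem:S.cont}, and since $S(1) \in L^2(\bbR)$ (using $\rho \in H^{2,2}(\bbR) \subset L^\infty \cap L^2$ together with $\lam\rho \in L^\infty$ by Sobolev embedding, which ensures that the formula \eqref{S} evaluated at $h = 1$ defines an $L^2$ function), the Fredholm alternative reduces everything to verifying that $\ker(I - S) = \{0\}$.

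To establish triviality of the kernel, suppose $h \in L^2(\bbR)$ satisfies $h = Sh$. Set $\nu_{11} = h$ and $\nu_{12}(x,\lam) = C^+_\bbR\bigl[h(\dotarg)\rho(\dotarg) e^{-2ix(\dotarg)}\bigr](\lam)$. A direct computation, which is the homogeneous analogue of Lemma \ref{lemma:RH.n.BC.Fred}(ii), shows that $\nu = (\nu_{11}, \nu_{12})$ satisfies the homogeneous Beals--Coifman equation $\nu = \calC_W \nu$, so it suffices to prove $\nu \equiv 0$. Construct scattering data $(a, \ba, b, \bb)$ from $\rho$ using the formulas in the proof of Lemma \ref{lemma:rho.to.ab}; although $\rho$ is only $H^{2,2}$ rather than Schwartz, these formulas still produce data obeying the symmetries, the spectral condition $\inf |a| > 0$, and the determinant identity $a\ba - b\bb \equiv 1$. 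Via the symmetric extension \eqref{nu.to.mu} of Proposition \ref{prop:nu.to.mu}, lift $\nu$ to a matrix-valued function $\mu(x,\zeta) \in L^2(\Sigma)$ solving the homogeneous equation $\mu = \calC_w \mu$, and form the associated piecewise analytic matrix $M(x,z)$ on $\bbC \setminus \Sigma$ with $M_\pm \in \dee C_\Sigma(L^2)$ (so $M$ vanishes at infinity) and jump $M_+ = M_- e^{-ix\zeta^2 \ad \sigma} v$ on $\Sigma$.

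The final step is to show $M \equiv 0$, which proceeds in parallel with Lemma \ref{lemma:unique.RH.M}: since $\det v \equiv 1$ by construction, $\det M(z)$ has matching boundary values across $\Sigma$ and extends to an entire function vanishing at infinity, hence $\det M \equiv 0$ by Liouville; a refinement using the $\dee C_\Sigma(L^2)$ structure (the same step that forces $M \equiv 0$ from $\det M \equiv 0$ in the $M_1 M_2^{-1}$ comparison of two normalized solutions in Lemma \ref{lemma:unique.RH.M}) then gives $M \equiv 0$, whence $\mu \equiv 0$ and $h = 0$. The main obstacle is that Lemma \ref{lemma:unique.RH.M} is stated for Schwartz-class data, whereas here the data is only $H^{2,2}$; however, inspection of its proof shows that it depends only on $\det v \equiv 1$ and on the $\dee C_\Sigma(L^2)$ framework, both of which remain available. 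As an alternative (or safety net), one can approximate $\rho$ in $H^{2,2}(\bbR)$ by Schwartz $\rho_n$ preserving $1 - \lam|\rho_n|^2 > 0$ uniformly, apply Lemma \ref{lemma:unique.RH.n} for each $n$, and pass to the limit using the continuity of $S$ in $\rho$ from Remark \ref{rem:S.cont}.
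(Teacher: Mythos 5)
Your setup---the Fredholm alternative for the compact operator $S$, the observation that $S(1)\in L^2$, and the reduction to $\ker(I-S)=\{0\}$---coincides with the paper's. The proof breaks down at the vanishing step for the homogeneous problem. Having produced from $h=Sh$ a homogeneous matrix solution with $M_\pm\in\dee C_\Sigma(L^2)$ and $M_+=M_-v_x$, you argue that $\det M$ is entire and vanishes at infinity, hence $\det M\equiv 0$ by Liouville, and that ``a refinement'' of Lemma \ref{lemma:unique.RH.M} then upgrades $\det M\equiv 0$ to $M\equiv 0$. No such step exists in that lemma: there the determinant is identically $1$, and the conclusion $M_1=M_2$ is obtained by forming $G=M_1M_2^{-1}$, which is precisely the move that is unavailable when the determinant vanishes. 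For a $2\times 2$ matrix, $\det M\equiv 0$ is perfectly compatible with $M\not\equiv 0$, so this chain of reasoning proves nothing. A further warning sign is that your vanishing argument never invokes the hypothesis $1-\lam|\rho(\lam)|^2>0$ at the decisive moment; since nontrivial homogeneous solutions are exactly what eigenvalues and spectral singularities produce, the conclusion is false without that hypothesis, and any argument that does not use it there cannot be correct. A genuine vanishing argument requires either the positivity structure of the jump matrix (a Zhou-type computation with $M(z)M(\zbar)^\dagger$) or, as the paper does, a comparison of solutions of the \emph{normalized} problem: the paper notes that two solutions of \eqref{BC.nusharp} would yield, via Lemma \ref{lemma:RH.n.BC.Fred}(ii), two distinct solutions of Problem \ref{prob:RH.n}, contradicting Lemma \ref{lemma:unique.RH.n}.

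Two secondary points. The paper does not rerun the RHP machinery for non-Schwartz data (your main route, which obliges you to extend several lemmas stated only for $\calS(\bbR)$); it instead reduces to the Schwartz case using the operator-norm continuity of $\rho\mapsto S$ and the openness of the invertible operators. Your ``safety net''---approximate by Schwartz $\rho_n$ and pass to the limit---is closer to this, but as stated it is incomplete: injectivity of $I-S_{\rho_n}$ does not survive a norm limit without a uniform bound on $\|(I-S_{\rho_n})^{-1}\|$, which is what the compactness argument of Lemma \ref{lemma:S.res} is designed to supply.
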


\begin{proof}
It suffices to show that the resolvent $(I-S)^{-1}$ exists as an operator in $L^2$ since, if so, we can recover a unique solution via the formula
$$ \widetilde{\nu} = 1 + (I-S)^{-1}(S(1)). $$
Since the map $H^{2,2}(\bbR) \ni \rho \mapsto S \in \calB(L^2)$ is continuous and the invertible 
operators are open in $\calB(L^2)$, it suffices to show that $(I-S)^{-1}$ exists for $\rho \in \calS(\bbR)$ 
with $1-\lam|\rho(\lam)|^2 >0$ strictly. Finally, since $S$ is compact it suffices to show that $\ker(I-S)$ is 
trivial. 

Suppose that $\widetilde{\nu}_{(1)}$ and $\widetilde{\nu}_{(2)}$ are two solutions of \eqref{BC.nusharp}. By Lemma \ref{lemma:RH.n.BC.Fred}(ii), these solutions induce solutions $\nu_{(1)}$ and $\nu_{(2)}$ of \eqref{RH.n.BC} which are distinct as a consequence of the explicit formulas in Lemma \ref{lemma:RH.n.BC.Fred}(ii)
if  $\widetilde{\nu}_{(1)} \neq \widetilde{\nu}_{(2)}$. Hence, by the uniqueness result  of Lemma \ref{lemma:unique.RH.n},
$\widetilde{\nu}_{(1)} = \widetilde{\nu}_{(2)}$
and $\ker(I-S)$ is trivial.
\end{proof}

\subsection{Reconstruction of the Potential}
\label{sec:RH.recon}

In this subsection we show that the solution of the RHP \eqref{prob:RH.M} solves a 
differential equation of the form \eqref{LS.red} and obtain explicit formulas for $Q(x)$ and $P(x)$ 
having the correct structure (see Remark \ref{rem:integrand}). From these formulas and a change of variables, we can rewrite the reconstruction formula for $q$ in terms of the solution $\nu$ for the Beals-Coifman integral equation \eqref{RH.n.BC}.

Recall the Beals-Coifman integral equation \eqref{RH.M.BC}. We first note a simplification analogous to Lemma \ref{lemma:RH.n.BC.Fred}. Consider the integral equations
\begin{align}
\label{RH.M.BC.11}
\mu_{11}^\sharp 	&=	1 + S_{11} \mu_{11}^\sharp\\
\label{RH.M.BC.22}
\mu_{22}^\sharp		&=	1 + S_{22} \mu_{22}^\sharp
\end{align}
where
\begin{align*}
\left( S_{11} h \right)(\zeta) 
	&= 	-C_\Sigma^-
					\left[ C_\Sigma^+
							\left( 
									h(\dotarg) (\bb/a)(\dotarg) e^{-2ix(\dotarg)^2}
							\right) 
							(b/\ba)(\dotarg) e^{2ix(\dotarg)^2}
					\right]
					(\zeta)\\
\left( S_{22} h \right)(\zeta)	
	&=	-C_\Sigma^+
					\left[	C_\Sigma^-
						\left(
								h(\dotarg) (b/\ba)(\dotarg)e^{2ix(\dotarg)^2}
						\right)
						(\bb/a)(\dotarg) e^{-2ix(\dotarg)^2}
					\right]
					(\zeta)
\end{align*}
obtained from iterating \eqref{RH.M.BC}.

\begin{lemma}
\label{lemma:RH.M.BC.Fred}
Suppose that $(a,\ba,b,\bb)$ satisfy properties (i)--(vii) in Problem \ref{prob:RH.M} and fix $x \in \bbR$. Then:
\begin{itemize}
\item[(i)] If $(\mu_{11}^\sharp,\mu_{22}^\sharp)$ solve \eqref{RH.M.BC.11}--\eqref{RH.M.BC.22},
and we define
\begin{align*}
 \mu^\sharp_{12} 
 	&= C_\Sigma^+ \left[ 
 								\mu^\sharp_{11}(\dotarg) (\bb/a)(\dotarg) e^{2ix(\dotarg)^2} 
 							\right]\\
\mu^\sharp_{21}
	&=	C_\Sigma^- 
							\left[
								\mu^\sharp_{22}(\dotarg) (b/\ba)(\dotarg) e^{-2ix(\dotarg)^2}
							\right]
\end{align*}
then 
$$ 
\mu^\sharp = \twomat{\mu_{11}^\sharp}{\mu_{12}^\sharp}{\mu_{21}^\sharp}{\mu_{22}^\sharp}
$$
solves \eqref{RH.M.BC}.
\item[(ii)] If $\mu$ solves \eqref{RH.M.BC}, then $(\mu_{11},\mu_{22})$ solve \eqref{RH.M.BC.11}--\eqref{RH.M.BC.22}
\end{itemize}
\end{lemma}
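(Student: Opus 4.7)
My plan is to prove this lemma as a pair of elementary algebraic manipulations on the Beals--Coifman system: part (ii) follows by direct substitution, and part (i) follows by construction, with the scalar equations \eqref{RH.M.BC.11}--\eqref{RH.M.BC.22} arising precisely from substituting the off-diagonal BC equations into the diagonal ones and vice versa.

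To set up, I would first record the four scalar equations contained in \eqref{RH.M.BC}: reading off each entry of $\mu = \One + \calC_w\mu$ using the explicit componentwise formula for $\calC_w$ just after \eqref{RH.M.BC}, we obtain the diagonal equations
\begin{align*}
\mu_{11}(x,\zeta) &= 1 - C_\Sigma^-\!\left[\mu_{12}(x,\dotarg)(b/\ba)(\dotarg)e^{2ix(\dotarg)^2}\right](\zeta),\\
\mu_{22}(x,\zeta) &= 1 + C_\Sigma^+\!\left[\mu_{21}(x,\dotarg)(\bb/a)(\dotarg)e^{-2ix(\dotarg)^2}\right](\zeta),
\end{align*}
together with the off-diagonal equations
\begin{align*}
\mu_{12}(x,\zeta) &= C_\Sigma^+\!\left[\mu_{11}(x,\dotarg)(\bb/a)(\dotarg)e^{-2ix(\dotarg)^2}\right](\zeta),\\
\mu_{21}(x,\zeta) &= -C_\Sigma^-\!\left[\mu_{22}(x,\dotarg)(b/\ba)(\dotarg)e^{2ix(\dotarg)^2}\right](\zeta).
\end{align*}

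For part (ii), assuming $\mu$ solves \eqref{RH.M.BC}, I substitute the off-diagonal identities for $\mu_{12}$ and $\mu_{21}$ into the diagonal identities for $\mu_{11}$ and $\mu_{22}$. Inspection shows that the resulting inner iterated Cauchy operators reproduce exactly the operators $S_{11}$ and $S_{22}$ displayed before the lemma, so $\mu_{11}$ and $\mu_{22}$ satisfy \eqref{RH.M.BC.11} and \eqref{RH.M.BC.22} respectively.

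For part (i), I start with scalar solutions $\mu_{11}^\sharp$, $\mu_{22}^\sharp$ of \eqref{RH.M.BC.11}--\eqref{RH.M.BC.22} and use the formulas in the statement to \emph{define} $\mu_{12}^\sharp$ and $\mu_{21}^\sharp$. The two off-diagonal BC equations then hold by construction; for the diagonal equations I substitute the definitions of $\mu_{12}^\sharp, \mu_{21}^\sharp$ back into the right-hand sides of the $(1,1)$ and $(2,2)$ components of $\One+\calC_w\mu^\sharp$, which by the definition of $S_{11}, S_{22}$ collapse to $1+S_{11}\mu_{11}^\sharp = \mu_{11}^\sharp$ and $1+S_{22}\mu_{22}^\sharp = \mu_{22}^\sharp$. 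The only non-bookkeeping point worth addressing is that $\mu^\sharp-\One$ lies in $L^2(\Sigma)$ as required by the Beals--Coifman framework: since $\mu_{11}^\sharp-1$ and $\mu_{22}^\sharp-1$ are in $L^2(\Sigma)$ by hypothesis, and since $b/\ba$ and $\bb/a$ lie in $L^\infty(\Sigma)\cap L^2(\Sigma)$ under conditions (i)--(vii) of Problem \ref{prob:RH.M}, boundedness of $C_\Sigma^\pm$ on $L^2$ yields $\mu_{12}^\sharp,\mu_{21}^\sharp\in L^2(\Sigma)$; this is the mildest obstacle in the argument and causes no difficulty given the Schwartz-class hypotheses on $b,\bb$.
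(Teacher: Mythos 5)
The paper omits the proof of this lemma entirely (``We omit the proof.''), and your direct-substitution argument is precisely the routine verification the authors intend: both directions check correctly against the componentwise form of $\calC_w$ displayed after \eqref{RH.M.BC}, and your closing remark that $\mu^\sharp-\One\in L^2(\Sigma)$ follows from $b/\ba,\,\bb/a\in L^2(\Sigma)\cap L^\infty(\Sigma)$ and the $L^2$-boundedness of $C_\Sigma^\pm$ is the one point genuinely worth recording. A small caveat: the defining formulas for $\mu^\sharp_{12}$ and $\mu^\sharp_{21}$ as printed in the lemma statement have the exponential factors interchanged and drop a minus sign relative to the $(1,2)$ and $(2,1)$ entries of $\calC_w$, so for the off-diagonal equations to ``hold by construction'' as you assert one must read them as $\mu^\sharp_{12}=C_\Sigma^+\bigl[\mu^\sharp_{11}(\dotarg)(\bb/a)(\dotarg)e^{-2ix(\dotarg)^2}\bigr]$ and $\mu^\sharp_{21}=-C_\Sigma^-\bigl[\mu^\sharp_{22}(\dotarg)(b/\ba)(\dotarg)e^{2ix(\dotarg)^2}\bigr]$; your argument silently uses these corrected forms, which is the right thing to do.
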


We omit the proof. Lemma \ref{lemma:unique.RH.M} and Lemma \ref{lemma:RH.M.BC.Fred} imply that the system \eqref{RH.M.BC.11}--\eqref{RH.M.BC.22} are also uniquely solvable if $(a,\ba,b,\bb)$ satisfy properties (i)--(vii).

Now we can prove a symmetry result about the solution $\mu$ of \eqref{RH.M.BC}.

\begin{lemma}
\label{lemma:mu.even-odd}
Let $$\mu= \twomat{\mu_{11}}{\mu_{12}}{\mu_{21}}{\mu_{22}}$$ be the unique solution of \eqref{RH.M.BC}. Then $\mu_{11}$ and $\mu_{22}$ are even functions of $\zeta$ while $\mu_{21}$ and $\mu_{21}$ are odd functions of $\zeta$. Moreover, $\mu_{22}(x,\zeta) = \overline{\mu_{11}(x,\zetabar)}$ and $\mu_{21}(x,\zeta)=\overline{\mu_{12}(x,\zetabar)}$. 
\end{lemma}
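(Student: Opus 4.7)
The plan is to exploit the uniqueness of solutions to the Beals--Coifman integral equation \eqref{RH.M.BC}, which is guaranteed by Lemma \ref{lemma:unique.RH.M} together with Proposition \ref{prop:BC.sol}, by exhibiting two involutions that preserve its solution set. Concretely, I would introduce
\begin{align*}
\Phi_1(\mu)(\zeta) &= \twomat{\mu_{11}(-\zeta)}{-\mu_{12}(-\zeta)}{-\mu_{21}(-\zeta)}{\mu_{22}(-\zeta)},\\
\Phi_2(\mu)(\zeta) &= \twomat{0}{1}{1}{0}\,\overline{\mu(\zetabar)}\,\twomat{0}{1}{1}{0},
\end{align*}
and show that each sends any solution of \eqref{RH.M.BC} in the class $\mu-\One\in L^2(\Sigma)$ to another solution in the same class. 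Both fix $\One$ trivially, and since $\zeta\mapsto-\zeta$ and $\zeta\mapsto\zetabar$ are measure-preserving bijections of $\Sigma$, the $L^2$ decay of $\mu-\One$ is retained. Uniqueness then forces $\Phi_1(\mu)=\mu=\Phi_2(\mu)$, which reads exactly as the parity assertion (from $\Phi_1$) and the Schwarz-type symmetry (from $\Phi_2$).

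For $\Phi_1$ I would verify the commutation $\calC_w\circ\Phi_1=\Phi_1\circ\calC_w$ entry by entry. The relevant inputs are the symmetries \eqref{ab.sym}: $a$ and $\ba$ are even, $b$ and $\bb$ are odd, so that both ratios $b/\ba$ and $\bb/a$ are odd in $\zeta$, while $e^{\pm 2ix\zeta^2}$ is even. Combined with the transformation rule
$$(C_\Sigma^\pm f)(-\zeta) = (C_\Sigma^\pm f^\vee)(\zeta), \qquad f^\vee(u):=f(-u),$$
obtained from a direct change of variable in the Cauchy integral together with the fact (manifest from Figure \ref{fig:contours}) that $s\mapsto -s$ is an orientation-preserving automorphism of $\Sigma$ that leaves $\Omega^\pm$ invariant, the four-entry check reduces to elementary algebra.

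For $\Phi_2$ the analogous input is the conjugation symmetry (v), $\ba(\zeta)=\overline{a(\zetabar)}$ and $\bb(\zeta)=\overline{b(\zetabar)}$, together with the identity
$$\overline{(C_\Sigma^\pm f)(\zetabar)} = -(C_\Sigma^\mp f^\dagger)(\zeta), \qquad f^\dagger(u):=\overline{f(\bar u)},$$
which follows by conjugating the defining integral and using that $\zeta\mapsto\zetabar$ is orientation-preserving on $\Sigma$ but interchanges $\Omega^+\leftrightarrow\Omega^-$. On $\Sigma$ one has $\zeta^2\in\bbR$, so $\overline{e^{\mp 2ix\zeta^2}}=e^{\pm 2ix\zeta^2}$; the conjugation symmetry (v) swaps $b/\ba$ with $\bb/a$; and the sandwich $\twomat{0}{1}{1}{0}(\cdot)\twomat{0}{1}{1}{0}$ exchanges the $(1,1)\leftrightarrow(2,2)$ and $(1,2)\leftrightarrow(2,1)$ entries of $\calC_w\mu$. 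The sign flip $C^\pm\to -C^\mp$ is then exactly absorbed by these swaps, yielding $\calC_w\circ\Phi_2=\Phi_2\circ\calC_w$.

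The main obstacle is bookkeeping: one must correctly track the behavior of $C_\Sigma^\pm$ under the two involutions, noting that the reflection $\zeta\mapsto-\zeta$ preserves both the orientation of $\Sigma$ and the regions $\Omega^\pm$ (so $C^+$ stays $C^+$), while complex conjugation preserves the orientation of $\Sigma$ but swaps $\Omega^+\leftrightarrow\Omega^-$ (so $C^\pm$ becomes $C^\mp$). Once these two transformation rules for $C_\Sigma^\pm$ are pinned down, the proof of the lemma is mechanical.
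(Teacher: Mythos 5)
Your proposal is correct and is essentially the paper's argument: both rest on the invariance of the Beals--Coifman equation under the two symmetries (using that $b/\ba$ and $\bb/a$ are odd, the conjugation symmetry (v), and the behavior of $C_\Sigma^\pm$ under $\zeta\mapsto-\zeta$ and $\zeta\mapsto\zetabar$) together with uniqueness of the solution. The only cosmetic difference is that the paper first passes through the reduced diagonal system \eqref{RH.M.BC.11}--\eqref{RH.M.BC.22} before reading off the off-diagonal statements from \eqref{RH.M.BC}, whereas you apply the involutions directly to the full matrix equation; your transformation rules for the Cauchy projectors (reflection preserves $\Omega^\pm$, conjugation swaps them and introduces the sign $C^\pm\to-C^\mp$) are stated correctly.
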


\begin{proof}
Recall the Cauchy projectors $C^\pm$ preserve the subspaces of even and odd functions on $\Sigma$.
Using the fact that  $b$ and $\breve{b}$ are odd functions of $\zeta$ we first conclude that, 
if $(\mu_{11},\mu_{22})$ solve \eqref{RH.M.BC.11}--\eqref{RH.M.BC.22}, then $(\mu^\sharp_{11}(\zeta),\mu^\sharp_{22}(\zeta))=(\mu_{11}(-\zeta),\mu_{22}(-\zeta))$ also solve the same system of equations, and $(\mu_{11},\mu_{22})=(\mu_{11}^\sharp,\mu_{22}^\sharp)$ by unicity. It now follows directly from \eqref{RH.M.BC} that are odd functions of $\zeta$ as claimed.

To prove the statement about conjugates, we first observe that if $\mu_{11}^\sharp(x,\zeta)$ solves \eqref{RH.M.BC.11}, $\overline{\mu^\sharp_{11}(x,\zetabar)}$ solves \eqref{RH.M.BC.22} by straightforward computation and property (v) of $(a,\ba,b,\bb)$. We can use this relation and \eqref{RH.M.BC} to prove that $\overline{\mu_{12}}=\mu_{21}$. 
\end{proof}

Now, we show that $M_\pm$ obey the linear problem \eqref{LS.red} and obtain effective formulas for $Q$ and $P$. 

\begin{proposition}
\label{prop:recon}
The functions $M_\pm$ obey the differential equation
\eqref{LS.red} where $P$ and $Q$ are constructed from the solution $\mu$ of \eqref{RH.M.BC} as follows:
\begin{align}
\label{Q.recon}
Q(x) 	&=	-\frac{1}{2\pi} \ad \sigma  
						\left( \int_\Sigma 
									\mu(x,\zeta) 
										\left(
											w_x^+(\zeta)+ w_x^-(\zeta)
										\right) 
								\, d\zeta
						\right)\\
\label{P.recon}
P(x)	&=	Q(x) i (\ad \sigma)^{-1} Q(x) 
\end{align}
\end{proposition}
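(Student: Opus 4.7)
The plan is to follow the standard ``dressing'' argument: show that a suitably renormalized $\Psi$ has a jump across $\Sigma$ independent of $x$, so that $\Psi_x\Psi^{-1}$ extends to an entire function of the spectral variable, and then read off $Q$ and $P$ from its expansion at infinity.

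First, I would set $\Psi(x,z) := M(x,z)\, e^{-ixz^2\sigma}$ for $z\in\bbC\setminus\Sigma$. A direct manipulation using the identity $e^{-ix\zeta^2 \ad\sigma}(v) = e^{-ix\zeta^2\sigma}\, v\, e^{ix\zeta^2\sigma}$ converts the jump condition of Problem \ref{prob:RH.M} into $\Psi_+(x,\zeta)=\Psi_-(x,\zeta) v(\zeta)$, where now $v$ is $x$-independent. Differentiating in $x$ gives $(\Psi_x\Psi^{-1})_+ = (\Psi_x\Psi^{-1})_-$ on $\Sigma$, so $\Psi_x\Psi^{-1}$ extends across $\Sigma$ to an entire function of $z$.

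Next, I would insert the large-$z$ asymptotic expansion $M(x,z)=\One + M_{-1}(x)z^{-1}+M_{-2}(x)z^{-2}+O(z^{-3})$ into $\Psi_x\Psi^{-1} = M_xM^{-1}-iz^2 M\sigma M^{-1}$ and compute term by term to obtain
\begin{equation*}
\Psi_x\Psi^{-1} = -iz^2\sigma \,+\, z\cdot i\ad\sigma(M_{-1}) \,+\, A_0(x) \,+\, O(z^{-1}),
\end{equation*}
for an explicit $A_0(x)$. Because the left-hand side is entire in $z$ with polynomial growth at infinity, Liouville's theorem forces the $O(z^{-1})$ remainder to vanish identically. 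Hence $\Psi_x = \bigl(-iz^2\sigma+zQ(x)+P(x)\bigr)\Psi$ with $Q(x):=i\ad\sigma(M_{-1}(x))$ and $P(x):=A_0(x)$; undoing the gauge yields \eqref{LS.red} for $M$.

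For the formula \eqref{Q.recon}, I would use the Beals-Coifman representation from Section \ref{sec:BC-integral-eq} with $x$-dependent weights $w_x^\pm=e^{-ix\zeta^2\ad\sigma}w^\pm$, namely
\begin{equation*}
M(x,z)-\One = \int_\Sigma \frac{\mu(x,\zeta)\bigl[w_x^+(\zeta)+w_x^-(\zeta)\bigr]}{\zeta-z}\,\frac{d\zeta}{2\pi i}.
\end{equation*}
Expanding $(\zeta-z)^{-1}$ at infinity reads off $M_{-1}(x)=-\tfrac{1}{2\pi i}\int_\Sigma \mu(w_x^++w_x^-)\,d\zeta$, and applying $i\ad\sigma$ produces \eqref{Q.recon}. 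For \eqref{P.recon}, I would invoke Lemma \ref{lemma:mu.even-odd} together with the odd/even parity in $\zeta$ of $w_x^\pm$ (inherited from conditions (i), (ii) in Problem \ref{prob:RH.M}) to conclude that $M_{-1}$ is off-diagonal and $M_{-2}$ is diagonal. The $z^0$ coefficient then reduces to $A_0 = -i\bigl(\ad\sigma(M_{-1})M_{-1}-\ad\sigma(M_{-2})\bigr)$; since $M_{-2}$ is diagonal, $\ad\sigma(M_{-2})=0$, and the surviving term $-i\ad\sigma(M_{-1})M_{-1}$ equals $Q\cdot i(\ad\sigma)^{-1}Q$ by a short algebraic computation on off-diagonal matrices, yielding \eqref{P.recon}.

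The main obstacle will be justifying the polynomial growth of $\Psi_x\Psi^{-1}$ at infinity -- essentially, termwise differentiation of the Cauchy representation and control of the remainders $o(z^{-1})$. This requires smoothness of $\mu(x,\cdot)$ in $x$ and sufficient decay of the densities $\mu(x,\cdot)(w_x^++w_x^-)$ on $\Sigma$, both of which follow from the Schwartz hypothesis on $b$ together with the spectral condition (vii) (ensuring $\mu$ is bounded); the inclusion of the oscillatory factors $e^{\pm 2ix\zeta^2}$ in $w_x^\pm$ does not affect the $L^\infty$ and $L^2$ bounds used for the asymptotic expansion.
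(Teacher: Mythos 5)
Your proposal is correct in outline and reaches the right formulas (the parity claim that $M_{-1}$ is off-diagonal and $M_{-2}$ diagonal, and the algebra giving $A_0=-i\ad\sigma(M_{-1})M_{-1}=Q\,i(\ad\sigma)^{-1}Q$, both check out), but it follows the classical dressing/Liouville route rather than the paper's argument. The paper never forms the entire function $\Psi_x\Psi^{-1}$ and never invokes a full two-term asymptotic expansion with controlled remainder. Instead it differentiates the jump relation to get $\frac{d}{dx}M_+ + i\zeta^2\ad\sigma(M_+) = \bigl(\frac{d}{dx}M_- + i\zeta^2\ad\sigma(M_-)\bigr)v_x$, and then uses the representation $M_\pm-\One=C^\pm_\Sigma[f]$ with $f=\mu(w_x^++w_x^-)$ together with the exact commutator identity of Lemma \ref{lemma:commutator} to rewrite $\zeta^2 C^\pm[f]$ and $\zeta C^\pm[f]$ as Cauchy transforms of $(\dotarg)^2 f$ and $(\dotarg)f$ plus polynomial terms built from the moments $f_0,f_1$ (the $f_1$ contribution being killed by the same parity you use). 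This exhibits $W_\pm = \frac{dM_\pm}{dx}+i\zeta^2\ad\sigma(M_\pm)-\zeta Q M_\pm - QRM_\pm$ as a pair in $\dee C_\Sigma(L^2)$ satisfying the homogeneous jump $W_+=W_-v_x$, so $W_\pm\equiv 0$ by the uniqueness Lemma \ref{lemma:unique.RH.M}. What each approach buys: yours is conceptually cleaner and standard, but the step you yourself flag as the ``main obstacle'' is genuinely the hard part --- Liouville requires polynomial growth of the entire function in \emph{all} directions, including near $\Sigma$ and its self-intersection at the origin, where the Cauchy transform of an $L^2$ density is not pointwise bounded; making this rigorous requires establishing smoothness and decay of $\mu(x,\cdot)$ beyond what Problem \ref{prob:RH.M} gives directly. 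The paper's commutator-plus-uniqueness argument needs only the $L^2$ boundedness of $C^\pm_\Sigma$ and finiteness of the first few moments of $f$, which is why it survives the passage from Schwartz data to the $H^{2,2}$ setting used elsewhere in the paper. If you want to complete your version, you should either supply the uniform $O(z^{-3})$ remainder estimates for the expansion of $M$ (nontangentially and across $\Sigma$), or replace the Liouville step by exactly this $\dee C_\Sigma(L^2)$ vanishing argument.
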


\begin{remark}
\label{rem:integrand}
Note that, by Lemma \ref{lemma:mu.even-odd}, the integrand in \eqref{Q.recon} 
\begin{align}
\label{integrand}
f(x,\zeta)		&=		\mu(\zeta) \left(w_x^+(\zeta)+ w_x^-(\zeta) \right)\\
\nonumber		&=		\Twomat{-\mu_{12}(x,\zeta) \overline{s(\zetabar)} e^{2i\zeta^2 x}}
											{\mu_{11}(x,\zeta) s(\zeta) e^{-2i\zeta^2 x}}
											{-\mu_{22}(x,\zeta) \overline{s(\zetabar)} e^{2i\zeta^2 x}}
											{\mu_{21}(x,\zeta) s(\zeta) e^{-2i\zeta^2 x}}
\end{align}
(where $s(\zeta) = \bb(\zeta)/a(\zeta)$) is odd off-diagonal and even on-diagonal. Given the orientation of the contour $\Sigma$ as shown in Figure \ref{fig:contours}, the even terms integrate to zero while the odd terms persist. 
Moreover, by Lemma \ref{lemma:mu.even-odd} again, $f(x,\zeta)$ takes the form
$$ f(x,\zeta) = \Twomat{f_{11}(x,\zeta)}
							{-\overline{f_{21}(x,\zeta)}}
							{f_{21}(x,\zeta)}
							{-\overline{f_{11}(x,\zeta)}}
$$
which, together with the formula \eqref{Q.recon}, shows that
$$ 
Q(x) = \twomat{0}{q(x)}{\overline{q(x)}}{0},
$$
as required. Here
\begin{equation}
\label{q.recon.pre}
q(x) = \frac{1}{\pi} \int_\Sigma  
								e^{-2ix\zeta^2} 
								\frac{\bb(\zeta)}{a(\zeta)} 
								\mu_{11}(x,\zeta) 
						\, d\zeta.
\end{equation}
\end{remark}

\begin{proof}
Let
$$ 
v_{x}(\zeta) = e^{-ix\zeta^2 \ad \sigma } v(\zeta).
$$
Differentiating the jump relation
$$ 
M_+(x,\zeta) = M_-(x,\zeta)  v_{x}(\zeta)
$$
with respect to $x$ and using the fact that $\ad \sigma $ is a derivation, we
compute
\begin{align*}
\frac{dM_+}{dx} 
	&= 	\frac{dM_-}{dx}v_{x}
				+M_-\left(-i\zeta^2 \ad \sigma  (v_x) \right)\\
\nonumber
	&=	\frac{dM_-}{dx}v_{x} 
				+ M_-\left(
						-i\zeta^2 \ad \sigma  
								\left( (M_-)^{-1} M_+ \right)
						\right) \\
\nonumber
	&=	\frac{dM_-}{dx}v_x + i \zeta^2\ad \sigma  (M_-) v_x - i\zeta^2 \ad \sigma  (M_+)
\end{align*}
We conclude that
\begin{equation}
\label{jump.x}
\frac{d}{dx}M_+(x,\zeta) + i\zeta^2 \ad \sigma  (M_+) =
\left( \frac{d}{dx} M_-(x,\zeta) + i \zeta^2 \ad \sigma  (M_-)\right) v_x.
\end{equation}
Using the fact that
\begin{equation}
\label{Npm.Cpm}
M_\pm(x,\zeta) -  { \One} = 
	C^\pm	\left[ 
						\mu(x,\dotarg) 
						\left( w_x^-(\dotarg)+ w_x^+(\dotarg) \right) 
			 	\right]
\end{equation}
and Lemma \ref{lemma:commutator}(ii), we conclude that
\begin{align*}
i\zeta^2 \ad \sigma  (M_\pm)(x,\zeta)
	&= i\ad \sigma 
			\Bigl[ \, C^\pm\left( (\dotarg)^2 f(x,\dotarg) \right) \\
	&\quad
				- \frac{\zeta}{2\pi i}\int_\Sigma f(x,s) \, ds
					- \frac{1}{2\pi i} \int_\Sigma s f(x,s) \, ds\,
			\Bigr]
\end{align*}
where
$ f(x,\zeta) $ is given by \eqref{integrand}. It follows from Lemma 
\ref{lemma:mu.even-odd} and \eqref{integrand} that the matrix-valued 
integral $\int_\Sigma \zeta f(x,\zeta) \, d\zeta$ is a diagonal matrix. Hence,
$$\ad \sigma \left( \int_\Sigma s f(x,s) \, ds \right) =0.$$ 
Hence defining $Q(x)$ by \eqref{Q.recon}, we have
\begin{equation}
\label{Com1} 
i \zeta^2 \ad \sigma  (M_\pm)(x,\zeta) = 
		i \ad \sigma  \left[ C^\pm\left( (\dotarg)^2 f(x,\dotarg) \right) \right]  +		
		\zeta Q(x). 
\end{equation}
Since Cauchy projection is bounded on $L^2$ and $\nu_{11}-1\in L^2$ for each fixed $x$ and $s$ and $\overline{s}$ are in the Schwartz class. Through change of variable we can show that the first term defines an $L^2$ function of $\zeta$ for each $x$.
Next, observe that, by \eqref{Npm.Cpm} and Lemma \ref{lemma:commutator}(i),
\begin{equation}
\label{Com2}
\zeta Q(x) (M_\pm - \textbf{1}) = 
	Q(x) C^\pm\left[ (\dotarg) f(x,\dotarg) \right]- Q(x) R(x)
\end{equation}
where $R(x)$ is given by 
\begin{align*}
R(x)	&=	\frac{1}{2\pi i} \int_\Sigma	
						\mu(x,\zeta) (w_x^+(\zeta)+ w_x^-(\zeta)) \, d\zeta\\
		&=	i (\ad \sigma)^{-1} Q(x)
\end{align*}
and the first right-hand term of \eqref{Com2} is an $L^2$ function of $\zeta$ for each $x$.

Now define
$$ 
W_\pm = \frac{dM_\pm}{dx} + i\zeta^2 \ad \sigma  (M_\pm )
	-\zeta Q(x) M_\pm(x) - Q(x) R(x) M_\pm(x).
$$
By \eqref{Npm.Cpm}, \eqref{Com1}, \eqref{Com2}, and the identity
\begin{align*}
 -\zeta Q(x) M_\pm(x) - Q(x) R(x) M_\pm(x) 
 	&=			-\zeta Q(x) - \zeta Q(x) (M_\pm-{ \One})\\
	 &\quad		- Q(x) R(x) - Q(x) R(x) (M_\pm - { \One}),
\end{align*}
it now follows that $(W_+,W_-) \in \dee C_\Sigma(L^2)$ each fixed $x$. Since, also,
$$W_+ = W_- v_x,$$
it follows from Lemma \ref{lemma:unique.RH.M} that $W_+ = W_- = 0$. 

\end{proof}

If we construct $\nu=(\nu_{11},\nu_{12})$ from $\mu$ as in Proposition \ref{prop:mu.to.nu}, we may use the reconstruction formula \eqref{q.recon.pre}, the  change of variables formula \eqref{lee.cv}, and the odd symmetry of the integrand in \eqref{q.recon.pre} to conclude that
\begin{equation}
\label{q.recon.nu}
q(x) = -\frac{1}{\pi} \int e^{-2ix\lam} \rho(\lam) \nu_{11}(x,\lam) \, d\lam.
\end{equation}

\section{The Inverse Scattering Map}
\label{sec:inverse}

%%%%%%%%%%%%%%%%%%%%%%%%%%%%%%%%%%%%%%%%%%%
%
%		Section 5, as revised by JL and PP			12.16.2015
%
%%%%%%%%%%%%%%%%%%%%%%%%%%%%%%%%%%%%%%%%%%%

In this section we prove Theorem \ref{thm:I} by studying the RHP
\begin{align}
\label{I:RH+}
\bfN_+(x,\lam)	
	&=	\bfN_-(x,\lam) e^{-i\lam x\ad \sigma } J(\lam)\\[5pt]
\nonumber
J(\lam)			
	&=	\Twomat{1-\lam |\rho(\lam)|^2}
						{\rho(\lam)}
						{-\lam\overline{\rho(\lam)}}
						{1}\\[5pt]
\nonumber
\bfN_\pm(x,\dotarg)-\bfe_1	
	&\in \dee C_\bbR(L^2)
\end{align}
to reconstruct $q$ on $(-a,\infty)$ for any $a>0$, and the  RHP
\begin{align}
\label{I:RH-}
\bbfN_+(x,\lam)
	&=	\bbfN_-(x,\lam) e^{-i\lam x\ad \sigma }\bJ(\lam)\\[5pt]
\nonumber
\bJ(\lam)
	&=	\Twomat{1}
						{\brho(\lam)}
						{-\lam \overline{\brho(\lam)}}
						{1-\lam|\brho(\lam)|^2}\\[5pt]
\nonumber
\bbfN_\pm(x,\dotarg) - \bfe_1
	&\in \dee C_\bbR(L^2).
\end{align}
to reconstruct $q$ on $(-\infty,a)$ for any such $a$. 
Here $\rho \in \calS(\bbR)$ with $1-\lam|\rho(\lam)|^2 \geq c>0$
is given scattering data. The data $\brho$ is constructed from $\rho$ by solving the scalar RHP
\begin{align*}
\delta_+(\lam) &= \delta_-(\lam) (1-\lam |\rho(\lam)|^2 )^{-1}, \\
\delta_\pm - 1 &\in \dee C(L^2).
\end{align*}
In Lemma \ref{lemma:delta.left} we show that this problem has a unique solution
and that the function $\Delta(\lam) = \delta_+(\lam)\delta_-(\lam)$ 
satisfies $|\Delta(\lam)|=1$, $\overline{\Delta(\lam)} = (\Delta(\lam))^{-1}$. We show that 
$\Delta$ and $\Delta^{-1}$ are bounded multiplication operators on $H^{2,2}(\bbR)$. We take 
$$\brho(\lam)=\rho(\lam)/\Delta(\lam).$$

The reconstruction procedure can be reduced in each case to a scalar integral equation and reconstruction formula; see \eqref{RH.n.BC} for the Beals-Coifman integral equation for the RHP \eqref{I:RH+}, see \eqref{BC.nusharp} for its reduction to a scalar integral equation,and \eqref{q.recon.nu} for the reconstruction formula. In the notation of \eqref{BC.nusharp}, set $\nu^\sharp = \widetilde{\nu} -1$.  For
\eqref{I:RH+}, we solve
\begin{equation}
\label{RH+.nu}
\nu^\sharp	=	\nu_0^\sharp + S\nu^\sharp, 
\quad \nu_0^\sharp = S[1]
\end{equation}
for $\nu^\sharp(x,\dotarg) \in L^2(\bbR)$, and compute
\begin{equation}
\label{RH+.q}
q(x) = -\frac{1}{\pi} \int_\bbR e^{-2i\lam x} \rho(\lam) (1+\nu^\sharp(x,\lam)) \, d\lam
\end{equation}
where $S$ is the operator
$$ (Sh)(\lam)
			 = -C_\bbR^-
				\left[ C_\bbR^+ 
					\left( \rho(\dotarg) h(\dotarg) e^{-2ix(\dotarg)} \right) 		
								(\diamond) 
					\left( (\diamond) \overline{\rho(\diamond)}
							e^{2ix(\diamond)}
					\right)
				\right] (\lam).
$$		
Similarly, for \eqref{I:RH-}, we solve
\begin{equation}
\label{RH-.nu}
\bnu^\sharp	=	\bnu^\sharp_0 + \bS\bnu^\sharp, 
\quad \bnu^\sharp_0  = \bS[1]
\end{equation}
for $\bnu^\sharp(x,\dotarg) \in L^2(\bbR)$, and compute
\begin{equation}
\label{RH-.q}
\bq(x) = -\frac{1}{\pi} \int_\bbR e^{-2i\lam x} \brho(\lam) (1+\bnu^\sharp(x,\lam)) \, d\lam
\end{equation}
where $\bS$ is the operator
$$ (\bS h)(\lam)
			 = -C_\bbR^-
				\left[ C_\bbR^+ 
					\left( \brho(\dotarg) h(\dotarg) e^{-2ix(\dotarg)} \right) 		
								(\diamond) 
					\left( (\diamond) \overline{\brho(\diamond)}
							e^{2ix(\diamond)}
					\right)
				\right] (\lam).
$$		

Denote by $V$ the set of $\rho \in H^{2,2}(\bbR)$ with $1-\lam|\rho(\lam)|^2 \geq c>0$ for some strictly positive constant $c$. Note that, if $\rho \in V$, we also have $\brho \in V$. We will first prove:

\begin{proposition}
\label{prop:RH+}
For any $a>0$, the map 
\begin{equation}
\label{RH+.map}
\begin{split}
V	&\longrightarrow		H^{2,2}(-a,\infty)\\
\rho 	&\mapsto			q
\end{split}
\end{equation}
initially defined on $V \cap \calS(\bbR)$ by the RHP \eqref{I:RH+} and the formula \eqref{RH+.q}, extends to a locally Lipschitz continuous map from $V$ to $H^{2,2}(-a,\infty)$.
\end{proposition}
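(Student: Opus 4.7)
The plan is to reduce the RHP \eqref{I:RH+} to the scalar Beals--Coifman equation \eqref{RH+.nu}, analyze $\nu^\sharp$ as an $L^2(\bbR)$-valued function of $x$, and then convert the reconstruction formula \eqref{RH+.q} into an $H^{2,2}(-a,\infty)$ estimate. First I would establish the uniform operator bound $\|S(x)\|_{\calB(L^2(\bbR))} \lesssim \|\rho\|_{H^{2,2}}^2$, using $L^2$-boundedness of $C_\bbR^\pm$ together with Sobolev embedding $H^{2,2}(\bbR) \hookrightarrow L^\infty$ applied to both $\rho$ and $\lam\rho$. Combined with the injectivity of $I-S(x)$ for $\rho \in V$ (Proposition \ref{prop:RH.n.BC.unique}) and compactness of $S(x)$, this yields existence and local Lipschitz continuity of $(I-S(x))^{-1}$ in $\calB(L^2(\bbR))$, locally uniformly in $\rho$. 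The function $\nu^\sharp(x,\dotarg) = (I-S(x))^{-1}\nu_0^\sharp(x,\dotarg)$ is then Lipschitz in $\rho$ as a map from $V$ into $C^0_x L^2_\lam$, and by the second resolvent formula the dependence on $\rho$ transfers nicely.

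Next, I would split $q=q_0+q_1$, where $q_0(x)=-\tfrac{1}{\pi}\widehat{\rho}(x)$ is Lipschitz from $V$ into $H^{2,2}(\bbR)$ by Fourier-invariance of $H^{2,2}$, and
\[
q_1(x) = -\frac{1}{\pi}\int_\bbR e^{-2i\lam x}\rho(\lam)\nu^\sharp(x,\lam)\,d\lam.
\]
For the weighted bound $\|\langle x\rangle^2 q_1\|_{L^2(-a,\infty)}$, I would use $x^2 e^{-2i\lam x} = -\tfrac{1}{4}\dee_\lam^2 e^{-2i\lam x}$ and integrate by parts twice in $\lam$, producing integrals of $\rho''\nu^\sharp$, $\rho'\nu^\sharp_\lam$ and $\rho\,\nu^\sharp_{\lam\lam}$. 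The $\lam$-derivatives of $\nu^\sharp$ are obtained by differentiating \eqref{RH+.nu} in $\lam$ and inverting $I-S(x)$, in direct analogy with Lemmas \ref{lemma:eta11.lam} and \ref{lemma:eta11.lamlam} from the direct-scattering analysis; the forcing terms involve $\rho \in L^{2,2}$ and hence are controlled in $L^2(\bbR)$. For $\|q_1''\|_{L^2(-a,\infty)}$ I would use that two $x$-derivatives produce either $(-2i\lam)^2$ from the exponential, handled by $\lam^2\rho \in L^2$, or act on $\nu^\sharp$, requiring $\nu^\sharp_x,\nu^\sharp_{xx}$ in suitable mixed-norm spaces, again obtained by differentiating \eqref{RH+.nu} in $x$ and applying the resolvent.

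The main obstacle is the asymmetric $x$-behavior of the RHP \eqref{I:RH+}: the operator $S(x)$ pairs $C^+$ with $e^{-2ix\lam}$ and $C^-$ with $e^{2ix\lam}$, yielding Volterra-type decay only as $x\to+\infty$ while admitting potential growth as $x\to-\infty$. This is precisely what restricts the reconstruction to the half-line $(-a,\infty)$, and the bookkeeping of how resolvent bounds and their Lipschitz constants depend on $a$, especially after differentiations in $x$ and $\lam$, is the most delicate part of the argument. Once these weighted estimates are established with Lipschitz dependence on $\rho$ (via the bilinear/multilinear structure of $S$ and the second resolvent identity), the $H^{2,2}(-a,\infty)$ bound and local Lipschitz continuity of $\rho\mapsto q$ follow by combining the $q_0$ and $q_1$ estimates.
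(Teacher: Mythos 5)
Your overall architecture matches the paper's: reduce to the scalar Beals--Coifman equation \eqref{RH+.nu}, get a resolvent bound for $(I-S)^{-1}$ uniform in $x\ge -a$, handle the weight $x^2$ by integrating by parts twice in $\lam$, and handle $q''$ by differentiating the integral equation in $x$ and inverting $I-S$ (this last step is exactly Lemma \ref{lemma:nu.x}). Two points, however, need repair. The more serious one is your treatment of the $\lam$-derivatives. The analogy with Lemmas \ref{lemma:eta11.lam} and \ref{lemma:eta11.lamlam} is misleading: in the direct problem the integral operator acts in the same variable ($x$) in which the Jost function is a function, and $\lam$ is a parameter in the kernel, so differentiating in $\lam$ produces a term $T[(\eta_{11})_\lam]$ and forces a resolvent inversion. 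In the inverse problem the roles are reversed: $S$ acts in the spectral variable $\lam$ itself, so $\dee_\lam\bigl(S[\nu^\sharp]\bigr)$ contains \emph{no} term $S[\nu^\sharp_\lam]$ and no inversion is needed; instead one must estimate $\dee_\lam\bigl(S[h]\bigr)$ directly, which on the Fourier side is multiplication by $\xi$ of the kernel (see \eqref{S[h].lam}). The bound then rests entirely on the support conditions $\xi\le 0$, $\xi'\ge x\ge -a$, which let one transfer the factor $\xi$ onto $\widehat{\rho}$. If you instead try to force your scheme through by writing $\dee_\lam\circ S=[\dee_\lam,S]+S\circ\dee_\lam$ and inverting $I-S$, the commutator picks up the derivatives of the phases $e^{\pm2ix\lam}$, hence factors of $x$; combined with the decay $\norm{\nu^\sharp(x,\dotarg)}{L^2_\lam}\lesssim(1+|x|)^{-1}$ this gives only an $O(1)$ bound in $x$, which is not square-integrable on $(-a,\infty)$, so the step fails as described. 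Relatedly, for the second derivative only the weighted quantity $\langle\lam\rangle^{-1}\nu^\sharp_{\lam\lam}$ lands in $L^2((-a,\infty)\times\bbR)$ (compare \eqref{S[1].ll} and \eqref{nu.todo}); this weight is exactly what \eqref{RH+.xxq} tolerates, but your plan does not account for it.

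The second point is that the uniform-in-$x$ resolvent bound on $(-a,\infty)$ is asserted but not produced. Compactness of $S$ plus injectivity of $I-S$ gives invertibility for each fixed $x$, but to conclude $\sup_{x\ge -a}\norm{(I-S_{x,\rho})^{-1}}{\calB(L^2)}<\infty$ one needs a mechanism: the paper shows the Hilbert--Schmidt norm of $S$ tends to $0$ as $x\to+\infty$ uniformly on bounded sets of $H^{2,2}$ (so the resolvent is controlled by a Neumann series for large $x$), and then covers the remaining compact $x$-interval by exploiting joint continuity of $(x,\rho)\mapsto S$ on $\bbR\times X$ together with compactness of the embedding $H^{2,2}(\bbR)\hookrightarrow X$ (Lemmas \ref{lemma:S[h]} and \ref{lemma:S.res}). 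You correctly identify this as the delicate part, but some such argument must be supplied before the $L^2_x$ estimates on $\nu^\sharp$ and its derivatives, and hence the $H^{2,2}(-a,\infty)$ bound, follow.
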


We give the proof of Proposition \ref{prop:RH+} in section \ref{sec:RH.Mapping}.   
By essentially identical arguments, we may prove:

\begin{proposition}
\label{prop:RH-}
For any $a>0$, the map 
\begin{equation}
\label{RH-.map}
\begin{split}
V	&\longrightarrow		H^{2,2}(-\infty,a)\\
\rho 	&\mapsto			\bq
\end{split}
\end{equation}
initially defined on $V \cap \calS(\bbR)$ by the RHP \eqref{I:RH-} and the formula \eqref{RH+.q}, extends to a locally Lipschitz continuous map from $V$ to $H^{2,2}(-\infty,a)$.
\end{proposition}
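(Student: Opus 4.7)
The plan is to mirror the proof of Proposition \ref{prop:RH+} given in Section \ref{sec:RH.Mapping}, with appropriate modifications reflecting the different normalization of \eqref{I:RH-} and the reconstruction on left half-lines. The first step is to verify that $\brho = \rho/\Delta$ lies in $V$ whenever $\rho$ does, and depends Lipschitz-continuously on $\rho$. Since the scalar RHP defining $\delta_\pm$ yields $|\Delta(\lam)| = |\delta_+(\lam)\delta_-(\lam)| = 1$ on $\bbR$, we obtain $1 - \lam|\brho(\lam)|^2 = 1 - \lam|\rho(\lam)|^2 \geq c > 0$, so the positivity constraint defining $V$ is preserved. Lemma \ref{lemma:delta.left} supplies the multiplicative boundedness of $\Delta^{\pm 1}$ on $H^{2,2}(\bbR)$, and Lipschitz dependence of $\Delta$ on $\rho$ follows from the explicit Cauchy-integral representation of $\log \delta_\pm$ in terms of $\log(1-\lam|\rho|^2)$ together with elementary multiplicative estimates in $H^{2,2}$.

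Next, I would reduce \eqref{I:RH-} to the scalar Beals--Coifman integral equation \eqref{RH-.nu}, following the same steps as in Lemma \ref{lemma:RH.n.BC.Fred} and Proposition \ref{prop:RH.n.BC.unique}. The jump matrix $\bJ$ admits the factorization $\bJ = (\One - \bw^-)^{-1}(\One + \bw^+)$ with $\bw^- = \lowermat{\lam\overline{\brho}}$ and $\bw^+ = \uppermat{\brho}$, which swaps the upper- and lower-triangular placements relative to the factorization of $J$ used for \eqref{I:RH+}. Correspondingly, $\bS$ differs from $S$ by interchanging the roles of the outer and inner Cauchy projectors (and by replacing $\rho$ with $\brho$), so the phase oscillation $e^{\mp 2ix\lam}$ now produces decay of $\bnu_0^\sharp$ and of the iterated kernel of $\bS$ as $x\to -\infty$ rather than as $x\to +\infty$; this is the mechanism that yields an $H^{2,2}(-\infty,a)$ reconstruction of $\bq$ instead of an $H^{2,2}(-a,\infty)$ reconstruction.

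With $\bS$ and $\bnu_0^\sharp$ in hand, each operator-theoretic estimate carried out for $S$ and $\nu_0^\sharp$ in Section \ref{sec:RH.Mapping}---compactness and existence of the resolvent, Lipschitz dependence on the data in $H^{2,2}(\bbR)$, and the $L^2$ and weighted $L^2$ bounds on $x$- and $\lam$-derivatives of the solution---carries over line-for-line with $\rho$ replaced by $\brho$, since only the $L^\infty$- and $H^{2,2}$-norms of the data and the standard $L^p$-boundedness of $C^\pm_\bbR$ enter the estimates, not the sign of the projector. The Lipschitz continuity $\rho \mapsto \brho$ established in the first paragraph then transfers Lipschitz continuity of $\brho \mapsto \bnu^\sharp$ to Lipschitz continuity of $\rho \mapsto \bnu^\sharp$. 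Inserting this into the reconstruction formula \eqref{RH-.q} and repeating the final argument that controls the $H^{2,2}$-norm of the reconstructed potential on a half-line in terms of $\bnu^\sharp$ and its derivatives yields the stated bound.

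The main obstacle, such as it is, will be bookkeeping: carefully tracking the sign flips in exponents, the swap of $C^+_\bbR$ and $C^-_\bbR$, and the reversal of the decay half-line throughout the chain of estimates, and verifying that each of these transpositions preserves every inequality used in the analysis of $S$. Once this is checked the proof becomes a mechanical translation of that of Proposition \ref{prop:RH+}. The only genuinely new analytic ingredient is the Lipschitz dependence $\rho \mapsto \brho$, which reduces to controlling $\Delta^{\pm 1}$ as multipliers on $H^{2,2}(\bbR)$.
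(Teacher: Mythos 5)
Your proposal is correct and takes essentially the same route as the paper, which simply asserts that Proposition \ref{prop:RH-} follows ``by essentially identical arguments'' to Proposition \ref{prop:RH+} and omits the details. Your outline supplies exactly the modifications that make this work: the swapped triangular factorization of $\bJ$ (hence the interchange of the inner and outer Cauchy projectors in $\bS$, which reverses the decay direction of $\bnu_0^\sharp$ and of the kernel estimates to $x\to-\infty$), and the Lipschitz dependence of $\rho\mapsto\brho=\rho/\Delta$, which rests on Lemma \ref{lemma:delta.left} and the identity $1-\lam|\brho|^2=1-\lam|\rho|^2$ coming from $|\Delta|=1$.
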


We omit the proof.

Finally, we will prove:

\begin{proposition}
\label{prop:RH.map}
For any $a>0$, we have $q(x) = \bq(x)$ for $x \in (-a,a)$, so the maps \eqref{RH+.map}--\eqref{RH-.map} together define a locally Lipschitz mapping $\calI$
\begin{align*}
V 		&\longrightarrow		H^{2,2}(\bbR)\\
\rho	&\mapsto 	q
\end{align*}
with the property that $\calI \left( \calR (q) \right) = q$ on $U$, an open neighborhood of $0$ in $H^{2,2}(\bbR)$,
and $\calR (\calI(\rho)) = \rho$ on $V$. 
\end{proposition}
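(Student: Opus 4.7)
The plan is to prove three things in order: (a) the reconstructions $q$ and $\bq$ coming from the two RHPs \eqref{I:RH+} and \eqref{I:RH-} agree (so that for each $a>0$ they coincide on $(-a,a)$); (b) the resulting global map $\calI \colon V \to H^{2,2}(\bbR)$ is locally Lipschitz; and (c) $\calI \circ \calR$ is the identity on $U$ and $\calR \circ \calI$ is the identity on $V$.

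For (a), I would relate the two RHPs via the scalar $\delta$-function of Lemma \ref{lemma:delta.left}, mimicking the conjugation device familiar from Deift--Zhou. For $\rho \in V \cap \calS(\bbR)$, set $\widetilde{\bbfN}(x,z) := \bbfN(x,z)\,\delta(z)^{\sigma}$, and verify by a direct algebraic computation using the triangular factorizations
$$
J = \twomat{1}{\rho}{0}{1}\twomat{1}{0}{-\lam \overline{\rho}}{1-\lam|\rho|^2}, \qquad \bJ = \twomat{1-\lam|\brho|^2}{\brho}{0}{1}\twomat{1}{0}{-\lam\overline{\brho}}{1},
$$
together with the scalar jump of $\delta$ and the identity $\brho = \rho/\Delta$, that $\widetilde{\bbfN}$ satisfies the jump of \eqref{I:RH+} with datum $\rho$. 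Since $\delta(z)\to 1$ at infinity and $\delta-1,\delta^{-1}-1 \in \dee C_\bbR(L^2)$, one has $\widetilde{\bbfN}_\pm - \bfe_1 \in \dee C_\bbR(L^2)$, so the uniqueness statement of Lemma \ref{lemma:unique.RH.n} (row-vector form) forces $\widetilde{\bbfN} = \bfN$. Comparing the $(1,2)$-components and inserting into the reconstruction formula \eqref{q.recon.nu} gives $q(x)=\bq(x)$ for every $x$ when $\rho \in V \cap \calS(\bbR)$. Density of $\calS(\bbR)$ in $V$ and the Lipschitz continuity from Propositions \ref{prop:RH+}--\ref{prop:RH-} then propagate the identity to all $\rho \in V$.

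Once (a) is in place, (b) follows because the two one-sided Lipschitz maps glue to a single map $V \to H^{2,2}(\bbR)$; for any fixed $a>0$, split $\bbR = (-\infty,0] \cup [0,\infty)$ and take the sum of the one-sided Lipschitz constants supplied by Propositions \ref{prop:RH+} and \ref{prop:RH-}. For (c), I would work first on the Schwartz class and extend by continuity. Given $q \in U \cap \calS(\bbR)$ with $\rho = \calR(q) \in V$ (Theorem \ref{thm:R}), Proposition \ref{prop:recon} together with \eqref{q.recon.nu} shows that $\calI(\rho)$ returns exactly $q$, so $\calI \circ \calR = \mathrm{id}$ on $U \cap \calS(\bbR)$. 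Conversely, for $\rho \in V \cap \calS(\bbR)$ with $q = \calI(\rho)$, Proposition \ref{prop:recon} yields that the Beals--Coifman solution satisfies the spectral equation \eqref{LS.red} with this $q$, and the normalizations at $x \to \pm\infty$ combined with uniqueness of Jost solutions identify the scattering data of $q$ with $\rho$, giving $\calR \circ \calI = \mathrm{id}$ on $V \cap \calS(\bbR)$. Density of $\calS(\bbR)$ and the Lipschitz continuity of $\calR$ and $\calI$ extend both identities to $U$ and $V$.

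The main obstacle is the algebraic step in (a): one must carefully track the interaction between the conjugation by $\delta^{\sigma}$, the $e^{-i\lam x \ad \sigma}$ factor in the jump, and the triangular factorizations of $J$ and $\bJ$, and then check that the transformed row vector retains the $\dee C_\bbR(L^2)$ boundary behavior required for uniqueness. Once this bookkeeping is done, the remaining steps reduce to density arguments and to invocations of Proposition \ref{prop:recon} and Lemma \ref{lemma:unique.RH.n}.
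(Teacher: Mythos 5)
Your overall strategy coincides with the paper's: part (a) is exactly the $\delta^{\sigma}$-conjugation argument of Remark \ref{rem:delta.to.delta.tilde} and Lemma \ref{lemma:q.all} (the paper conjugates the right solution to produce the left one rather than the reverse, which is immaterial), and part (c) is the content of Lemma \ref{lemma:invert}. One small repair is needed in (a): after the identification $\widetilde{\bbfN}=\bfN$ the $(1,2)$-components are \emph{not} equal --- one has $\bN_{12}=N_{12}\,\delta^{-1}$ --- so you cannot literally ``compare the $(1,2)$-components and insert into \eqref{q.recon.nu}''. What saves the argument is $\delta(z)=1+\bigO{z^{-1}}$, so that $zN_{12}$ and $z\bN_{12}$ have the same limit as $z\to\infty$; the paper therefore passes through the equivalence of the integral formula with the asymptotic reconstruction $q(x)=\lim_{z\to\infty}zN_{12}(x,z)$ rather than using \eqref{q.recon.nu} directly.

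More seriously, your proof of $\calR(\calI(\rho))=\rho$ hides the hard step inside the phrase ``the normalizations at $x\to\pm\infty$ \dots identify the scattering data''. Solving the Beals--Coifman equation and invoking Proposition \ref{prop:recon} only shows that $M(x,z)$ satisfies \eqref{LS.red} with potential $q=\calI(\rho)$; it does \emph{not} automatically give that $M(x,z)\to\One$ as $x\to+\infty$ and that $M$ stays bounded as $x\to-\infty$, which is what you need in order to recognize $M$ as the right Beals--Coifman solution (Proposition \ref{prop:BC}) and hence to read off its scattering data from the jump matrix. The paper's Lemma \ref{lemma:invert} establishes these facts by proving the decay estimate $\norm{\mu-\One}{L^2}\leq C(1+|x|)^{-1}$ for $x>0$, applying the Riemann--Lebesgue lemma to get $M\to\One$ as $x\to+\infty$, running the symmetric argument for the left solution $\bM$ as $x\to-\infty$, and then transferring boundedness to $M$ as $x\to-\infty$ via $\bM=M\,\mathrm{diag}(\tdelta,\tdelta^{-1})$ and the two-sided bounds on $\delta$ from Lemma \ref{lemma:delta.left}(i). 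Without this analytic input the identification of the scattering data of $\calI(\rho)$ with $\rho$ is unsupported. Relatedly, the paper deduces $\calI\circ\calR=\mathrm{id}$ on $U$ from $\calR\circ\calI=\mathrm{id}$ on $V$ together with a separate proof that $\calR$ is one-to-one; your direct route via uniqueness of the RHP is a viable alternative, but it too requires verifying that the Beals--Coifman solutions built from the Jost functions of $q$ satisfy the $\dee C_\Sigma(L^2)$ condition of Problem \ref{prob:RH.M} before uniqueness can be invoked.
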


\begin{proof}[Proof of Theorem \ref{thm:I}]
By Proposition \ref{prop:RH.map}, $ \calR \circ \calI$ extends to the identity map on $V$, and $\calR$ is one-to-one from $U$ to $V$. Theorem \ref{thm:I} now follows.
\end{proof}

\subsection{Mapping Properties}
\label{sec:RH.Mapping}

In this subsection we prove Proposition \ref{prop:RH+}. For $\rho \in \calS(\bbR)$ it is easy to compute from \eqref{RH+.q} that
\begin{align}
\label{RH+.qxx}
q_{xx}	
	&= \frac{1}{\pi} 
				\int_\bbR e^{-2i\lam x} 
						4\lam^2 \rho (1+\nu^\sharp) 
			-\frac{1}{\pi}
				\int_\bbR e^{-2i\lam x}\rho(\lam) 
						\left(-4i\lam  \nu^\sharp_x
							  +\nu^\sharp_{xx}
						\right)\\[5pt]
\label{RH+.xxq}
x^2 q(x)
	&=	-\frac{1}{4 \pi}
				\int_\bbR e^{-2i\lam x} 
					\rho''(1+\nu^\sharp) 
			-\frac{1}{4\pi} 
			\int_\bbR e^{-2i\lam x}
					\left( 
						2\rho' \nu^\sharp_\lam
						+ \rho\nu^\sharp_{\lam \lam}
					\right)
\end{align}
where for the second identity we used $(-2i)^{-1} (d/d\lam)e^{-2i\lam x}  = xe^{-2i\lam x}$ and integrated by parts. From these formulas and the mapping properties of the Fourier transform,
we can easily obtain the following sufficient conditions for Lipschitz continuity of the map $\rho \rarr q$.

\begin{lemma}
\label{lemma:RH.mapping}
Suppose that the functions
\smallskip
\begin{equation}
\label{nu.todo}
\nu^\sharp, 
\quad \nu^\sharp_x, 
\quad \nu^\sharp_{xx}, 
\quad \nu^\sharp_\lam,
\quad \langle \lam \rangle^{-1} \nu^\sharp_{\lam \lam}
\end{equation}
\medskip
all belong to $L^2((-a,\infty) \times \bbR)$ as functions of $(x,\lam)$ for any $a>0$. Then $q$ defined by \eqref{RH+.q} belongs to $H^{2,2}(-a,\infty)$ for any such $a$. If, moreover, the functions \eqref{nu.todo}are Lipschitz continuous as functions of $\rho \in V$, then the map $\rho \mapsto q$ is locally Lipschitz continuous from $V$ into $H^{2,2}(-a,\infty)$. 
\end{lemma}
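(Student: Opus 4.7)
The plan is to obtain the three required $L^2$-bounds on $q$, $q_{xx}$, and $x^2 q$ by inserting the identities \eqref{RH+.q}, \eqref{RH+.qxx}, \eqref{RH+.xxq} and then splitting each integral into a $\nu^\sharp$-independent piece plus a $\nu^\sharp$-dependent piece. The first piece is always a Fourier transform in $\lam$ of an $L^2$ function of $\lam$ alone, so its $L^2_x(\bbR)$ norm is controlled by Plancherel. The second piece is an integral in $\lam$ of $e^{-2i\lam x}$ against a product $g(\lam)h(x,\lam)$ with $g(\cdot) \in L^2_\lam$ and $h(x,\cdot) \in L^2_\lam$ for each $x$; Cauchy--Schwarz in $\lam$ followed by integration over $x \in (-a,\infty)$ then bounds its $L^2_x((-a,\infty))$-norm by $\|g\|_{L^2_\lam}\,\|h\|_{L^2((-a,\infty)\times\bbR)}$.

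Concretely, for $\|q\|_{L^2((-a,\infty))}$, I will split $\rho(1+\nu^\sharp) = \rho + \rho\nu^\sharp$ and use $\rho\in L^2_\lam$. For $\|q_{xx}\|_{L^2((-a,\infty))}$, the identity \eqref{RH+.qxx} produces four terms in which the relevant weights are $\lam^2\rho,\lam^2\rho,\lam\rho,\rho$, all in $L^2_\lam$ since $\rho\in H^{2,2}(\bbR) \hookrightarrow L^{2,2}(\bbR)$, while the partners are $1,\nu^\sharp,\nu^\sharp_x,\nu^\sharp_{xx}$ respectively, whose $L^2((-a,\infty)\times\bbR)$-norms are hypothesized to be finite. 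For $\|x^2 q\|_{L^2((-a,\infty))}$, formula \eqref{RH+.xxq} produces terms with weights $\rho'',\rho'',\rho',\rho$. The only delicate term is the last one, $\int e^{-2i\lam x}\rho\,\nu^\sharp_{\lam\lam}\,d\lam$: I will rewrite it as
\[
\int e^{-2i\lam x}\bigl[\langle\lam\rangle\rho(\lam)\bigr]\bigl[\langle\lam\rangle^{-1}\nu^\sharp_{\lam\lam}(x,\lam)\bigr]\,d\lam,
\]
and invoke Cauchy--Schwarz with $\langle\lam\rangle\rho \in L^2_\lam$ (which holds since $\rho\in L^{2,2}$) against $\langle\lam\rangle^{-1}\nu^\sharp_{\lam\lam}\in L^2((-a,\infty)\times\bbR)$, exactly where hypothesis~\eqref{nu.todo} is used. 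This handles the strongest singularity in $\lam$ in the $\nu^\sharp_{\lam\lam}$ term and is the single nonroutine step of the argument.

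For the Lipschitz statement, I will apply the same decomposition to the difference $q_{\rho_1}-q_{\rho_2}$. Each inhomogeneous term becomes a sum of terms of the form $[\text{something depending only on }\rho_1-\rho_2\text{ or }\nu^\sharp_{\rho_1}-\nu^\sharp_{\rho_2}] \times [\text{bounded factor}]$, estimated by the identical Plancherel/Cauchy--Schwarz bookkeeping. Local Lipschitz continuity of $\rho\mapsto\rho,\rho',\rho''$ (linear) combined with the assumed local Lipschitz continuity of $\rho\mapsto \nu^\sharp,\nu^\sharp_x,\nu^\sharp_{xx},\nu^\sharp_\lam,\langle\lam\rangle^{-1}\nu^\sharp_{\lam\lam}$ from $V$ to $L^2((-a,\infty)\times\bbR)$ then yields the local Lipschitz continuity of $\rho\mapsto q$ from $V$ to $H^{2,2}(-a,\infty)$. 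No obstruction beyond the weight bookkeeping for the $\rho\,\nu^\sharp_{\lam\lam}$ term is expected; all the real work is deferred to producing the estimates \eqref{nu.todo} for the solution of \eqref{RH+.nu}, which is the task of the subsequent subsections.
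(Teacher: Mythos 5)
Your proposal is correct and follows essentially the same route as the paper: the linear terms are handled by Plancherel, the $\nu^\sharp$-dependent terms by Cauchy--Schwarz in $\lam$ followed by integration in $x$, with the weight $\langle\lam\rangle$ shifted onto $\rho$ in the $\rho\,\nu^\sharp_{\lam\lam}$ term, and Lipschitz continuity deduced from bilinearity. No substantive difference from the paper's argument.
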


\begin{proof}
In the formulas \eqref{RH+.q}, \eqref{RH+.qxx}, \eqref{RH+.xxq}, there is a linear term which has the correct mapping properties by Fourier theory and one or more terms involving $\nu^\sharp$ or its derivatives which integrate to $L^2$ functions of $x$ by the mapping properties of $\nu^\sharp$ and H\"{o}lder's inequality. The Lipschitz continuity follows from the fact that these expressions are bilinear in $\rho$ and $\nu$ and their derivatives.
\end{proof}

To prove that $\nu^\sharp$ and its derivatives have the desired properties, we study the integral equation \eqref{RH+.nu} in detail. First, we show that the inhomogeneous term $\nu_0^\sharp$ has the required properties
(Lemma \ref{lemma:S[1]}). Second, we study mapping properties of $S$, $\dee S/\dee x$, and $\dee^2 S/\dee x^2$ (Lemma \ref{lemma:S[h]}). Third, we prove the existence of the resolvent $(I-S)^{-1}$ (Lemma \ref{lemma:S.res}). Fourth, we obtain the required estimates on $\nu^\sharp$, $\nu_x^\sharp$, and $\nu^\sharp_{xx}$ using the integral equation and differentiation of the parameter (Lemma \ref{lemma:nu.x}). Finally, we obtain the needed estimates on $\nu^\sharp_\lam$ and $\nu^\sharp_{\lam\lam}$ by direct differentiation (Lemma \ref{lemma:nu.lam}). 

We'll make repeated use of the explicit formulas
\begin{align}
\label{S.h}
S[h](x,\lam)	&= 
             -\frac{1}{2i\pi^2}
             		\int_{0}^{-\infty}
             				e^{2i\lambda\xi}
             					\int_{x}^{\infty}
             							(\widehat{{\rho}} *\widehat{h})(\xi') \,
             						{	\widehat{\overline{{\rho}}}~'(\xi-\xi')}
             				\, d\xi'\, d\xi, \quad h \in L^2 \\[5pt]
\label{S.1}
S[1](x,\lam)	 &=
				-\frac{1}{2i\pi^2}
					\int_{0}^{-\infty}
							e^{2i\lambda\xi}
								\int_{x}^{\infty}\widehat{{\rho}}(\xi') \,
								{ \widehat{\overline{{\rho}}}~'(\xi-\xi')}
						\, d\xi' \, d\xi.	
\end{align}
which follow from elementary properties of the Fourier transform
and the fact that $C^\pm$ act in Fourier representation as multiplication by the characteristic functions of $\bbR^\pm$.

\textbf{(1) Estimates on $\nu_0^\sharp$.} Using \eqref{S.1} and Fourier theory, we obtain basic estimates on $S[1]\equiv \nu_0^\sharp$ and its derivatives.

\begin{lemma}
\label{lemma:S[1]}
Suppose that $\rho \in H^{2,2}(\bbR)$,  $a >0$, and $x \geq -a$. 
{ Let $\nu_0^\sharp = S[1]$.} Then:
\begin{subequations}
\label{S[1].est}
\begin{align}
\label{S[1]}
\norm{{\nu_0^\sharp}(x,\dotarg)}{L^2_\lam}	
	&\lesssim (1+|x|)^{-1} \norm{\rho}{H^{2,0}} \norm{\rho}{L^2} \\
\label{S[1].x}
\norm{(\nu_0^\sharp)_x}{L^2(\bbR \times \bbR)}
	&\leq		\norm{\rho}{L^{2,1}} \norm{\rho}{L^2}\\
\label{S[1].xx}
\norm{(\nu_0^\sharp)_{xx}}{L^2((-a,\infty)\times \bbR)}
	&\leq	\norm{\rho}{L^{2,2}} \norm{\rho}{L^{2,1}}\\
\label{S[1].l}
\norm{(\nu_0^\sharp)_{\lam}(x,\dotarg)}{L^2_\lam}
	&\lesssim	(1+|x|)^{-1}\norm{\rho}{H^2}\norm{\rho}{H^{2,2}}\\
\label{S[1].ll}
\norm{ \langle \dotarg \rangle^{-1} (\nu_0^\sharp)_{\lam\lam}(x,\dotarg)}
		{L^2_\lam}
	&\lesssim	(1+|x|)^{-1} \norm{\rho}{H^{2,2}}^2
\end{align}
\end{subequations}
where the implied constants depend only on $a$. 
\end{lemma}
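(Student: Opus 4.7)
The proof rests on Plancherel's identity in $\lambda$ combined with the explicit Fourier representation \eqref{S.1}, which writes $\nu_0^\sharp(x,\lambda)$ as an outer integral over $\xi\in(-\infty,0)$ of the truncated convolution $g(x,\xi) = \int_x^\infty \widehat{\rho}(\xi')\widehat{\overline{\rho}}'(\xi-\xi')\,d\xi'$. A complementary operator form
\[
\nu_0^\sharp(x,\lambda) = -\frac{1}{\pi}\,C^-_\bbR\bigl[\lambda\overline{\rho(\dotarg)}\,R(x,\dotarg)\bigr](\lambda),\qquad R(x,\lambda) = \int_x^\infty e^{2i\lambda\zeta}\widehat{\rho}(\zeta)\,d\zeta,
\]
arises by noting the cancellation $e^{2ix\lambda}\cdot e^{-2i\lambda x}=1$ hidden inside $S[1] = -C^-_\bbR[\lambda\overline{\rho}e^{2ix\lambda}\cdot C^+_\bbR[\rho\,e^{-2ix\dotarg}]]$, and is better adapted to $x$-differentiation because $R_x(x,\lambda) = -e^{2i\lambda x}\widehat{\rho}(x)$ shifts the $x$-dependence onto $\widehat{\rho}$ evaluated at $x$. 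The double-integral representation is convenient for the three $\lambda$-estimates; the $R$-representation handles the two $x$-derivative estimates.

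For the baseline bound \eqref{S[1]}, Plancherel reduces matters to $\|g(x,\dotarg)\|_{L^2(-\infty,0)}$. One integration by parts in $\xi'$ yields the boundary term $\widehat{\rho}(x)\widehat{\overline{\rho}}(\xi-x)$ plus the truncated convolution of $\widehat{\rho}'\chi_{[x,\infty)}$ with $\widehat{\overline{\rho}}$. Young's inequality $L^1*L^2\hookrightarrow L^2$ controls the convolution by $\|\widehat{\rho}'\chi_{[x,\infty)}\|_{L^1}\|\rho\|_{L^2}$; Cauchy--Schwarz with the weight $\zeta^{-2}$ on the half-line $[x,\infty)$ delivers the decay factor for $x\geq 1$, and the Hardy-type identity $|\widehat{\rho}(x)|^2 \leq 2\|\widehat{\rho}\|_{L^2(x,\infty)}\|\widehat{\rho}'\|_{L^2(x,\infty)}$ supplies the $(1+|x|)^{-1}$ decay for the boundary term. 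For the $\lambda$-derivative bounds \eqref{S[1].l} and \eqref{S[1].ll}, differentiating under the outer integral in \eqref{S.1} brings down factors of $\xi$ and $\xi^2$; Plancherel in $\lambda$ then reduces matters to $\|\xi^k g(x,\dotarg)\|_{L^2(-\infty,0)}$, and the algebraic identity $\xi = \xi' + (\xi-\xi')$ redistributes each $\xi$-weight as either a $\zeta$-weight on $\widehat{\rho}(\xi')$ (equivalent to a $\lambda$-derivative of $\rho$) or an additional weight on $\widehat{\overline{\rho}}'$ (equivalent to an $x$-moment of $\overline{\rho}$). The $\langle\lambda\rangle^{-1}$ factor in \eqref{S[1].ll} absorbs the non-$L^2$ boundary contribution at $\xi'=x$ produced by the two redistributions required for $\xi^2$.

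For the $x$-derivative bounds \eqref{S[1].x} and \eqref{S[1].xx}, I differentiate the $R$-representation. The first derivative gives $(\nu_0^\sharp)_x = \pi^{-1}\widehat{\rho}(x)\,C^-_\bbR[\lambda\overline{\rho(\dotarg)}\,e^{2i\lambda x}](\lambda)$, and the second yields analogous terms containing either $\lambda^2\overline{\rho}$ weighted by $\widehat{\rho}(x)$ or $\lambda\overline{\rho}$ weighted by $\widehat{\rho}'(x)$. Boundedness of $C^-_\bbR$ on $L^2_\lambda$, Plancherel in $x$ (identifying $\|\widehat{\rho}\|_{L^2_x} = \|\rho\|_{L^2}$ and $\|\widehat{\rho}'\|_{L^2_x} = 2\|\rho\|_{L^{2,1}}$), and the interpolation $\|\rho\|_{L^{2,1}}^2 \leq \|\rho\|_{L^2}\|\rho\|_{L^{2,2}}$, together deliver both bounds.

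The principal technical difficulty lies in \eqref{S[1]}, \eqref{S[1].l}, and \eqref{S[1].ll}: the $(1+|x|)^{-1}$ decay must coexist with an asymmetric norm allocation in which one copy of $\rho$ absorbs all the weights and derivatives while the other contributes only its bare $L^2$ norm. Both the decay and the Sobolev regularity must flow from the single copy of $\rho$ that is localized to $[x,\infty)$ by the truncation $\chi_{[x,\infty)}$, and every integration by parts or Plancherel step must be arranged so as not to spill weights onto the second factor. Managing this split cleanly is the delicate bookkeeping at the heart of the proof.
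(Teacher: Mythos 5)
Your handling of the two $x$-derivative bounds \eqref{S[1].x}--\eqref{S[1].xx} is correct and, after Plancherel, coincides with the paper's (which differentiates the Fourier representation \eqref{S.1.hat} directly rather than your $R$-representation); the Agmon-type bound for the boundary term is also fine. The gaps are in \eqref{S[1]} and \eqref{S[1].ll}. For \eqref{S[1]}, your integration by parts in $\xi'$ is not only unnecessary but fatal: it leaves you needing $(1+|x|)^{-1}$ decay from $\norm{\widehat{\rho}~'\chi_{[x,\infty)}}{L^1}$, and Cauchy--Schwarz against $\zeta^{-s}$ gives decay $x^{(1-2s)/2}$ at the price of $\norm{\zeta^{s}\widehat{\rho}~'}{L^2}$; the choice $s=1$ yields only $x^{-1/2}$, while the full rate $x^{-1}$ forces $s=3/2$, i.e.\ $\lambda\rho\in H^{3/2}$, which is \emph{not} controlled by $\norm{\rho}{H^{2,2}}$ (the budget is one weight per derivative). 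The rate $x^{-1/2}$ is insufficient downstream, since Lemma \ref{lemma:nu.x} needs $\nu_0^\sharp\in L^2((-a,\infty)\times\bbR)$. The paper's mechanism is different and avoids any integration by parts here: on the domain one has $\xi\le 0$ and $\xi'\ge x\ge 0$, so \emph{both} $1+|x|\le 1+|\xi'|$ and $1+|x|\le 1+|\xi-\xi'|$ hold pointwise, and the decay is bought by attaching a single weight to either convolution factor before applying Young. Your closing principle --- that the decay must come from the one copy of $\rho$ truncated to $[x,\infty)$ and that weights must not spill onto the other factor --- is the misdiagnosis: both arguments exceed $x$ in modulus on the domain, and the paper deliberately distributes weights across both factors (one on each in \eqref{S[1].l}).

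For \eqref{S[1].ll} the redistribution $\xi=\xi'+(\xi-\xi')$ does not close the estimate: the term $(\xi-\xi')^2\,\widehat{\overline{\rho}}~'(\xi-\xi')$ corresponds to $\partial_\lambda^2(\lambda\overline{\rho})=\lambda\overline{\rho}\,''+2\overline{\rho}\,'$, and $\lambda\overline{\rho}\,''$ is not in $L^2$ for general $\rho\in H^{2,2}$. The $\langle\lambda\rangle^{-1}$ prefactor is a multiplier in $\lambda$ and cannot ``absorb a boundary contribution at $\xi'=x$''; its actual role is tied to a step your proposal omits. As in \eqref{nu0.lam.2}, one writes $\widehat{\overline{\rho}}~'(\xi-\xi')=\tfrac{d}{d\xi}\widehat{\overline{\rho}}(\xi-\xi')$ and integrates by parts in the \emph{outer} variable $\xi$ against $e^{2i\lambda\xi}\xi^2$; this trades one power of $\xi$ for an explicit factor of $\lambda$ (plus a harmless $2\xi$ term) and leaves the quadratic weight sitting on $\widehat{\overline{\rho}}$ with no prime, which costs only $\rho''\in L^2$. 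The factor of $\lambda$ so produced is precisely what $\langle\lambda\rangle^{-1}$ is there to cancel. Without this step \eqref{S[1].ll} fails in $H^{2,2}$, and \eqref{S[1].l} only closes once the decay mechanism of the previous paragraph replaces yours.
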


\begin{remark}
\label{rem:S[1]}
Since ${\nu_0^\sharp}$ and its derivatives are bilinear in $\rho$, the
estimates used to prove Lemma \ref{lemma:S[1]} can easily be adapted to show that ${\nu_0^\sharp}$ and its derivatives are Lipschitz continuous as a function of $\rho \in H^{2,2}(\bbR)$. 
\end{remark}

\begin{proof}
It follows from \eqref{S.1} that, Fourier transforming in $\lam$, 
\begin{equation}
\label{S.1.hat}
\widehat{{\nu_0^\sharp}}(x,\xi)
	 = -\chi_-\frac{1}{2\pi^2 i} 
	 			\int_x^\infty
	 				{ \widehat{\overline{{\rho}}}~'(\xi-\xi')}\, 
	 				\widehat{\rho(\xi')}
	 				\, d\xi'.
\end{equation}
Note that, in \eqref{S.1.hat},  $\xi \leq 0$ and $\xi' \geq x \geq -a$.  We'll sketch the proofs assuming $a=0$ and 
{ when $a>0$, we simply write
$$\int_x^\infty=\int_x^0+\int_0^\infty$$
and use the fact that $|\xi-a|>|\xi|$. }The { same conclusions follow.
}

Using \eqref{S.1.hat} and the inequality 
$(1+|x|) \leq (1+|\xi-\xi'|)$, we easily recover \eqref{S[1]} using
$\Norm{\widehat{\rho'}}{L^1} \leq C \Norm{\widehat{\rho'}}{L^{2,1}}$, Young's inequality,  and Plancherel's theorem. Estimates \eqref{S[1].x} and \eqref{S[1].xx} follow by differentiating \eqref{S.1.hat}.

To prove \eqref{S[1].l} it suffices to estimate 
$\Norm{\widehat{{\nu_0^\sharp}}}{L^{2,1}}$. Since 
$0 \leq x \leq |\xi'|$ and $|\xi| \leq |\xi'| \leq |\xi-\xi'|$
we may estimate  
$$
(1+|\xi|) (1+|x|) \left| \widehat{{\nu_{0\lambda}^\sharp}}(x,\xi) \right|
\leq 
\frac{1}{2\pi^2} 
	\int_x^\infty 
			(1+|\xi-\xi'|) 	\left| { \widehat{\overline{{\rho}}}~'(\xi-\xi')} \right|
			(1+|\xi'|) 		\left| \widehat{\rho}(\xi') \right| 
	\, d\xi' .
$$
and use Young's inequality together with the estimate
$ \norm{x f'(x)}{L^2} \leq \norm{f}{H^{2,2}}$. 

To prove \eqref{S[1].ll}, we compute
\begin{align}
\label{nu0.lam.2}
(\nu_0^\sharp)_{\lam \lam}
	&=	\frac{2i}{\pi^2}
			\int_{-\infty}^0 e^{2i\lam \xi}
			\xi^2
			\int_{x}^{\infty} \widehat{{\rho}}(\xi')
				{{ \widehat{\overline{\rho}}}~'(\xi-\xi')}
		\, d\xi' \, d\xi
\\
\nonumber
       &=-\frac{2i}{\pi^2}
       		\int_{-\infty}^0 e^{2i\lam \xi}
       				\xi^2
       					\frac{d}{d\xi}
       						\left(
       							\int_{x}^{\infty}\widehat{{\rho}}(\xi')
       							{{ \widehat{\overline{\rho}}}(\xi-\xi')}
       						\right)
       				\, d\xi' \, d\xi
\\
\nonumber
	&=\frac{2i}{\pi^2}
			\int_{-\infty}^0 e^{2i\lam \xi}
				\left( 2i\lambda\xi^2 +
				2\xi \right)
					\int_{x}^{\infty}
						\widehat{{\rho}}(\xi')
						{{ \widehat{\overline{\rho}}}(\xi-\xi')}
	\, d\xi' \, d\xi
\end{align}
Following the pattern of the previous arguments, it is easy to see that 
$$
\left|
	\langle\lambda\rangle^{-1}
	\frac{\dee^2 {\nu_0^\sharp}}{\dee\lambda^2}
\right|_{L^2_{\lambda}}
\lesssim(1+|x|)^{-1}
\norm{\rho}{H^{2,2}}^2
$$ 
\end{proof}

\textbf{(2) Mapping Properties of $S$, $S_x$, and $S_{xx}$.}
Next, we study $S$, $S_x$, and $S_{xx}$ as bounded operators on $L^2(\bbR)$ using \eqref{S.h}. We will need the Banach space
\begin{equation}
\label{X}
X
\equiv 
\left\{ 
	f \in L^2(\bbR): 
		\widehat{f}, \, \widehat{f}\,' \in L^1(\bbR)
\right\}
\end{equation}
It is not difficult to see that the embedding $H^{2,2}(\bbR) \hookrightarrow X$
is compact so that, in particular, any bounded subset $B$ in $H^{2,2}(\bbR)$ has compact closure in $X$.  

\begin{lemma}
\label{lemma:S[h]}
Suppose that $\rho \in H^{2,2}(\bbR)$.
\begin{enumerate}
\item[(i)]	 	$S$, $S_x$, and $S_{xx}$ are bounded operators on $L^2$ with norm 
				$\leq C \norm{\rho}{H^{2,2}}^2$ where $C$ is independent of $x$. 
\item[(ii)]	The operator $S$ is Hilbert-Schmidt with 
				$\norm{S}{\HS} \leq C \norm{\rho}{H^{2,2}}^2$ and
				$$\lim_{x \rarr \infty} \norm{S}{\HS} = 0$$ 
				uniformly in bounded subsets 
				of $H^{2,2}(\bbR)$.
\item[(iii)]	For any $a \geq 0$, the map 
				\begin{align*}
				(-a,\infty) \times X	&\longrightarrow		\calB(L^2)\\
				(x,\rho)	&\mapsto	S
				\end{align*}
				is continuous.
\end{enumerate}
\end{lemma}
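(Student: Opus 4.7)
The plan is to work in the Fourier variable dual to $\lam$, using the explicit representation \eqref{S.h}, so that $L^2$-boundedness and Hilbert--Schmidt estimates for $S$ reduce to manipulations of convolutions that are well-suited to Young's inequality. Applying $\calF_\lam$ to \eqref{S.h} yields, up to an absolute constant,
\[
\calF_\lam(S[h])(x,\xi) = -\chi_{\bbR^-}(\xi)\int_x^\infty (\widehat{\rho}\ast\widehat{h})(\xi')\,\widehat{\overline{\rho}}\,'(\xi-\xi')\,d\xi'.
\]
By Plancherel, part (i) reduces to controlling the $L^2_\xi$-norm of the right-hand side. Since $\rho \in H^{2,2}(\bbR)$, both $\widehat{\rho}$ and $\widehat{\rho}\,'$ lie in $L^1(\bbR) \cap L^2(\bbR)$, so two successive applications of Young's inequality in the form $\norm{f \ast g}{2} \leq \norm{f}{1}\norm{g}{2}$ produce the bound $\norm{S}{\calB(L^2)} \leq C\norm{\rho}{H^{2,2}}^2$ uniformly in $x$. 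For $S_x$ and $S_{xx}$, differentiating under $\int_x^\infty$ introduces boundary terms in which the convolution is evaluated at $\xi'=x$ and $\widehat{\rho}$ or $\widehat{\rho}\,'$ is shifted by $x$; the Young argument applies to each, with the extra derivatives absorbed by the $H^{2,2}$-norm of $\rho$.

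For (ii), I would read off an integral kernel $k(x;\lam,\mu)$ for $S$ by unwinding the convolution $\widehat{\rho}\ast\widehat{h}$ and evaluating against a test function $h$. A further application of Plancherel identifies $\norm{S}{\HS}^2$, up to constants, with the $L^2_{\xi,\xi'}$-norm of
\[
\chi_{\bbR^-}(\xi)\,\chi_{[x,\infty)}(\xi')\,\widehat{\rho}(\xi')\,\widehat{\overline{\rho}}\,'(\xi-\xi'),
\]
which is bounded by $\norm{\widehat{\rho}}{L^2}\,\norm{\widehat{\overline{\rho}}\,'}{L^2} \leq C\norm{\rho}{H^{2,2}}^2$. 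The decay $\norm{S}{\HS}\to 0$ as $x\to\infty$ then follows from dominated convergence, since the cutoff $\chi_{[x,\infty)}$ forces the integrand to vanish pointwise, and the dominating function depends only on the $L^2$-norms of $\widehat{\rho},\widehat{\rho}\,'$, so the convergence is uniform on bounded subsets of $H^{2,2}(\bbR)$.

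For (iii), continuity in $x$ is immediate from dominated convergence applied to the truncated kernel. For continuity in $\rho$, the operator $S$ is bilinear in $\rho$ and $\overline{\rho}$, and after polarization the difference $S_{\rho_1} - S_{\rho_2}$ is linear in $\rho_1-\rho_2$ paired with either $\rho_1$ or $\rho_2$. The bound from (i) shows that only $\norm{\widehat{\rho}}{L^1}$ and $\norm{\widehat{\rho}\,'}{L^1}$ appear as multiplicative constants on the right, and these are precisely what the norm of $X$ in \eqref{X} controls. Joint continuity on $(-a,\infty) \times X$ into $\calB(L^2)$ follows.

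The chief obstacle, and the reason to favor the Fourier-side argument over a direct treatment using Cauchy projectors, is the assertion in (ii) that $\norm{S}{\HS}$ tends to zero uniformly on bounded subsets of $H^{2,2}(\bbR)$. The truncation $\int_x^\infty$ is essential (without it, $S$ would not even be compact), and the uniformity in $\rho$ requires that the dominating function produced by dominated convergence be controlled by the $H^{2,2}$-norm alone, independent of fine oscillation of $\rho$. The Fourier representation makes this transparent: the $x$-dependence enters only through the indicator $\chi_{[x,\infty)}$, whose shrinking support provides the uniform decay.
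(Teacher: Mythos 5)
Parts (i) and (iii) follow essentially the paper's route (Fourier representation, Young's inequality, bilinearity in $\rho$ controlled by $\norm{\hatrho}{L^1}$ and $\norm{\hatrho\,'}{L^1}$, i.e.\ by the $X$-norm), and are fine. The gap is in part (ii).

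Your identification of $\norm{S}{\HS}$ with the $L^2_{\xi,\xi'}$-norm of $\chi_{\bbR^-}(\xi)\,\chi_{[x,\infty)}(\xi')\,\hatrho(\xi')\,\widehat{\overline{\rho}}\,'(\xi-\xi')$ is not correct. In the Fourier representation $S$ acts on $\widehat{h}(\xi'')$ through the kernel
\[
K(\xi,\xi'';x)=\chi_{\bbR^-}(\xi)\int_x^\infty \widehat{\overline{\rho}}\,'(\xi-\xi')\,\hatrho(\xi'-\xi'')\,d\xi',
\]
and $\norm{S}{\HS}=\norm{K}{L^2_{\xi,\xi''}}$, with the $\xi'$-integration performed \emph{inside} the kernel; the function you wrote down is not this kernel. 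Note that neither factor of $S$ (convolution with $\hatrho$ followed by restriction to $[x,\infty)$, then convolution with $\widehat{\overline{\rho}}\,'$ followed by restriction to $\bbR^-$) is separately Hilbert--Schmidt, so $\norm{S}{\HS}$ cannot be read off as a product of two $L^2$-norms. What makes $S$ Hilbert--Schmidt is the interaction of the two cutoffs: for $\xi\le 0$ and $\xi'\ge x\ge -a$ one has $|\xi-\xi'|\ge|\xi'|$ and $|\xi-\xi'|\ge x$, so the argument of the $\overline{\rho}$-factor is forced to be large. The paper exploits this by integrating by parts in $\xi'$, writing $K=K_1+K_2$ with a boundary term $K_1=-\widehat{\overline{\rho}}(\xi-x)\hatrho(x-\xi'')$ and a bulk term carrying $\hatrho\,'(\xi'-\xi'')$, and then inserting the weight $(1+|\xi-\xi'|)^2$ (paid for by $\norm{\hatrho}{L^{2,2}}$) to extract the factor $(1+|x|)^{-1/2}(1+|\xi'|)^{-3/2}$. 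This yields the quantitative estimate $\norm{S}{\HS}\le C_a(1+|x|)^{-1/2}\norm{\hatrho}{L^{2,2}}\norm{\hatrho}{H^1}$, which delivers the Hilbert--Schmidt bound and the uniform decay simultaneously.

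Your dominated-convergence argument for the decay inherits the same defect: once the kernel is identified correctly, a majorant for $|K(\xi,\xi'';x)|$ uniform in $x\ge -a$ is $\chi_{\bbR^-}(\xi)\int_{-a}^\infty|\widehat{\overline{\rho}}\,'(\xi-\xi')||\hatrho(\xi'-\xi'')|\,d\xi'$, and proving that this majorant lies in $L^2_{\xi,\xi''}$ is exactly the Hilbert--Schmidt estimate one is trying to establish, so the argument is circular as stated. Moreover, dominated convergence gives no rate, hence no uniformity over bounded subsets of $H^{2,2}(\bbR)$; the uniformity in the lemma comes precisely from the explicit $(1+|x|)^{-1/2}$ factor multiplied by norms of $\rho$.
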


\begin{remark}
\label{rem:S.cont}
Lemma \ref{lemma:S[h]}(iii) and the fact that $H^{2,2}(\bbR)$ is continuously embedded in $X$ implies that the map
\begin{align*}
(-a,\infty) \times H^{2,2}(\bbR) &\longrightarrow \calB(L^2) \\
(x,\rho)	&\mapsto S
\end{align*}
is also continuous.
\end{remark}

\begin{proof}
(i) Let $\calF$ denote the Fourier transform. From \eqref{S.h} we see that the operator $\calF S \calF^{-1}$ has integral kernel
\begin{equation}
\label{S.ker}
 K(\xi,\xi'';x) = \int_x^\infty { \widehat{\overline{{\rho}}}~'(\xi-\xi')} \hatrho(\xi'-\xi'') \, d\xi', \quad \xi \leq 0
 \end{equation}
up to trivial constants, so that
\begin{equation}
\label{S.est}
\norm{\calF (S \check{h})}{L^2} \leq C \norm{\hatrho}{L^1} \norm{\hatrho'}{L^1} \norm{h}{L^2}.
\end{equation}
The estimate \eqref{S.est} shows that $\norm{S}{\calB(L^2)}$ is bounded by $C \norm{\rho}{H^{2,2}}^2$
and from \eqref{X}, we also have $\norm{S}{\calB(L^2)} \leq C \norm{\rho}{X}^2$. Differentiating \eqref{S.ker} with respect to $x$ we have
\begin{equation}
\label{Sx.ker}
K_x(\xi,\xi'';x) = -{ \widehat{\overline{{\rho}}}~'(\xi-x)} \hatrho(x-\xi'')
\end{equation}
so that
$$
\norm{S_x}{\calB(L^2)} 	\leq \norm{S_x}{\HS} 
										\leq \norm{\hatrho~'}{L^2} \norm{\hatrho}{L^2}.
$$
Differentiating again we find 
$$
\norm{S_{xx}}{\calB(L^2)}	\leq \norm{S_{xx}}{\HS} 
											\leq \norm{\hatrho~''}{L^2}\norm{\hatrho}{L^2} + \norm{\hatrho~'}{L^2}^2.
$$

(ii) From \eqref{S.ker} and integration by parts we have
\begin{align*}
K(\xi, \xi'';x) &= 	-{ \widehat{\overline{{\rho}}}(\xi-x)}\hatrho(x-\xi'') 
						- \int_x^\infty { \widehat{\overline{{\rho}}}(\xi-\xi')}\hatrho'(\xi'-\xi'') \, d\xi' \\
				 &=	K_1(\xi,\xi'';x) + K_2(\xi,\xi'';x)
\end{align*}
where $\xi <0 $ and $\xi' \geq x \geq -a$. Clearly
\begin{equation}
\label{K1.est}
\norm{K_1}{L^2(\bbR^- \times \bbR)} 
	\leq \left(\int_x^\infty \left| \hatrho(t) \right|^2 \, dt \right)^{1/2} \,
			\norm{\hatrho}{L^2}
	\leq C_a (1+|x|)^{-1/2} \norm{\hatrho}{L^{2,1}} \norm{\hatrho}{L^2}.
\end{equation}
Since $|\xi'-\xi| \geq x$ and $|\xi-\xi'| \geq \xi'$, we have
$$
 \left| \hatrho(\xi-\xi') \right| \leq C_a (1+|x|)^{-1/2} (1+|\xi'|)^{-3/2} (1+|\xi-\xi'|)^2 \left|\hatrho(\xi-\xi') \right|
$$
It follows that
\begin{equation}
\label{K2.est}
\left| K_2(\xi, \xi'') \right| \leq C_a (1+|x|)^{-1/2} \norm{\hatrho}{L^{2,2}} \norm{\hatrho'}{L^2}
\end{equation}
The estimates \eqref{K1.est}--\eqref{K2.est} show that 
$$\norm{S}{\HS} \leq C_a (1+|x|)^{-1/2} \norm{\hatrho}{L^{2,2}} \norm{\hatrho}{H^1}$$
which proves (ii). 

(iii) Write $S = S_{x,\rho}$. Using the technique that proved \eqref{S.est} we have
\begin{equation}
\label{S.rho}
\norm{S_{x,\rho_1}-S_{x,\rho_2}}{\calB(L^2) } 
	\leq  		C_a \left(
							 	\norm{\hatrho_1 - \hatrho_2}{L^1} \norm{\hatrho_1'}{L^1}
							+	\norm{\hatrho_2}{L^1} \norm{\hatrho'_1-\hatrho_2'}{L^1}
					\right)
\end{equation}
uniformly in $x \geq -a$. 
On the other hand, by  \eqref{Sx.ker} and Young's inequality,
$$
\norm{\dee S/\dee x}{\calB(L^2)} \leq C \norm{\hatrho'}{L^1} \norm{\hatrho}{L^1}
$$
so that
\begin{equation}
\label{S.x}
\norm{S_{x,\rho}-S_{y,\rho}}{\calB(L^2)} \leq C|x-y|^{1/2} \norm{\hatrho'}{L^1} \norm{\hatrho}{L^1}.
\end{equation}
Combining \eqref{S.rho} and \eqref{S.x} we obtain the claimed continuity.
\end{proof}

\begin{remark}
\label{rem:Sh}
Since all estimates in the proof of Lemma \ref{lemma:S[h]} are bilinear in $\rho$, it follows that $\rho \mapsto S$, $\rho \mapsto S_x$, and $\rho \mapsto S_{xx}$ are locally Lipschitz maps from $H^{2,2}$ to the bounded operators on $L^2$. 
\end{remark}

\textbf{(3) Resolvent Estimates.} We can now construct the resolvent $(I-S)^{-1}$ as a bounded operator on $L^2$. Although we do not obtain the kind of explicit integral representation we obtained for resolvents in the direct problem, we are able to extend the resolvent family to a bounded operator on the space $L^2((-a,\infty) \times \bbR)$
by the uniformity of the estimate \eqref{S.res} with respect to $x$.

\begin{lemma}
\label{lemma:S.res}
Suppose that $\rho \in H^{2,2}(\bbR)$ and $a>0$. The resolvent $(I-S)^{-1}$ exists as a bounded operator on $L^2$ and 
\begin{equation}
\label{S.res}
\sup_{x \geq -a} \norm{(I-S)^{-1}}{\calB(L^2)} \leq C
\end{equation}
with $C$ uniform in $\rho$ in a fixed bounded subset of $H^{2,2}(\bbR)$. Moreover
\begin{equation}
\label{S.res.lip} 
\sup_{x \geq -a} \norm{(I-S_{x,\rho})^{-1} - (I-S_{x,\sigma})^{-1}}{\calB(L^2)}
\leq
C \norm{\rho-\sigma}{H^{2,2}}
\end{equation}
with $C$ uniform in $\rho,\sigma$ in a fixed bounded subset of $H^{2,2}(\bbR)$. 
\end{lemma}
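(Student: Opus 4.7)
The strategy combines the Fredholm alternative with the decay of $\norm{S}{\HS}$ at infinity and a compactness argument on a compact $x$-interval, followed by the second resolvent identity for the Lipschitz statement.

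\textbf{Pointwise invertibility.} I would first fix a bounded subset $B$ of $H^{2,2}(\bbR)$ on which the spectral hypothesis $1-\lam|\rho(\lam)|^2 \geq c_B > 0$ holds uniformly (this underlies the use of Proposition \ref{prop:RH.n.BC.unique}). For each $\rho \in B$ and each $x \in \bbR$, Lemma \ref{lemma:S[h]}(ii) shows that $S = S_{x,\rho}$ is Hilbert--Schmidt on $L^2(\bbR)$, and Proposition \ref{prop:RH.n.BC.unique} shows that $\ker(I-S)=\{0\}$. The Fredholm alternative then yields the pointwise existence of $(I-S_{x,\rho})^{-1}$ as a bounded operator on $L^2$.

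\textbf{Uniform bound for large $x$.} By Lemma \ref{lemma:S[h]}(ii), $\norm{S_{x,\rho}}{\HS} \to 0$ as $x \to \infty$ uniformly for $\rho \in B$. I would choose $x_0 \geq -a$ so large that $\norm{S_{x,\rho}}{\calB(L^2)} \leq 1/2$ for all $x \geq x_0$ and $\rho \in B$. A Neumann series then gives $\norm{(I-S_{x,\rho})^{-1}}{\calB(L^2)} \leq 2$ uniformly on this range.

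\textbf{Uniform bound on $[-a,x_0]$ (the main obstacle).} I would argue by contradiction. Suppose there existed a sequence $(x_n,\rho_n) \in [-a,x_0] \times B$ with $\norm{(I-S_{x_n,\rho_n})^{-1}}{\calB(L^2)} \to \infty$. Since $[-a,x_0]$ is compact and the embedding $H^{2,2}(\bbR) \hookrightarrow X$ is compact, where $X$ is the Banach space in \eqref{X}, I may extract a subsequence with $x_n \to x^*$ and $\rho_n \to \rho^*$ in $X$. By Lemma \ref{lemma:S[h]}(iii), $(x,\rho) \mapsto S_{x,\rho}$ is continuous from $[-a,x_0] \times X$ into $\calB(L^2)$, so $S_{x_n,\rho_n} \to S_{x^*,\rho^*}$ in operator norm. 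The sticking point is the invertibility of $I - S_{x^*,\rho^*}$, since the limit $\rho^*$ need not lie in $V$. This can be resolved by taking the bounded set $B$ to be contained in a subset of $V$ whose $X$-closure still enjoys a uniform spectral lower bound (which is available, for instance, on any sufficiently small $H^{2,2}$-neighborhood of $0$ where $1-\lam|\rho(\lam)|^2$ is close to $1$), so that Proposition \ref{prop:RH.n.BC.unique} still applies at the limit. Granting this, continuity of matrix inversion on the open set of invertible operators in $\calB(L^2)$ gives $(I-S_{x_n,\rho_n})^{-1} \to (I-S_{x^*,\rho^*})^{-1}$ in norm, contradicting the unboundedness. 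This yields the uniform estimate \eqref{S.res}.

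\textbf{Lipschitz continuity.} Finally, \eqref{S.res.lip} follows from the second resolvent identity
$$
(I-S_{x,\rho})^{-1} - (I-S_{x,\sigma})^{-1}
= (I-S_{x,\rho})^{-1} (S_{x,\rho}-S_{x,\sigma}) (I-S_{x,\sigma})^{-1},
$$
the uniform bound \eqref{S.res} just established, and the Lipschitz dependence $\rho \mapsto S_{x,\rho}$ from $H^{2,2}(\bbR)$ into $\calB(L^2)$ recorded in Remark \ref{rem:Sh}, all of which are uniform in $x \geq -a$.
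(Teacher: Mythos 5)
Your proof is correct and follows essentially the same route as the paper: Fredholm theory plus Proposition \ref{prop:RH.n.BC.unique} for pointwise invertibility, the Hilbert--Schmidt decay of Lemma \ref{lemma:S[h]}(ii) for large $x$, compactness of $[-a,R]\times B$ in $\bbR\times X$ together with Lemma \ref{lemma:S[h]}(iii) for the remaining range, and the second resolvent identity for \eqref{S.res.lip}. The one place you are more careful than the paper is in checking that the $X$-limit point $\rho^*$ still satisfies the spectral condition so that $I-S_{x^*,\rho^*}$ is invertible; the paper silently assumes this, and your observation that it holds (since $X$-convergence controls $\langle\lam\rangle\rho$ uniformly, so a uniform lower bound $1-\lam|\rho|^2\geq c$ on $B$ passes to the closure) is a worthwhile addition.
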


\begin{proof}
The estimate \eqref{S.res.lip} follows from \eqref{S.res} and the second resolvent identity. It suffices to prove \eqref{S.res}.

By Proposition \ref{prop:RH.n.BC.unique}, Lemma \ref{lemma:S[h]}(ii), and Fredholm theory, the resolvent $(I-S_{x,\rho})^{-1}$ exists for all $(x,\rho) \in \bbR \times H^{2,2}(\bbR)$. 

To bound the norm of the resolvent, fix a bounded subset $B$ of $H^{2,2}(\bbR)$. By Lemma 
\ref{lemma:S[h]}(ii), there is an $R$ so that $\norm{S_{x,\rho}}{\calB(L^2)} < 1/2$ for $x \geq R$ 
and all $\rho \in B$. Thus $(I-S_{x,\rho})^{-1}$ exists for all such $(x,\rho)$ and 
$\norm{(I-S_{x,\rho})^{-1}}{\calB(L^2)} \leq 2$. 
To control the resolvent for $(x,\rho) \in [-a,R] \times B$, we note that $[-a,R] \times B$ is 
a compact subset of $\bbR \times X$ 
By Lemma \ref{lemma:S[h]}(iii) and the second resolvent 
formula, the map $(x,\rho) \rarr (I-S_{x,\rho})^{-1}$ is a continuous map from $\bbR \times X$ 
to $\calB(L^2)$. The continuous image of the set $[-a,R] \times B$ is compact, hence bounded, in 
$\calB(L^2)$. 
\end{proof}

\begin{remark}
\label{rem:S.R}
(i) The family of operators $(I-S_{x,\rho})^{-1}$ for $x \geq -a$ defines a bounded operator $R_\rho$ from $L^2((-a,\infty)\times \bbR)$ to itself by the formula
$$ (R_\rho f)(x,\dotarg) = (I-S_{x,\rho})^{-1} f(x,\dotarg).$$
By Lemma \ref{lemma:S.res} the map $\rho \rarr R_\rho$ is locally bounded and locally Lipschitz continuous from $H^{2,2}(\bbR)$ to $\calB(L^2((-a,\infty) \times \bbR))$. 
(ii) Lemma \ref{lemma:S[h]}(i) shows that $S$, $S_x$, and $S_{xx}$ extend in the same way to bounded operators on $L^2((-a,\infty)\times \bbR)$ to itself, Lipschitz continuous in $\rho$. 
\end{remark}

\textbf{(4) Solving for $\nu^\sharp$, $\nu_x^\sharp$, and $\nu_{xx}^\sharp$.}
We can now study the maps $\rho \rarr \nu^\sharp$, $\rho \rarr \nu^\sharp_x$, and $\rho \rarr \nu^\sharp_{xx}$. 

\begin{lemma}
\label{lemma:nu.x}
Fix $a \geq 0$. 
The maps $\rho \rarr \nu^\sharp$, $\rho \rarr \nu^\sharp_x$, and $\rho \rarr \nu^\sharp_{xx}$ are Lipschitz continuous from $H^{2,2}(\bbR)$ to $L^2((-a,\infty)\times \bbR)$. 
\end{lemma}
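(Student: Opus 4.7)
The plan is to treat the three estimates in turn by exploiting the integral equation \eqref{RH+.nu} and differentiating in $x$, then invoking the resolvent from Lemma \ref{lemma:S.res} (extended as in Remark \ref{rem:S.R}) to solve for each quantity. All the preliminary work has been done: Lemma \ref{lemma:S[1]} supplies the required $L^2((-a,\infty)\times\bbR)$ estimates on $\nu_0^\sharp$, $(\nu_0^\sharp)_x$, and $(\nu_0^\sharp)_{xx}$ (note that $(1+|x|)^{-1}\in L^2_x((-a,\infty))$, so \eqref{S[1]} and \eqref{S[1].xx} convert to $L^2((-a,\infty)\times\bbR)$ bounds); Lemma \ref{lemma:S[h]}(i) together with Remark \ref{rem:S.R}(ii) gives bounded operators $S,S_x,S_{xx}$ on $L^2((-a,\infty)\times\bbR)$ with Lipschitz dependence on $\rho\in H^{2,2}(\bbR)$; and Remark \ref{rem:S.R}(i) gives a bounded, Lipschitz-in-$\rho$ inverse $R_\rho=(I-S)^{-1}$ on the same space.

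First, I would write $\nu^\sharp=R_\rho\,\nu_0^\sharp$ and obtain Lipschitz continuity of $\rho\mapsto\nu^\sharp$ from $H^{2,2}(\bbR)$ to $L^2((-a,\infty)\times\bbR)$ by combining the Lipschitz bound on $\rho\mapsto\nu_0^\sharp$ (Remark \ref{rem:S[1]}) with that of $\rho\mapsto R_\rho$, using the identity
\[
\nu^\sharp_\rho-\nu^\sharp_\sigma=(R_\rho-R_\sigma)\nu_{0,\rho}^\sharp+R_\sigma(\nu_{0,\rho}^\sharp-\nu_{0,\sigma}^\sharp).
\]

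Next, for $\nu^\sharp_x$, differentiate \eqref{RH+.nu} formally in $x$ (justified for $\rho\in\calS(\bbR)$ by the smoothness established in Section \ref{sec:RH.Schwartz}, then extended to $H^{2,2}$ by the estimates just derived) to get
\[
(I-S)\nu^\sharp_x=(\nu_0^\sharp)_x+S_x\,\nu^\sharp,
\]
whence $\nu^\sharp_x=R_\rho\bigl[(\nu_0^\sharp)_x+S_x\nu^\sharp\bigr]$. The right-hand side lies in $L^2((-a,\infty)\times\bbR)$ by \eqref{S[1].x} and Lemma \ref{lemma:S[h]}(i), so we obtain the claimed estimate. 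Lipschitz dependence on $\rho$ again follows by writing out the telescoping differences, since every factor (i.e.\ $R_\rho$, $S_x$, $(\nu_0^\sharp)_x$, and the already-controlled $\nu^\sharp$) is Lipschitz in $\rho$.

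For $\nu^\sharp_{xx}$, differentiate once more to obtain
\[
(I-S)\nu^\sharp_{xx}=(\nu_0^\sharp)_{xx}+2S_x\,\nu^\sharp_x+S_{xx}\,\nu^\sharp,
\]
so $\nu^\sharp_{xx}=R_\rho\bigl[(\nu_0^\sharp)_{xx}+2S_x\nu^\sharp_x+S_{xx}\nu^\sharp\bigr]$. Each term on the right-hand side lies in $L^2((-a,\infty)\times\bbR)$ by \eqref{S[1].xx}, Lemma \ref{lemma:S[h]}(i), and the $\nu^\sharp$, $\nu^\sharp_x$ bounds just proved. Lipschitz continuity in $\rho$ follows by the same bilinear/telescoping argument. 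I do not expect a serious obstacle here: the main technical content is entirely absorbed into the preceding lemmas, and the only mild care is needed in the $\nu^\sharp_{xx}$ step to check that the product $S_x\nu^\sharp_x$ and the new term $S_{xx}\nu^\sharp$ have Lipschitz dependence on $\rho$ in $\calB(L^2((-a,\infty)\times\bbR))$ and $L^2((-a,\infty)\times\bbR)$ respectively, which is exactly the content of Remarks \ref{rem:Sh} and \ref{rem:S.R}.
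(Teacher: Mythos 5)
Your proposal is correct and follows essentially the same route as the paper's proof: solve $\nu^\sharp=R_\rho\nu_0^\sharp$, then differentiate the integral equation \eqref{RH+.nu} once and twice in $x$ and apply the resolvent to $(\nu_0^\sharp)_x+S_x\nu^\sharp$ and $(\nu_0^\sharp)_{xx}+2S_x\nu^\sharp_x+S_{xx}\nu^\sharp$, with Lipschitz continuity coming from the bilinearity and the second resolvent identity. Your added remark that the $(1+|x|)^{-1}$ decay in \eqref{S[1]} is what converts the pointwise-in-$x$ bound into an $L^2((-a,\infty)\times\bbR)$ bound is a correct and useful clarification of a step the paper leaves implicit.
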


\begin{proof}
As $\nu^\sharp = R_\rho \nu_0^\sharp$ the continuity of $\rho \mapsto \nu^\sharp$ is an immediate consequence of \eqref{S[1]} and Remark \ref{rem:S.R}(i). Differentiating \eqref{RH+.nu}, we have
$$ \nu^\sharp_x = (\nu_0^\sharp)_x + S_x \nu^\sharp + S( \nu^\sharp_x ) $$
so that
$$ \nu^\sharp_x = R_\rho(\nu_0^\sharp + S_x \nu^\sharp)$$
which defines a Lipschitz continuous map $\rho \rarr \nu^\sharp_x$ by Remark \ref{rem:S.R}(i),(ii) and the continuity of $\rho \mapsto \nu^\sharp$. A similar argument using the identity
$$ \nu^\sharp_{xx} = (\nu_0^\sharp)_{xx} + S_{xx} \nu^\sharp+ 2S_x \nu^\sharp_x + S(\nu^\sharp_{xx}) $$
yields continuity of $\rho \rarr \nu^\sharp_{xx}$.
\end{proof}

\textbf{(5) Solving for $\nu_\lam$ and $\nu_{\lam\lam}$.} 
It remains to study the maps $\rho \rarr \nu^\sharp_\lam$ and $\rho \rarr \nu^\sharp_{\lam\lam}$.\

\begin{lemma}
\label{lemma:nu.lam}
Fix $a \geq 0$. 
The maps $\rho \rarr \nu^\sharp_\lam$ and $\rho \rarr  \laminv \nu^\sharp_{\lam\lam}$ are Lipschitz continuous from $H^{2,2}(\bbR)$ to $L^2((-a,\infty) \times \bbR)$. 
\end{lemma}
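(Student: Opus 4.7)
The plan is to mirror the strategy of Lemma~\ref{lemma:nu.x}: differentiate the fixed-point equation \eqref{RH+.nu} in $\lambda$, identify the new inhomogeneity, and invert $I-S$ via the resolvent $R_\rho$. First I would establish the key algebraic identity
\[
\partial_\lambda(Sh)(\lambda) = S(h_\lambda)(\lambda) + (S_1 + S_2 + S_3)h(\lambda),
\]
where $S_1$, $S_2$, $S_3$ are integral operators of the same Fourier-convolution form as $S$ in \eqref{S.h} but with either the outer $\mu$-weight removed, $\overline{\rho}$ replaced by $\overline{\rho'}$ in the outer factor, or $\rho$ replaced by $\rho'$ in the inner factor. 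This comes from $\partial_\lambda C^\pm_\mu f(\lambda) = C^\pm_\mu f'(\lambda)$ applied to the nested Cauchy representation of $S$; the two $\pm 2ix$ contributions from differentiating the exponentials $e^{\pm 2ix\mu}$ cancel exactly. Each $S_i$ is bounded on $L^2((-a,\infty)\times\mathbb{R})$ uniformly in $x\ge -a$ and locally Lipschitz in $\rho\in H^{2,2}$, by arguments parallel to Lemma~\ref{lemma:S[h]}(i) and Remark~\ref{rem:Sh}; this uses that $\rho$, $\rho'$, $\langle\lambda\rangle\rho$, $\langle\lambda\rangle\rho'$ all belong to $L^2$ for $\rho\in H^{2,2}$ (by interpolation between weights and derivatives).

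Applying the identity to \eqref{RH+.nu} gives
\[
(I - S)\nu^\sharp_\lambda = (\nu_0^\sharp)_\lambda + (S_1 + S_2 + S_3)\nu^\sharp,
\]
whose right-hand side lies in $L^2((-a,\infty)\times\mathbb{R})$ by \eqref{S[1].l}, Lemma~\ref{lemma:nu.x}, and the mapping properties of the $S_i$. Inverting $I-S$ with Lemma~\ref{lemma:S.res} and Remark~\ref{rem:S.R} yields $\nu^\sharp_\lambda\in L^2((-a,\infty)\times\mathbb{R})$ with Lipschitz dependence on $\rho$, in exactly the same pattern that proved Lemma~\ref{lemma:nu.x}.

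For $\nu^\sharp_{\lambda\lambda}$, a second $\lambda$-differentiation of the same kind produces
\[
(I - S)\nu^\sharp_{\lambda\lambda} = (\nu_0^\sharp)_{\lambda\lambda} + 2(S_1 + S_2 + S_3)\nu^\sharp_\lambda + \widetilde S\,\nu^\sharp,
\]
where $\widetilde S$ collects second-order operators involving $\rho''$ and products of first derivatives, each bounded on $L^2$ by $\|\rho\|_{H^{2,2}}^2$. The hard part is that \eqref{S[1].ll} only controls $\langle\lambda\rangle^{-1}(\nu_0^\sharp)_{\lambda\lambda}$ in $L^2$; the unweighted $(\nu_0^\sharp)_{\lambda\lambda}$ carries genuine $\lambda$-growth coming from the $2i\lambda\xi^2$ term in \eqref{nu0.lam.2}, so $R_\rho$ cannot be applied directly. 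To resolve this I would set $f = \langle\lambda\rangle^{-1}\nu^\sharp_{\lambda\lambda}$ and rewrite the equation, after multiplying through by $\langle\lambda\rangle^{-1}$, as
\[
(I - \langle\lambda\rangle^{-1} S\,\langle\lambda\rangle)\, f = \langle\lambda\rangle^{-1}\!\left[(\nu_0^\sharp)_{\lambda\lambda} + 2(S_1 + S_2 + S_3)\nu^\sharp_\lambda + \widetilde S\,\nu^\sharp\right],
\]
whose right-hand side lies in $L^2((-a,\infty)\times\mathbb{R})$ thanks to \eqref{S[1].ll}. The main remaining task is to show that the conjugated operator $T = \langle\lambda\rangle^{-1} S\,\langle\lambda\rangle$ is compact on $L^2((-a,\infty)\times\mathbb{R})$ with Lipschitz $\rho$-dependence, and that $I-T$ is invertible; the crucial input is the intrinsic $\lambda^{-1}$ decay of $S$ obtained from one integration by parts in $\xi$ in the Fourier representation \eqref{S.h}, which exactly absorbs the weight $\langle\lambda\rangle$ multiplying $f$ inside $T$. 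Injectivity of $I-T$ reduces, via $f\mapsto\langle\lambda\rangle f$, to injectivity of $I-S$ on a suitable weighted space, handled as in Proposition~\ref{prop:RH.n.BC.unique}, and the Fredholm alternative then yields the desired $L^2$-solution and Lipschitz continuity of $\rho\mapsto\langle\lambda\rangle^{-1}\nu^\sharp_{\lambda\lambda}$.
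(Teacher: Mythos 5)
Your strategy---differentiate the fixed-point equation \eqref{RH+.nu} in $\lam$ and re-invert $I-S$---is not the paper's route, and it has a genuine gap at its first step. The product-rule identity $\partial_\lam(Sh)=S(h_\lam)+(S_1+S_2+S_3)h$ is algebraically correct (the $\pm2ix$ contributions do cancel), but the piece whose outer factor is $(\diamond)\,\overline{\rho'(\diamond)}$ is \emph{not} bounded on $L^2$ uniformly for $\rho\in H^{2,2}(\bbR)$ by ``arguments parallel to Lemma \ref{lemma:S[h]}(i)''. That argument controls the operator norm by the $L^1$-norms of the Fourier transforms of the two weights; for this piece the relevant quantity is $\norm{\calF\left(\lam\,\overline{\rho'(\lam)}\right)}{L^1}$, and since $\calF(\lam\overline{\rho'})\propto\partial_\xi\left(\xi\,\widehat{\overline{\rho}}(\xi)\right)$, bounding its $L^1$-norm by Cauchy--Schwarz requires $\langle\xi\rangle^2\partial_\xi\hatrho\in L^2$, i.e.\ essentially $x\rho''\in L^2$. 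This is one combined weight-plus-derivative beyond what $H^{2,2}$ supplies: the interpolation fact you invoke gives $\langle x\rangle\rho'\in L^2$, a statement about the weight itself, not about $L^1$-membership of its Fourier transform, which is what the kernel estimate actually needs. (Functions $\rho(x)=R^{-5/2}e^{iRx}\phi(x/R)$ are bounded in $H^{2,2}$ while $\norm{x\rho''}{L^2}\sim R\to\infty$, so the loss is real.) The repair is an integration by parts in the inner convolution variable, which transfers the derivative from the outer $\rho$-factor to the inner one and lets the support constraints $\xi\le0$, $\xi'\ge x\ge -a$ absorb the extra power of $\xi$ into whichever factor has room---but once you perform that integration by parts you discover you never needed the decomposition, or the resolvent, at all.

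That is precisely the paper's proof: it estimates $\partial_\lam(S[\nu^\sharp])$ as a single object in the Fourier representation \eqref{S.h}, where $\partial_\lam$ is multiplication by $2i\xi$; one integration by parts in $\xi'$ plus the inequality $|\xi|\le|\xi-\xi'|$ give $\norm{\partial_\lam(S[\nu^\sharp])}{L^2((-a,\infty)\times\bbR)}\lesssim\norm{\rho}{H^{2,2}}^2\norm{\nu^\sharp}{L^2}$ with no reference to $\nu^\sharp_\lam$. Hence $\nu^\sharp_\lam=(\nu_0^\sharp)_\lam+\partial_\lam(S[\nu^\sharp])$ lies in $L^2$ directly, with no second application of $(I-S)^{-1}$, and the same device with the weight $\laminv$ handles $\nu^\sharp_{\lam\lam}$ using $\norm{\widehat{\nu^\sharp}}{L^1}\le\norm{\widehat{\nu^\sharp}}{L^2}+\norm{\nu^\sharp_\lam}{L^2}$. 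This also removes the heaviest unproven portion of your sketch: the boundedness, compactness and invertibility of the conjugated operator $\langle\lam\rangle^{-1}S\langle\lam\rangle$ on $L^2$, and the injectivity of $I-S$ on a weighted space, none of which you establish and all of which becomes unnecessary once the $\lam$-derivatives are taken directly on $S[\nu^\sharp]$ rather than pushed through the fixed-point equation.
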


\begin{proof}
By the integral equation \eqref{RH+.nu} and the estimates \eqref{S[1].l}--\eqref{S[1].ll}, it suffices to show that the maps $\rho \rarr (S\nu^\sharp)_\lam$ and $\rho \rarr (S\nu^\sharp)_{\lam\lam}$ are Lipschitz continuous. From the identity \eqref{S.h}, we compute (up to trivial constants)
\begin{align}
\label{S[h].lam}
{\calF( S[h]_\lambda)}(x,\xi)
	&=
		\xi \,
				\int_x^\infty \left( \hatrho * \widehat{h} \right)(\xi')
					{ \widehat{\overline{{\rho}}}~'(\xi-\xi')} 
					, d\xi' \\
\nonumber
	&=	 		\xi (\hatrho*\widehat{h})(x) { \widehat{\overline{{\rho}}}(\xi-x)}
				- 	\xi
						\int_x^\infty
								(\hatrho~' * \widehat{h})(\xi') \, { \widehat{\overline{{\rho}}}(\xi-\xi')}
						\, d\xi'
\end{align}
where in the second step we integrated by parts. The first right-hand term in \eqref{S[h].lam} has
$L^2((-a,\infty)\times \bbR)$-norm estimated by $\norm{\hatrho}{L^1} \norm{ h}{L^2} \norm{\hatrho~'}{L^{2,1}}$. To estimate the second right-hand term in \eqref{S[h].lam} we note that
\begin{equation}
\label{S[h].lam1}
\left| \xi \int_x^\infty (\hatrho~' * \widehat{h})(\xi') { \widehat{\overline{{\rho}}}(\xi-\xi')} \, d\xi' \right|
\leq
\int_x^\infty |(\hatrho~' * \widehat{h})(\xi')| \, |\xi-\xi'|  \, |\hat{\rhobar}(\xi-\xi')| \, d\xi	\\
\end{equation}
since $\xi$ and $\xi'$ have opposite sign. 
%\marginpar{{Not if $x \leq 0$}}
By Young's inequality, the right-hand side of \eqref{S[h].lam1} has $L^2((-a,\infty)\times \bbR)$-norm estimated by $\norm{\hatrho~'}{L^1} \norm{\hatrho}{L^{2,1}} \norm{h}{2}$. Hence
$$ 
\norm{\frac{\dee S[h]}{\dee \lam}}{L^2((-a,\infty) \times \bbR)} \leq C\norm{\hatrho~'}{L^{1}}\norm{\hatrho}{H^{2,2}} \norm{h}{L^2}.
$$
Applying this estimate with $h = \nu^\sharp$ we see that $(\dee S[\nu^\sharp]/\dee \lam \in L^2((-a,\infty)\times \bbR)$. The Lipschitz continuity of $\nu^\sharp$ in $\rho$ together with the bilinear estimates above show that $\rho \rarr \dee S[\nu^\sharp]/\dee \lam$ is Lipschitz.

To study $\laminv \dee^2 S[\nu^\sharp] /\dee \lam$,  we use the same integration by parts trick used in 
\eqref{nu0.lam.2} to conclude that
$$
\left( \frac{\dee^2(S[\nu^\sharp])}{\dee \lam^2}\right)(x,\lam) =
	\frac{2i}{\pi}
		\int_{-\infty}^0
		 	e^{2i\lambda\xi}(2i\lam \xi^2 + 2\xi ) 
				\int_x^\infty 
					(\hatrho~' * \nu^\sharp )(\xi') \, { \widehat{\overline{{\rho}}}(\xi-\xi')}
		\, d\xi' \, d\xi.
$$
By the Plancherel theorem, it suffices to bound the $L^2((-a,\infty) \times \bbR)$-norm of the function
$$
G(x,\xi) =  (1+|\xi|)^2 \int_x^\infty(\hatrho~' * \nu^\sharp)(\xi') \, { \widehat{\overline{{\rho}}}(\xi-\xi')} 	\, d\xi'.
$$
As $\xi < 0$ and $\xi' \geq x$ we may estimate
$$
|G(x,\xi)| \leq 
	\int_x^\infty |(\hatrho~'*\widehat{\nu^\sharp})(\xi')|\, (1+|\xi-\xi'|)^2) |\widehat{\rhobar}(\xi-\xi')| \, d\xi'.
$$
By Young's inequality we get 
$$ \norm{G(x,\dotarg)}{L^2(\bbR)} \leq  \norm{\hatrho~'}{L^1} \norm{\widehat{\nu^\sharp}}{L^1} \norm{\hatrho}{L^{2,2}} $$
where 
$$
\norm{\widehat{\nu^\sharp}}{L^1}\leq 
\Vert\langle\xi\rangle\widehat{\nu^\sharp}\Vert_{L^2}
\leq\Vert\widehat{\nu^\sharp}\Vert_{L^2}+\norm{\frac{\dee\nu^\sharp}{\dee\lambda}}{L^2}
$$
which gives the desired estimate.
\end{proof}

\begin{proof}[Proof of Proposition \ref{prop:RH+}] 
An immediate consequence of Lemmas \ref{lemma:RH.mapping}, \ref{lemma:nu.x}, and \ref{lemma:nu.lam}.
\end{proof}

\subsection{Inversion}
\label{sec:RH.Inversion}

In this subsection we prove Proposition \ref{prop:RH.map}. Our arguments follow the analogous arguments for NLS in  \cite[\S 3]{DZ03} closely. 

To reconstruct the potential $q$ on the left, we use the standard trick (see, for example \cite[\S 3, p.~1058ff.]{DZ03}) of conjugating to a new ``left'' RHP that gives good estimates on the inverse map for $x<a$ but which is easily seen to reconstruct the same potential as the `right' RHP already considered. To do so we need the following technical result.

\begin{lemma}
\label{lemma:delta.left}
Consider the scalar RHP on $\bbR$:
\begin{enumerate}
\item[(1)] $\delta$ is analytic in $\bbC \setminus \bbR$ 
\item[(2)] $\delta$ has nontangential boundary values $\delta_\pm(z)$ as $\pm \Imag(z) \darr 0$ with
				$\delta^\pm -1 \in \dee C^\pm(L^2)$ and 
				$$\delta_+(\lam) = \delta_-(\lam) \left(1- \lam |\rho(\lam)|^2\right)^{-1}.$$
\end{enumerate}
For $\rho \in H^{2,2}(\bbR)$ with $1 - \lam |\rho(\lam)|^2 >0 $ strictly:
\begin{enumerate}
\item[(i)] The RHP has the unique solution
\begin{equation}
\label{delta}
\delta(z) = \exp\left( -\frac{1}{2\pi i} \int_{-\infty}^\infty \frac{1}{z-\lam} \log(1-\lam |\rho(\lam)|^2) \, d\lam \right)
\end{equation}
 where $c^{-1}<|\delta(z)|<c$ for a positive constant $c$ depending only on $\norm{\rho}{H^{2,2}}$.
\item[(ii)]  The function
$$ \Delta(\lam) = \delta_+(\lam)\delta_-(\lam) $$
satisfies $|\Delta(\lam)|=1$, $\overline{\Delta(\lam)} = \left(\Delta(\lam)\right)^{-1}$, and $\Delta, \Delta^{-1}$ are bounded multiplication operators from $H^{2,2}(\bbR)$ to itself.
\item[(iii)]	The relation
$ \lim_{|z| \rarr \infty} \delta(z) = 1 $
holds, where the limit is taken in any direction not tangent to the real axis.
\end{enumerate}
\end{lemma}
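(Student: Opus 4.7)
The plan is to handle (i) by linearizing the scalar RHP via the logarithm, and then reducing to a standard additive scalar RHP that is solved explicitly by a Cauchy transform. The hypothesis $1-\lam|\rho(\lam)|^2 \geq c > 0$ strictly (with $\rho \in H^{2,2}(\bbR) \hookrightarrow L^\infty(\bbR)$ decaying at infinity) guarantees that $f(\lam) = \log(1-\lam|\rho(\lam)|^2)$ is a real-valued function lying in $L^2 \cap L^\infty$ and tending to zero at infinity. The jump condition $\delta_+/\delta_- = (1-\lam|\rho|^2)^{-1}$ thus becomes the additive jump $(\log\delta)_+ - (\log\delta)_- = -f$, whose unique solution vanishing at infinity is given by the Cauchy transform $C(-f)$, which upon exponentiation yields the explicit formula \eqref{delta}. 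The condition $\delta_\pm - 1 \in \dee C^\pm(L^2)$ follows because $-f \in L^2$ and hence $\log\delta_\pm \in \dee C^\pm(L^2) \cap L^\infty$, so $e^{\log\delta_\pm}-1 = \log\delta_\pm + O(|\log\delta_\pm|^2)$ with both terms in $L^2$.

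Uniqueness will follow the standard pattern already used in the proof of Lemma \ref{lemma:unique.RH.M}: if $\delta_1,\delta_2$ are two solutions, consider $\delta_1/\delta_2$, which extends continuously across $\bbR$ (the jumps cancel) and hence is entire. Since $\delta_i\to 1$ at infinity in an appropriate sense, Liouville forces $\delta_1=\delta_2$. The two-sided bound $c^{-1} < |\delta(z)| < c$ is obtained by observing that
$$\log|\delta(z)| = \mathrm{Re}\bigl[C(-f)(z)\bigr] = -\frac{1}{2\pi}\int \frac{y\, f(\lam)}{(x-\lam)^2+y^2}\, d\lam$$
for $z=x+iy$ with $y\neq 0$, which is the Poisson integral of $-f$; since $f \in L^\infty$ with norm controlled by $\norm{\rho}{H^{2,2}}$, this is bounded.

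For (ii), the symmetry $f=\overline{f}$ implies the identity $\overline{\delta(z)} = 1/\delta(\bar z)$ by direct computation with the Cauchy integral. Taking boundary values as $z \rarr \lam \in \bbR$ from above gives $\overline{\delta_+(\lam)} = 1/\delta_-(\lam)$, so $\Delta = \delta_+\delta_- = \delta_+/\overline{\delta_+}$ has $|\Delta|\equiv 1$ and $\overline{\Delta}=\Delta^{-1}$ as claimed. The multiplier property on $H^{2,2}(\bbR)$ requires more care: since $|\Delta|=1$, the weighted $L^{2,2}$ bound is immediate, but the $H^2$ bound demands estimates on $\Delta'$ and $\Delta''$. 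I would use the identity $\Delta = \delta_+^2 (1-\lam|\rho|^2)$ (obtained from the jump relation) together with Plemelj-Sokhotski, which gives $\log\delta_+ = \frac12 f - \frac12 Hf$ where $H$ is the Hilbert transform. Since $H$ is bounded on $L^2(\bbR)$ and $f = \log(1-\lam|\rho|^2)$ lies in $H^2(\bbR)$ (via the estimate $|\log(1+u)-u| \lesssim |u|^2$ for bounded $u$ and the fact that $\lam|\rho(\lam)|^2 \in H^2$ when $\rho \in H^{2,2}$), one concludes $\log\delta_+ \in H^2\cap L^\infty$, and exponentiating preserves this regularity.

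The main obstacle I expect is controlling the derivatives of $\Delta$ to verify the $H^2$ multiplier property, specifically showing that $\lam \mapsto \log(1-\lam|\rho|^2)$ really lies in $H^2$ when $\rho\in H^{2,2}$; this needs Sobolev-algebra type estimates combined with the strict positivity $1-\lam|\rho|^2 \geq c >0$ to keep the composition with $\log$ well-controlled. Finally, (iii) is routine: for $|z|\to\infty$ nontangentially, $|z-\lam|^{-1} \to 0$ uniformly on compact sets of $\lam$, and the integrand in \eqref{delta} is dominated by $|f(\lam)|/\mathrm{dist}(z,\bbR)$ which goes to zero in $L^1$ by dominated convergence, so the integral in \eqref{delta} tends to $0$ and $\delta(z) \rarr 1$.
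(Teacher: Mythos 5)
Your proposal is correct and follows essentially the same route as the paper: log-linearize the multiplicative jump, solve the additive problem by the Cauchy transform of $h=\log(1-\lam|\rho(\lam)|^2)$, get uniqueness from Liouville, read off the boundary values via Plemelj--Sokhotski, deduce $|\Delta|=1$ from the reality of $h$, and reduce the multiplier property to $Hh\in H^2(\bbR)$ (the step you rightly flag as the real work, which the paper asserts with the same Sobolev-algebra justification you sketch). The only differences are cosmetic: your Poisson-kernel bound on $\mathrm{Re}\,C(-h)$ is a slightly more self-contained way to get $c^{-1}<|\delta|<c$ than the paper's citation of a Cauchy-integral estimate from Beals--Deift--Tomei, and your displayed Plemelj formula for $\log\delta_+$ has an inconsequential overall sign slip.
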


\begin{remark}
\label{rem:delta.to.delta.tilde}
Let $\tdelta(z)= \delta(z^2)$ for $z \in \Sigma$, and construct $a$, $\ba$, $b$, $\bb$ from $\rho$ as in the proof of Lemma \ref{lemma:rho.to.ab}. Then $\tdelta$ solves the scalar Riemann-Hilbert problem
\begin{align*}
\tdelta_+(\zeta)& = \tdelta_-(\zeta)\left(1 - \frac{b(\zeta)\bb(\zeta)}{a(\zeta)\ba(\zeta)}\right)\\
\tdelta_\pm - 1 &\in \dee C_\Sigma(L^2)
\end{align*}
with contour $\Sigma$, and $\tdelta_+(\zeta)=\ba(\zeta)$, $\tdelta_-(\zeta)=a(\zeta)^{-1}$.  A straightforward computation 
shows that the jump matrices \eqref{BC.v.right} and \eqref{BC.v.left} are related by
$$ 
v_{\ell}(\zeta) 
	= 	\diagmat{\tdelta_-(\zeta)^{-1}}{\tdelta_-(\zeta)} 
		v_{r}(\zeta) 
		\diagmat{\tdelta_+(\zeta)}{\tdelta_+(\zeta)^{-1}},
\quad \zeta \in \Sigma.
$$
It follows that if $M$ and $\bM$ are respective solutions to the Riemann-Hilbert problems $(\Sigma,v_r)$ and $(\Sigma,v_\ell)$, then
\begin{equation}
\label{BC:r.to.l}
\bM(x,z) = M(x,z) \diagmat{\tdelta(z)}{\tdelta(z)^{-1}}, \quad z \in \bbC \setminus \Sigma.
\end{equation}
\end{remark}

\begin{proof}
(i) The function 
$$g(\lam) = 1-\lam |\rho(\lam)|^2 $$ 
is strictly nonnegative and belongs to $L^{2,1}(\bbR) \cap H^2(\bbR)$ since 
$\rho \in H^{2,2}(\bbR)$.  
Since $g$ is continuous,  it is an easy 
consequence of Liouville's theorem that there is at most one solution. On the other 
hand, the boundary values of the function $\delta$ defined in \eqref{delta} are 
$$
\delta_\pm(\lam)
	=\exp\left(
				-\left[\pm \frac{1}{2}h(\lam)-\frac{1}{2}(Hh)(\lam)\right]
			\right), \quad h \equiv \log(g)
$$
by the Sokhotski-Plemelj  formulas \eqref{Plemelj-Sokhotski}. It follows that
$$
\frac{\delta_+(\lam)}{\delta_-(\lam)} 
	= \exp(-h(\lam)) = \left(1-\lam |\rho(\lam)|^2\right)^{-1}.
$$
so \eqref{delta} gives the unique solution.
Since $\log g \in H^1(\bbR)$, the Cauchy integral of $\log g$ 
is  a bounded function on $\bbC \setminus \bbR$ by standard estimates on the Cauchy integral (see, for example \cite[Lemma 23.3]{BDT88}), so that the exponential is bounded above and below in modulus.

 (ii) The function $\Delta(\lam)$ is given by
$$ \Delta(\lam) = \exp( (Hh)(\lam) ) $$
so $Hh \in H^2(\bbR)$ by the above properties of $g$.  With our normalization of $H$, $Hh$ is purely imaginary since $g$ 
is real, from which the identities $|\Delta(\lam)|=1$ and $\overline{\Delta(\lam)} = \left(\Delta(\lam)\right)^{-1}$ follow. It follows from the fact that $Hh \in H^2(\bbR)$ and $|\Delta(\lam)|=1$ that
$\Delta$ and $\Delta^{-1}$ are bounded multiplication operators on $H^{2,2}(\bbR)$.

(iii) follows by dominated convergence.

\end{proof}

Suppose now that $\bfN$ solves the original RHP and  let
$$ \bbfN(x,z) = \bfN(x,z) \Twomat{\delta(z)}{0}{0}{\delta(z)^{-1}}. $$
Then $\bbfN$ solves the RHP
\eqref{I:RH-}. Proposition \ref{prop:RH-} asserts that this RHP has a unique solution and defines a Lipschitz continuous map ${ \brho } \rarr \bq$. We now establish:

\begin{lemma}
\label{lemma:q.all}
For any $x \in \bbR$, $q(x) = \bq(x)$.
\end{lemma}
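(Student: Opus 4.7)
The plan is to reduce the equality $q(x)=\bq(x)$ to the assertion that the $1/z$ coefficients in the asymptotic expansions of $\bfN_{12}(x,z)$ and $\bbfN_{12}(x,z)$ (as $z\to\infty$ nontangentially in $\bbC\setminus\bbR$) coincide, and that each such coefficient delivers the reconstructed potential.

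First I would verify the conjugation identity
\[
\bbfN(x,z) = \bfN(x,z)\,\diag(\delta(z),\delta(z)^{-1}).
\]
A direct computation, using the scalar jump $\delta_+/\delta_- = (1-\lam|\rho|^2)^{-1}$ from Lemma \ref{lemma:delta.left}, the identity $\overline{\Delta}=\Delta^{-1}$ (so that $|\brho|^2=|\rho|^2$ and $\Delta\bar\rho=\overline{\brho}$), and the commutativity of diagonal matrices with $\exp(-i\lam x\,\ad\sigma)$, shows that the dressing $D(\lam)=\diag(\delta(\lam),\delta(\lam)^{-1})$ transforms the right jump into the left jump: $D_-^{-1}J D_+=\bJ$. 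Restricting to the second component yields
\[
\bbfN_{12}(x,z) = \bfN_{12}(x,z)\,\delta(z)^{-1}.
\]

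Second, I would identify the integral reconstruction formulas \eqref{RH+.q}/\eqref{RH-.q} as leading asymptotic coefficients. Starting from the Cauchy integral representation
\[
\bfN(x,z) = \bfe_1 + \int_\bbR \frac{\nu(x,\lam)\bigl(W_x^-(\lam)+W_x^+(\lam)\bigr)}{\lam-z}\,\frac{d\lam}{2\pi i},
\]
which is the row-vector analogue of the formula following Problem \ref{problem-RHP} and whose $\bfN_\pm$ boundary values satisfy \eqref{RH.n.BC}, I would expand $(\lam-z)^{-1}$ in powers of $1/z$ and compare with \eqref{q.recon.nu} to obtain
\[
\bfN_{12}(x,z) = \frac{iq(x)}{2z} + O(z^{-2}), \qquad z\to\infty.
\]
An identical derivation for the left RHP, with $\brho,\bnu$ in place of $\rho,\nu$, gives $\bbfN_{12}(x,z) = (i\bq(x)/2)\,z^{-1}+O(z^{-2})$.

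Third, combining the first two steps with Lemma \ref{lemma:delta.left}(iii), which yields $\delta(z)^{-1}=1+O(z^{-1})$ as $z\to\infty$ nontangentially, we have
\[
\bbfN_{12}(x,z) = \bfN_{12}(x,z)\bigl(1+O(z^{-1})\bigr) = \frac{iq(x)}{2z} + O(z^{-2}),
\]
from which $\bq(x)=q(x)$ follows by matching $1/z$ coefficients. The principal obstacle I anticipate lies in the second step: justifying that \eqref{RH+.q} and \eqref{RH-.q} really capture the leading asymptotic coefficient. For Schwartz class $\rho$ this is a routine Fourier/Cauchy manipulation (differentiate the Cauchy integral in $z$, let $z\to\infty$, and invoke Fubini and dominated convergence). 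For general $\rho\in V$ one would extend by density of $\calS(\bbR)\cap V$ in $V$, relying on the Lipschitz continuity of $\rho\mapsto q$ and $\brho\mapsto\bq$ from Propositions \ref{prop:RH+}--\ref{prop:RH-} together with the Lipschitz continuity of $\rho\mapsto\nu^\sharp$ from Lemma \ref{lemma:nu.x} to pass to the limit on both sides.
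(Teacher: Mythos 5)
Your proposal is correct and follows essentially the same route as the paper: reduce to Schwartz-class data by Lipschitz continuity, conjugate the right RHP by $\diag(\delta,\delta^{-1})$ to obtain the left one, and match the $1/z$ coefficients of $\bfN_{12}$ and $\bbfN_{12}$ using $\delta(z)=1+\bigO{z^{-1}}$ and the reconstruction formula $q(x)=2i\lim_{z\to\infty}zN_{12}(x,z)$. (Minor point: the leading coefficient is $-iq(x)/(2z)$ rather than $+iq(x)/(2z)$, but since the same constant appears on both sides this does not affect the conclusion.)
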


\begin{proof}
Since the maps $\rho \mapsto q$ and $\brho \mapsto \bq$ are Lipschitz, it suffices to prove equality for $\rho \in \calS(\bbR)$. In this case, the solutions $\bfN(x,z)$ and $\bbfN(x,z)$ have large-$z$ asymptotic expansions in $\bbC \setminus \bbR$ of the form
$$ 
\bfN(x,z) = \bfe_1 + \frac{N_{-1}(x)}{z} + \bigO{\frac{1}{z^2}},
\quad
\bbfN(x,z) = \bfe_1 + \frac{\bN_{-1}(x)}{z} + \bigO{\frac{1}{z^2}}
$$
while the function $\delta(z)$ satisfies
$$ 
\delta(z) = 1+ \bigO{\frac{1}{z}}.
$$
From these formulae, it is easy to see that
$$
N_{12}(x,z) = \bN_{12}(x,z) + \bigO{\frac{1}{z^2}}. 
$$
We now use the fact that the reconstruction formulas \eqref{RH+.q} for $q$ and \eqref{RH-.q} for $\bq$ are equivalent to the formulae
\begin{align*}
%\label{q.recon.N}
q(x)		&= 	\lim_{z \rarr \infty} z N_{12}(x,z)\\
%\label{bq.recon.bN}
\bq(x)	&= 	\lim_{z \rarr \infty} z \bN_{12}(x,z)
\end{align*}
to conclude that $q = \bq$. 
\end{proof}

Let $\chi \in C^\infty(\bbR)$ with $\chi(x) = 1$ for $x \geq 1$ and $\chi(x) =0$ for $x < -1$. Define
$$\calI(\rho) = q(x) \chi(x) + \bq(x) (1-\chi(x)).$$
The map $\calI$ is Lipschitz continuous from $H^{2,2}(\bbR)$ to itself. It remains to show that $\calI$ inverts $\calR$ and that $\calR$ is one-to-one. To this end, we use the following uniqueness result for the {Beals-Coifman solutions} of \eqref{LS.red}.

\begin{proposition}
\label{prop:BC}
Suppose that $q \in H^{2,2}(\bbR) \cap U$.
\begin{itemize}
\item[(i)]		There exists at most one matrix-valued solution $M(x,z)$, 
				analytic for $z \in \bbC \setminus \Sigma$,
				 of the problem
				\begin{align*}
				\frac{d}{dx} M(x,z) &=	-iz^2 \ad \sigma  (M) + z 
				{ Q(x) } M + P(x) M,\\
				\lim_{x \rarr +\infty} M(x,z) &= \One,\\
				M(x,z) &\text{ is bounded as }x \rarr -\infty.\\
				\end{align*}
				
\item[(ii)]	There exists at most one matrix-valued solution $M(x,z)$, 
				analytic for $z \in \bbC \setminus \Sigma$,
				 of the problem
				\begin{align*}
				\frac{d}{dx} M(x,z) &=	-iz^2 \ad \sigma  (M) + z  
				{Q(x)} M + P(x) M,\\
				\lim_{x \rarr -\infty} M(x,z) &= \One,\\
				M(x,z) &\text{ is bounded as }x \rarr +\infty.\\
				\end{align*}
\end{itemize}
\end{proposition}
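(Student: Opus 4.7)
The plan is a standard ODE uniqueness argument, exploiting the fact that the conjugation $A \mapsto e^{-ixz^2\sigma}Ae^{ixz^2\sigma}$ leaves diagonal entries fixed while multiplying off-diagonal entries by exponentials with strict growth or decay once $z \notin \Sigma$.

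First I would reduce the equation to a standard first-order linear system by setting $\Psi(x,z) = M(x,z)\,e^{-ixz^2\sigma}$, which satisfies $\Psi_x = L(x,z)\Psi$ with $L(x,z) = -iz^2\sigma + zQ(x) + P(x)$. Since $\Tr L \equiv 0$ (the traces of $\sigma$, of the off-diagonal $Q$, and of $P = \operatorname{diag}(-\tfrac{i}{2}|q|^2,\tfrac{i}{2}|q|^2)$ all vanish), Liouville's formula gives that $\det\Psi = \det M$ is constant in $x$, and the asymptotic condition $M \to \One$ at one end forces $\det M \equiv 1$. In particular, $M$ is invertible for every $x$, with $M^{-1} = \operatorname{adj}(M)$, so boundedness of $M$ at the other end propagates to boundedness of $M^{-1}$.

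Next, given two solutions $M_1, M_2$ of problem (i), the corresponding $\Psi_1, \Psi_2$ solve the same linear ODE, so $\Psi_1^{-1}\Psi_2 \equiv A(z)$ is constant in $x$. Unwrapping gives
$$M_2(x,z) = M_1(x,z)\, B(x,z), \qquad B(x,z) := e^{-ixz^2\sigma}\,A(z)\,e^{ixz^2\sigma},$$
and a direct computation shows $B_{\diag} = A_{\diag}$ while $B_{12}(x,z) = A_{12}\,e^{-2ixz^2}$ and $B_{21}(x,z) = A_{21}\,e^{2ixz^2}$. I would then close the argument by examining $B(x,z)$ at both ends of the line. Sending $x \to +\infty$ in $B = M_1^{-1}M_2$ with $M_1,M_2 \to \One$ gives $B \to \One$: since $z \in \bbC \setminus \Sigma$ forces $\Imag(z^2) \neq 0$, exactly one of $e^{\pm 2ixz^2}$ grows exponentially at $+\infty$, so the corresponding off-diagonal entry of $A$ must vanish, while the diagonal entries must equal $1$. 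The remaining off-diagonal entry of $A$ is then killed by sending $x \to -\infty$: $B = M_1^{-1}M_2$ is bounded there (by boundedness of $M_1, M_2$ together with $\det M_1 = 1$), but the exponential factor that decayed at $+\infty$ now grows at $-\infty$, so this entry must also be zero. Hence $A(z) = \One$ and $M_1 = M_2$. Problem (ii) is handled identically after interchanging the roles of $\pm\infty$.

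This argument is essentially routine; the main point to track carefully is that, for each fixed $z$ with $\Imag(z^2) \neq 0$, exactly one off-diagonal entry of $A$ is eliminated at each end, and together these dispose of both. No analytical difficulty arises: the hypothesis $q \in H^{2,2}(\bbR) \cap U$ enters only in guaranteeing that such solutions $M$ exist in the first place, and plays no role in uniqueness.
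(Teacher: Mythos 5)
Your argument is correct and is essentially the paper's own proof: both reduce to the constant matrix $A(z)$ via $M_2 = M_1 e^{-ixz^2\ad\sigma}A(z)$ and kill the off-diagonal entries of $A$ using the exponential growth of $e^{\pm 2ixz^2}$ (one entry at each end of the line) together with the normalization at $+\infty$ and boundedness at $-\infty$. You merely make explicit two details the paper leaves implicit, namely $\det M \equiv 1$ via Liouville's formula and the boundedness of $M_1^{-1}=\operatorname{adj}(M_1)$.
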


\begin{proof}
We prove (i) since the proof of (ii) is similar. Suppose that $M_1$ and $M_2$ are two such solutions. It is easy to prove that there is a matrix $A(z)$ with $\det A(z)=1$ and 
$$
M_1(x,z) 	= M_2(x,z) e^{-iz^2 x \ad \sigma } A(z)
				= M_2(x,z) \Twomat{A_{11}(z)}{A_{12}(z) e^{-2iz^2 x}}
											{A_{21}(z) e^{2iz^2 x}}{A_{22}(z)}
$$
Using the exponential blow-up of the factors $e^{\pm i z^2 x}$ as $x \rarr \infty$ together with the asymptotic conditions it is easy to see that $A_{12}(z) = A_{21}(z) = 0$ for $z \in \bbC \setminus \Sigma$.
We can then use the asymptotic condition as $x \rarr +\infty$ to show that $A_{11}(z) = A_{22}(z) = 1$.
Hence $M_1=M_2$.
\end{proof}

\begin{lemma}
\label{lemma:invert}
For any $\rho \in H^{2,2}(\bbR) \cap V$, $\calI (\rho) \in U$ and $\calR (\calI (\rho)) = \rho$. Moreover, the map $\calR$ is one-to-one from $U$ onto $V$. 
\end{lemma}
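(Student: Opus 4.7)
The plan is to establish the identity $\calR \circ \calI = \mathrm{id}_V$ first for Schwartz-class $\rho$, using the RHP uniqueness results from Sections \ref{sec:BC} and \ref{sec:RH.Schwartz}, and then to extend to all of $V$ by the Lipschitz continuity of Theorem \ref{thm:I} and density of $\calS(\bbR)$ in $H^{2,2}(\bbR)$. Once this identity is secured, injectivity of $\calR$ on $U$ follows from the parallel identity $\calI \circ \calR = \mathrm{id}_U$, obtained by running the same argument starting from $q$, and surjectivity is automatic from $\calR \circ \calI = \mathrm{id}_V$.

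For $\rho \in \calS(\bbR) \cap V$, I would invoke Lemma \ref{lemma:rho.to.ab} to produce scattering data $(a, \ba, b, \bb)$ satisfying all conditions (i)--(vii) of Problem \ref{prob:RH.M}; in particular $a$ is zero-free on $\Sigma \cup \Omega^-$ by construction. Lemma \ref{lemma:unique.RH.M} furnishes a unique $M$ solving that problem, Propositions \ref{prop:mu.to.nu}--\ref{prop:nu.to.mu} identify the row-vector Beals-Coifman solution $\nu$ associated to $M$ with the one used in the definition of $\calI(\rho)$, and Proposition \ref{prop:recon} produces $q = \calI(\rho)$ satisfying \eqref{LS.red} through the reconstruction formulas \eqref{Q.recon}--\eqref{P.recon}. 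The key step is then to identify $M$ with the right Beals-Coifman solution $M_r$ of \eqref{LS.red} associated to this $q$: both are piecewise analytic on $\bbC \setminus \Sigma$, solve the same linear system, are normalized so that $M \to \One$ as $x \to +\infty$, and are bounded as $x \to -\infty$, so Proposition \ref{prop:BC}(i) forces $M = M_r$. Matching the jump \eqref{BC.jump} for $M$ against the jump \eqref{Mr.jump} for $M_r$, whose entries are written in terms of the scattering data of $q$, identifies those scattering data with $(a, \ba, b, \bb)$, yielding $\calR(q) = \rho$ and placing $q$ in $U$. A symmetric argument based on the left RHP \eqref{I:RH-}, Remark \ref{rem:delta.to.delta.tilde}, and Proposition \ref{prop:BC}(ii), together with Lemma \ref{lemma:q.all}, handles the reconstruction on the left half-line.

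The main obstacle is the extension from Schwartz class to all of $V$, and in particular verifying $\calI(\rho) \in U$ for $\rho \in V \setminus \calS(\bbR)$. I would approximate $\rho \in V$ by $\rho_n \in \calS(\bbR) \cap V$ (dense because $V$ is open and $\calS(\bbR)$ is dense in $H^{2,2}(\bbR)$) and use Lipschitz continuity of $\calI$ to obtain $q_n = \calI(\rho_n) \in U$ converging to $q = \calI(\rho)$ in $H^{2,2}(\bbR)$. The scattering function $a$ reconstructed from $\rho$ by \eqref{RH.asharp.sol} depends continuously on $\rho \in V$, thanks to the uniform lower bound $1 - \lam|\rho(\lam)|^2 \geq c > 0$, and is zero-free on $\Sigma \cup \Omega^-$ by construction. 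Since the Schwartz-class analysis equates $\alpha(q_n)$ with $a$ evaluated at $\rho_n$, continuity in $\rho$ transfers the zero-free property to $\alpha(q)$ in the limit, placing $q$ in $U$; the equality $\calR(\calI(\rho)) = \rho$ then extends to all of $V$ by continuity of both $\calI$ and $\calR$. Injectivity and surjectivity of $\calR: U \to V$ follow immediately: surjectivity from $\calR \circ \calI = \mathrm{id}_V$, and injectivity from the analogously extended $\calI \circ \calR = \mathrm{id}_U$.
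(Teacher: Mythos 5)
Your proposal follows the paper's route in all essentials: reduce to $\rho \in \calS(\bbR)\cap V$ by density and Lipschitz continuity, pass from the row-vector problem on $\bbR$ to the matrix problem on $\Sigma$ via Lemma \ref{lemma:rho.to.ab} and Propositions \ref{prop:mu.to.nu}--\ref{prop:nu.to.mu}, use Proposition \ref{prop:recon} to see that the RHP solution solves \eqref{LS.red} with potential $q=\calI(\rho)$, identify it with the right Beals--Coifman solution via Proposition \ref{prop:BC}(i), and read $\calR(q)=\rho$ off the jump matrix. The one substantive point you assert rather than derive is exactly where the paper's proof does its work: the properties you list for $M$ --- that $M(x,z)\to\One$ as $x\rarr+\infty$ and that $M$ is bounded as $x\rarr-\infty$ --- are \emph{not} part of the RHP normalization (which only controls the $z\rarr\infty$ behavior). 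The paper obtains the first from the decay estimate $\norm{\mu-\One}{L^2}\leq C(1+|x|)^{-1}$ (a consequence of \eqref{S[1]} and the uniform resolvent bound) together with the Riemann--Lebesgue lemma, and the second by comparing $M$ with the solution $\bM$ of the left RHP through the conjugation \eqref{BC:r.to.l} and the two-sided bounds on $\tdelta$ from Lemma \ref{lemma:delta.left}(i). You have the right ingredients on the table (you cite Remark \ref{rem:delta.to.delta.tilde} and both parts of Proposition \ref{prop:BC}), but these two asymptotic facts must be proved before Proposition \ref{prop:BC}(i) can be invoked.

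Two smaller differences. You derive injectivity of $\calR$ from $\calI\circ\calR=\mathrm{id}_U$, which requires rerunning the whole identification argument starting from $q$; the paper proves injectivity in two lines: if $\calR(q_1)=\calR(q_2)$, the two right Beals--Coifman solutions satisfy the same jump relation, hence coincide by Lemma \ref{lemma:unique.RH.M}, hence $q_1=q_2$ by the reconstruction formula, and $\calI\circ\calR=\mathrm{id}_U$ is then a corollary. Both orders of deduction are valid. Finally, your explicit argument that $\calI(\rho)\in U$ for non-Schwartz $\rho$ --- via continuity of $\rho\mapsto a^\sharp$ and the zero-free exponential formula \eqref{RH.asharp.sol} --- addresses a point the paper leaves implicit, and is a reasonable way to close it.
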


\begin{proof}
To prove the first assertion, it suffices by Lipschitz continuity and density to show that $\calR (\calI (\rho) ) = \rho$ for $\rho \in \calS(\bbR) \cap V$. For given $\rho \in \calS(\bbR) \cap V$  we solve the Beals-Coifman integral equations for the RHP's \eqref{I:RH+} and \eqref{I:RH-}, obtaining solutions $\nu$ and $\bnu$.  By Proposition \ref{prop:nu.to.mu} of \S \ref{sec:RH.Schwartz} we can construct solutions $\mu$ and $\bmu$ to the Beals-Coifman integral equations for the corresponding RHP's on $\Sigma$. Now define
\begin{align*}
M(x,z) 
	&= \One
			+ \int_\Sigma \mu(x,\zeta) 
					\left(w_x^+(\zeta) + w_x^-(\zeta)\right)
					 \frac{1}{\zeta-z} \, \frac{d\zeta}{2\pi i},\\[5pt]
\bM(x,z)
	&=  \One
			+ \int_\Sigma \bmu(x,\zeta) 
					\left(\bw_x^+(\zeta) + \bw_x^-(\zeta)\right)
					 \frac{1}{\zeta-z} \, \frac{d\zeta}{2\pi i}.
\end{align*}
The functions $M$ and $\bM$ solve \eqref{LS.red} and are analytic in $z \in \bbC \setminus \Sigma$. Using the estimate \eqref{S[1]}, the boundedness of $(I-S)^{-1}$, the solution formula $\nu -1 = \nu_0^\sharp + (I-S)^{-1} \nu_0^\sharp$, and the construction of $\mu$ from $\nu$ in Proposition \ref{prop:nu.to.mu}, it is easy to see that
$ \norm{\mu-\One}{L^2} \leq C (1+|x|)^{-1}$ for $x > 0$. 
Using this estimate, the Riemann-Lebesgue lemma, and the formula
\begin{align*}
M(x,z) 	&= 	\One + 
					\int_\Sigma 
						\left(\mu(x,\zeta)-\One \right)
						\left(w_x^+ + w_x^-\right) \, 
						\frac{1}{\zeta-z} \, 
						\frac{d\zeta}{2\pi i}\\
			&\quad + 
					\int_\Sigma \left(w_x^+ + w_x^- \right)
						\frac{1}{\zeta-z} \, 
						\frac{d\zeta}{2\pi i}
\end{align*} 
we see that $M(x,z) \rarr \One$ as $x \rarr +\infty$. A similar argument shows that $\bM(x,z) \rarr \One$ as $x \rarr -\infty$.  It now follows from \eqref{BC:r.to.l} and Lemma \ref{lemma:delta.left}(i) that $M(x,z)$ is bounded as $x \rarr -\infty$.
Hence, $M(x,z)$ is the unique ``right'' Beals-Coifman solution for $q=\calI(\rho)$. 

Let $s(\zeta) = \bb(\zeta)/a(\zeta)$ where $a$ and $\bb$ are constructed from $\rho$ via Lemma \ref{lemma:rho.to.ab}. Then $M_\pm(x,\zeta)$ satisfy the jump relation
$$ 
M_+(x,\zeta) = M_-(x,\zeta)  v_x(\zeta), \quad
v_x(\zeta) = e^{-ix\zeta^2 \ad \sigma }\Twomat{1-s(\zeta)\overline{s(\zetabar)}}{s(\zeta)}{-\overline{s(\zetabar)}}{1}, 
$$
This is equivalent to the statement that $\calR(\calI(\rho))=\rho$ since the scattering data for the unique Beals-Coifman solutions corresponds exactly to $\rho$. 

Next, we prove that $\calR$ is one-to-one. Suppose that $q_1$ and $q_2$ are potentials with $\calR(q_1)=\calR(q_2)$.  We can construct ``right'' Beals-Coifman solutions $M_1$ and $M_2$ which 
both satisfy the \emph{same} jump relation. It follows that $(M_1-M_2)_+ = (M_1-M_2)_- v_x$ and $(M_1-M_2)_\pm \in \dee C(L^2)$
so that $M_1=M_2$ by uniqueness of solutions to the RHP. But then the reconstruction formulas show that $q_1=q_2$, so $\calR$ is one-to-one.
\end{proof}

\begin{proof}[Proof of Proposition \ref{prop:RH.map}]
An immediate consequence of Lemmas \ref{lemma:q.all} and \ref{lemma:invert}.
\end{proof}

\section{Inverse Scattering Solution to DNLS}
\label{sec:RHP-t}

%%%%%%%%%%%%%%%%%%%%%%%%%%%%%%%%%%%%%%%%%%
%
%		Section 6 as revised by CS			12.16.2015
%
%%%%%%%%%%%%%%%%%%%%%%%%%%%%%%%%%%%%%%%%%%

The purpose of this section is to prove that the function
\begin{equation}
\label{q.recon.t.bis} 
q(x,t) = \calI\left( e^{-4i(\dotarg)^2 t} (\calR q_0)(\dotarg) \right)(x)
\end{equation}
solves the equation \eqref{DNLS2} if $q_0 \in \calS(\bbR) \cap U$ where $U$ is the spectrally determined set in Theorem \ref{thm:DNLS2}. 
{ Equation \eqref{zcr2}  from
 Appendix \ref{app:gauge}  } % equation \eqref{DNLS2.ZC} 
is  the zero-curvature representation of \eqref{DNLS2} where $q=q(x,t)$:
\begin{align}
\label{DNLS.Lax.x.bis}
\psi_x 	&=	-i\zeta^2 \psi + \zeta Q(x,t)\psi + P(x,t) \psi,\\
\label{DNLS.Lax.t.bis}
\psi_t		&=	-i\zeta^4 \psi + 2\zeta^3 Q(x,t) \psi
					+i\zeta^2 \diagmat{-|q|^2}{|q|^2} \psi	\\
\nonumber
			&\quad
					+ i\zeta \offdiagmat{q_x}{-\qbar_x} \psi 
					+ \frac{i}{4}\diagmat{|q|^4}{-|q|^4}\psi \\
\nonumber
			&\quad
					+\frac{1}{2}
						\diagmat{q_x\qbar-q \qbar_x}
									{q \qbar_x - q_x \qbar} \psi.
\end{align}
Recall that a \emph{fundamental solution} to this system is an invertible matrix-valued solution $\psi(x,t,\zeta)$.
The computations in Appendix \ref{app:gauge} imply the following criterion which is the key to our analysis.

\begin{lemma} 
\label{lemma:Lax}
{ Let  } $q \equiv q(x,t) \in C^\infty(\bbR \times \bbR)$, 
$$ 
Q(x,t) = \offdiagmat{q(x,t)}{\overline{q(x,t)}}, \quad
P(x,t) = \frac{i}{2} \diagmat{-|q(x,t)|^2}{|q(x,t)|^2},
$$
and suppose that there exists a fundamental solution 
$\psi(x,t,\zeta)$ of the system \eqref{DNLS.Lax.x.bis}--\eqref{DNLS.Lax.t.bis} for each $\zeta \in \Sigma$.
Then $q(x,t)$ is a classical solution of \eqref{DNLS2}.
\end{lemma}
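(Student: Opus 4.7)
The approach is to extract the zero-curvature compatibility condition from the existence of a fundamental solution and invoke the equivalence (established in Appendix \ref{app:gauge}) of that condition with \eqref{DNLS2}.

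Let $L(x,t,\zeta)$ and $A(x,t,\zeta)$ denote the coefficient matrices on the right-hand sides of \eqref{DNLS.Lax.x.bis} and \eqref{DNLS.Lax.t.bis}, so that $\psi_x = L\psi$ and $\psi_t = A\psi$. The first step is to differentiate the first equation in $t$ and the second in $x$, obtaining
\[
\psi_{xt} = (L_t + LA)\psi, \qquad \psi_{tx} = (A_x + AL)\psi.
\]
Because $L$ and $A$ have smooth coefficients in $(x,t)$ and polynomial dependence on $\zeta$, the solution $\psi$ is $C^2$ in $(x,t)$, so the mixed partials agree. Subtracting and using the fact that $\psi(x,t,\zeta)$ is a fundamental (hence pointwise invertible) solution for each $\zeta \in \Sigma$, I can cancel $\psi$ on the right to conclude
\[
L_t - A_x + [L,A] = 0
\]
identically in $(x,t)$ for every $\zeta \in \Sigma$.

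The second step is to promote this from $\zeta \in \Sigma$ to all $\zeta \in \bbC$. Since $L$ is a polynomial of degree $2$ in $\zeta$ and $A$ is a polynomial of degree $4$, the expression $L_t - A_x + [L,A]$ is a matrix-valued polynomial in $\zeta$ of degree at most $6$. It vanishes on the infinite set $\Sigma$, hence it vanishes identically as a polynomial in $\zeta$. Collecting coefficients of $\zeta^k$ in this identity for $k=0,1,\dots,6$ yields a finite system of scalar equations. The high-degree coefficients (those coming from purely diagonal or $\zeta^{\geq 1}$ contributions) reduce to algebraic identities built into the definition of $A$ in terms of $q$ and its derivatives, and the remaining coefficient, once simplified using the off-diagonal/diagonal decomposition and the formula for $P$, is exactly the matrix form of the scalar equation \eqref{DNLS2} for $q(x,t)$.

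The main obstacle is the bookkeeping in computing $[L,A]$ and sorting the resulting polynomial in $\zeta$ into vanishing coefficients. This algebraic verification, however, is precisely the content of Appendix \ref{app:gauge}, which establishes that \eqref{zcr2} is the zero-curvature representation of \eqref{DNLS2} with the specified $L$ and $A$. Thus the only new content of the lemma beyond Appendix \ref{app:gauge} is the elementary observation that a fundamental solution forces the zero-curvature equation, together with the polynomial-density argument that extends the identity from $\Sigma$ to all of $\bbC$; the conclusion $q \in C^\infty(\bbR^2)$ solves \eqref{DNLS2} then follows directly.
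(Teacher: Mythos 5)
Your proposal is correct and follows essentially the same route the paper intends: the paper simply asserts that the computations in Appendix \ref{app:gauge} imply the criterion, and the implicit argument is precisely your chain (equality of mixed partials plus invertibility of $\psi$ forces $L_t - A_x + [L,A]=0$ on $\Sigma$, polynomial dependence on $\zeta$ upgrades this to an identity in $\zeta$, and the coefficient identities computed in the Appendix reduce to \eqref{DNLS2}). You have merely made explicit the standard compatibility argument that the paper leaves unstated.
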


Our main result is:

\begin{theorem}
\label{thm:RHP.t.q}
Suppose that $q_0 \in \calS(\bbR) \cap U$. Then the function \eqref{q.recon.t.bis} is a classical solution of \eqref{DNLS2}.
\end{theorem}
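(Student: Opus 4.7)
The plan is to apply Lemma \ref{lemma:Lax}: to construct a fundamental matrix solution $\psi(x,t,\zeta)$ of the full Lax pair \eqref{DNLS.Lax.x.bis}--\eqref{DNLS.Lax.t.bis} whose potential agrees with the $q(x,t)$ given by \eqref{q.recon.t.bis}. For $q_0 \in \calS(\bbR) \cap U$, I would set $\rho_0 = \calR q_0 \in \calS(\bbR)$ and $\rho(\lam,t) = e^{-4i\lam^2 t}\rho_0(\lam)$. Because $|e^{-4i\lam^2 t}| = 1$, the quantity $1 - \lam|\rho(\lam,t)|^2$ is independent of $t$ and remains strictly positive, so $\rho(\cdot,t) \in \calS(\bbR) \cap V$ and depends smoothly on $t$. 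For each $(x,t)$ the RHP of \S\ref{sec:RH.Schwartz} then has a unique Beals-Coifman solution $M(x,t,z)$, and $q(x,t) = \calI(\rho(\cdot,t))(x)$ lies in $\calS(\bbR)$ as a function of $x$ for each $t$; smoothness in $t$ follows by differentiating the Beals-Coifman equation \eqref{RH.M.BC} in $t$, using the explicit form of $\partial_t v$.

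The $x$-Lax equation \eqref{DNLS.Lax.x.bis} for $M_\pm(x,t,\zeta)$ is already a consequence of Proposition \ref{prop:recon} applied for each fixed $t$, with $Q$ and $P$ recovered by the reconstruction formulas \eqref{Q.recon}--\eqref{P.recon}. The main work is to establish the $t$-Lax equation \eqref{DNLS.Lax.t.bis}. My strategy here mirrors the proof of Proposition \ref{prop:recon}, but with $x$ replaced by $t$. Differentiating the jump relation $M_+ = M_- e^{-ix\zeta^2 \ad \sigma } v(\zeta,t)$ in $t$ and using $\partial_t \rho = -4i\lam^2 \rho$ produces a $t$-analogue of \eqref{jump.x}, now with the more complicated right-hand side dictated by \eqref{DNLS.Lax.t.bis}. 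I would then define
\begin{equation*}
W_\pm(x,t,\zeta) = \frac{\partial M_\pm}{\partial t}(x,t,\zeta) - \calA(x,t,\zeta)\, M_\pm(x,t,\zeta),
\end{equation*}
where $\calA(x,t,\zeta)$ is the matrix $\zeta$-polynomial on the right of \eqref{DNLS.Lax.t.bis}, and verify (a) $W_+ = W_- v_x$ on $\Sigma$, and (b) $W_\pm \in \dee C_\Sigma(L^2)$, whence $W_\pm \equiv 0$ by the uniqueness Lemma \ref{lemma:unique.RH.M}.

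The delicate step is the correct identification of $\calA$ from the higher-order large-$z$ asymptotics of $M$. Expanding $M(x,t,z) = \One + M_{-1}(x,t)/z + M_{-2}(x,t)/z^2 + \bigO{z^{-3}}$ and substituting into \eqref{LS.red} determines $M_{-1}$ in terms of $Q$ and $M_{-2}$ in terms of $Q$, $Q_x$, and $|q|^2$; these are precisely the building blocks of the coefficients of $\zeta^3$, $\zeta^2$, $\zeta$, and $\zeta^0$ appearing in \eqref{DNLS.Lax.t.bis}. Lemma \ref{lemma:commutator} would be applied, as in the derivation of \eqref{Com1}--\eqref{Com2}, to convert factors of $\zeta^k$ acting on $M_\pm$ into Cauchy integrals plus polynomial corrections, and matching these corrections against the expected form of $\calA$ closes the argument. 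This algebraic bookkeeping is the purpose of Appendix \ref{app:time}, and is the main obstacle: one must check that every polynomial contribution produced by $\partial_t M_\pm + i\zeta^4 M_\pm$ is exactly cancelled by a contribution of $\calA M_\pm$, leaving only the $L^2$-Cauchy remainder required by (b).

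Finally, with both Lax equations verified on $\Sigma$, the function
\begin{equation*}
\psi(x,t,\zeta) = M_\pm(x,t,\zeta)\, e^{-i(x\zeta^2 + 2t\zeta^4)\sigma}, \qquad \zeta \in \Sigma,
\end{equation*}
is a matrix solution of \eqref{DNLS.Lax.x.bis}--\eqref{DNLS.Lax.t.bis}. Since $\det M_\pm \equiv 1$, by the standard Liouville argument used in the proof of Lemma \ref{lemma:unique.RH.M}, $\psi$ is invertible and hence a fundamental solution. Applying Lemma \ref{lemma:Lax} then delivers the conclusion that the $q(x,t)$ defined by \eqref{q.recon.t.bis} is a classical solution of \eqref{DNLS2}.
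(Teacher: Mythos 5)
Your proposal is correct and follows essentially the same route as the paper: the $x$-equation is taken from Proposition \ref{prop:recon}, the $t$-equation is obtained by differentiating the jump relation in $t$, using the commutator identities of Lemma \ref{lemma:commutator} to trade powers of $\zeta$ acting on $M_\pm$ for moments of the Beals--Coifman integrand (equivalently the coefficients $M_{-1}$, $M_{-2}$ of the large-$z$ expansion), and matching the resulting polynomial terms against $A(x,t,\zeta)$ before invoking uniqueness of the RHP; this is precisely the content of Proposition \ref{prop:RHP.t} and Appendix \ref{app:time}.
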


\begin{remark}
From the continuity of the solution map \eqref{DNLS2.sol}, it follows that if 
$q_0 \in H^{2,2}(\bbR) \cap U$, then the function \eqref{q.recon.t.bis} is a strong solution of \eqref{DNLS2}.
\end{remark}

By Lemma \ref{lemma:Lax}, it suffices to construct a fundamental solution for \eqref{DNLS.Lax.x.bis}--\eqref{DNLS.Lax.t.bis}. We will do so using the RHP.

Suppose that $M_\pm$ solve the RHP
\begin{align}
\label{RHP.t}
M_+(x,t,\zeta)	&=	M_-(x,t,\zeta) e^{it\theta \ad (\sigma)} v (\zeta)  \\
\nonumber
M_\pm - I		&\in 	\dee C(L^2)
\end{align}
where
$$
v(\zeta)	=	\Twomat{1-s(\zeta)\overline{s(\zetabar)}}{s(\zeta)}{-\overline{s(\zetabar)}}{1},
\quad
\theta(x,t,\zeta)= -\left( \zeta^2 \frac{x}{t}+ 2 \zeta^4 \right).$$
In terms of the transition matrix
$$ T(\zeta) = \twomat{a(\zeta)}{\bb(\zeta)}{b(\zeta)}{\ba(\zeta)} $$
we have $s(\zeta)= \bb(\zeta)/a(\zeta)$, and we have used the symmetries
$$ 
\ba(\zeta) = \overline{a(\zetabar)}, \quad 
\bb(\zeta)= \overline{b(\zetabar)}.
$$
We also have $a(-\zeta)=a(\zeta)$, $b(-\zeta)=-b(\zeta)$ so that $s(\zeta)$ is odd. 
We will assume that $s \in \calS(\Sigma)$. We have the factorization $v =(I-w_-)^{-1}(I+w_+)$ 
where
\begin{equation}
\label{RHP.w}  
w_- = \offdiagmat{s(\zeta)}{0}, \quad w_+ = \offdiagmat{0}{-\overline{s(\zetabar)}} 
\end{equation}
We define
\begin{equation}
\label{RHP.wxt}
w_{x,t}^\pm(\zeta) = e^{it\theta \ad \sigma } w^\pm(\zeta), 
\end{equation}
so that
\begin{equation}
\label{RHP.wxt.t}
\frac{\dee w_{x,t}^\pm}{\dee x}=-i\zeta^2 \ad \sigma  (w_{x,t}^\pm), \quad
\frac{\dee w_{x,t}^\pm}{\dee t} =- i\zeta^4 \ad \sigma  w_{x,t}^\pm. 
\end{equation}

%We will prove:

\begin{proposition}
\label{prop:RHP.t}
Suppose that $a,b,\ba,\bb$ satisfy conditions (i)--(vii) in Problem 
\ref{prob:RH.M}  and that  $M_\pm$
solve the RHP \eqref{RHP.t}. { Let}
\begin{align*}
 Q(x,t) 	&= {  -  } \frac{1}{2\pi} \ad \sigma  
						\left[ \int_\Sigma \mu 
								\left(w_{x,t}^+  + w_{x,t}^- \right) 
						\right]    \\
P(x,t) 	&=	i Q(x,t) (\ad \sigma )^{-1} Q(x,t) = 
						\frac{i}{2} \Diagmat{-|q|^2}{|q|^2}.
\end{align*}
Then $M_\pm$  are fundamental solutions for the Lax equations
\begin{align}
\label{RHP.Lax.x}
\frac{\dee M_\pm}{\dee x} 
	&	= -i\zeta^2 \ad \sigma (M_\pm) + \zeta Q(x) M_\pm + P(x) M_\pm,\\
\label{RHP.Lax.t}
\frac{\dee M_\pm}{\dee t}(x,t,\zeta) 
	&=-2i\zeta^4 \ad \sigma (M_\pm) +  A(x,t,\zeta)M_\pm(x,t,\zeta) 
\end{align}
where
\begin{align}
\label{RHP.A}
A(x,t,\zeta) 
	&= 			2 	\zeta^3 \Offdiagmat{q}{\qbar}
					+ 	i\zeta^2 \Diagmat{-|q|^2}{|q|^2} 
					+	i\zeta\Offdiagmat{q_x}{-\qbar_x}\\[5pt]
\nonumber
	&\quad 		+ \frac{i}{4} \Diagmat{|q|^4}{-|q|^4} 
					+ \frac{1}{2} \Diagmat{q_x \qbar - q \qbar_x}{q\qbar_x  - q_x \qbar}
\end{align}
\end{proposition}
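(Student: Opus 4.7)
The spatial Lax equation \eqref{RHP.Lax.x} is an immediate corollary of Proposition \ref{prop:recon}: since $\ad \sigma$ is a derivation, the jump matrix factors as $e^{it\theta \ad \sigma}v(\zeta) = e^{-ix\zeta^2 \ad \sigma}\bigl[e^{-2it\zeta^4 \ad \sigma}v(\zeta)\bigr]$, displaying precisely the $x$-dependence required by Problem \ref{prob:RH.M}. The modified scattering data $\tilde v(\zeta) = e^{-2it\zeta^4 \ad \sigma}v(\zeta)$ continues to satisfy conditions (i)--(vii), because $\zeta^4$ is real and even on $\Sigma$, so the twist multiplies the off-diagonal entries of $v$ by unimodular, even-in-$\zeta$ phases. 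Hence Proposition \ref{prop:recon} applies verbatim at each fixed $t$, yielding \eqref{RHP.Lax.x} together with the stated reconstruction formulas for $Q(x,t)$ and $P(x,t)$.

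The temporal Lax equation \eqref{RHP.Lax.t} is proved by mimicking the strategy of Proposition \ref{prop:recon}. Differentiating the jump relation $M_+ = M_- v_{x,t}$ in $t$ and using \eqref{RHP.wxt.t} one obtains
\begin{equation*}
\tfrac{\dee M_+}{\dee t} + 2i\zeta^4 \ad \sigma (M_+) = \bigl(\tfrac{\dee M_-}{\dee t} + 2i\zeta^4 \ad \sigma (M_-)\bigr) v_{x,t}.
\end{equation*}
Applying Lemma \ref{lemma:commutator} with $m=4$ expands
\begin{equation*}
\zeta^4 \ad \sigma (M_\pm) = \ad \sigma \bigl( C_\Sigma^\pm[(\dotarg)^4 f]\bigr) - \sum_{j=0}^{3} \zeta^{3-j} \frac{\ad \sigma (f_j)}{2\pi i},
\end{equation*}
where $f = \mu(w_{x,t}^+ + w_{x,t}^-)$ and $f_j = \int_\Sigma s^j f(s) \, ds$. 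By Lemma \ref{lemma:mu.even-odd} together with the odd parity of $s$ and $\overline{s}$ on $\Sigma$, the moments $f_j$ are off-diagonal for even $j$ and diagonal for odd $j$; therefore $\ad \sigma$ annihilates $f_1$ and $f_3$, and only the $\zeta^3$ and $\zeta^1$ polynomial remainders survive. The $\zeta^3$ remainder reproduces, via \eqref{Q.recon}, the $2\zeta^3 Q$ coefficient of $A$.

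To identify the remaining coefficients of $A$, one substitutes the large-$\zeta$ expansion $M(x,t,\zeta) = \One + \sum_{k \ge 1} M^{(k)}(x,t) \zeta^{-k}$ into the already-established equation \eqref{RHP.Lax.x} and matches powers of $\zeta$. This produces the recursion
\begin{equation*}
i\ad \sigma (M^{(k+1)}) = -\tfrac{\dee}{\dee x} M^{(k)} + Q M^{(k)} + P M^{(k-1)}, \quad k \ge 1,
\end{equation*}
with $M^{(0)} = \One$ and base case $Q = i\ad \sigma (M^{(1)})$. Iterating twice expresses $\ad \sigma (M^{(3)})$ (which is proportional to $\ad \sigma (f_2)$) as an explicit polynomial in $q, \qbar, q_x, \qbar_x, |q|^2$. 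Combining this with the parsing of the extra $\zeta Q M_\pm$ and $P M_\pm$ contributions supplied by another application of Lemma \ref{lemma:commutator} (cf.\ \eqref{Com2}) reproduces the $\zeta^2$, $\zeta^1$, and $\zeta^0$ entries of $A$ from \eqref{RHP.A}. One then defines
\begin{equation*}
W_\pm = \tfrac{\dee M_\pm}{\dee t} + 2i\zeta^4 \ad \sigma (M_\pm) - A(x,t,\zeta) M_\pm,
\end{equation*}
verifies that $W_+ = W_- v_{x,t}$ and $W_\pm \in \dee C_\Sigma(L^2)$ (the polynomial parts of $A$ precisely cancel the polynomial remainders extracted above, so that only the $\ad \sigma (C_\Sigma^\pm[(\dotarg)^4 f])$ piece remains), and invokes Lemma \ref{lemma:unique.RH.M} to conclude $W_\pm \equiv 0$. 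Since $\det M_\pm \equiv 1$ by the Liouville argument of that lemma, $M_\pm$ is invertible and hence a fundamental solution; Lemma \ref{lemma:Lax} then delivers Theorem \ref{thm:RHP.t.q}.

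The principal obstacle is bookkeeping. After two iterations of the recursion one must verify that the resulting expressions combine with the $\zeta Q M_\pm$ and $P M_\pm$ contributions to yield exactly the coefficients $i\zeta^2 \diagmat{-|q|^2}{|q|^2}$, $i\zeta \offdiagmat{q_x}{-\qbar_x}$, $\tfrac{i}{4}\diagmat{|q|^4}{-|q|^4}$, and $\tfrac{1}{2}\diagmat{q_x \qbar - q \qbar_x}{q \qbar_x - q_x \qbar}$ prescribed by \eqref{RHP.A}. The parity properties from Lemma \ref{lemma:mu.even-odd} are essential at every stage for separating diagonal from off-diagonal contributions and ruling out spurious terms; the derivative contributions $q_x, \qbar_x$ enter through the $\dee M^{(k)}/\dee x$ terms in the recursion, while the quartic contributions $|q|^4$ arise only from $P M^{(2)}$.
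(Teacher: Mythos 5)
Your proposal is correct and follows essentially the same route as the paper: differentiate the jump relation in $t$, expand $\zeta^4 \ad\sigma(M_\pm)$ with the commutator formula of Lemma \ref{lemma:commutator}, kill $\ad\sigma(f_1)$ and $\ad\sigma(f_3)$ by the parity statements of Lemma \ref{lemma:mu.even-odd}, and extract $\ad\sigma(f_2)$ from the already-proved spatial Lax equation --- your asymptotic recursion at order $\zeta^{-1}$ is literally the paper's identity $\frac{d}{dx}f_0 = -i\ad\sigma(f_2) + Q f_1 + P f_0$ rewritten in terms of the expansion coefficients $M^{(k)} = -f_{k-1}/(2\pi i)$, before both arguments close with the uniqueness lemma applied to $W_\pm$. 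The one step you leave implicit is showing that $\partial M_\pm/\partial t \in \dee C_\Sigma(L^2)$ (needed before the vanishing lemma can be applied to $W_\pm$), which the paper establishes separately in Lemma \ref{lemma:dm.dt} by differentiating the Beals--Coifman integral equation in $t$ and solving for $\partial\mu/\partial t$ in $L^2(\Sigma)$; this is routine but should be stated.
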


\begin{proof}
We have already shown in Proposition \ref{prop:recon} that, for each fixed $t$, $M_\pm$ obeys \eqref{RHP.Lax.x}. In Appendix \ref{app:time},  we show  that \eqref{RHP.Lax.t} also holds. Straightforward computation then shows that the functions $\psi_\pm = M_\pm e^{it\theta \sigma}$ obey \eqref{DNLS.Lax.x.bis}--\eqref{DNLS.Lax.t.bis}. 
\end{proof}
{
 The proof of Theorem \ref{thm:RHP.t.q} is }
an immediate consequence of Proposition \ref{prop:RHP.t} and Lemma \ref{lemma:Lax}.
%\end{proof}

%%%%

\appendix

\section{Gauge Equivalence}
\label{app:gauge}

%%%%%%%%%%%%%%%%%%%%%%%%%%%%%%%%%%%%%%%%%%%
%
%		Appendix A as revised by CS			12.16.2015
%
%%%%%%%%%%%%%%%%%%%%%%%%%%%%%%%%%%%%%%%%%%%

In this Appendix, we provide  details about  the  correspondence between the gauge transformation  relating solutions $u$ and $q$ of DNLS equations  \eqref{DNLS1} and \eqref{DNLS2}  ($\varepsilon=1$) and a matrix gauge transformation  relating their respective Lax pairs $(L,A)$ and $(L',A')$. 

 We write $v=\overline{u}$ and $r=\overline{q}$. %Recall from Kaup and Newell \cite{KN78} that 
We know from  the original paper of Kaup and Newell \cite{KN78} that  the DNLS equation \eqref{DNLS1} is equivalent to the zero-curvature condition
\footnote{This terminology refers to a geometrical interpretation of \eqref{pair} where the matrix operators $L$ and $A$ are seen as connection coefficients.}
\begin{equation}\label{zcr1}
 L_t - A_x + [L,A] =0 
 \end{equation}
where 
the operators $L$ and $A$ are given by 
%where
\begin{equation*}
\begin{aligned}
%label{DNLS.L}
L 	&= 	-i \zeta^2 \sigma + \zeta U(x), \\[0.2cm]
%\label{DNLS.A}
A 	&=	(-i)\twomat{A_{11}}{A_{12}}{A_{21}}{-A_{11}}.
\end{aligned}
\end{equation*}

Here $ U = \twomat{0}{u}{v}{0}$,
%$$
%\sigma=\twomat{1}{0}{0}{-1}, \quad
% V = \twomat{0}{u}{v}{0},
%$$
while
\begin{align*}
A_{11}	&=	2\zeta^4+\zeta^2 uv \\
A_{12}	&=	2i\zeta^3 u - \zeta u_x +i\zeta u^2 v\\
A_{21}	&=	2i\zeta^3 v + \zeta v_x +i\zeta u v^2 .
\end{align*}
The zero-curvature representation \eqref{zcr1}
$$ L_t - A_x+[L,A] = 
	\zeta \Twomat{0}{u_t-iu_{xx}-\left(u^2 v\right)_x}
			{v_t+iv_{xx}-\left(v^2 u\right)_x}{0} =0
$$
 gives the  evolution equations 
\begin{equation}
\label{DNLS.uv}
\begin{aligned}
%\label{DNLS.u}
u_t	&=	iu_{xx}+(u^2 v)_x, \\
%\label{DNLS.v}
v_t	&=	-iv_{xx}-(v^2 u)_x.
\end{aligned}
\end{equation}

\begin{proposition}
%We show that the DNLS equations \eqref{DNLS1} and \eqref{DNLS2}  (with $\varepsilon=1$) whose respective solutions $u$ and $q$  are related by the gauge transformation
%\begin{equation}
%\label{q.gauge}
%q(x,t) = u(x,t) e^{-2i\varphi(x,t)} \quad  {\rm with} \quad  \varphi(x,t)  = \frac{1}{2} \int_{\infty}^x |u(y,t)|^2 dy
%\end{equation}
% q(x,t) = u(x,t) \exp\left({-i\eps \int_{-\infty}^x |u(y,t)|^2 dy}\right)
%\end{equation}
The zero-curvature representation associated to \eqref{DNLS2} has the form 
\begin{equation} \label{zcr2}
L_t '- A_x' + [L',A'] =0
\end{equation}
where the  Lax  pair  $(L',A')$  is  equivalent to $(L,A)$  through a matrix gauge transformation and 
\begin{equation}
\begin{aligned}
L'	&=	-i\zeta^2 \sigma + \zeta Q(x) + P(x)\\[0.2cm]
A'	&=	(-i) \twomat{A'_{11}}{A'_{12}}{A'_{21}}{-A'_{11}}
\end{aligned}
\end{equation}
with
\begin{equation}
\label{DNLS2.ABC}
\begin{aligned}
A'_{11}	&=	2\zeta^4 + \zeta^2 qr -\frac{1}{4}q^2 r^2 +\frac{i}{2}\left(q_x r- q r_x \right)\\
A'_{12}	&=	2i\zeta^3q -\zeta q_x \\
A'_{21}	&=	2i\zeta^3r +\zeta r_x  \ .
\end{aligned}
\end{equation}
\end{proposition}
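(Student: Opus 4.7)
The plan is to construct the matrix gauge transformation explicitly from the scalar gauge \eqref{q.gauge} and verify that it intertwines the two Lax pairs. Setting $\phi(x,t) = \int_{-\infty}^x |u(y,t)|^2 \, dy$ so that \eqref{q.gauge} with $\varepsilon = 1$ reads $q = u e^{-i\phi}$, I would define
$$ G(x,t) = e^{-i\phi(x,t)\sigma/2} = \diag\left(e^{-i\phi/2},\, e^{i\phi/2}\right), $$
then set $L' := G L G^{-1} + G_x G^{-1}$ and $A' := G A G^{-1} + G_t G^{-1}$ and check these coincide with the operators in the statement. Since the zero-curvature tensor transforms covariantly,
$$ L'_t - A'_x + [L', A'] = G \bigl(L_t - A_x + [L,A]\bigr) G^{-1}, $$
equation \eqref{zcr2} then holds if and only if \eqref{zcr1} does, so via $q = u e^{-i\phi}$ the evolution for $q$ is equivalent to \eqref{DNLS.uv} for $u$.

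Verifying the form of $L'$ is straightforward: since $G$ is diagonal and commutes with $\sigma$, conjugation sends $U$ to $Q$ (the off-diagonal entries acquire factors $e^{\mp i\phi}$ which turn $u,\bar u$ into $q,\bar q$), while $G_x G^{-1} = -\tfrac{i}{2}\phi_x \sigma = -\tfrac{i}{2}|q|^2\sigma = P(x)$ since $\phi_x = |u|^2 = |q|^2$. Thus $L' = -i\zeta^2 \sigma + \zeta Q + P$, as claimed.

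The main work is the verification of $A'$. First, using DNLS1 and decay of $u$ at $-\infty$, I would derive the identity
$$ \phi_t = i(u_x \bar u - u \bar u_x) + \tfrac{3}{2}|u|^4, $$
which yields $G_t G^{-1} = -\tfrac{i}{2}\phi_t\,\sigma$. Next I would carry out the conjugation $GAG^{-1}$: the diagonal entries of $A$ are preserved, while the off-diagonal ones pick up factors $e^{\mp i\phi}$. Translating to $q$ using $u_x = (q_x + i|q|^2 q) e^{i\phi}$ and its conjugate, the $\zeta$-coefficients of the $(1,2)$ and $(2,1)$ entries of $GAG^{-1}$ collapse to $2\zeta^3 q + i\zeta q_x$ and its adjoint, after cancellation of the cubic contributions $\zeta u^2 \bar u$ and $\zeta u \bar u^2$ present in the original $A$; this is exactly $-iA'_{12}$ and $-iA'_{21}$ from \eqref{DNLS2.ABC}.

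The subtle step is the diagonal entry. The $\zeta^4$ and $\zeta^2$ terms of $-iA'_{11}$ are inherited directly from $-iA_{11} = -2i\zeta^4 - i\zeta^2 |u|^2$; the novel order-$\zeta^0$ contribution $\tfrac{i}{4}|q|^4 + \tfrac{1}{2}(q_x \bar q - q\bar q_x)$ in $-iA'_{11}$ must come entirely from $(G_t G^{-1})_{11} = -\tfrac{i}{2}\phi_t$. The phase identity $u_x \bar u - u \bar u_x = (q_x \bar q - q \bar q_x) + 2i|q|^4$, which follows directly from $u = qe^{i\phi}$, produces exactly this expression after substitution into $\phi_t$, so $A'_{11}$ matches \eqref{DNLS2.ABC}; the $(2,2)$ entry then follows by the trace-free structure of $A$. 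This is the main obstacle because the cancellation involves a three-way conspiracy between the cubic nonlinearity in $A_{11}$, the derivative term hidden in $\phi_t$, and the phase shift produced by writing $u_x$ in terms of $q_x$; once this is in place, the gauge-covariance formula above completes the proof.
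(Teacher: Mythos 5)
Your proposal is correct and follows essentially the same route as the paper: a diagonal gauge matrix built from the phase $\tfrac12\int_{-\infty}^x|u|^2\,dy$, verification of $L'$ via $G_xG^{-1}$, and verification of $A'$ via conjugation together with the time derivative of the phase computed from \eqref{DNLS.uv}, with the same cancellation of the cubic terms in the off-diagonal entries and the same identification of the order-$\zeta^0$ diagonal terms. The only (harmless) difference is that you conclude by invoking covariance of the curvature under gauge transformation, whereas the paper expands \eqref{zcr2} entrywise to read off \eqref{DNLS2} directly.
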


\begin{proof}
A $2 \times 2$ matrix-valued function $G(x,t)$ defines a gauge transformation to a new Lax pair 
\begin{align}
\label{L'}
L'	&=	G L G^{-1} + G_x G^{-1} \\
\label{A'}
A' &=	G A G^{-1} + G_t G^{-1} { .}
\end{align}
Indeed,  if $\psi_x = L\psi$ and $\psi_t=A\psi$, then the function $\Psi=G \psi$ satisfies $\Psi_x = L'\Psi$ and $\Psi_t = A'\Psi$. 
We seek a gauge  transformation in  the   matrix form
\begin{equation}
\label{Omega}
G(x,t)=\twomat{e^{i\varphi}}{0}{0}{e^{-i\varphi}} .
\end{equation}
A simple computation shows that 
$$ L' = -i \zeta^2 \sigma + \zeta Q + P $$
where, setting
$$ q = e^{-2i\varphi} u, \quad r = e^{2i\varphi} v, $$ 
we have
$$ Q = \twomat{0}{q}{r}{0}, \quad P = \twomat{-i \varphi_x}{0}{0}{i\varphi_x}. $$
We wish to choose $\varphi$ so that\footnote{This condition insures that the RHP associated to the inverse scattering map  will be   properly normalized for large 
scattering parameter. }
$$P = i Q (\ad \sigma)^{-1} Q =\twomat{-\frac{i}{2}qr}{0}{0}{\frac{i}{2}qr}. $$
It follows that 
\begin{equation}
\label{phi.bis}
\varphi(x,t) = \frac{1}{2} \leftint qr \, dy = \frac{1}{2} \leftint uv \, dy. 
%\varphi(x,t) = -\frac{1}{2} \leftint qr \, dy = \frac{1}{2} \leftint uv \, dy. 
\end{equation}
%We compute $A'$ and check that the zero-curvature condition
%$ L'_t-A'_x + [L',A'] = 0 $
%gives \eqref{DNLS2}.
We get $A'$ in the form  %\eqref{DNLS2.ABC} 
$ A' = (-i)\twomat{A'_{11}}{A'_{12}}{A'_{21}}{-A'_{11}} $
with
\begin{align*}
A'_{11}	&=	2\zeta^4 + \zeta^2 qr - \varphi_t\\
A'_{12}	&=	2i\zeta^3q -\zeta u_x e^{2i\varphi}+ i\zeta q^2 r\\
A'_{21}	&=	2i\zeta^3r +\zeta v_x e^{-2i\varphi} + i\zeta qr^2 \ .
\end{align*}
We can express $u_x e^{-2i\varphi}$ and $v_x e^{2i\varphi} $ in terms of $q$ and $r$ by differentiating the identities $u=e^{2i\varphi}q$ and $v=e^{-2i\varphi}r$ to obtain
\begin{equation}\label{uvx}
u_x e^{-2i\varphi} 	=	q_x + i q^2 r, \quad 
v_x e^{2i\varphi}		=	r_x - iqr^2.
\end{equation}
We compute $\varphi_t$  using that $u$ and $v$ obey the equations \eqref{DNLS.uv} % and the relations \eqref{uvx} %. With this assumption, we can compute
$$
\varphi_t = \left( \frac{1}{2} \leftint uv \, dy \right)_t
	=	\frac{i}{2}\left(u_x v - uv_x \right) + \frac{3}{4}(u^2 v^2) = -\frac{1}{4}q^2 r^2 +\frac{i}{2}\left(q_x r- q r_x \right)
$$
Finally, a short computation shows that the condition
$ L'_t-A'_x + [L',A'] = 0 $ gives the following equations (in the order $(11)$, $(12)$, $(21)$, $(22)$ of entries in the matrices):
\begin{align*}
-\frac{i}{2}\left(q_tr+qr_t\right)
	-\frac{i}{2}\left(r^2 qq_x+q^2rr_x\right)
	-\frac{1}{2}\left(rq_{xx}-qr_{xx}\right)
&=	0\\
q_t -iq_{xx}+q^2 r_x -\frac{i}{2}r^2q^3 &=0\\
r_t +ir_{xx}+r^2 q_x +\frac{i}{2}r^3 q^2 &=0\\
\frac{i}{2}\left(q_tr+qr_t\right)
	+\frac{i}{2}\left(r^2 qq_x+q^2rr_x\right)
	+\frac{1}{2}\left(rq_{xx}-qr_{xx}\right)
&=	0
\end{align*}
In particular, the $(12)$ and $(21)$ equations hold, the $(11)$ and $(22)$ equations are vacuous. It shows that \eqref{zcr2} give a zero-curvature representation of \eqref{DNLS2}, and that the transformation \eqref{q.gauge} indeed maps solutions of \eqref{DNLS1} to solutions of \eqref{DNLS2}.
\end{proof}

%%%%%%%%%%%%%%%%%%%%%%%%%%%%%%%%%%%
%
%		Appendix Deleted by PAP
%
%%%%%%%%%%%%%%%%%%%%%%%%%%%%%%%%%%

\section{Time-Evolution of Solutions to the RHP}
\label{app:time}

%%%%%%%%%%%%%%%%%%%%%%%%%%%%%%%%%%%%%%%%%%%
%	
%		Appendix C (now Appendix B) as revised by CS			12.16.2015
%
%%%%%%%%%%%%%%%%%%%%%%%%%%%%%%%%%%%%%%%%%%%

We prove that the solutions $M_\pm$ of the RHP \eqref{RHP.t} solve equation \eqref{RHP.Lax.t}, completing the proof of Proposition \ref{RHP.t}.   The computation is  similar to that presented  in the proof of Proposition \ref{prop:recon}  { except that now, we take into account the time evolution}.
We write 
$$g _\pm \doteq h_\pm$$ if 
$$g_+ - h_+ = C^+k, \,\, \, g_- -  h_- = C^-k \quad \text{ for the same function }k \in L^2(\Sigma).$$
Since the RHP \eqref{RHP.t} has a unique solution, we have:

\begin{lemma}
\label{lemma:RHP.t.vanish}
Suppose that $G_\pm \doteq 0$ and $G_+ = G_- v_{x,t}$. Then $G_+ = G_- = 0$.
\end{lemma}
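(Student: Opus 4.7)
The plan is as follows. The hypothesis $G_\pm \doteq 0$ means by definition that $G_\pm = C^\pm k$ for some common $k \in L^2(\Sigma)$; in particular $G_\pm \in \dee C_\Sigma(L^2)$. Combined with the multiplicative relation $G_+ = G_-\, v_{x,t}$, this says that the pair $(G_+, G_-)$ solves the Riemann--Hilbert problem on $\Sigma$ with jump matrix $v_{x,t}$, but with \emph{zero} (rather than identity) normalization at infinity.

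The most efficient route is to add this null solution to a genuine solution and invoke uniqueness. Lemma \ref{lemma:unique.RH.M}, applied at the fixed $(x,t)$, yields a unique $M_\pm$ solving the RHP \eqref{RHP.t} with $M_\pm - \One \in \dee C_\Sigma(L^2)$. I would then set $\widetilde{M}_\pm := M_\pm + G_\pm$ and observe that
\[
\widetilde{M}_+ \;=\; M_+ + G_+ \;=\; M_-\, v_{x,t} + G_-\, v_{x,t} \;=\; \widetilde{M}_-\, v_{x,t},
\]
while $\widetilde{M}_\pm - \One = (M_\pm - \One) + G_\pm \in \dee C_\Sigma(L^2)$ since both summands lie in this space. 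Thus $\widetilde{M}_\pm$ is another solution of the same normalized RHP, so by uniqueness $\widetilde{M}_\pm = M_\pm$, forcing $G_\pm = 0$.

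The only point needing care is that Lemma \ref{lemma:unique.RH.M} is applied here to the $t$-dependent jump $v_{x,t} = e^{it\theta\,\ad\sigma}\,v(\zeta)$ rather than the $x$-only version originally considered. But hypotheses (i)--(vii) of Problem \ref{prob:RH.M} concern only the scattering data $(a,\ba,b,\bb)$, and the diagonal conjugation by $e^{it\theta\,\ad\sigma}$ neither alters $\det v_{x,t} = \det v = 1$ nor the analyticity and spectral assumptions on $a$ and $\ba$. So uniqueness transfers without modification, and there is no substantive obstacle beyond this bookkeeping check.
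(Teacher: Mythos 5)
Your argument is correct and is exactly the content the paper intends: the paper simply asserts the lemma with the remark ``since the RHP \eqref{RHP.t} has a unique solution,'' and your additive trick $\widetilde{M}_\pm = M_\pm + G_\pm$ is the standard way to reduce the homogeneous (zero-normalized) problem to that uniqueness statement. Your closing remark about transferring Lemma \ref{lemma:unique.RH.M} to the jump $v_{x,t}=e^{it\theta\,\ad\sigma}v$ is the right bookkeeping check, since the conjugation is by a unimodular diagonal matrix and preserves $\det v_{x,t}=1$.
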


We will differentiate the jump relation in \eqref{RHP.t}
and use a commutator formula to show that the function
$$
G_\pm = \frac{\dee M_\pm}{\dee t} - A(x,t,\zeta)M_\pm - M_\pm(2i\zeta^4 \sigma) 
$$
obeys $G_\pm \doteq 0$ and $G_+ = G_- v_{x,t}$, proving Proposition \ref{prop:RHP.t}. 
A computation similar to that leading to \eqref{jump.x} gives

\begin{equation}
\label{RHP.t.jump.t}
\frac{\dee M_+}{\dee t} + i\zeta^4 \ad \sigma (M_+)
	= \left( \frac{\dee M_-}{\dee t} + i\zeta^4 \ad \sigma M_- \right) v_{x,t}.
\end{equation}
 We need  to evaluate $i\zeta^4 \ad \sigma  (M_\pm)$ modulo terms $\doteq 0$. 
To this end, we  use the commutator  formula  \eqref{Cpm.com}
\begin{equation}
\label{RHP.t.com}
\zeta^m C^\pm \left[ f \right](\zeta) 
	\doteq		- \sum_{j=0}^{m-1} \zeta^{m-1-j} \frac{f_j}{2 \pi i}
\end{equation}
The function
%\begin{align}
%\label{RHP.t.f}
%f(x,t,\zeta) 	&= 	\mu(x,t,\zeta) \left(w_{x,t}^+(\zeta) + w_{x,t}^-(\zeta) \right) \\[10pt]
%\nonumber
%				&=	\Twomat{-\overline{s(\zetabar)}\mu_{12}(x,t,\zeta)}{s(\zeta) \mu_{11}(x,t,\zeta)}
%									{-\overline{s(\zetabar)}\mu_{22}(x,t,\zeta)}{s(\zeta) \mu_{21}(x,t,\zeta)}
%\end{align}
\begin{align*}
%\label{RHP.t.f}
f(x,t,\zeta) 	= 	\mu(x,t,\zeta) \left(w_{x,t}^+(\zeta) + w_{x,t}^-(\zeta) \right) 
				=	\Twomat{-\overline{s(\zetabar)}\mu_{12}(x,t,\zeta)}{s(\zeta) \mu_{11}(x,t,\zeta)}
									{-\overline{s(\zetabar)}\mu_{22}(x,t,\zeta)}{s(\zeta) \mu_{21}(x,t,\zeta)}
\end{align*}
and its moments
$$
f_j(x,t)		=	\int_\Sigma \zeta^j f(x,t,\zeta) \, d\zeta, \quad j=0,1,2,3.
$$
play a crucial role in the the computations since the solution of the RHP
has the large-$z$ asymptotic expansion
\begin{equation}
\label{RHP.t.M.z}
M(x,z) \sim { \One} - \frac{1}{z} \frac{f_0}{2\pi i} - \frac{1}{z^2} \frac{f_1}{2\pi i} + \ldots. 
\end{equation}
By symmetry, $f_0$ and $f_2$ vanish on the diagonal, while $f_1$ and $f_3$ vanish off the diagonal, so that in particular 
\begin{equation}
\label{RHP.t.f1f3}
\ad \sigma  (f_1) = \ad \sigma  (f_3) = 0.
\end{equation}
We recall the reconstruction formula
\begin{align}
\label{RHP.t.q}
\offdiagmat{q}{\qbar}	&=	-\frac{1}{2\pi} 
											\ad \sigma  (f_0) 	\\[5pt]
\nonumber
								&=	-\frac{1}{\pi} \int_\Sigma
										\offdiagmat{\mu_{11}(x,\zeta) s(\zeta) e^{-2is^2 x}}
														{\mu_{22}(x,\zeta) \overline{s(\zetabar)} e^{-2is^2 x}}
										\, d\zeta
\end{align}
which implies
\begin{equation}
\label{RHP.t.f0}
f_0	=	\pi \offdiagmat{-q}{\qbar}.
\end{equation}
The formula
\begin{equation}
\label{RHP.t.-1}
M_\pm = {  \One} + C^\pm f
\end{equation}
implies that
\begin{equation}
\label{RHP.t.-3}
\zeta(\One-M_\pm) \doteq \frac{1}{2i} \offdiagmat{-q}{\qbar} \doteq \frac{1}{2i} \offdiagmat{-q}{\qbar} M_\pm
\end{equation}
where the last step follows from the fact that $M_\pm \doteq { \One}$.
In the course of the computations,  we will need to evaluate  { the off-diagonal matrix $f_2$.}
%$$
%f_2	=	\Offdiagmat{\left(f_2\right)_{12}}{\left(f_{2}\right)_{21}}
%$$
It follows from \eqref{RHP.Lax.x} and the identity $f = M_+ - M_-$ that
$$ \frac{\dee f}{\dee x} = -i\zeta^2 \ad \sigma  (f) + \zeta Q(x) f + P(x) f.$$
Hence
\begin{align} \label{RHP.t.7}
\frac{d}{dx} f_0	&=	\int_\Sigma \left(   -i  { \zeta^2} \ad \sigma ( f ) + \zeta Q(x) f + P(x) f \right) \, d\zeta\cr
					&=	-i \ad \sigma  (f_2) + Q(x) f_1 + P(x) f_0 .
\end{align}

\begin{lemma} 
\label{lemma:dm.dt}
The identity
\begin{equation}
\label{RHP.t.0}
\frac{\dee M_\pm}{\dee t} \doteq 0.
\end{equation}
holds.
\end{lemma}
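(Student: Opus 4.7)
The plan is to exploit the representation $M_\pm = \One + C_\Sigma^\pm f$ with $f = \mu(w_{x,t}^+ + w_{x,t}^-)$ and to commute the $t$-derivative with the Cauchy projectors. Since $g_\pm \doteq h_\pm$ demands a \emph{common} function $k \in L^2(\Sigma)$ realizing $g_\pm - h_\pm = C_\Sigma^\pm k$, it will suffice to prove that $f$ is $t$-differentiable as an $L^2(\Sigma)$-valued function and that
$$
\frac{\dee M_\pm}{\dee t}(x,t,\zeta) = C_\Sigma^\pm\!\left(\frac{\dee f}{\dee t}(x,t,\cdot)\right)\!(\zeta).
$$
Taking $k = \dee f/\dee t$ then yields $\dee M_\pm/\dee t \doteq 0$.

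First I would check that $w_{x,t}^\pm$ is smooth in $t$ with values in $L^2(\Sigma) \cap L^\infty(\Sigma)$. From \eqref{RHP.wxt.t}, $\dee w_{x,t}^\pm/\dee t = -i\zeta^4 \ad\sigma(w_{x,t}^\pm)$; although $\zeta^4$ is unbounded on $\Sigma$, the Schwartz decay of $s \in \calS(\Sigma)$ absorbs it, placing the derivative in $L^2(\Sigma) \cap L^\infty(\Sigma)$. Next I would differentiate the Beals--Coifman equation $\mu = \One + \calC_{w_{x,t}} \mu$ in $t$. Formally, $(I - \calC_{w_{x,t}}) \mu_t = \dot{\calC}_w \mu$, where $\dot{\calC}_w h := C_\Sigma^+(h\, \dee w_{x,t}^-/\dee t) + C_\Sigma^-(h\, \dee w_{x,t}^+/\dee t)$ is bounded on $L^2(\Sigma)$ by the $L^\infty$-bound above together with the $L^2$-boundedness of $C_\Sigma^\pm$. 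Since the RHP \eqref{RHP.t} has a unique solution (Lemma \ref{lemma:unique.RH.M}), Proposition \ref{prop:BC.sol} gives the existence of $(I - \calC_{w_{x,t}})^{-1}$ on $L^2(\Sigma)$, so $\mu_t \in L^2(\Sigma)$ is well-defined; a standard difference-quotient argument, together with continuity of the resolvent in $t$ via the second resolvent identity, upgrades the formal calculation to genuine $L^2$-differentiability of $\mu$ in $t$.

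Finally, the product rule applied to $f = \mu(w_{x,t}^+ + w_{x,t}^-)$---using $\mu - \One \in L^2(\Sigma)$, $w_{x,t}^\pm \in L^\infty(\Sigma)$, and $\dee w_{x,t}^\pm/\dee t \in L^2(\Sigma) \cap L^\infty(\Sigma)$---places $\dee f/\dee t$ in $L^2(\Sigma)$. Because $C_\Sigma^\pm$ are bounded linear operators on $L^2(\Sigma)$, differentiation in $t$ commutes with them, yielding $\dee M_\pm/\dee t = C_\Sigma^\pm(\dee f/\dee t)$ and hence the desired conclusion with witness $k = \dee f/\dee t$. The main obstacle is the second step: justifying that the implicitly defined solution $\mu$ of the Beals--Coifman equation is $L^2$-differentiable in $t$. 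The subtlety is that the $t$-derivative of $\calC_{w_{x,t}}$ introduces the unbounded multiplier $\zeta^4$, and only the Schwartz decay of $s$ keeps the combination inside the function-space framework where the resolvent estimate applies.
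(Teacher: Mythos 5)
Your proposal is correct and follows essentially the same route as the paper: differentiate the Beals--Coifman equation $\mu = \One + \calC_{w}\mu$ in $t$, observe that the inhomogeneous term lies in $L^2(\Sigma)$ because $\mu - \One \in L^2(\Sigma)$ while $\dee w_{x,t}^\pm/\dee t \in L^2(\Sigma)\cap L^\infty(\Sigma)$ (the $\zeta^4$ factor being absorbed by the Schwartz decay of $s$), solve for $\mu_t \in L^2(\Sigma)$ via the resolvent, and then differentiate $M_\pm = \One + C_\Sigma^\pm f$ to exhibit the common witness $k = \dee f/\dee t$. Your added remarks on the difference-quotient justification and on commuting $\dee/\dee t$ with the bounded projectors $C_\Sigma^\pm$ only make explicit what the paper leaves implicit.
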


\begin{proof}
Recall that 
$ M_\pm = \mu( {\One}  \pm w_{x,t}^\pm) $
where $w_{x,t}$ is given by \eqref{RHP.wxt}. 
Assume that  $ { s   \in \calS(\Sigma) }$. We claim that $(\dee \mu/\dee t)(x,t,\dotarg) \in L^2(\Sigma)$. If so, then \eqref{RHP.t.0} follows by differentiating \eqref{RHP.t.-1}.

We prove that $(\dee \mu/\dee t)(x,t,\dotarg) \in L^2(\Sigma)$ using the Beals-Coifman integral equation satisfied by $\mu$
$$
\mu = { \One} + C^+(\mu w_{x,t}^-) + C^-(\mu w_{x,t}^+) = { \One} + \calC_w \mu.
$$
We obtain 
\begin{equation}
\label{RHP.t.BC}
\frac{\dee \mu}{\dee t}(x,t,\zeta) = g(x,t,\zeta) + \calC_w \left( \frac{\dee \mu}{\dee t} \right) 
\end{equation}
where
$$ 
g(x,t,\zeta)= 	C^-\left( \mu \frac{\dee w_{x,t}^+}{\dee t} \right) +
					C^+\left( \mu \frac{\dee w_{x,t}^-}{\dee t} \right).
$$
Since $\mu-{ \One } \in L^2(\Sigma)$ and $\dee w_{x,t}^\pm/\dee t \in L^\infty(\Sigma) \cap L^2(\Sigma)$, it follows that $g(x,t,\dotarg) \in L^2(\Sigma)$ for each $(x,t)$, { and eq. \eqref{RHP.t.BC} for $\dee \mu/ \dee t$} can be solved in $L^2(\Sigma)$. %for $\dee \mu/ \dee t$. 
\end{proof}

From the { commutator formula \eqref{RHP.t.com} }
%for $C^\pm$ 
(applying it {  successively} for $j=4$ and $j=3$) and the identity \eqref{RHP.t.f1f3}, we have

\begin{align}
\label{RHP.t.1}
2i\zeta^4 \ad \sigma  (M_\pm)		
		&\doteq		
						 -	\zeta^3 \frac{1}{\pi} \ad \sigma  
									\left[  { f_0} \right]
						-	\zeta \frac{1}{\pi} \ad \sigma  
									\left[ { f_2} \right]\\[5pt]	
\nonumber	
		&\doteq		2\zeta^3 Q(x) M_\pm + 2\zeta^3 Q(x)(I-M_\pm) 
							-  \frac{\zeta}{\pi} \ad \sigma  \left[ {f_2}\right]\\[5pt]
\nonumber
		&\doteq		2\zeta^3 Q(x) M_\pm + \frac{\zeta^2}{\pi i }  Q(x) f_0 + \frac{ \zeta}{\pi i }Q(x)  f_1  
							+ \frac{Q(x)}{\pi i} f_2 
%\\							
%\nonumber
%		&\quad			
-   \frac{\zeta}{\pi} \ad \sigma  \left[f_2\right]
\end{align}
We wish to re-write the last four terms on the right-hand side of \eqref{RHP.t.1} as coefficients times $M_\pm$, 
modulo the equivalence relation. We will see that terms involving $f_1$ cancel so we will keep separate track of 
these. 
%Using \eqref{RHP.t.com} and the identity
%$$ \frac{Q(x)}{2\pi i} f_0 = 2iQ(x) (\ad \sigma )^{-1} Q(x) = i \diagmat{-|q|^2}{|q|^2} $$
We have (using  again the identity matrix $ \One = M_\pm + (\One -M_\pm)$)
\begin{align*}
%\label{RHP.t.2}
\frac{\zeta^2}{\pi i } Q(x)  f_0 
	%&=	i \zeta^2 \diagmat{-|q|^2}{|q|^2}  \\[10pt]
%\nonumber
	&=		i \zeta^2 \diagmat{-|q|^2}{|q|^2} M_\pm
%	 \\[5pt]
%\nonumber
   %  &\quad
				 + i \diagmat{-|q|^2}{|q|^2}\left[ \zeta^2 ( { \One}-M_\pm) \right]\\[10pt]
\nonumber
\text{(by \eqref{RHP.t.com}, }m=2) \quad
	&\doteq	i \zeta^2 \diagmat{-|q|^2}{|q|^2} M_\pm(x)
	\\[5pt]
\nonumber
&\quad
				+ i \diagmat{-|q|^2}{|q|^2}\left[ \zeta \frac{f_0}{2\pi i} + \frac{f_1}{2\pi i} \right]\\[10pt]
\nonumber
	&\doteq	i \zeta^2 \diagmat{-|q|^2}{|q|^2}  M_\pm
				+\frac{ \zeta }{2}\offdiagmat{|q|^2 q}{|q|^2 \qbar} M_\pm\\[5pt]
\nonumber
	&\quad
				+  \frac{\zeta}{2} \offdiagmat{|q|^2 q}{|q|^2 \qbar}( { \One}-M_\pm) %\\[5pt]
%\nonumber
%	&\quad
				+	\frac{1}{2\pi} \diagmat{-|q|^2}{|q|^2} f_1 \\[10pt]
\nonumber
\text{(by \eqref{RHP.t.com}, }m=1) \quad
	&\doteq		i \zeta^2 \diagmat{-|q|^2}{|q|^2}  M_\pm
				+ \frac{\zeta}{2}  \offdiagmat{|q|^2 q}{|q|^2 \qbar} M_\pm \\[5pt]
\nonumber
	&\quad	-	\frac{i}{4} \diagmat{|q|^4}{-|q|^4} M_\pm  
				+  \frac{1}{2\pi} \diagmat{-|q|^2}{|q|^2} f_1 .
\end{align*}
Collecting all the terms, we get
\begin{align}
\label{main}
&2i\zeta^4 \ad \sigma (M_\pm) -2\zeta^3Q(x)M_\pm\\[5pt]
\nonumber
&\qquad + 
		\diagmat{i\zeta^2|q|^2}{-i\zeta^2|q|^2}M_\pm
		- { \frac{1}{2} }
\offdiagmat{\zeta |q|^2q}{\zeta |q|^2 \overline{q}}M_\pm		
	%	\Offdiagmat{\dfrac{q|q|^2}{2}}{\dfrac{\overline{q}|q|^2}{2}}M_\pm
		\\[5pt]
\nonumber
\qquad & \doteq   { \frac{1}{4} } \diagmat{- i|q|^4}{i|q|^4}M_\pm   +   
{ \frac{1}{\pi} }Q(x)(\ad \sigma) ^{-1}Q(x) f_1  + { \frac{1}{\pi i} } Q(x) \zeta f_1   
\\[5pt]
\nonumber
&\qquad  +{ \frac{1}{\pi i} } Q(x) f_2  
-\dfrac{\zeta}{\pi}\ad (\sigma)\left[ f_2\right]
\end{align}
We  are now able to simplify the right hand side of (\ref{main}) using (\ref{RHP.t.7}):
%\bigskip
{
\begin{align*}
%\label{cancellation}
&\frac{1}{\pi}Q(x)(\ad \sigma) ^{-1}Q(x) f_1 +    \frac{1}{\pi i}Q(x) f_2 
\\[10pt]
\nonumber
&\quad\doteq  -\frac{1}{2}  \diagmat{q\overline{q}_x}{\overline{q}q_x}M_\pm     + \frac{i}{4} \diagmat{|q|^4}{-|q|^4}M_\pm
\end{align*}
}
and
{
\begin{align*}
%\label{cancellation.zeta}
&\frac{1}{\pi i } Q(x)\zeta f_1 -\frac{\zeta}{\pi}\ad \sigma \left[ f_2\right]
%\\
%\nonumber
%&\quad
=\offdiagmat{i\zeta q_x}{-i\zeta\overline{q}_x} -  \frac{1}{2}  \offdiagmat{\zeta|q|^2q}{ \zeta|q|^2\overline{q}  }\\[5pt]
\nonumber
&\quad\doteq \offdiagmat{i\zeta q_x}{-i\zeta\overline{q}_x}M_\pm+ \frac{1}{2}  \diagmat{q_x\overline{q}}{\overline{q}_x q}M_\pm\\[5pt]
\nonumber
&\qquad 
- \frac{1}{2}\ \offdiagmat{\zeta|q|^2q}{\zeta|q|^2\overline{q}}M_\pm
+   \frac{i}{4} \diagmat{ |q|^4}{-|q|^4}M_\pm.
\end{align*}
}
{ Eq.  (\ref{main}) now becomes  the equivalence relation: 
\begin{align}
\label{equivalence}
&2i\zeta^4 \ad \sigma (M_\pm) -2\zeta^3Q(x)M_\pm\\
\nonumber
&\quad	+\diagmat{i\zeta^2|q|^2}{-i\zeta^2|q|^2}M_\pm
			- \frac{i}{4}\diagmat{|q|^4}{-|q|^4}M_\pm\\[5pt]
\nonumber
&\quad 	-\offdiagmat{i\zeta q_x}{-i\zeta\overline{q}_x}M_\pm
%				-	\frac{1}{2} \Diagmat{q_x\overline{q}}{\overline{q}_x q}
%			M_\pm
			%\\
%\nonumber
%&\quad	
			+\frac{1}{2}\diagmat{q\overline{q}_x  -q_x\overline{q}      }
			{\overline{q}q_x  -  \overline{q}_x q      }M_\pm
			%\\[10pt]
%\nonumber
%&
\doteq  \ 0 \ .
\end{align}
We combine (\ref{equivalence}) with Lemma \ref{lemma:dm.dt} and Lemma \ref{lemma:RHP.t.vanish} to
obtain eq. \eqref{RHP.Lax.t} and conclude the proof of  Proposition \ref{prop:RHP.t}.}

\section*{Acknowledgments} 
We are grateful to  P.\  Miller, D.\  Pelinovsky and Y.\ Shimabukoro for useful discussions. J.\ L.\ and P.\ P.\  thank the Fields Institute and the Department of Mathematics at the University of Toronto for hospitality during part of the time that this work was done. This work was partially supported by a grant from the Simons Foundation (359431 to Peter Perry).


\begin{thebibliography}{99}

\bibitem{AKNS74}
 Ablowitz, M., Kaup, D. J., Newell, A.,  Segur, H. (1974).
The inverse scattering transform-Fourier analysis for nonlinear problems,
\emph{Studies in Appl. Math.\ } {53}:249--315.

 \bibitem{BC84}
Beals, R., Coifman, R. R. (1984). {Scattering and inverse scattering for first order systems}. 
\emph{Comm. Pure
Appl. Math.}  {37}:39--90.

\bibitem{BDT88}
Beals, R., Deift, P., Tomei, C. (1988).
Direct and inverse scattering on the line. 
\emph{Mathematical Surveys and Monographs}, 28. American Mathematical Society, Providence, RI.


%\bibitem{BJM16}
%Borghese, M., Jenkins, R., McLaughlin, K. T.-R. 
%Long-time asymptotic behavior of the focusing nonlinear Schr\"{o}dinger
%equation. Preprint, \texttt{arXiv:1604.07436 [math-ph]}.



\bibitem{CLPS99}
Champeaux, S., Laveder, D. Passot, T., Sulem, P.-L. (1999).
Remarks on the parallel propagation of small-amplitude Alfv\'en waves.
\emph{Nonlin. Processes in Geophys.} {6}:169--178.

\bibitem{CLL79}
Chen, H. H., Lee, Y. C., Liu, C. S.
Integrability of nonlinear Hamiltonian systems by inverse scattering method. 
Special issue on solitons in physics. 
\emph{Phys. Scripta} 20 (1979), no. 3-4, 490--492. 

\bibitem{DVZ97}
Deift, P., Venakides, S.,  Zhou, X. (1997).
New results in small dispersion KdV by an extension of the steepest descent method of Riemann-Hilbert problems.
\emph{Intern. Math. Res. Not.}  {6}:285--299. 

%\bibitem{Do11}
%Do, Y. (2011).
%A nonlinear stationary phase method for oscillatory Hilbert-Riemann problem.
%\emph{Intern. Math. Res. Not.}, 12:2650--2765. 

\bibitem{DZ93}
Deift, P.,  Zhou, X. (1993).  A steepest descent method for oscillatory Riemann-Hilbert problems. Asymptotics for the MKdV equation. \emph{Ann. of Math.} (2) {137}:295--368. 

\bibitem{DZ03}
 Deift, P.,  Zhou, X. (2003). Long-time asymptotics for solutions of the NLS equation with initial data in a weighted Sobolev space. Dedicated to the memory of J\"urgen K. Moser. \emph{Comm. Pure Appl. Math.} {56}:1029--1077.

\bibitem{DM08}
Dieng, M., McLaughlin, K.-D.-T.  Long-time Asymptotics for the NLS equation via dbar methods. 
Preprint,
\href{http://arxiv.org/pdf/0805.2807.pdf}{arXiv:0805.2807}.
%\texttt{arXiv: 0805.2807}, 2008.

\bibitem{F00}
Fan, E.  (2000).
Darboux transformation and soliton-like solutions for the Gerdjikov-Ivanov equation.
\emph{J. Physics A}  {33}:6925--6933.


\bibitem{HO92} Hayashi, N., Ozawa, T. (1992).
On the derivative nonlinear Schr\"odinger equation. 
 \emph{Physica D} {55}:14--36.

\bibitem{KN78}
Kaup, D. J., Newell, A. C. (1978).  An exact solution for a derivative nonlinear Schr\"{o}dinger equation. 
\emph{J. Mathematical Phys.} {19}:798--801.

\bibitem{KV97}
Kitaev, A. V., Vartanian, A. H. (1997).
 Leading-order temporal asymptotics of the modified nonlinear Schr\"odinger equation: solitonless sector. 
 \emph{Inverse Problems}  {13}:1311--1339.

\bibitem{KV99}
Kitaev, A. V., Vartanian, A. H. (1999).
Asymptotics of solutions to the modified nonlinear Schr\"odinger equation: Solitons on a non vanishing countotuous background.
\emph{SIAM J. Math. Anal.}  {30}:787--832.


\bibitem{Lee83}
Lee, J.-H. (1983).  Analytic properties of Zakharov-Shabat inverse scattering problem with a polynomial spectral dependence of degree 1 in the potential. Thesis (Ph.\ D.), Yale University.
 
\bibitem{Lee89a}
Lee, J.-H. (1989). Global solvability of the derivative nonlinear Schr\"odinger equation. 
\emph{Trans. Amer. Math. Soc.} \textbf{314}:107--118.

\bibitem{LPS16} Liu, J., Perry, P., Sulem, C. (2016). Large-time behavior of soliton-free solutions of the derivative nonlinear Schr\"{o}dinger equation, in preparation. 

\bibitem{M76}  Mjolhus, E. (1976).
On the modulational instability of hydromagnetic waves parallel to the magnetic field.
\emph{Journal of Plasma Physics} {16}:321--334.

\bibitem{PS15} Pelinovsky, D.E., Shimabukoro, Y. (2015).
Global existence in the derivative NLS equation
with the inverse scattering transform method. Preprint, \href{http://arxiv.org/pdf/1602.02118.pdf}{arXiv:1602.02118}.

\bibitem{WS83}
Wadati, M., Sogo,K. (1983).
Gauge transformation in Soliton Theory.
\emph{J. Phys. Soc.  Japan} {52}:394--398.


\bibitem{Wu14} Wu, Y. (2014).
Global well-posedness on the derivative nonlinear Schr{\"o}dinger equation revisited.
Preprint, \href{http://arxiv.org/pdf/1404.5159.pdf}{arXiv:1404:5159}.
%arxiv.org 1404.5159v3.


\bibitem{XuFan12}
Xu, J., Fan, E. (2012).
Long-time asymptotic for the derivative nonlinear Schr\"{o}dinger equation with decaying initial value.
Preprint, \href{http://arxiv.org/pdf/1209.4245.pdf}{arXiv:1209.4245}.
%Preprint, \texttt{arXiv.1209.4245v2[nlin.SI]}.


\bibitem{ZS72} Zakharov, V.E., Shabat, A.B. (1971). 
Exact theory of two-dimensional self-focusing and one-dimensional self-modulation of waves in nonlinear media. 
\emph{Sov. Phys. JETP} \textbf{34}:62--69; Z. \`Eksper. Teoret. Fiz. 61, no. 1, 118--134

\bibitem{Zhou89}
 Zhou, X. (1989). The Riemann-Hilbert problem and inverse scattering. \emph{SIAM J. Math. Anal.} {20}:966--986.

\bibitem{Zhou98}
Zhou, X. (1998). $L^2$-Sobolev space bijectivity of the scattering and inverse scattering transforms. \emph{Comm. Pure Appl. Math.} {51}:697--731.

\end{thebibliography}
\end{document}